\theoremstyle{plain}
\newtheorem*{thrm}{Theorem}
\newtheorem{thm}{Theorem}[section]
\newtheorem{prop}[thm]{Proposition}
\newtheorem{cor}[thm]{Corollary}
\newtheorem{lemma}[thm]{Lemma}
\newtheorem*{conj}{Conjecture}
\theoremstyle{definition}
\newtheorem{defn}[thm]{Definition}
\newtheorem{defns}[thm]{Definitions}
\theoremstyle{remark}
\newtheorem{eg}[thm]{Example}
\newtheorem{egs}[thm]{Examples}
\newcommand{\proofof}[1]{\end{#1}\begin{proof}}
\renewcommand\section{\@startsection {section}{1}{\z@}%
  {-3.5ex \@plus -1ex \@minus -.2ex}{2.3ex \@plus.2ex}%
  {\normalfont\large\bfseries}}
\renewcommand\subsection{\@startsection{subsection}{2}{\z@}%
  {-3.25ex\@plus -1ex \@minus -.2ex}{1.5ex \@plus .2ex}%
  {\normalfont\bfseries}}
\newcommand{\lie}[1]{\mathfrak{#1}}
\newcommand{\sh}[1]{\mathcal{#1}}
\newcommand{\N}{{\mathbb N}}
\newcommand{\Z}{{\mathbb Z}}
\newcommand{\Q}{{\mathbb Q}}
\newcommand{\R}{{\mathbb R}}
\newcommand{\C}{{\mathbb C}}
\newcommand{\B}{{\mathbb B}}
\renewcommand{\P}{{\mathbb P}}
\newcommand{\A}{{\mathbb A}}
\newcommand{\D}{{\mathbb D}}
\newcommand{\tens}{\mathbin{\otimes}}
\newcommand{\iden}{\mathrm{id}}
\newcommand{\topos}{\hspace{4pt}\widetilde{}\hspace{4pt}}
\newcommand{\Hom}{\mathrm{Hom}}
\DeclareMathOperator*\colim{colim}
\DeclareMathAlphabet{\mathrmsl}{OT1}{cmr}{m}{sl}
\newcommand{\rssymb}[2]{\newcommand{#1}{\mathrmsl{#2}} }
\newcommand{\oper}[3][n]{\newcommand{#2}{\mathop{\mathrm{#3}}%
\ifx n#1\nolimits\else\limits\fi} }
\newcommand{\rsoper}[3][n]{\newcommand{#2}{\mathop{\mathrmsl{#3}}%
\ifx n#1\nolimits\else\limits\fi} }
\oper\Ad{Ad}
\oper\ad{ad}
\oper\val{val}
\oper\coker{coker}
\oper\mult{mult}
\oper\Iso{Iso}
\oper\End{End}
\oper\Aut{Aut}
\oper\Sub{Sub}
\oper\Alt{Alt}
\oper\Ext{Ext}
\oper\Pic {Pic}
\oper\Sym{Sym}
\oper\Spec{Spec}
\oper\Spf{Spf}
\oper\Sp{Sp}
\oper\Spa{Spa}
\oper\Proj{Proj}
\rsoper\divg{div}
\rsoper{\sym}{sym}
\rsoper{\alt}{alt}
\rsoper\trace{tr}
\rssymb\id{id}
\newcommand{\thismonth}{\ifcase\month\or
  January\or February\or March\or April\or May\or June\or
  July\or August\or September\or October\or November\or December\fi
  \space\number\year}
\title{Skeleta in non-Archimedean and tropical geometry}
\author{Andrew W. Macpherson}
\begin{document}

\maketitle

\begin{abstract}
I describe an algebro-geometric theory of \emph{skeleta}, which provides a unified setting for the study of tropical varieties, skeleta of non-Archimedean analytic spaces, and affine manifolds with singularities. Skeleta are spaces equipped with a structure sheaf of topological semirings, and are locally modelled on the \emph{spectra} of the same. The primary result of this paper is that the topological space $X$ underlying a non-Archimedean analytic space may locally be recovered from the sheaf $|\sh O_X|$ of \emph{pointwise valuations} of its analytic functions; in other words, $(X,|\sh O_X|)$ is a skeleton.
\end{abstract}
\setcounter{tocdepth}{2}
\tableofcontents

\section{Introduction}

There are several areas in modern geometry in which one is led to consider spaces with \emph{affine} or \emph{piecewise affine} structure. The three with which I am in particular concerned are, in order of increasing subtlety: \begin{itemize}\item skeleta of non-Archimedean analytic spaces (\cite{Berkpadic}); \item tropical geometry (\cite{Grisha},\cite{BPR}); \item affine manifolds with singularities (\cite{Grossbook},\cite{KoSo2}).\end{itemize}
These cases share the following features: 
\begin{itemize}\item they are piecewise manifolds; \item it makes sense to ask which continuous, real-valued functions are \emph{piecewise affine}; \item they admit a stratification on which it makes sense to ask which of these are \emph{convex}.\end{itemize}

Moreover, in each case the structure is determined entirely by an underlying space $B$, together with a sheaf \[|\sh O_B|\subseteq C^0\left(B;\R\sqcup\{-\infty\}\right)\] of piecewise-affine, convex (where this is defined) functions.

The sheaf $|\sh O_B|$ is naturally a sheaf of \emph{idempotent semirings} under the operations of pointwise maximum and addition. It has long been understood, at least in the tropical geometry community (cf. e.g. \cite{Grisha}) that such semirings are the correct algebraic structures to associate to piecewise-affine geometries like $B$.

A natural question to ask is whether this sheaf-theoretic language can be pushed further in this setting and, as in algebraic geometry, the underlying space $B$ recovered from the semirings of local sections of $|\sh O_B|$. In this paper, I provide an affirmative answer to this question, though, as for the passage from classical algebraic geometry to scheme theory, it will require us to alter our expectations of what type of space underlies a piecewise-affine geometry. The resulting theory is what I call the theory of \emph{skeleta}.

The relationship between the theories of schemes and of skeleta goes beyond mere analogy: they can in fact be couched within the \emph{same} theoretical framework (appendix \ref{TOPOS}), \`a la Grothendieck (cf. also \cite{Toen}, \cite{Durov}). Within this framework, one need only specify which semiring homomorphisms
\[ \Gamma(U;|\sh O_U|)\rightarrow \Gamma(V,|\sh O_V|) \]
are dual to open immersions $V\hookrightarrow U$ of skeleta. This is enough to associate to every semiring $\alpha$ a quasi-compact topological space, its \emph{spectrum} $\Spec\alpha$. Skeleta can then be defined to be those semiringed spaces locally modelled by the spectra of semirings.

My main contention in this paper is that the \emph{primary} source of skeleta is the non-Archimedean geometry, and this is why I have adopted the terminology of this field. The initial concept that links non-Archimedean and piecewise-affine geometry is that of a valuation. Indeed, semirings are the natural recipients of valuations, while topological rings are the sources.

The topology of skeleta is selected so as to ensure that there is a unique functor
\[ \mathrm{sk}:\mathbf{Ad}\rightarrow\mathbf{Sk} \]
from the category $\mathbf{Ad}$ of adic spaces to the category $\mathbf{Sk}$ of skeleta, a natural homeomorphism $X\widetilde\rightarrow\mathrm{sk}X$ for $X\in\mathbf{Ad}$, and a \emph{universal valuation} $\sh O_X\rightarrow|\sh O_X|$. This \emph{universal skeleton} $\mathrm{sk}X$ of an analytic space $X$ can be thought of as the skeleton whose functions are the pointwise logarithmic norms of analytic functions on $X$. In particular, $X$ is locally the spectrum of the semiring of these functions.

The existence of this functor is the primary result of this paper. I also recover within the category of skeleta certain further examples that already existed in the literature: the \emph{dual intersection} or \emph{Clemens complex} of a degeneration (\S\ref{EGS-Clemens}), and the \emph{tropicalisation} of a subvariety of a toric variety (\S\ref{EGS-trop}).

\subsection*{Gist}

The categories of skeleta (section \ref{SKEL}) and of non-Archimedean analytic spaces may be constructed in the same way: as a category of \emph{locally representable sheaves} on some site whose underlying category is opposite to a category of algebras (\`a la \cite{Toen}). As such, to build a bridge between the two categories, it is enough to build a bridge between the categories $\frac{1}{2}\mathbf{Ring}_t$ of \emph{topological semirings} (defs. \ref{semiring}, \ref{tsemiring}) and $\mathbf{nA}$ of \emph{non-Archimedean rings} (def. \ref{ADIC-def-na}), and to check that it satisfies certain compatibility conditions.

One can associate to any non-Archimedean ring $(A,A^+)$ a \emph{free semiring} $\B^c(A;A^+)$, which, as a partially ordered set, is the set of finitely generated $A^+$-submodules of $A$. The addition on $\B^c(A;A^+)$ comes from the multiplication on $A$. It comes with a \emph{valuation} \[ A\rightarrow \B^c(A;A^+),\quad f\mapsto (f) \] universal among continuous semivaluations of $A$ into a semiring whose values on $A^+$ are negative (or zero). In other words, $\B^c(A;A^+)$ corepresents the functor
\[ \mathrm{Val}(A,A^+;-):\frac{1}{2}\mathbf{Ring}_t\rightarrow\mathbf{Set} \]
which takes a topological semiring $\alpha$ to the set of continuous semivaluations $\val:A\rightarrow\alpha$ satisfying $\val|_{A^+}\leq0$.

In particular, if $A=A^+$, then $\B^cA^+:=\B^c(A^+;A^+)$ is the set of finitely generated ideals of $A^+$, or of finitely presented subschemes of $\Spf A^+$.

Everything in the above paragraph may also be phrased in the internal logic of topoi so that, for example, it makes sense to replace $A$ and $\alpha$ with \emph{sheaves} of non-Archimedean rings and semirings on a space. Thus if $X$ is a non-Archimedean analytic space, then \[|\sh O_X|:U\mapsto\B^c(\sh O_U;\sh O_U^+)\] is a sheaf of topological semirings on $X$, universal among those receiving a continuous semivaluation from $\sh O_X$.

\begin{thrm}[\ref{SKEL-main}]Let $X$ be quasi-compact and quasi-separated. There is a natural homeomorphism \[X\widetilde\rightarrow\Spec\B^c(\sh O_X;\sh O_X^+)\] which matches the structure sheaf on the right with $|\sh O_X|$ on the left.\end{thrm}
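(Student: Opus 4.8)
The strategy is to reduce the global statement to an affine one and then analyse the spectrum of the free semiring $\B^c(A;A^+)$ directly. Since $X$ is quasi-compact and quasi-separated, it has a finite cover by affinoid (or affine adic) pieces $\Spa(A_i,A_i^+)$ with quasi-compact overlaps, and both $X\mapsto X$ and $X\mapsto\Spec\B^c(\sh O_X;\sh O_X^+)$ are built by gluing along such covers; the formation of $\B^c$ commutes with the relevant localisations (this should already be recorded in the discussion of $\B^c$ as a corepresenting object, since localisations of $(A,A^+)$ are characterised by a universal property compatible with $\mathrm{Val}$). So it suffices to produce, naturally in $(A,A^+)$, a homeomorphism $\Spa(A,A^+)\,\widetilde\rightarrow\,\Spec\B^c(A;A^+)$ carrying $|\sh O|$ to the structure sheaf, and then check the identifications agree on overlaps — which is automatic from naturality.

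For the affine case I would proceed in three steps. First, construct the map of spaces: a point of $\Spa(A,A^+)$ is a continuous valuation $v\colon A\to\Gamma_v\cup\{0\}$ with $v(A^+)\le 1$; composing with the universal property, $v$ factors through a semivaluation $A\to\B^c(A;A^+)$ and hence determines a semiring homomorphism from $\B^c(A;A^+)$ to a (semi)field, i.e. a point of $\Spec\B^c(A;A^+)$. Conversely a point of the spectrum is a prime congruence on $\B^c(A;A^+)$, and one must see that pulling back along the universal valuation $A\to\B^c(A;A^+)$ recovers a continuous valuation on $A$ with the $A^+$-bound; the monomial elements $(f)$ for $f\in A$ generate, so the value group is controlled and continuity is inherited from the definition of the topology on $\B^c$. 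Second, show this bijection is a homeomorphism: the topology on $\Spa(A,A^+)$ is generated by rational subsets $\{v(f_i)\le v(g)\ne 0\}$, and these correspond exactly to the basic opens of $\Spec\B^c(A;A^+)$ cut out by the images of the finitely generated submodules $(f_1,\dots,f_n,g)$ and $(g)$ — so the two bases match. Third, match the sheaves: on a rational subset, $|\sh O_X|$ is by definition $\B^c$ of the corresponding rational localisation, while the structure sheaf of $\Spec\B^c(A;A^+)$ on the matching basic open is the corresponding localisation of $\B^c(A;A^+)$; the universal property of $\B^c$ together with the compatibility of localisation on both sides identifies them, and one checks this is compatible with restriction, hence glues to a sheaf isomorphism.

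The main obstacle is the second step, the comparison of topologies and in particular the surjectivity/injectivity of the point map — equivalently, showing that \emph{every} prime congruence on $\B^c(A;A^+)$ pulls back to an honest continuous valuation on $A$, and that distinct points of $\Spa(A,A^+)$ remain distinct. The subtlety is that $\B^c(A;A^+)$ as a poset is the lattice of finitely generated $A^+$-submodules, so a prime congruence is a fair amount of data, and one has to verify that the order structure forces the pulled-back function on $A$ to be multiplicative (not merely sub- or super-multiplicative) and to satisfy the strong triangle inequality; this is where the specific combinatorics of finitely generated submodules, and the fact that $(fg)=(f)+(g)$ while $(f+g)\le(f)\vee(g)=(f,g)$, must be used to pin down the valuation axioms. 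Continuity, i.e. matching the topology on $A$ with the topological semiring structure on $\B^c$, is the other delicate point and is presumably where the "topological" in $\frac{1}{2}\mathbf{Ring}_t$ and the completeness hypotheses built into $\mathbf{nA}$ get used; I expect the argument to run parallel to Huber's identification of $\Spa$ with the valuation spectrum, with $\B^c(A;A^+)$ playing the role of the value monoid universally attached to $(A,A^+)$.
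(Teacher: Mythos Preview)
Your approach diverges from the paper's in two substantive ways, and the first is a genuine gap.

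\textbf{The definition of $\Spec$.} In this paper, $\Spec\alpha$ is \emph{not} defined as a set of prime congruences or of semiring homomorphisms to semifields. It is defined topos-theoretically (\S\ref{SKEL-spec}, appendix~\ref{TOPOS}): the small site of $\Spec\alpha$ consists of bounded localisations $\alpha\to\alpha\{T_i-S_i\}$, and the underlying space is the locale associated to this site. For \emph{contracting} semirings the paper does identify points with $\B$-points (Prop.~\ref{contentious}), but $\B^c(A;A^+)$ is Tate and not contracting, so your step ``a point of the spectrum is a prime congruence'' is not available. Your whole point-matching programme (steps one and two) would need an independent description of the points of $\Spec\B^c(A;A^+)$ before it can begin, and the paper does not supply one.

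\textbf{The presheaf/sheaf distinction.} The theorem concerns $\B^c(\sh O_X;\sh O_X^+)$, the global sections of the \emph{sheaf} $|\sh O_X|$; the paper explicitly warns that this is the sheafification of the presheaf $U\mapsto\B^c(\sh O(U);\sh O^+(U))$ and does not agree with it on affines. Your reduction to $\Spa(A,A^+)\cong\Spec\B^c(A;A^+)$ would, even if correct, give the \emph{shell} $\mathrm{sk}(X;X^+)$ of \S5.5 rather than the universal skeleton $\mathrm{sk}X$; these have different underlying spaces.

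The paper's proof avoids points entirely. It matches \emph{open subsets}: via the colimit formula $\B^c(\sh O_X;\sh O_X^+)\cong\colim_{j}\B^c(j_*\sh O_X;j_*\sh O_X^+)$ over formal models (Lemma~\ref{SKEL-lim}), every quasi-compact open $S\subseteq X$ arises from a Zariski-open on some model, and the Zariski-open formula (\ref{LOC-Zar-open}) identifies the restriction $|\sh O|(X)\to|\sh O|(S)$ with a cellular localisation; conversely any $I\in|\sh O|(X)$ lives on some model and cuts out such a subset. The blow-up formula (\ref{LOC-blow-up}) handles the passage between models (free localisations), and the cellular cover formula (\ref{LOC-Zar-cover}) shows the correspondence respects covers. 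This site-level matching is what yields the homeomorphism, without ever naming a point.
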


In particular, if $X$ is a qcqs formal scheme, then the spectrum of the semiring $\B^c\sh O_X$ of ideal sheaves on $X$ is naturally homeomorphic to $X$ itself.

This skeleton $\Spec\B^c(\sh O_X;\sh O_X^+)$ is called the \emph{universal skeleton} $\mathrm{sk}X$ of $X$. It follows from the universal property of its structure sheaf that the \emph{real} points $\mathrm{sk}X(\R_\vee)$ can be identified canonically with the Berkovich analytic space associated to $X$ \cite[\S1.6]{Berketale}, provided such a thing exists; see theorem \ref{thm-berk}.

\

A natural geometric counterpart to the universality of $\B^c$ might be to say that the universal skeleton of an analytic space is universal among skeleta $B$ equipped with a continuous map $\iota:B\rightarrow X$ and valuation $\sh O_X\rightarrow \iota_*|\sh O_B|$. However, my point of view is that the very construction of the universal semiring diminishes the importance of valuation theory in getting a handle on the geometry of $X$. It tends to be easier, and perhaps more natural, to construct skeleta $B$ with a morphism $X\rightarrow B$ in the \emph{opposite} direction.

For example, let $X^+\rightarrow\Spf\sh O_K$ be a simple normal crossings degeneration over a DVR $\sh O_K$, with general fibre $j:X\rightarrow X^+$ (so $X$ is an analytic space, smooth over $K$). The irreducible components $E_i$ of the central fibre $X^+_0$ of the degeneration generate a subring $|\sh O_{\mathrm{sk}(X,X^+)}|^\circ\subseteq \B^c\sh O_{X^+}$ whose elements are the ideals \emph{monomial} with respect to normal co-ordinates $(t=\prod_{i=1}^kx_i^{n_i})$. Their supports are the strata of $X^+_0$. Base-changing over $K$ yields the \emph{dual intersection semiring}
\[ \mathrm{Cl}(X,X^+)\hookrightarrow\B^c(\sh O_{X^+}\tens K;\sh O_{X^+}) \]
and, dually, \emph{dual intersection skeleton} $\Spec\mathrm{Cl}(X,X^+)=:\Delta(X,X^+)\stackrel{\mu}{\leftarrow} X$ (definition \ref{EGS-def-Clemens}). 

That $X$ is defined over $K$ means that the universal skeleton, dual intersection skeleton, and morphism $\mu$ are defined over its value group: the \emph{semifield of integers} $\Z_\vee:=\Z\sqcup\{-\infty\}$. The real points $\mathrm{sk}(X,X^+)(\R_\vee)$ of the dual intersection skeleton are $\Z_\vee$-semialgebra homomorphisms $|\sh O_{\mathrm{sk}(X,X^+)}|\rightarrow\R_\vee$ to the \emph{real semifield} $\R_\vee=:=\R\sqcup\{-\infty\}$. They can be identified with the points of the na\"ive dual intersection complex as defined in, for example, \cite[\S A.3]{KoSo2}. Indeed, the simplices of this complex are defined by the logarithms of local equations for the intersections of $X^+_0$:
\[ \frac{K\{x_1,\ldots,x_n\}}{(t=\prod_{i=1}^kx_i^{n_i})} \rightsquigarrow \frac{\Z_\vee\{X_1,\ldots,X_n\}}{(-1=\sum_{i=1}^kn_iX_i)}, \]
where the curly braces on the right-hand side signify that $X_i\leq 0$. The latter equation $1+\sum_{i=1}^kn_iX_i=0$ cuts the dual intersection simplex
\[ \mathrm{conv}\{(0,\ldots,0,-1/n_i,0,\ldots,0)\}_{i=1}^k\subset  \R_{\leq0}^n \] out of the negative orthant in $\R^n$.
Under this identification, the elements of the dual intersection semiring correspond to integral, piecewise-affine functions whose restriction to each cell is convex.

The construction of such skeleta, perhaps \emph{partial} skeleta of $X$, is the crux of the theory. At this point I know of only a few examples (\S\ref{EGS}).

\subsection*{An elliptic curve}

Let us consider now the case that $X^+=E^+/\sh O_K$ is an elliptic curve degenerating semistably to a cycle of $n>3$ $\P^1_k$s, which I denote $\{D_i\}_{i=1}^n\in\Z_\vee\{X;X^+\}$. Its general fibre $E/K$ is a Tate elliptic curve. The dual intersection skeleton $\Delta(E,E^+)$ is, at the level of real points, a cycle of $n$ unit intervals joined at their endpoints. The vertices $\{v_i\}_{i=1}^n$ correspond to the lines $D_i$. Functions are allowed to be concave at these vertices. In particular, the function $D_i$ takes the value -1 at $v_i$ and zero at the other vertices.

Now let us collapse one of the $D_i$s \[p_i:E^+\rightarrow E^+_i\] to an $A_1$ singularity. The special fibre of $E^+_i$ is now a cycle of $(n-1)$ $\P^1_k$s meeting transversally except at the discriminant locus of the blow-up, which now has the local equation $(xy-t^2)$. With these co-ordinates, $p_i$ is the blow-up of the ideal $(x,y,t)$. 

In the semialgebraic notation, the ideal is \[(x,t,y) = D^\prime_{i-1}\vee -1\vee D^\prime_{i+1}\in \mathrm{Cl}(E,E^+) ,\] where $D^\prime_j$ denotes the divisor whose strict transform under $p_i$ is $D_j$, so $p_i^*D_{i\pm1}^\prime=D_{i\pm 1}+D_i$. The blow-up is monomial, and hence induces a pullback homomorphism $p_i^*:\mathrm{Cl}(E,E^+)\rightarrow\mathrm{Cl}(E,E^+)$, and dually, a morphism
\[ p_i:\Delta(E,E^+)\rightarrow \Delta(E,E^+_i) \]
of the dual intersection skeleta.

\

%\texttt{[picture to be added]}

%\

The segment of the dual intersection complex corresponding to the singular intersection $D_{i-1}\cap D_{i+1}$ is an interval $I$ of affine length two. Considered as a function on $I$, the blow-up ideal $D^\prime_{i-1}\vee -1\vee D^\prime_{i+1}$ has real values as the absolute value
\[ |-|:I\simeq [-1,1]\rightarrow\R. \]
It has a kink in the middle. Because, in $\Delta(E,E^+_i)$, there is no vertex here, the inverse of this function is not allowed; while of course the pullback $D_i$ \emph{is} invertible on $\Delta(E,E^+)$.

In fact, $\mathrm{Cl}(E,E^+)$ is a \emph{localisation} of $\mathrm{Cl}(E,E^+_i)$ at $D^\prime_{i-1}\vee -1\vee D^\prime_{i+1}$, and $\Delta(E,E^+)$ is an \emph{open subset} of $\Delta(E,E^+_i)$.

Varying $i$, we obtain therefore an atlas
\[ \coprod_{i\neq j=1}^n\Delta(E,E^+) \rightrightarrows \coprod_{i=1}^n\Delta(E,E^+_i) \]
for a skeleton $B$ which \emph{compactifies} $\Delta(E,E^+)$. Functions on $B$ are required to be convex everywhere, and $B(\R_\vee)$ is, as an affine manifold, the flat circle $\R/n\Z$.

This skeleton is a kind of \emph{Calabi-Yau skeleton} of $E$, and it depends only on the intrinsic geometry of $E$ and not on any choices of model. See also section \ref{EGS-ellipt}.

\subsection*{Mirror symmetry context}

Conjectures arising from homological mirror symmetry \cite{KoSo1} suggest that a Calabi-Yau $n$-fold $X$ approaching a so-called \emph{large complex structure limit} acquires the structure of a completely integrable system $\mu:X\rightarrow B$ with singularities in (real) codimension one, shrinking to two in the limit. The base $B$ therefore acquires the structure of a Riemannian $n$-manifold with integral affine co-ordinates $y_i$, away from the singular fibres, given by the Hamiltonians of the system. The metric is locally the Hessian, with respect to these co-ordinates, of a convex function $K$, and satisfies the \emph{Monge-Amp\`ere equation}
\[ d\det\left(\frac{\partial^2K}{\partial y_i\partial y_j}\right)=0 \]
which can be thought of as the `tropicalisation' of the complex Monge-Amp\`ere equation satisfied by the Yau metric.

The central idea of \cite{KoSo2} is that the skeleton $B$ can be constructed, with the \emph{Legendre dual} affine structure $\check y_i$, from the non-Archimedean geometry of $X^\mathrm{an}$ or, what is the same thing, the birational geometry of its formal models. Indeed, Kontsevich noted that the Gromov-Hausdorff limit of $X$ should resemble the dual intersection complex of a certain `crepant' model thereof. To be precise, the real points of $B$ should be embeddable into $X^\mathrm{an}(\R_\vee)$ as the dual intersection complex of any dlt minimal model of $X$ \cite{Nicaise}. Its structure as a dual intersection complex also endows it with the correct affine structure, away from a subset of codimension one which contains the singularities.

More subtle is to construct the correct \emph{non-Archimedean torus fibration} $\mu:X\rightarrow B$. This would also determine the affine structure of $B$ in the sense that 
\[ |\sh O_B|\cong\mathrm{Im}(\mu_*\sh O_X\rightarrow\mu_*|\sh O_X|). \]
Such a $\mu$ is determined by a choice of minimal model. Unfortunately, in dimensions greater than one, the morphisms $\mu$ coming from various models differ. The affine structures they induce are related by so-called \emph{worm deformations}, which move the singularities of the affine structure along their monodromy-invariant lines. These deformations correspond to flops in birational geometry.

This forms the basis of a dictionary, motivated by mirror symmetry, between concepts in birational geometry and the tropical geometry of affine manifolds. This dictionary has been partially developed along combinatorial lines in the Gross-Siebert programme.\footnote{In general the Gross-Siebert programme \cite{GS} bypasses the non-Archimedean geometry to give a direct construction of the affine structure of $B$, up to worm deformations, in terms of toric geometry. Using this approach, they were able to obtain many results with a combinatorial flavour, and even a reconstruction of $X$ (as an algebraic variety) from $B$ together with some cocycle data. To mimic at least their basic construction in the context of skeleta is not difficult, but beyond the scope of this paper.}
However, geometrically interesting examples present enormous combinatorial complexity, already for the case of K3 surfaces. I propose that a more geometric approach, such as outlined in this paper, will be more robust in such applications.

There is some hope that, armed with a suitably flexible language, the birational geometry of $X$ together with a polarisation can be used to construct solutions to a real Monge-Amp\'ere equation on $B$.

\subsection{How to read this paper}

The structure of the paper is as follows. In the first three sections, we establish the theory of semirings and their (semi)modules as a theory of commutative algebras in a certain closed monoidal category, the category of \emph{$\B$-modules} $(\mathrm{Mod}_\B,\oplus)$. The objects of $\mathrm{Mod}_\B$ are also known in the literature as `join-semilattices'. Since we wish to compare with non-Archimedean geometry, we actually need to work with \emph{topological} $\B$-modules (\S\ref{TOP}). At this paper's level of sophistication, this causes few complications.

Apart from establishing the formal properties of the categories of $\B$-modules and semirings, the secondary thrust of this part is to introduce various versions of the \emph{subobject} and \emph{free} functors \begin{align}\nonumber \B,\B^c:&\mathrm{Mod}_A\longrightarrow\mathrm{Mod}_\B \\
\nonumber &\mathbf{Ring}\longrightarrow\frac{1}{2}\mathbf{Ring} \\
\nonumber & etc. \end{align}
which will pave the major highway linking algebraic and `semialgebraic' geometry. I have spelled out in some detail the functoriality of these constructions, though they are mostly self-evident.

\

Section \ref{LOC} sets about defining the localisation theory of semirings, which is designed to parallel the one used for topological rings in non-Archimedean geometry. These \emph{bounded localisations} factorise into two types: cellular, and free. The latter resemble ordinary localisations of algebras, and the algebraically-minded reader will be unsurprised by their presence. The cellular localisations, on the other hand, may be less familiar: they involve the non-flat operation of setting a variable equal to zero. Thinking of a skeleton as a polyhedral or cell complex, these localisations will be dual to the inclusions of cells (of possibly lower dimension). Perhaps confusingly, these are the semiring homomorphisms that correspond, under $\B^c$, to open immersions of formal schemes. The precise statements are the Zariski-open (\ref{LOC-Zar-open}) and cellular cover (\ref{LOC-Zar-cover}) formulas.

With some understanding of the `cellular topology' we are able, as an aside, to describe the spectrum of \emph{contracting} semirings in terms of a na\"ive construction: the \emph{prime spectrum} \S\ref{LOC-prime}. This makes clear the relationship between the topological space underlying a formal scheme $X^+$ and the spectrum of the ideal (sheaf) semiring $\B^c\sh O_X^+$.

It is also easy to describe the free localisations in terms of the polyhedral complex picture. Inverting a strictly convex function has the effect of destroying the affine structure along its non-differentiability locus (or `tropical set'); we therefore think of it as further subdividing our complex into the cells on which the function is affine. We can also give an algebro-geometric interpretation of these subdivisions: it is given by the \emph{blow-up formula} (\ref{LOC-blow-up}). In the setting of a formal scheme $X^+$ over a DVR $\sh O_K$, it says that blowing up an ideal sheaf $J$ supported on the reduction has the effect of inverting $J$ in $\B^c(\sh O_{X^+}\tens K;\sh O_{X^+})$. Intuitively, the blow-up of $J$ is the universal way to make it an invertible sheaf.

\

In \S\ref{SKEL} we meet the category $\mathbf{Sk}$ of skeleta, and introduce some universal constructions of certain skeleta from adic spaces and their models. The construction of this category follows the general programme of glueing objects inside a topos, as outlined in \cite{Toen}. The main result \ref{SKEL-main} - which concerns the main skeletal invariant of an analytic space $X$, the \emph{universal skeleton} $\mathrm{sk}X$ - boils down to proving that for reasonable values of $X$, the topological space underlying $X$ can be identified with that of $\Spec\Gamma(X;|\sh O_X|)$. The technical part of the proof is based on the Zariski-open, blow-up, and cellular cover formulas, which together allow us to explicitly match the open subsets of $X$ with those of its skeleton.
 
As an artefact of the proof, we may notice that a surprisingly many skeleta - those associated to any quasi-compact, quasi-separated analytic space - are affine. As an aside in $\S\ref{SKEL}$ I was able to obtain a kind of quantification (thm. \ref{SKEL-qc=aff}) of this observation. We also glance at the relationship (thm. \ref{thm-berk}) between skeleta and the theory of Berkovich.

In the examples section \ref{EGS}, we reconstruct some well-known `tropical spaces' as skeleta: the dual intersection complexes of locally toric degenerations (\S\ref{EGS-Clemens}), and the tropicalisations of subvarieties of a toric variety \cite{Payne} (\S\ref{EGS-trop}). I have also attempted to couch the construction of an affine manifold from a Tate elliptic curve, summarised above, in more general terms (\S\ref{EGS-ellipt}). This forms the first test case of an ongoing project.

\subsection*{Acknowledgements}

I would like to thank my PhD supervisors, Alessio Corti and Richard Thomas, for their support. I thank also Mark Gross, Sam Payne, and Jeff Giansiracusa, for interesting conversations.

I also thank the Cecil King foundation for funding my visit to Mark Gross in UCSD, where some of the aforementioned conversations, as well as part of the work writing this paper, occurred.

\section{Preliminaries and conventions}

\subsection{On sites and topoi}\label{TOPOS}

Our general notational conventions on sites and topoi follow the canonical \cite{SGA4}. The central glueing construction of the paper revolves around the notion of a \emph{locally representable sheaf}, defined in \cite[def. 2.15]{Toen}. I only wish to replace the input, the authors' notion of \emph{Zariski-open immersion}, with something a bit more flexible.

\begin{defn}\label{TOPOS-def}Let $\sh U$ be a class of monomorphisms in a category $\mathbf C$ stable for composition and base change. One defines the structure of a Grothendieck site on $\mathbf C$ whose generating coverings are finite families of morphisms in $\sh U$ that form a covering for the canonical site.

An \emph{open immersion} in the associated topos $\mathbf C\topos$ is a morphism locally representable by morphisms in $\sh U$.

An object of $\mathbf C\topos$ is \emph{locally representable} if it is a union of representable open subobjects.\end{defn}

Much of the theory of \cite{Toen} is valid with this more flexible input, notably proposition 2.18. I warn the reader only that without a \emph{locality} requirement for our definition of affine open immersion, part 2 of \cite[prop. 2.17]{Toen} is false. This is the case, for example, for the category of adic spaces (\S\ref{ADIC}).

The resulting class of objects can also be characterised in terms of point-set topology via a modern analogue of Stone's construction:

\begin{enumerate}
\item By construction, $\mathbf C\topos$ is a coherent topos and so by Deligne's theorem \cite[\S IX.11.3]{Topos} it has enough points.
\item Since the morphisms in $\sh U$ were assumed to be monic, the small topos of an object $X\in\mathbf C\topos$ is localic.
\item Being localic and having enough points, the small topos of an object is therefore equivalent to a uniquely determined sober topological space \cite[\S IX.3.1-4]{Topos}. This determines a functor
\[  \mathbf C\topos\rightarrow\mathbf{Top} \]
that takes morphisms in $\sh U$ to open immersions. 
\item The topological space associated to a representable (or more generally, compact) object is quasi-compact and quasi-separated.
\item Being locally representable corresponds to having a basis of open sets coming from open immersions with representable source.
\end{enumerate}

A covering - or, more precisely, two-term hypercovering - of a space $X$ will be denoted $U_\bullet\twoheadrightarrow X$, with the bullet ranging over a partially ordered set of indices. %Following conventions in homotopy theory, we write $|U_\bullet|$ for the colimit of a simplicial object.

\subsection{Non-Archimedean geometry}\label{ADIC}

The perspective on non-Archimedean geometry taken in this paper was influenced by the foundational works \cite{Hubook} and \cite{FujiKato}. Broadly speaking, I have adopted the categorical localisation constructions of the latter (after the approach of Raynaud), but the language and notation of the former - in particular, the nomenclature \emph{adic spaces}.

I introduce the following innovations in terminology:

\begin{defns}\label{ADIC-def-na}A \emph{marked formal scheme} is a pair $(X^+,Z)$ consisting of a formal scheme $X^+$ and a collection $Z$ of Cartier divisors. A morphism of marked formal schemes is a morphism $f:X^+_1\rightarrow X^+_2$ such that $(f^{-1}Z_2)^\mathrm{red}\subseteq Z_1$. An admissible blow-up is a finite type blow-up whose centre has underlying reduced scheme contained in $Z$.

A \emph{non-Archimedean ring} is a pair $(A,A^+)$ consisting of an adic ring $A^+$ and a localisation $A$ of $A^+$. We only consider \emph{locally convex} $(A,A^+)$-modules, that is, complete topological $A$-modules whose topology is generated by $A^+$-submodules; the category of such is denoted $\mathrm{LC}_{(A,A^+)}$, or just $\mathrm{LC}_A$ for short.\end{defns}

\begin{itemize}
\item The category $\mathbf{Ad}$ of adic spaces is defined by the same means as the category $\mathbf{Rf}$ of \cite[\S\textbf{II}.2]{FujiKato}, with the modification that the input to the localisation construction is instead the category of coherent \emph{marked formal schemes} at admissible blow-ups, as in def. \ref{ADIC-def-na}. This ensures that the notion of adic space is a generalisation of that of formal scheme.
\item The glueing construction of \cite[\S\textbf{II}.2.2(c)]{FujiKato}, although expressed in less standard language, is identical to the locally representable sheaves story of \S\ref{TOPOS}.
\item Following Huber, the sheaf of functions extending over a model is denoted $\sh O^+$ (rather than $\sh O^\mathrm{int}$ as in \cite[\S\textbf{II}.3.2(a)]{FujiKato}). The structure sheaf of the adic topos $\mathbf{Ad}\topos$ is a \emph{pair} $(\sh O, \sh O^+)$. It is a sheaf of non-Archimedean rings in the sense of def. \ref{ADIC-def-na}. 
\item Accordingly, we also adopt the notation $X^+$ for models of a adic space $X$. The category of models is denoted $\mathbf{Mdl}_{X^+}$; if $X$ is qcqs, it is cofiltered. The map $j:X\rightarrow X^+$ exhibiting the model is a morphism of adic spaces.
\item An \emph{affine adic space} $X$ is one admitting an affine formal model whose marking divisors are principal. By definition, this space is the spectrum of the non-Archimedean ring $A:=\Gamma\sh O_X$; following Huber, this spectrum is denoted $\Spa A$.\end{itemize}

The key aspect of this construction that we will use is that for quasi-compact, quasi-separated $X$, as topologically ringed sites,
\begin{align}\label{important} (X,\sh O_X^+)\simeq\lim_{X^+\in\mathbf{Mdl}(X)}X^+ \end{align}
where the limit is over all formal models of $X$.

\section{Subobjects and $\B$-modules}\label{SPAN}

The theory of \emph{$\B$-modules} plays the same r\^ole in tropical geometry that the theory of Abelian groups plays in algebraic geometry: while rings are commutative monoids in the category of Abelian groups, semirings are commutative monoids instead in the category of $\B$-modules. This is the fundamental point of departure of the two theories. There is therefore a temptation to try to treat $\B$-modules as "broken" Abelian groups, and to literally translate as many concepts and constructions from the category $\mathbf{Ab}$ as will survive the transition.

In this paper, I adopt a different perspective. A $\B$-module is a particular type of partially ordered set which axiomatises some properties of \emph{subobject posets} in Abelian and similar categories. In particular, there is a functor $\B:\mathbf{Ab}\rightarrow\mathrm{Mod}_\B$ which associates to an Abelian group its $\B$-module of subgroups. As such, I propose to treat $\B$-modules as though they are \emph{lower categorical shadows} of structures in the \emph{category} of Abelian groups, rather than simply as elements of a single Abelian group. The theory of $\B$-modules is a na\"ive form of category theory, rather than a weak form of group theory.

There is also a dual, or more precisely, \emph{adjoint}, perspective, which is that a $\B$-module is the natural recipient of a \emph{non-Archimedean seminorm} from an Abelian group. This fits well with traditional perspectives on non-Archimedean geometry. In keeping with the ahistorical nature of this paper, I barely touch upon this idea here (but see example \ref{eg-discs'}).

\subsection{$\B$-modules}\label{SPAN-span}

\begin{defn}A \emph{$\B$-module} is an idempotent commutative monoid. In other words, it is a commutative monoid $(\alpha,\vee,-\infty)$ in which the identity
\[ X\vee X=X \]
holds for all $X\in\alpha$, and where $-\infty$ is the identity for $\vee$. The category of $\B$-modules and their homomorphisms will be denoted $\mathrm{Mod}_\B$.\end{defn}

A $\B$-module is automatically a partially ordered set with the relation
\[ X\leq Y \quad \Leftrightarrow \quad X\vee Y=Y. \]
It has all finite joins (suprema). Conversely, any poset with finite joins is a $\B$-module under the binary join operation. They are more commonly called \emph{join semilattices} or simply \emph{semilattices}.\footnote{I abandon this terminology for a number of reasons, but one could be the inconsistency of the r\^oles of the modifier \emph{semi} in the words `semiring' and `semilattice'.}

We may therefore introduce immediately a path to category theory in the form of an essentially equivalent definition.

\begin{defn}\label{SPAN-def-preorder}A \emph{$\B$-module} is a preorder with finite colimits. A \emph{$\B$-module homomorphism} is a right exact functor.\end{defn}

\begin{egs}\label{eg-first}The \emph{null} or \emph{trivial} $\B$-module is the $\B$-module with one element $\{-\infty\}$. The \emph{Boolean semifield}
is the partial order $\B=\{-\infty,0\}\simeq\{\texttt{false,true}\}$.

The \emph{integer, rational}, and \emph{real semifields} $\Z_\vee,\Q_\vee,\R_\vee$ are obtained by disjointly affixing $-\infty$ to $\Z,\Q,\R$, respectively. More generally, we can obtain a semifield $H_\vee$ by adjoining $-\infty$ to any totally ordered Abelian group $H$. These semifields are totally ordered $\B$-modules (in fact, semirings; cf. e.g. \ref{semifields}).

If $X$ is a topological space, the set $C^0(X,\R_\vee)$ of continuous functions $X\rightarrow\R_\vee$, where $\R_\vee$ is equipped with the order topology, is a $\B$-module. So too are the subsets of bounded above functions, or of functions bounded above by some fixed constant $C\in\R$.

Suppose that $X$ is a manifold (with boundary). The subset $C^1(X,\R_\vee)$ of \emph{differentiable} functions is not a $\B$-module, since the pointwise maximum $f\vee g$ of two differentiable functions $f,g$ needn't be differentiable. One must allow \emph{piecewise} differentiable $\mathrm{P}C^1$ (or piecewise smooth $\mathrm{P}C^\infty$) functions to obtain submodules of $C^0(X,\R_\vee)$. Since convexity is preserved under $\vee$, the subsets of \emph{convex} functions $\mathrm{CP}C^r(X;\R_\vee)$ are also submodules.

We can also endow $X$ with some kind of affine structure \cite[\S2.1]{KoSo2}, which gives rise to $\B$-modules $\mathrm{CPA}_*(X,\R_\vee),*=\R,\Q,\Z$ of piecewise-affine, convex functions (with real, rational, or integer slopes, respectively). 
If $X=\Delta\subset\R^n$ is a polytope, then it has a notion of integer points, and so one can define a $\B$-module $\mathrm{CPA}_\Z(X,\Z_\vee)$ of piecewise-affine, convex functions with integer slopes and which take \emph{integer values} on lattice points $\Z^n\cap\Delta$. Note that any function in this $\B$-module that attains the value $-\infty$ must in fact be constant.\end{egs}

\begin{eg}\label{PFR-set}Let $S$ be a set. The \emph{subset $\B$-module} $\B S$ is the power set of $S$; its join operation is union. The \emph{free $\B$-module} $\B^c S\subseteq \B S$ is the set of finite subsets of $S$. Its elements may be written uniquely (up to permutation of terms) as idempotent linear expressions ``with coefficients in $\B$,'' i.e.\ as $X_1\vee\cdots\vee X_k$ for some $X_1,\ldots,X_k\in S$.

Both constructions are functorial in $S$, so we have functors $\B,\B^c:\mathbf{Set}\rightarrow\mathrm{Mod}_\B$; the latter is left adjoint to the forgetful functor.\end{eg}

The theory of $\B$-modules is a finitary algebraic theory, and so limits, filtered colimits, and quotients by groupoid relations are computed in $\mathbf{Set}$; this remains true with $\mathbf{Set}$ replaced by any topos. The following (proposition \ref{SPAN-prop-bicomp}) also remains true in that generality.

For any $\B$-modules $\alpha,\beta$, we can construct the \textit{direct join} $\alpha\vee\beta$ as the $\B$-module whose underlying set is the Cartesian product $\alpha\times\beta$ and whose join is defined by the law \[ (X_1,Y_1)\vee(X_2,Y_2):=(X_1\vee X_2,Y_1\vee Y_2). \] I simply write $X_1\vee X_2$ for $(X_1,X_2)$ where this is not likely to cause confusion.

There are natural $\B$-module homomorphisms $\alpha\rightarrow\alpha\vee\beta\rightarrow\alpha$ defined by \[X\mapsto X\vee(-\infty), \quad X\vee Y\mapsto X,\] and similarly for $\beta$, which make the direct join into a coproduct and product in $\mathrm{Mod}_\B$. In particular, there are natural homomorphisms \[\alpha\stackrel{\Delta}{\longrightarrow}\alpha\vee\alpha \stackrel{\vee}{\longrightarrow} \alpha,\] the diagonal and the map defining the $\B$-module structure, respectively. I use also the direct join notation for a pushout $\alpha\vee_\beta\gamma:=\alpha\sqcup_\beta\gamma$.

The null $\B$-module is the empty direct join, or zero object, of $\mathrm{Mod}_\B$.
The kernel and cokernel of a morphism $f:\alpha\rightarrow\beta$ of $\B$-modules are defined: $\ker f:=f^{-1}(-\infty),\coker f = \beta\vee_\alpha\{-\infty\}$.

If $f,g\in\Hom(\alpha,\beta)$, then their `sum' is given by the composition
\[\alpha\stackrel{\id\times \id}\longrightarrow\alpha\vee\alpha\longrightarrow\beta\vee\beta\stackrel{\id\sqcup\id}\longrightarrow\beta\]
which takes $X\in\alpha$ to $f(X)\vee g(X)$. This description establishes that the monoid $\Hom(\alpha,\beta)$ is in fact a $\B$-module in which $f\leq g$ if and only if $f(X)\leq g(X)\in\beta$ $\forall X\in\alpha$; moreover $\Hom(-,-)$ is a bifunctor from $\mathrm{Mod}_\B$ to itself. 

\begin{prop}\label{SPAN-prop-bicomp}The category $\mathrm{Mod}_\B$ is semiadditive.\footnote{A category is \emph{semiadditive} if it admits finite products and coproducts and the natural map $\times\rightarrow\sqcup$ is an isomorphism of bifunctors.} It is complete and cocomplete.\end{prop}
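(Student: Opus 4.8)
The plan is to verify each clause of the proposition by exhibiting the relevant (co)limits concretely, exploiting the fact (noted just before the statement) that the theory of spans is a finitary algebraic theory, so that limits, filtered colimits, and quotients by groupoid relations are computed in $\mathbf{Set}$. First I would establish semiadditivity. We already have a zero object, namely the null span, which is simultaneously initial and terminal; the text has also identified the direct join $\alpha\vee\beta$ as both a coproduct and a product, with the structure maps $\alpha\rightarrow\alpha\vee\beta\rightarrow\alpha$ written out explicitly. It remains to check that the canonical map from the coproduct to the product — built from the zero morphisms and identities in the usual way — agrees with the identity on $\alpha\vee\beta$, and that this is natural in both variables. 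This is a direct computation on elements $(X,Y)$: the canonical map sends $(X,Y)=(X,-\infty)\vee(-\infty,Y)$ to $\bigl(X\vee(-\infty),(-\infty)\vee Y\bigr)=(X,Y)$. So the natural transformation $\times\rightarrow\sqcup$ is an isomorphism of bifunctors, which is exactly semiadditivity.

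Next I would handle completeness and cocompleteness. By a standard reduction it suffices to produce all small products, all small coproducts, equalizers, and coequalizers. Arbitrary products are formed as in $\mathbf{Set}$: the underlying set is the Cartesian product $\prod_i\alpha_i$, with $\vee$ computed coordinatewise and $-\infty$ the tuple of basepoints; the poset structure is the product order, which manifestly has all finite joins, and the projections are span homomorphisms with the expected universal property. Equalizers are also computed in $\mathbf{Set}$: for $f,g:\alpha\rightrightarrows\beta$ the subset $\{X\in\alpha : f(X)=g(X)\}$ is closed under $\vee$ and contains $-\infty$, hence is a subspan, and it has the right universal property. This gives all small limits. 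For coproducts, $\mathbf{Span}$ is a finitary algebraic (equational) category over $\mathbf{Set}$, so it is cocomplete by general nonsense — the free-span functor $\B^c$ of example \ref{PFR-set} is left adjoint to the forgetful functor, the forgetful functor is monadic, and categories of algebras for a finitary monad on $\mathbf{Set}$ are cocomplete. Concretely, an arbitrary coproduct $\coprod_i\alpha_i$ can be described as finite formal joins $X_{i_1}\vee\cdots\vee X_{i_k}$ of elements drawn from the various $\alpha_i$, subject only to the relations forcing each inclusion $\alpha_i\hookrightarrow\coprod_i\alpha_i$ to be a homomorphism; one checks this carries a well-defined idempotent-monoid structure and the correct universal property. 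Coequalizers are obtained as the quotient of the target by the span congruence generated by the relevant pairs, which again exists by the algebraicity of the theory (or by the adjoint functor theorem, the category being locally presentable).

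I expect the main obstacle to be purely expository rather than mathematical: deciding how much of the above to prove by hand versus how much to delegate to the general theory of finitary algebraic theories. The concrete construction of infinite coproducts — giving a clean normal form for elements of $\coprod_i\alpha_i$ and verifying the universal property directly — is the one place where a little genuine care is needed, since one must check that no unexpected collapsing occurs (equivalently, that each $\alpha_i\to\coprod_i\alpha_i$ is injective, which follows because there is a retraction sending the other summands to $-\infty$). Everything else reduces to the observation that the forgetful functor $\mathbf{Span}\to\mathbf{Set}$ creates limits and filtered colimits, together with the explicit formulas for $\vee$, products, and the direct join already recorded in the text.
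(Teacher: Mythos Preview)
Your proposal is correct and follows the same approach the paper takes: the paper does not give an explicit proof but relies on the preceding discussion, which (i) constructs the direct join and observes it is simultaneously a product and coproduct with zero object the null span, and (ii) invokes the fact that spans form a finitary algebraic theory so that limits, filtered colimits, and quotients by groupoid relations are computed in $\mathbf{Set}$. Your write-up is more detailed than the paper's treatment --- in particular you verify the canonical map $\sqcup\to\times$ is the identity on elements and discuss coequalisers, which the paper defers to \S\ref{SPAN-quotient} --- but the underlying argument is the same.
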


It is harder to obtain an explicit description of general coequalisers; see \S\ref{SPAN-quotient}.

\subsubsection{Subobjects}

Beyond the geometric examples \ref{eg-first}, the primary source of $\B$-modules are the \emph{subobject posets} in certain finitely cocomplete categories. One could formulate a general theory of subobjects in certain kinds of categories; however, for the purposes of this paper we only need to know the version for modules over a commutative ring (possibly in a Grothendieck topos).

\begin{defn}\label{SPAN-def-B}Let $A$ be a ring, $M$ an $A$-module. I write $\B(M;A)$ for the \emph{submodule lattice} of $M$, the partially ordered set of all $A$-submodules of $M$; its join operation is submodule sum. I abbreviate $\B(A;A)$ to $\B A$, the \emph{ideal semiring} of $A$.\end{defn}

The submodule lattice is functorial in $A$-module homomorphisms $f:M_1\rightarrow M_2$
\[\B f:\B(M_1;A)\rightarrow\B(M_2;A),\quad N\mapsto\mathrm{Im}(f|_N) \]
and ring maps $g:A\rightarrow B$
\[ \B g:\B(M;A)\rightarrow\B(M\tens_AB;B),\quad N \mapsto\mathrm{Im}(N\tens_AB\rightarrow M\tens_AB).\]
In particular, $\B A\rightarrow \B B$.

Typically, $A=:\sh O_X$ will be a sheaf of rings on some space $X$ and $M$ an $\sh O_X$-module, in which case $\B(M;\sh O_X)$ is the lattice of $\sh O_X$-subsheaves of $M$. The submodule lattice is then functorial for maps defined in the sheaf category $X\topos$, but also for morphisms $g:(Y,\sh O_Y)\rightarrow (X,\sh O_X)$ of ringed spaces. In the latter case, I will write \[ g^*=\B g:\B(M;\sh O_X)\rightarrow\B(g^*M;\sh O_Y),\quad N \mapsto\mathrm{Im}(g^*N\rightarrow g^*M) \]
 for the induced map of lattices, though this should not be confused with the functor of pullback of $\sh O_Y$-modules, which it equals only when $g$ is flat.

\begin{eg}[Discs]\label{eg-discs}Let $K$ be a complete, valued field, $V$ a $K$-vector space. I would like to be able to say that the subobjects of $V$ are the \emph{discs} \cite[\S 2.2]{Banach}. If $K$ is non-Archimedean with ring of integers $\sh O_K$, then a disc is the same thing as an $\sh O_K$-submodule, and so the set of discs is $\B(V;\sh O_K)$ (which in \emph{loc. cit.} is called $\sh D(V)$).

If $K$ is Archimedean, then we need an alternative theory of `abstract discs' or `convex sets'. Following \cite{Durov}, one can describe it as a theory of modules for a certain algebraic monad. For instance, if $K=\R=\Q_\infty$, the corresponding monad is that $\Z_\infty$ (also written $\sh O_\R$) of convex, balanced sets \cite[\S2.14]{Durov}. An object of $\mathrm{Mod}_{\Z_\infty}$ is a set $M$ equipped with a way of evaluating convex linear combinations \[\sum_{i=1}^k\lambda_ix_i,\quad x_i\in M,\lambda_i\in\R,\sum_{i=1}^k|\lambda_i|\leq 1\] of its elements. A subset of $V$ is a disc if and only if it is stable for the action of $\Z_\infty$. In other words, $\sh D(V)=\B(V;\Z_\infty)$, in a mild generalisation of definition \ref{SPAN-def-B}.\end{eg}

\subsection{Orders and lattices}

The alternative definition \ref{SPAN-def-preorder} puts $\B$-module theory in the broader context of order theory. In particular, there are \emph{sometimes} defined infinitary operations \[(X_i)_{i\in I}\mapsto \sup_{i\in I}X_i.\] I reserve the notation $\bigvee_{i=1}^kX_i$ for the (always defined) operation of finite supremum or join.

The following definitions are standard in order theory:

\begin{defns}A map of posets is \emph{monotone} if it preserves the order. A monotone map of posets is the same as a functor of preorders. The category of posets and monotone maps is denoted $\mathbf{POSet}$.

A $\B$-module is a \emph{complete lattice} if it has all suprema. A complete lattice is the same thing as a cocomplete poset. In particular, meets exist. A \emph{lattice homomorphism} is a map of complete lattices preserving all suprema, that is, a colimit-preserving functor. The category of complete lattices and homomorphisms is denoted $\mathbf{Lat}\subset\mathbf{Span}$.

Let $\alpha$ be a $\B$-module, $S,T\subseteq\alpha$. The \textit{lower slice set} 
\begin{align}\nonumber S_{\leq T} &:= \{Y\in S|\exists X\in T \text{ s.t. }X\vee Y=X\}\\
 & = \{Y\in S|\exists X\in T\text{ s.t. } Y\leq X\}\end{align} is the $\B$-module of all elements contained in $S$ that are bounded above by an element of $T$. The \emph{upper slice set} \[S_{\geq T} := \{Y\in S|\exists X\in T \text{ s.t. }X\vee Y=Y\}\] is defined dually.

A subset $S$ is said to be \emph{lower} (resp. \emph{upper}) if $S=\alpha_{\leq S}$ (resp. $\alpha_{\geq S}$). A lower submodule of $\alpha$ is called an \emph{ideal} of $\alpha$.

The subset $S$ is called \textit{coinitial} (resp. \emph{cofinal}) if all lower (resp. upper) slice sets are non-empty, that is, $\forall X\in\alpha$, $\exists Y\in S$ such that $Y\leq X$ (resp.\ $X\leq Y$).\end{defns}

\begin{eg}A quotient of a $\B$-module $\alpha$ by an ideal $\iota$, that is, the cokernel of the inclusion $\iota\hookrightarrow\alpha$, is easy to make explicit: it is simply the set-theoretic quotient $\alpha/\iota$ of $\alpha$ by the equivalence relation $\iota\sim-\infty$. If $\iota=\alpha_{\leq T}$ is a slice set, we may also write $\alpha/T$. The cokernel of a $\B$-module homomorphism $f:\alpha\rightarrow\beta$ is the quotient of $\beta$ by $\beta_{\leq f(\alpha)}$, the smallest ideal containing $f(\alpha)$. In particular, $\alpha$ is an ideal if and only if it is the kernel of its cokernel.\end{eg}

The set of all ideals of $\alpha$ can be thought of as a \emph{subobject poset} in the category of $\B$-modules. It is a complete lattice.

\begin{defn}\label{SPAN-def-lat}The lattice of ideals $\B\alpha$ of a $\B$-module $\alpha$ is called the \emph{lattice completion} of $\alpha$.\end{defn}

The lattice completion defines a left adjoint $\B:\mathrm{Mod}_\B\rightarrow\mathbf{Lat}$ to the inclusion of $\mathbf{Lat}$ into $\mathrm{Mod}_\B$.
The unit $\iden\rightarrow\B$ of the adjunction is an injective homomorphism
\[ \alpha\rightarrow\B\alpha, \quad X\mapsto \alpha_{\leq X}. \]
As a preorder, the lattice completion of $\alpha$ is its category of ind-objects \cite[\S I.8.2]{SGA4}.

\subsection{Finiteness}\label{SPAN-fin}

In ordinary category theory, the notion of \emph{finite presentation} of objects is captured by \emph{compact objects}, that is, objects whose associated co-representable functor preserves filtered colimits. One then seeks to try to understand all objects of the category in terms of its compact objects. In particular, we like to work with \emph{compactly generated} categories: those for which every object is a colimit of compact objects.

A compactly generated category $\mathbf C$ is equivalent
\[ \mathbf C \cong \mathrm{Ind}(\mathbf C^c) \]
to its category of ind-compact objects. In particular, filtered colimits are exact.

\

The order-theoretic version of compactness is \emph{finiteness}. Its basic behaviour can be derived by applying the above results directly to the special case of objects in pre-orders.

\begin{defns}An element $X$ of a complete lattice $\alpha$ is \emph{finite} if, for any formula $X\leq\sup_{i\in I}X_i$ in $\alpha$, with the $X_i$ a filtered family, there exists an index $i$ such that $X\leq X_i$.

A lattice is \emph{algebraic} if every element is a supremum of finite elements.

A homomorphism $f:\alpha\rightarrow\beta$ \emph{preserves finiteness} if $f(X)$ is finite whenever $X$ is.\end{defns}

Be warned that it is not, in general, equivalent to replace the inequalities in the above definition with equalities. An element $X\in\alpha$ can be finite as an element of $\alpha_{\leq X}$ without being finite in $\alpha$.

\begin{lemma}A finite join of finite elements is finite.\end{lemma}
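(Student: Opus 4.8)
The plan is to reduce the general statement to the binary case and then chase through the definition of finiteness directly. First I would observe that, by an obvious induction on the number of elements in the join, it suffices to prove that if $X$ and $Y$ are finite elements of a complete lattice $\alpha$, then $X\vee Y$ is finite; the empty join is $-\infty$, which is finite vacuously since any filtered family indexed by a nonempty set already contains an index $i$ with $-\infty\leq X_i$ (and when $I=\emptyset$ the supremum is $-\infty$, so $-\infty\leq -\infty = X_i$ fails for lack of $i$ — I would handle the degenerate empty index set by the convention that filtered categories are nonempty, as is standard, so that this case does not arise).

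For the binary step, suppose $X\vee Y\leq\sup_{i\in I}X_i$ with $(X_i)_{i\in I}$ a filtered family. Since $X\leq X\vee Y$ and $Y\leq X\vee Y$, we get $X\leq\sup_{i\in I}X_i$ and $Y\leq\sup_{i\in I}X_i$. Applying finiteness of $X$ yields an index $i_0$ with $X\leq X_{i_0}$, and finiteness of $Y$ yields an index $i_1$ with $Y\leq X_{i_1}$. Because the family is filtered, there is an index $j$ together with (in the preorder picture of Definition \ref{SPAN-def-preorder}) comparisons $X_{i_0}\leq X_j$ and $X_{i_1}\leq X_j$. Then $X\leq X_j$ and $Y\leq X_j$, so $X\vee Y\leq X_j$ by the universal property of the join. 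This exhibits the required index, so $X\vee Y$ is finite.

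The only subtlety — and the place where one must be slightly careful rather than genuinely obstructed — is the passage from ``filtered family'' to the existence of a common upper bound $X_j$ dominating two given members. In the order-theoretic setting a filtered family is by definition a filtered diagram in the preorder $\alpha_{\leq}$, i.e. a directed subset up to the preorder, so any two of its members are bounded above within the family; this is exactly what the filteredness hypothesis provides and is the lattice-theoretic shadow of the fact that filtered colimits commute with finite colimits. I expect this to be the main point worth spelling out, since the rest is a direct unwinding of definitions. No appeal to anything beyond the definitions of finiteness and of the join, together with the elementary induction, is needed.
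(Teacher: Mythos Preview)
Your argument is correct and is exactly the standard one: reduce to the binary case by induction, then use filteredness to find a common upper bound for the two witnessing indices. The paper does not supply a proof of this lemma at all --- it is stated and immediately used --- so there is nothing to compare; your write-up is the natural elaboration of what the author left implicit.
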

%\begin{proof}Let $X_1,X_2\in\alpha$ be finite, and suppose $X_1\vee X_2\leq\sup_{i\in I}Y_i$. There exist $1,2\in I$ such that $X_j\leq Y_j$ for $j=1,2$, and so $X_1\vee X_2\leq Y_1\vee Y_2$.\end{proof}

Let $\alpha$ be a complete lattice. I denote by $\alpha^c$ its subset of finite elements; by the lemma, $\alpha^c$ is a $\B$-module. It is functorial for $\B$-module homomorphisms that preserve finiteness.

\begin{prop}Let $\alpha\in\mathbf{Lat}$. The following are equivalent:
\begin{enumerate}
\item $\alpha$ is algebraic;
\item $\sup:\B(\alpha^c)\rightarrow\alpha$ is an isomorphism;
\item Every element of $\alpha$ is a supremum of elements $X$ that are finite in their slice set $\alpha_{\leq X}$, and finite meets distribute over filtered suprema in $\alpha$.\end{enumerate}\end{prop}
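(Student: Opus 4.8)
The plan is to prove the three conditions equivalent by a cycle of implications, exploiting the general facts about compactly generated categories recalled just before the statement, specialised to preorders. I would organise this as \emph{(ii) $\Rightarrow$ (i)}, \emph{(i) $\Rightarrow$ (iii)}, and \emph{(iii) $\Rightarrow$ (ii)}, so that the most concrete assertions (those about $\sup:\B(\alpha^c)\to\alpha$) bookend the argument and condition (iii) is used only as an intermediate.

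First, \emph{(ii) $\Rightarrow$ (i)} is essentially a tautology: if $\sup:\B(\alpha^c)\to\alpha$ is an isomorphism, then every element $X\in\alpha$ is the supremum of the ideal $\alpha^c_{\leq X}$ of finite elements below it, which exhibits $X$ as a supremum of finite elements. For \emph{(i) $\Rightarrow$ (iii)}, suppose $\alpha$ is algebraic. An element $X$ that is finite in $\alpha$ is a fortiori finite in its slice set $\alpha_{\leq X}$ (any filtered family in $\alpha_{\leq X}$ is a filtered family in $\alpha$ whose supremum in $\alpha_{\leq X}$, being below $X$, agrees with its supremum in $\alpha$), so the first clause of (iii) follows from (i) directly. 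For the distributivity clause, I would take a filtered family $(Y_j)_{j\in J}$ with supremum $Y=\sup_j Y_j$ and an arbitrary $Z\in\alpha$, and show $Z\wedge Y=\sup_j(Z\wedge Y_j)$; writing $Z$ as a supremum of finite elements $X\leq Z$ and using finiteness of each such $X$ against the formula $X\leq Y=\sup_j Y_j$ (valid when $X\leq Z\wedge Y$, hence $X\wedge Y=X$) lets one push $X$ below some $Y_j$, hence below $Z\wedge Y_j$; taking the supremum over all such $X$ gives $Z\wedge Y\leq\sup_j(Z\wedge Y_j)$, and the reverse inequality is automatic. This is where one must be slightly careful that the meets in question exist — they do, since $\alpha$ is a complete lattice — and that the rephrasing of the finiteness hypothesis in terms of the inequality $X\leq\sup$ rather than an equality is legitimate (it is, because we only use the implication direction).

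The substantive step is \emph{(iii) $\Rightarrow$ (ii)}. Here I would first observe that, under (iii), an element $X$ that is finite in its slice set $\alpha_{\leq X}$ is in fact finite in $\alpha$: given $X\leq\sup_i X_i$ with $(X_i)$ filtered, the distributivity clause gives $X=X\wedge\sup_i X_i=\sup_i(X\wedge X_i)$, and now $(X\wedge X_i)$ is a filtered family \emph{inside} $\alpha_{\leq X}$ with supremum $X$, so finiteness of $X$ in $\alpha_{\leq X}$ (applied to the equality, which here is legitimate precisely because all terms lie below $X$) yields $X\leq X\wedge X_i$ for some $i$, i.e. $X\leq X_i$. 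Combining this with the first clause of (iii), every element of $\alpha$ is a supremum of elements of $\alpha^c$, so the unit map $\sup:\B(\alpha^c)\to\alpha$ is surjective (in the poset sense: every element of $\alpha$ is in its image). It remains to see it is injective, equivalently that an ideal $\iota\subseteq\alpha^c$ is recovered as $\alpha^c_{\leq\sup\iota}$; this is the statement that if a finite element $X$ satisfies $X\leq\sup\iota$ then $X\in\iota$, which follows because $\iota$ is a filtered (indeed directed, being an ideal) family and $X$ is finite. So $\sup$ is an order isomorphism of $\B(\alpha^c)$ onto $\alpha$, which is (ii).

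I expect the main obstacle to be bookkeeping around the distinction, emphasised in the warning after the definition of finiteness, between being finite in $\alpha$ and being finite in a slice set $\alpha_{\leq X}$: the equivalence genuinely needs the distributivity of meets over filtered suprema to bridge the two notions, and the proof of \emph{(iii) $\Rightarrow$ (ii)} must deploy that hypothesis at exactly the right moment. A secondary point to handle carefully is the licence to use equalities rather than inequalities in the definition of finiteness — this is only ever valid when the filtered family in question lies below the element being tested, which is precisely the situation engineered by applying distributivity first. Beyond that, the argument is a routine transcription of the compactly-generated/ind-object formalism of \S\ref{SPAN-fin} into the one-object-Hom-set setting of preorders, using the already-established adjunction $\B:\mathbf{Span}\to\mathbf{Lat}$ and the lemma that finite joins of finite elements are finite.
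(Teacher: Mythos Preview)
The paper states this proposition without proof, so there is no argument to compare against; your cycle (ii) $\Rightarrow$ (i) $\Rightarrow$ (iii) $\Rightarrow$ (ii) is correct and is the natural order-theoretic transcription of the compactly-generated formalism invoked in the surrounding text.

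One small point of exposition: in (i) $\Rightarrow$ (iii), the phrase ``writing $Z$ as a supremum of finite elements $X\leq Z$'' is misleading --- the element you must express as a supremum of finites is $Z\wedge Y$, not $Z$, since distributivity is precisely what is not yet available. Your parenthetical ``valid when $X\leq Z\wedge Y$'' shows you are in fact ranging over the correct set of $X$, and the concluding step ``taking the supremum over all such $X$ gives $Z\wedge Y$'' then uses algebraicity applied to $Z\wedge Y$; the argument is sound once phrased this way. Your handling of the delicate point --- using distributivity to promote finiteness in $\alpha_{\leq X}$ to finiteness in $\alpha$ in (iii) $\Rightarrow$ (ii) --- is exactly right and addresses the warning in the text.
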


Let $\alpha$ be any $\B$-module. A $\B$-module ideal $\iota\hookrightarrow\alpha$ is finite as an element of $\B\alpha$ if and only if it is \emph{principal}, that is, equal to some slice set $\alpha_{\leq X}$. Therefore, $\alpha\rightarrow\B\alpha$ identifies $\alpha$ with the $\B$-module of finite elements of $\B\alpha$. This sets up an equivalence of categories
\[ \B:\mathrm{Mod}_\B\leftrightarrows \mathbf{Lat}_{al}:(-)^c \]
between $\mathrm{Mod}_\B$ and the category $\mathbf{Lat}_{al}$ of algebraic lattices.

\begin{egs}Let $S$ be a set. A subset of $S$ is finite as an element of $\B S$ if and only if it has finitely many elements; $(\B S)^c\cong\B^cS$ in the notation of example \ref{eg-first}. The power set $\B S\cong\B\B^cS$ is an algebraic lattice.

A submodule of a module $M$ over a ring $A$ is finite if and only if it is finitely generated; $\B(M;A)$ is an algebraic lattice.\end{egs}

\begin{defn}\label{SPAN-def-fin}The \emph{finite submodule} or \emph{free} $\B$-module on $M$ is the $\B$-module $\B^c(M;A)\cong (\B(M;A))^c$ of finitely generated $A$-submodules of $M$; since a sum of finite submodules is finite, this is closed in $\B(M;A)$ under joins. By algebraicity, $\B\B^c(M;A)\cong\B(M;A)$. We abbreviate $\B^c(A;A)$ to $\B^cA$.\end{defn}

\begin{eg}[Seminorms]\label{eg-seminorms}Let $A$ be an Abelian group. A \emph{(logarithmic) non-Archimedean seminorm} on $A$ with values in a $\B$-module $\alpha$ is a map of sets $\val:A\rightarrow\alpha$ satisfying the \emph{ultrametric inequality} $\val(f+g)\leq\val f\vee\val g$. One can take the supremum of any (non-Archimedean) seminorm on $A$ over any finitely generated subgroup $X\subseteq A$; indeed, if $X=(x_1,\ldots,x_n)$, then \[ \sup_{f\in X}\val f = \bigvee_{i=1}^n\val x_n. \] This supremum defines a \emph{$\B$-module homomorphism} $\B^c(A;\Z)\rightarrow\alpha$.

This correspondence exhibits the natural seminorm \[A\rightarrow\B^c(A;\Z),\quad a\mapsto (a) \] as \emph{universal} among seminorms of $A$ into any $\B$-module. In other words, $\B^c(A;\Z)$ corepresents the functor \[\frac{1}{2}\mathrm{Nm}(A,-):\mathrm{Mod}_\B\rightarrow\mathbf{Set}\] of seminorms on $A$.\end{eg}

\begin{eg}\label{eg-discs-field}Let $K$ be a non-Archimedean field with ring of integers $\sh O_K$ and value group $|K|\subseteq\R$. The given valuation induces a $\B$-module isomorphism $\B^c(K;\sh O_K)\widetilde\rightarrow |K|_\vee$. In fact, the same holds if $K$ is Archimedean, cf. e.g. \ref{eg-discs}.\end{eg}

\begin{eg}[Not enough finites]Let $K$ be a complete, discrete valuation field with uniformiser $t$. Let $\overline K$ be an algebraic closure with ring of integers $\sh O_{\overline K}$. Then $\B^c\sh O_{\overline K}\cong\Q_\vee^\circ=\Q_{\leq0}\sqcup\{-\infty\}$ (cf. def. \ref{.5RING-integers}) is the set of principal ideals generated by positive rational powers $t^q$ of the uniformiser. The `traditional' way to complete $\Q_\vee\circ$ would be to embed it in its set $\R_\vee^\circ$ of Dedekind cuts. The latter is a complete lattice with no finite elements.

Of course, it is more sensible in this case to consider $\Q_\vee^\circ$ as the set of finite elements in the well-behaved lattice $\B\Q_\vee^\circ\in\mathbf{Lat}_{al}$.\end{eg}

One can show that if the above statements are interpreted in the usual semantics within the topos of sheaves on a space $X$, one obtains the following set-theoretic characterisation of the finite submodule $\B$-module (sheaf). Let $\sh O_X$ be a sheaf of rings on $X$, $M$ an $\sh O_X$-module.

\begin{defn}\label{SPAN-def-fin'}A submodule $N\hookrightarrow M$ is \emph{locally finitely generated} if there exists a covering $\{f_i:U_i\rightarrow X\}_{i\in I}$ and epimorphisms $\sh O_{U_i}^{n_i}\twoheadrightarrow f_i^*N$ for some numbers $n_i\in\N$.

The \emph{finite submodule} or \emph{free} $\B$-module on $M$ is the sheaf \[\B^c(M;\sh O_X):U\mapsto\B^c(M(U);\sh O_X(U))\] 
of locally finitely generated $\sh O_X$-submodules of $M$.\end{defn}

One may simply take this as a set-theoretic definition of $\B^c$, verifying directly that $\B^c(M;\sh O_X)$ is a sheaf.

\begin{eg}[Local seminorms]\label{eg-sheafseminorms}Let $X$ be a space, $A$ a sheaf of Abelian groups on $X$. A \emph{seminorm} on $A$ with values in a sheaf $\alpha$ of $\B$-modules is a map $A\rightarrow\alpha$ of sheaves which induces over each $U\subseteq X$ a non-Archimedean seminorm on $A(U)$ (e.g. \ref{eg-seminorms}).

One can define a \emph{universal seminorm} $A\rightarrow\B^c(A;\mathrm{Mod}_{\sh O_X})$, which, for a given $U\subseteq X$, takes $f\in A(U)$ to the subsheaf of $A|_U$ that it locally generates. Any seminorm $\val:A\rightarrow\alpha$ factors uniquely through this universal one, with the factoring arrow taking any finite subsheaf \(F\subseteq A|_V\) to 
\[ \sup_{f^\bullet\in F(U_\bullet)}\left|\val f^\bullet\right| = \left|\bigvee_{i=1}^{n^\bullet}\val f_i^\bullet\right| \in |\alpha(U_\bullet)|\cong \alpha(V) \]
In this formula, $U_\bullet\twoheadrightarrow V$ is a covering on which $F$ is defined, and $(f_1^\bullet,\ldots,f_{n^\bullet}^\bullet)$ denotes a locally finite system of generators for $F(U_\bullet)$. (Note that the numbers $n^\bullet$ need not be bounded.)\end{eg}

\subsection{Noetherian}
\begin{defn}A $\B$-module is called \textit{Noetherian} if the slice sets satisfy the ascending chain condition, that is, if every bounded, totally ordered subset has a maximum.\end{defn}

\begin{prop}Let $\{X_i\}_{i\in I}\subseteq\alpha$ be a bounded family of elements of a $\B$-module $\alpha$. If $\alpha$ is Noetherian, then $\sup_{i\in I}X_i=\bigvee_{i\in J}X_i$ for some finite $J\subseteq I$.\end{prop}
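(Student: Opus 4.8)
The plan is to reduce the statement to a single clean assertion: \emph{a non-empty subset of $\alpha$ that is bounded above and closed under finite joins has a greatest element}. Granting this, the proposition follows quickly, so essentially all of the work is in proving the assertion.

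First I would fix an upper bound $B$ for the family, so that every $X_i$ lies in the slice set $\alpha_{\leq B}$, and introduce the set $F$ of all finite sub-joins $\bigvee_{i\in J}X_i$ with $J\subseteq I$ finite. Then $F\subseteq\alpha_{\leq B}$, it is non-empty (it contains $\bigvee_{i\in\emptyset}X_i=-\infty$, as well as each $X_i$ itself), and it is closed under binary $\vee$ because $(\bigvee_{i\in J}X_i)\vee(\bigvee_{i\in J'}X_i)=\bigvee_{i\in J\cup J'}X_i$. Next I would observe that if $F$ has a greatest element $M$, then $M$ is automatically the supremum of the family: each $X_i\leq M$ since $X_i\in F$, while any upper bound $U$ of $\{X_i\}_{i\in I}$ dominates every finite sub-join and hence $M\leq U$.

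So the crux is to show that $F$ has a greatest element, and here I would invoke the Noetherian hypothesis. Suppose not. Then, by dependent choice, I would build a strictly ascending chain inside $F$: start from any $Y_0\in F$; given $Y_n\in F$, non-maximality yields some $Z\in F$ with $Z\not\leq Y_n$, and then $Y_{n+1}:=Y_n\vee Z$ again lies in $F$ (closure under $\vee$) and satisfies $Y_n<Y_{n+1}$, since $Y_{n+1}\geq Z\not\leq Y_n$. This produces a bounded, totally ordered subset $Y_0<Y_1<Y_2<\cdots$ of $\alpha$ with no maximum, contradicting the definition of Noetherian span. Hence $F$ has a maximum, which by the previous paragraph is the desired $\sup_{i\in I}X_i=\bigvee_{i\in J}X_i$ for a finite $J\subseteq I$.

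The only point demanding care is this last step: one must use closure of $F$ under finite joins — not merely that $F$ is directed in the abstract — to guarantee that a non-maximal element of $F$ can always be strictly enlarged \emph{within} $F$, and one should flag the (harmless) use of dependent choice in extracting the chain. Everything else is bookkeeping.
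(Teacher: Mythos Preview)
Your proof is correct and follows essentially the same approach as the paper: both proceed by contraposition, building a strictly increasing chain of finite sub-joins $Y_0<Y_1<\cdots$ from the assumption that no finite sub-join dominates all the $X_i$, thereby violating the ascending chain condition. Your version is slightly more explicit in isolating the set $F$ of finite sub-joins and noting the use of dependent choice, but the underlying argument is the same.
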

\begin{proof}We proceed by contraposition. Suppose that for all finite $J\subseteq I$, there is some $i(J)\in I\setminus J$ such that $X_{i(J)}\not\leq\bigvee_{j\in J}X_j$, that is, such that $\bigvee_{j\in J}X_j < X_{i(J)}\vee\bigvee_{j\in J}X_j$. Then $I$ is infinite, and starting from any index $0\in I$ we can inductively construct an infinite, strictly increasing sequence
\[ X_0 < \left(X_1 \vee X_0\right) < \left(X_2 \vee X_1 \vee X_0\right) < \cdots \]where $n:=i(\{0,\ldots,n-1\})\in I$. Therefore $\alpha$ is not Noetherian.\end{proof}

\begin{cor}The following are equivalent for a bounded $\B$-module $\alpha$:
\begin{enumerate}\item $\alpha$ is Noetherian;
\item $\alpha$ is a complete lattice, and $\alpha^c=\alpha$;
\item $\alpha\widetilde\rightarrow\B\alpha$.\end{enumerate}
A $\B$-module is Noetherian if and only if its every bounded ideal is Noetherian.\end{cor}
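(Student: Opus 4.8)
The plan is to prove the three equivalences and then the final sentence about bounded ideals, working in a cycle.

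First I would establish (i)$\Rightarrow$(ii). Assume $\alpha$ is Noetherian and bounded. To see that $\alpha$ is a complete lattice, it suffices to show that every subset $S\subseteq\alpha$ has a supremum; since $\alpha$ is bounded, $S$ is bounded, so by the preceding proposition $\sup S=\bigvee_{i\in J}X_i$ exists for some finite $J$. Hence all suprema exist and $\alpha\in\mathbf{Lat}$. For $\alpha^c=\alpha$: take any $X\in\alpha$ and any filtered family $(X_i)_{i\in I}$ with $X\leq\sup_iX_i$. Again by the proposition the supremum is already a finite join $\bigvee_{i\in J}X_i$; by filteredness of the family there is a single index $i$ dominating this finite set, so $X\leq\sup_iX_i=\bigvee_{i\in J}X_i\leq X_i$. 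Thus $X$ is finite.

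Next, (ii)$\Leftrightarrow$(iii) is essentially a restatement of the equivalence $\B:\mathbf{Span}\leftrightarrows\mathbf{Lat}_{al}:(-)^c$ from \S\ref{SPAN-fin}: the unit $\alpha\rightarrow\B\alpha$ identifies $\alpha$ with $(\B\alpha)^c$, and it is an isomorphism precisely when $\B\alpha$ is already (as a set/poset) equal to $\alpha$, i.e. when every ideal is principal. An ideal $\iota\subseteq\alpha$ is principal iff it has a maximum, and under (ii) it equals $\alpha_{\leq X}$ for $X=\sup\iota$ (which lies in $\iota$ since $\sup\iota$ is a finite join of elements of $\iota$, using $\alpha^c=\alpha$, and $\iota$ is closed under finite joins). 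Conversely if $\alpha\widetilde\rightarrow\B\alpha$ then $\alpha$ is a complete lattice, being isomorphic to one, and every element is finite because every ideal is principal hence finite in $\B\alpha$. Then for (ii) or (iii) $\Rightarrow$ (i): given a bounded totally ordered subset $T\subseteq\alpha_{\leq X}$, consider the ideal $\iota=\alpha_{\leq T}$ it generates; it is principal by (iii), $\iota=\alpha_{\leq Y}$, and $Y=\sup T$; since $Y$ is finite and $T$ is filtered (being totally ordered), $Y\leq Z$ for some $Z\in T$, so $Z$ is the maximum of $T$. Hence the ACC holds on slice sets.

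Finally, for the last sentence: if $\alpha$ is Noetherian then so is every bounded ideal $\iota$, since the slice sets of $\iota$ are slice sets of $\alpha$ intersected with $\iota$, and ACC is inherited by subsets. Conversely, suppose every bounded ideal of $\alpha$ is Noetherian, and let $T\subseteq\alpha$ be a bounded, totally ordered subset, say $T\subseteq\alpha_{\leq X}$. Then $\alpha_{\leq X}$ is a bounded ideal, hence Noetherian, so $T$ has a maximum; thus $\alpha$ is Noetherian. The main subtlety to watch is the ``inequalities versus equalities'' warning highlighted just before the finiteness lemma: one must consistently use the preceding proposition to replace arbitrary bounded suprema by finite joins before invoking filteredness, and one must be careful that finiteness of $\sup\iota$ is genuinely available (it comes from (ii), not for free). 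I expect the bookkeeping around principal ideals versus slice sets to be the only place where a misstep is likely; everything else is a direct unwinding of the definitions and the already-proven proposition.
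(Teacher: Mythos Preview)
Your proof is correct and follows the natural route implied by the paper, which states the corollary without proof as an immediate consequence of the preceding proposition and the $\B\dashv(-)^c$ equivalence. The only cosmetic slip is reusing the index $i$ in the line ``$X\leq\sup_iX_i=\bigvee_{i\in J}X_i\leq X_i$''; write $X_{i_0}$ for the dominating index to avoid confusion.
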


\begin{eg}Let $A$ be a ring.
The following are equivalent:
\begin{enumerate}
\item $A$ is Noetherian;
\item $\B A$ is Noetherian;
\item $\B^cA$ is Noetherian;
\item $\B^cM$ is Noetherian for all $A$-modules $M$;
\end{enumerate}
In this case, $\B^cM=\B M$ if and only if $M$ is finitely generated.\end{eg}

\subsection{Adjunction}

As in category theory, the notion of adjoint map is central to the theory of $\B$-modules.

\begin{defn}Let $f:\alpha\rightarrow\beta$ be a monotone map of $\B$-modules. We say that a monotone map $g:\beta\rightarrow\alpha$ is \emph{right adjoint} to $f$, and write $f^\dagger:=g$, if $\iden_\alpha\leq gf$ and $fg\leq\iden_\beta$. In this situation, we also say ${}^\dagger g:=f$ is \emph{left adjoint} to $g$.\end{defn}

If $\alpha$ is a complete lattice, then by the adjoint functor theorem a right adjoint exists for $f$ if and only if it preserves arbitrary suprema. We have the formula
\[ X\mapsto f^\dagger X=\sup\alpha_{\leq f^{-1}(X)}. \]
Alternatively, $\B f$ always preserves suprema, and therefore we can always find an adjoint 
\[(\B f)^\dagger:\B\beta\rightarrow\B\alpha, \quad \iota\mapsto f^{-1}\iota \]
at the level of the lattice completions. The restriction of $(\B f)^\dagger$ to $\beta$ is an ind-adjoint in the sense of \cite[\S I.8.11]{SGA4}. If an ordinary right adjoint to $f$ exists, then the ind-adjoint is the composite of this with the inclusion $\alpha\rightarrow\B\alpha$; I therefore denote the ind-adjoint also by $f^\dagger$ in general, since no confusion can arise.

In particular, any $\B$-module homomorphism gives rise to a diagram
\[\xymatrix{ \B\alpha \\ \alpha\ar[r]^f\ar[u] & \beta\ar[ul]_{f^\dagger} }\]
in $\mathbf{POSet}$, and $\iden_\alpha\leq f^\dagger f$.

\subsubsection{Pullback and pushforward}

Suppose that $A$ is a ring, $f:M_1\rightarrow M_2$ an $A$-module homomorphism. If $N\hookrightarrow M_2$ is a submodule, then so is $N\times_{M_2}M_1\rightarrow M_1$. The fibre product is a monotone map
\[ f^\dagger=f^{-1}:\B(M_2;A)\rightarrow\B(M_1;A),\quad N\mapsto N\times_{M_2}M_1, \]
\emph{right adjoint} to the image functor $\B f$. It happens to be a lattice homomorphism.

Secondly, let $g:X\rightarrow Y$ be a morphism of ringed spaces, $A=\sh O_X$. Then the pushforward functor $f_*$ is right adjoint to $f^*$ on the category $\mathrm{Mod}_{\sh O}$ of modules. Correspondingly,
\[ g_*:\B(M;\sh O_X)\rightarrow\B(g_*M;\sh O_Y),\quad N\mapsto g_*N \]
is right adjoint to the lattice homomorphism $g^*=\B g$. Since pushforward is left exact, this lattice homomorphism \emph{does} agree with the functor on modules.

\begin{eg}\label{eg-closure}If $X\hookrightarrow Y$ is an open immersion of schemes, then the right adjoint to $f^*:\B\sh O_Y\rightarrow\B\sh O_X$ sends a closed subscheme of $X$ to its scheme-theoretic closure in $Y$.\end{eg}

\subsubsection{$\B$-module quotients}\label{SPAN-quotient}

In the theory of categorical localisation, certain types of adjunction can provide a substitute for a linear calculus of quotients of categories. One can apply a similar technique to semilinear algebra in order to provide explicit descriptions of $\B$-module coequalisers and quotients.

Let $s,t:\alpha\rightrightarrows\beta$ be a pair of $\B$-module homomorphisms.

\begin{defn}An ideal $\iota\hookrightarrow\beta$ is \emph{invariant} for the pair $s,t$ if, for all $X\in\alpha$, $sX\in\iota\Leftrightarrow tX\in\iota$.\end{defn}

Since $s$ and $t$ are $\B$-module homomorphisms, the subset $\B\beta/(s\sim t)\subseteq\B\beta$ of invariant ideals is closed under arbitrary suprema. The right adjoint to the inclusion is a self-homomorphism
\[ p:=\sup_{n\in\N}\left(  (ts^\dagger)^n \vee (st^\dagger)^n \right):\B\beta\rightarrow\B\beta \]
taking an ideal to the smallest invariant ideal containing it. It coequalises $s,t$. In fact, for any $\B$-module homomorphism $f:\beta\rightarrow\gamma$ coequalising $s,t$, $\B f$ is independent of the action of $s,t$, that is, factors uniquely through $p$. Setting
\[ p:\beta\rightarrow\beta/(s=t):=p(\beta)\subseteq\B\beta/(s\sim t)\subseteq\B\beta\]
where $p(\beta)$ is the set-theoretic image, we therefore obtain:

\begin{lemma}\label{SPAN-adj-coeq}$\beta/(s= t)$ is a coequaliser for $s,t$.\end{lemma}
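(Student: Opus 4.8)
The plan is to verify the universal property of a coequaliser directly, using the description of $p$ as the right adjoint to the inclusion $\B\beta/(s\sim t)\hookrightarrow\B\beta$. First I would check that $p$ genuinely coequalises $s$ and $t$: since $ps = pt$ will follow once we know that $p$ lands in the invariant ideals and that for each $X\in\alpha$ the principal ideals $\beta_{\leq sX}$ and $\beta_{\leq tX}$ have the same image under $p$ — this is essentially the defining property of invariance, combined with the fact that $p\iota$ is the smallest invariant ideal containing $\iota$, so $\iota\subseteq\iota'$ invariant implies $p\iota\subseteq\iota'$, and symmetry of the roles of $s,t$ in the definition of invariance forces $p(\beta_{\leq sX}) = p(\beta_{\leq tX})$.

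Next I would establish the factorisation property at the level of lattice completions. Given any span homomorphism $f:\beta\rightarrow\gamma$ with $fs = ft$, the adjoint $(\B f)^\dagger:\B\gamma\rightarrow\B\beta$ has image consisting of invariant ideals: for $X\in\alpha$, $sX\in (\B f)^\dagger\kappa$ iff $f(sX)\in\kappa$ iff $f(tX)\in\kappa$ iff $tX\in (\B f)^\dagger\kappa$. Consequently every ideal in the image of $(\B f)^\dagger$ is fixed by $p$, and a short computation with the adjunctions $\B f\dashv (\B f)^\dagger$ and $p\dashv(\text{inclusion})$ shows $\B f = \B f\circ p$ — i.e. $\B f$ factors through $p:\B\beta\rightarrow\B\beta/(s\sim t)$, uniquely since $p$ is a (split) epimorphism of spans onto the invariant ideals. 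Restricting to the principal ideals, $f$ itself factors through $p:\beta\rightarrow\beta/(s\sim t) = p(\beta)$; uniqueness of the factoring homomorphism follows because $p:\beta\rightarrow p(\beta)$ is surjective.

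The one genuine subtlety — and the step I expect to be the main obstacle — is the interplay between the set-theoretic image $p(\beta)\subseteq\B\beta$ and the span structure: I must confirm that $p(\beta)$ is closed under finite joins (so that it is a sub-span of $\B\beta$, hence a span in its own right), that $p:\beta\rightarrow p(\beta)$ is a span homomorphism (not merely monotone), and that the factoring map $p(\beta)\rightarrow\gamma$ is also a span homomorphism rather than just order-preserving. Closure under joins follows since the invariant ideals are closed under arbitrary suprema and $p$ preserves joins being a left adjoint; that $p$ is a span homomorphism is then automatic from $p(\beta_{\leq X}\vee\beta_{\leq Y}) = p(\beta_{\leq X\vee Y})$. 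The factoring map inherits right-exactness from the fact that $f$ does and $p$ is epic. With these bookkeeping points dispatched, the universal property of $\beta/(s\sim t)$ as a coequaliser in $\mathbf{Span}$ is complete.
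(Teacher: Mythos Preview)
Your proposal is correct and follows essentially the same approach as the paper: the paper's argument is the terse discussion immediately preceding the lemma (work in $\B\beta$, identify $p$ as the adjoint sending an ideal to the smallest invariant ideal containing it, observe that any $f$ coequalising $s,t$ has $\B f$ factoring through $p$, then restrict to the image $p(\beta)$), and you have simply fleshed out those steps---in particular, your use of $(\B f)^\dagger$ to verify that $\B f$ factors through $p$ is exactly the detail the paper elides when it asserts ``$\B f$ is independent of the action of $s,t$''.
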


In the special case $\alpha=\B$, where $s,t$ are determined by some elements $S,T\in\beta$, we write also as usual $\beta/(S=T)$ for the $\B$-module quotient (by the congruence relation generated by the relation $S=T$).

Specialisations of the above construction will come into play in later sections; see, for example, \S\ref{.5RING-contract}.

\section{Topological lattices}\label{TOP}

A topological space with linear structure is \emph{linearly topologised} if its topology is generated by linear subspaces. In other words, a linear topology on a space is one that can be defined in terms of a certain decoration - a \emph{principal topology} - on its subobject lattice.

Let $\alpha$ be a complete lattice, $\alpha^u\subseteq\alpha$ a non-empty, upper subset, closed under finite meets. Such an $\alpha^u$ is called a \emph{fundamental system of opens}, or just \emph{fundamental system}, for short.

\begin{lemma}The collection of slice sets $\alpha_{\leq X}$ for $X$ in a fundamental system, together with $\emptyset$, are a topology on $\alpha$ for which $\vee$ is continuous.\end{lemma}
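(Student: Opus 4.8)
The plan is to verify directly that the proposed collection $\tau := \{\alpha_{\leq X} \mid X \in \alpha^u\} \cup \{\emptyset\}$ satisfies the axioms of a topology on $\alpha$, and then to check continuity of $\vee$. First I would observe that $\alpha \in \tau$: since $\alpha^u$ is non-empty, pick any $X_0 \in \alpha^u$; as $\alpha^u$ is upper and $X_0 \leq \sup\alpha$ (recall $\alpha$ is a complete lattice, so $\sup\alpha$ exists and is the top element), we get $\sup\alpha \in \alpha^u$, and $\alpha_{\leq \sup\alpha} = \alpha$. And $\emptyset \in \tau$ by fiat.

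Next I would handle finite intersections. The key computation is that for $X, Y \in \alpha$ one has $\alpha_{\leq X} \cap \alpha_{\leq Y} = \alpha_{\leq X \wedge Y}$, where $X \wedge Y$ is the meet (which exists since $\alpha$ is a complete lattice): indeed $Z \leq X$ and $Z \leq Y$ iff $Z \leq X\wedge Y$. Since $\alpha^u$ is closed under finite meets, $X \wedge Y \in \alpha^u$ whenever $X, Y \in \alpha^u$, so $\tau$ is closed under binary (hence finite, using $\alpha \in \tau$ for the empty intersection) intersections; the cases involving $\emptyset$ are trivial. For arbitrary unions, I would use that $\bigcup_{i} \alpha_{\leq X_i} = \alpha_{\leq \sup_i X_i}$ — the inclusion $\supseteq$ needs a word: if $Z \leq \sup_i X_i$ it need not lie below any single $X_i$, so in fact one must be slightly more careful. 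The correct statement is that $\bigcup_i \alpha_{\leq X_i}$ is a lower set, and I would instead argue that $\tau$ is closed under arbitrary unions by noting that each $\alpha_{\leq X_i}$ is lower and their union $U$ is lower, then verifying $U$ is again of the required form: $U = \alpha_{\leq X}$ where $X := \sup\{X_i\}$ provided this union actually equals the down-set of its supremum. Since this can fail, the honest route is to note that being open is equivalent to being a union of basic opens $\alpha_{\leq X}$, $X \in \alpha^u$, so arbitrary unions of opens are open by definition of the generated topology — i.e. one shows $\{\alpha_{\leq X} : X \in \alpha^u\} \cup \{\emptyset\}$ is a \emph{basis} closed under finite intersection, which is exactly what the previous paragraph established, and then $\tau$ is the topology it generates.

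Finally, for continuity of $\vee : \alpha \times \alpha \to \alpha$, I would take a basic open $\alpha_{\leq X}$ with $X \in \alpha^u$ and compute its preimage: $(A, B)$ with $A \vee B \leq X$ iff $A \leq X$ and $B \leq X$, so $\vee^{-1}(\alpha_{\leq X}) = \alpha_{\leq X} \times \alpha_{\leq X}$, which is open in the product topology. The preimage of $\emptyset$ is $\emptyset$. Since preimages of a basis of opens are open, $\vee$ is continuous. The only genuinely delicate point is the one flagged above — the interaction between arbitrary unions and the "slice set" description — and the cleanest resolution is to present $\{\alpha_{\leq X}\}_{X \in \alpha^u} \cup \{\emptyset\}$ as a basis (the finite-meet-closure of $\alpha^u$ is precisely what makes it a basis) rather than to claim the family is itself already closed under arbitrary unions. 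Everything else is a routine unwinding of the order relation $\leq$ on the span $\alpha$.
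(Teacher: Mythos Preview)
Your approach is essentially the paper's: verify the collection is closed under finite intersections (using that $\alpha^u$ is closed under finite meets) and contains $\alpha$ (using that $\alpha^u$ is upper and non-empty), then check continuity of $\vee$ via the identity $\vee^{-1}(\alpha_{\leq X})=\alpha_{\leq X}\times\alpha_{\leq X}$, which is exactly the paper's one-line argument.

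Your hesitation about arbitrary unions is well-founded and worth noting. The collection $\{\alpha_{\leq X}:X\in\alpha^u\}\cup\{\emptyset\}$ is in general only a \emph{basis}, not literally a topology: for instance, in $\alpha=\mathcal P(\{a,b,c\})$ with $\alpha^u=\{S:a\in S\}$, the union $\alpha_{\leq\{a,b\}}\cup\alpha_{\leq\{a,c\}}$ is not a principal ideal. The paper's phrase ``these sets define a topology'' should be read as ``generate'', and your resolution---showing the family is a basis closed under finite intersection and then checking continuity on basic opens---is the correct way to make the statement precise. So your proof is right, and in fact slightly more careful than the paper's on this point.
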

\begin{proof}It is clear that these sets define a topology; for continuity of $\vee:\alpha\times\alpha\rightarrow\alpha$, note simply that $\vee^{-1}(\alpha_{\leq X})=\alpha_{\leq X}\times\alpha_{\leq X}$.\end{proof}

The topology in the lemma is that \emph{defined by the fundamental system}.

Any intersection of fundamental systems is a fundamental system. Therefore, for any family $f_i:\alpha\rightarrow\beta_i$ of maps of complete lattices and fundamental systems $\beta_i^u$ on $\beta_i$, there is a \emph{smallest} fundamental system on $\alpha$ such that the $f_i$ are \emph{continuous} for the induced topology. Explicitly, it is given by the closure of the upper set
\[\bigcup_{i,X\in\beta_i^u}\alpha_{\geq f_i^\dagger(X)}\] under finite meets.

Dually, any union of fundamental systems generates a new fundamental system under finite meets. This coincides with the usual notion of generation of new topologies. Hence, for any family $g_i:\alpha_i\rightarrow\beta$ of homomorphisms and fundamental systems $\alpha_i^u$, there is a \emph{largest} fundamental system 
\[\beta^u:=\bigcap_i\beta_{\geq f_i(\alpha_i^u)}\]on $\beta$ making the $g_i$ continuous, and the topology it defines is simply the strong topology on the underlying set.

\begin{defns}A complete lattice equipped with a \emph{principal topology}, that is, a topology defined by a fundamental system, is called simply a \emph{topological lattice}. A \emph{topological $\B$-module} is a $\B$-module $\alpha$ equipped with an \emph{ideal topology}, that is, is the subspace topology with respect to some principal topology on $\B\alpha\supseteq\alpha$. A \emph{fundamental system for $\alpha$} is a fundamental system for $\B\alpha$, and we write $\alpha^u:=(\B\alpha)^u$.

A homomorphism of topological $\B$-modules (resp. lattices), is a continuous $\B$-module homomorphism (resp. lattice homorphism). The category of topological lattices is denoted $\mathbf{Lat}_t$, the category of topological $\B$-modules $\mathrm{Mod}_{\B,t}$.\end{defns}

A topological $\B$-module is a $\B$-module whose inhabited open sets are ideals, and in which every neighbourhood (of $-\infty$) is open. A topological lattice is the same, except that inhabited open sets are principal ideals. There is also an obvious notion of principal topology on a possibly incomplete $\B$-module.

\begin{eg}\label{semifields-top}The semifields $H_\vee$ (e.g. \ref{eg-first}) will always come equipped with the (principal) topology $H_\vee^u=H$. (In particular, $\B$ is discrete.) A net $\{X_i\}_{i\in I}$ converges to $-\infty$ if and only if it does so with respect to the order; in other words, if $\forall\lambda\in H$ $\exists i\in I$ such that $X_j\leq\lambda$ for all $j>i$.\end{eg}

The category of topological $\B$-modules (resp. lattices) comes with a forgetful functor \[?:\mathrm{Mod}_{\B,t}\rightarrow\mathrm{Mod}_\B\] which I do not suppress from the notation. Its left adjoint is given by equipping a lattice $\alpha$ with the \emph{discrete topology} $\alpha^u=\alpha$, its right adjoint by the \emph{trivial topology} $\alpha_\mathrm{triv}^u=\{\sup\alpha\}$. Both adjoints are fully faithful. We will treat the category of $\B$-modules as the (full) subcategory of discrete objects inside $\mathrm{Mod}_{\B,t}$.

In particular, limits (resp. colimits) in $\mathrm{Mod}_{\B,t}$ are computed by equipping the limits (resp. colimits) of the underlying discrete $\B$-modules with weak (resp. strong) topologies.

\begin{eg}\label{TOP-Hausdorff}A non-trivial topological $\B$-module is never Hausdorff in the sense of point-set topology, since every open set contains $-\infty$. Let us instead say that a $\B$-module $\alpha$ is \emph{Hausdorff} if $\inf\alpha^u=-\infty$. The category $\mathrm{Mod}_{\B,\dot t}$ of Hausdorff $\B$-modules is a reflective subcategory of $\mathrm{Mod}_{\B,t}$.\footnote{The notation $\dot{t}$ follows Bourbaki \cite{Banach}.}\end{eg}

\begin{defns}Let $f_i:\alpha_i\rightarrow\beta$ be a family of continuous $\B$-module homomorphisms. We say that $\beta$ carries the \emph{strong topology} with respect to the $f_i$, or that the family $f_i$ is \emph{strong}, if its topology is the strongest ideal topology such that the $f_i$ are continuous. 

In particular, if $f$ is just a single map, $f:\alpha\rightarrow\beta$ is strong if and only if it sends $\alpha^u$ into $\beta^u\subseteq\B\beta$. In this case, we may also say that $f$ is \emph{open} - although beware that it may fail to be open in the sense of general topology.

If $g_j:\alpha\rightarrow\beta_j$ are a family of continuous $\B$-module homomorphisms, then $\alpha$ carries the \emph{weak topology} with respect to the $g_j$, or that the family $g_i$ is \emph{weak}, if its topology is the weakest ideal topology such that the $g_i$ are continuous.\end{defns}

From the definition of ideal topology, it follows:

\begin{lemma}A family $f_i$ is weak (resp. strong) if and only if the induced family $\B f_i$ on the lattice completions is weak (resp. strong).\end{lemma}

In particular, weak and strong topologies, and hence limits and colimits, always exist.

\begin{prop}\label{TOP-span-complete}The category $\mathrm{Mod}_{\B,t}$ is complete, cocomplete, and semi-additive. Filtered colimits are exact.\end{prop}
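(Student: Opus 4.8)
\noindent\emph{Proof proposal.} The plan is to transport the three assertions from $\mathbf{Span}$ (proposition~\ref{SPAN-prop-bicomp}, together with the exactness of filtered colimits there, which holds because span theory is a finitary algebraic theory) across the forgetful functor $?\colon\mathbf{Span}_t\to\mathbf{Span}$, using the structure recorded just above: $?$ has the discrete- and trivial-topology functors as fully faithful left and right adjoint, hence preserves all limits and colimits; weak and strong topologies always exist; and a limit (resp.\ colimit) in $\mathbf{Span}_t$ is the underlying limit (resp.\ colimit) in $\mathbf{Span}$ equipped with its weak (resp.\ strong) topology. Completeness and cocompleteness then reduce to checking universal properties: for a diagram $D\colon J\to\mathbf{Span}_t$ I would form $\lim_J {?}D$ in $\mathbf{Span}$ with the weak topology relative to the projections $\pi_j$, and observe that for any test cone $c_j\colon\gamma\to D(j)$ the mediating span homomorphism $c$ is automatically continuous because $c^{-1}\pi_j^{-1}(V)=c_j^{-1}(V)$ is open for $V$ open in $D(j)$; the colimit case is dual, using the strong topology. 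Since $\mathbf{Span}$ is complete and cocomplete, so is $\mathbf{Span}_t$ --- equivalently, $?$ is a topological functor.

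For semi-additivity I would first note that the null span carries a unique topology and is both initial and terminal, giving a zero object; then I would show that $\alpha\vee\beta$ with the weak topology relative to $\pi_\alpha,\pi_\beta$ --- which is by construction the product --- is \emph{also} a coproduct. The coprojections $\iota_\alpha,\iota_\beta$ are continuous since $\pi_\bullet\iota_\bullet$ is in each case an identity or the zero map, and given continuous $g_\alpha\colon\alpha\to\gamma$, $g_\beta\colon\beta\to\gamma$ the mediating homomorphism $X\vee Y\mapsto g_\alpha X\vee g_\beta Y$ equals $\vee\circ(g_\alpha\pi_\alpha,g_\beta\pi_\beta)$, which is continuous because the $\pi_\bullet$ are continuous out of the weakly-topologised $\alpha\vee\beta$, the $g_\bullet$ are continuous, and $\vee$ is continuous on a topological span. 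Thus the canonical map from the coproduct to the product is an isomorphism, and finite products and coproducts agree by induction. (Alternatively one checks directly, using $\B(\alpha\vee\beta)\cong\B\alpha\times\B\beta$ and the coordinatewise behaviour of adjoints, that both the weak and the strong topology on $\alpha\vee\beta$ have fundamental system $\alpha^u\times\beta^u$.)

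For exactness of filtered colimits I would reduce to showing that for $I$ filtered and $J$ finite the comparison morphism $\colim_I\lim_J D\to\lim_J\colim_I D$ is an isomorphism in $\mathbf{Span}_t$. Applying $?$ and invoking exactness of filtered colimits in $\mathbf{Span}$, this is a continuous bijection, so everything hinges on it being a homeomorphism; writing finite limits via finite products and equalizers splits the problem in two. The product case is painless: if $\gamma$ denotes the constant factor then, from $\B(\delta\vee\gamma)\cong\B\delta\times\B\gamma$ and the fact that the adjoints and intersections involved are all coordinatewise, both sides carry the fundamental system of those pairs $(Y,Z)$ with $Z\in\gamma^u$ and $g_i^\dagger Y\in\alpha_i^u$ for all $i$ (the $g_i$ being the structure maps into $\colim_I\alpha_i$).

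The hard part will be the equalizer case. Here $E:=\colim_I E_i$ embeds in $\alpha:=\colim_I\alpha_i$, and one must compare the colimit topology on $E$ (built from the subspace topologies on the $E_i$) with the subspace topology induced from the colimit topology on $\alpha$; the inclusion expressing continuity of the comparison map is the easy half. For the other half I would take an ideal $\kappa$ of $E$ that is open for the colimit topology --- so for each $i$ there is an open $Z_i\subseteq\alpha_i$ with $Z_i\cap E_i\subseteq e_i^{-1}\kappa$ --- and try to manufacture from the incoherent family $(Z_i)$ a single open $Y\subseteq\alpha$ with $Y\cap E\subseteq\kappa$. Since the coprojections $e_i\colon E_i\to E$ are jointly epic, the family $\{e_i^\dagger\}$ jointly reflects inclusions of ideals, which reduces the task to arranging $(g_i^\dagger Y)\cap E_i\subseteq e_i^{-1}\kappa$ for all $i$ simultaneously; the obstruction is that fundamental systems need not be closed under infinite meets, so one cannot simply intersect the $Z_i$, and the glueing has to genuinely use the filteredness of $I$ at the level of the topological, not merely the underlying-span, data. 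Once this is settled, finite limits commute with filtered colimits, which is the exactness claimed.
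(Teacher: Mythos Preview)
Your treatment of completeness, cocompleteness, and semi-additivity is fine and essentially matches the paper; indeed your parenthetical alternative for semi-additivity --- checking directly that both the product and coproduct fundamental systems on $\alpha\vee\beta$ equal $\alpha^u\times\beta^u$ --- is precisely the argument the paper gives.

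The exactness of filtered colimits, however, has a genuine gap. You correctly locate the difficulty in the equaliser case: from an open $\kappa$ in the colimit topology you obtain an incoherent family $(Z_i)$ of opens in the $\alpha_i$, and you need a single open $Y$ in $\alpha$ restricting correctly. You then note that one cannot simply intersect the $Z_i$, that filteredness must be used ``at the level of the topological data,'' and write ``once this is settled\ldots'' --- but you never settle it. This is the heart of the proposition, and no argument is offered.

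The paper avoids the equaliser decomposition entirely and works instead with pullbacks $\alpha_i\times_{\gamma_i}\beta_i$ as the test finite limits. The point is that a basic open in the colimit topology on $\colim_i(\alpha_i\times_{\gamma_i}\beta_i)$ is bounded below by $\sup_i(X_i\wedge Y_i)$ for suitable $X_i\in\alpha_i^u$, $Y_i\in\beta_i^u$, and the key step is to pass from this to $(\sup_iX_i)\wedge(\sup_iY_i)$, which \emph{is} a basic open for the subspace topology on $\alpha\times_\gamma\beta$. This passage is exactly the statement that in the algebraic lattice $\B(\alpha\times_\gamma\beta)$ filtered suprema distribute over finite meets --- the third characterisation of algebraicity in \S\ref{SPAN-fin}. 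That distributivity is the missing ingredient in your sketch; once you have it, there is no need to manufacture a coherent family from the $(Z_i)$ by hand.
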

\begin{proof}
We need to check that the product and coproduct topology on the direct join agree. The explicit formulas show that
\[ (\alpha\times\beta)^u=\{ (X,\sup\beta)\wedge(\sup\alpha,Y) | X\in\alpha^u,Y\in\beta^u \}
= \alpha^u\times\beta^u = (\alpha\sqcup\beta)^u \]
which proves that $\mathrm{Mod}_{\B,t}$ is semi-additive.

Now let $\alpha_i,\beta_i\rightarrow\gamma_i$ be a filtered system, with $\alpha,\beta\rightarrow\gamma$ its colimit. We will confuse $\alpha_i,\beta_i$ with their image in $\alpha\times_\gamma\beta$. To show that the natural map $\colim_i(\alpha_i\times_{\gamma_i}\beta_i)\rightarrow\alpha\times_\gamma\beta$ is a homeomorphism, it will suffice to show that it is open. Let 
\[ U\in \left(\colim_i(\alpha_i\times_{\gamma_i}\beta_i)\right)^u = \bigcap_i \alpha\times_\gamma\beta_{\geq\alpha^u_i\wedge\beta^u_i} \]
so there exist $X_i\in\alpha^u_i,Y_i\in\beta^u_i$ such that $\sup_i(X_i\wedge Y_i)\leq U$. Since, in  $\B(\alpha\times_\gamma\beta)$, filtered suprema distribute over meets (cf. \ref{SPAN-fin}),
\[ U\geq (\sup_iX_i)\wedge(\sup_iY_i)\in \left(\bigcap_i \alpha\times_\gamma\beta_{\geq\alpha^u_i}\right) \wedge \left(\bigcap_i \alpha\times_\gamma\beta_{\geq\beta^u_i}\right) = (\alpha\times_\gamma\beta)^u \]
and is therefore open.\end{proof}

%Let $\alpha,\beta,\gamma$ be topological spans for which $\beta$ carries the strong topology with respect to a homomorphism $\alpha\rightarrow\beta$. Then a span homomorphism $\beta\rightarrow\gamma$ is continuous if and only if its restriction to $\alpha$ is. In other words, the strong topology is simply the fibre product
%\[ \Hom(\alpha,-)\times_{\Hom(?\alpha,?(-))} \Hom(\beta,?(-)) \] in the functor category $\Hom(\mathrm{Mod}_\B_t,\mathbf{Set})$. Similarly, the weak topology on $\alpha$ with respect to a map $\alpha\rightarrow\beta$ is a fibre product
%\[ \Hom(?(-),\alpha) \times_{\Hom(?(-),?\beta)} \Hom(-,\beta).\]

\begin{eg}\label{eg-second}There are two obvious ways to topologise the function $\B$-module $C^0(X,\R_\vee)$ on a topological space $X$ (and similarly $\mathrm{P}C^r(X;-)$, $\mathrm{CP}C^r(X;-)$, etc., cf. e.g. \ref{eg-first}): a topology of \emph{pointwise convergence}, which is the weak topology with respect to evaluation maps
\[ \mathrm{ev}_x:C^0(X,\R_\vee)\rightarrow\R_\vee, \]
and one of \emph{uniform convergence}, which is the strong topology with respect to the inclusion $\R_\vee\hookrightarrow C^0(X,\R_\vee)$ of constants. In the important case $\mathrm{CPA}_*(X,\R_\vee)$ of convex, piecewise-affine functions, when $X$ is compact with affine structure, these two topologies agree.\end{eg}

\subsection{Topological modules}

Let $A$ be a non-Archimedean ring (def. \ref{ADIC-def-na}), $M$ a (complete locally convex) $A$-module. We equip $\B(M;A^+)$ with a principal topology
\[ (\B M)^u:=\{ U\hookrightarrow M|U\text{ open} \} ,\]
which, by definition of local convexity, is enough to recover the topology on $M$. We also consider $\B^c(M;A^+)$ as a topological $\B$-module with respect to the subspace topology. This topology is natural for continuous module homomorphisms, and hence lifts $\B^{(c)}$ to a functor
\[ \B^{(c)}(-;M):\mathrm{LC}_A\rightarrow\mathrm{Mod}_{\B,t}. \]
If $g:A\rightarrow B$ is a ring homomorphism, then the base extension must be replaced with a \emph{completed} base extension $-\widehat\tens_AB:\mathrm{LC}_A\rightarrow\mathrm{LC}_B$ (that is, ordinary base extension followed by topological completion with respect to the projective tensor product topology). Correspondingly, there is a lattice (resp. $\B$-module) homomorphism
\[ \B g:\B^{(c)}(M;A^+)\rightarrow\B^{(c)}(M\widehat\tens_{A^+}B^+;B^+),\quad N \mapsto\mathrm{Im}(N\tens_{A^+}B^+\rightarrow M\widehat\tens_AB); \]
in the case $M=A$ this agrees with the homomorphism $\B^{(c)}(A;A^+)\rightarrow\B^{(c)}(B;B^+)$ defined previously without taking into account the topology. The same functoriality extends to morphisms of nA-ringed spaces.

Beware that the elements of $\B^c(M;A^+)$ correspond to not necessarily \emph{closed} submodules of $M$, and hence might not be represented by a subobject in $\mathrm{LC}_A$.

%One can show that if $A^+$ is an $I$-adic ring and $M$ is a \emph{presentable} $A$-module, that is, a topological quotient of $A^N$ for some cardinal $N$, then finitely generated $A^+$-submodules of $M$ are automatically closed. In other words, the elements of $\B^c(M;A^+)$ are actually represented by objects of $\mathrm{LC}_A$.

\begin{eg}[Continuous seminorms]Let $A$ be a linearly topologised Abelian group. It follows immediately from the definition of the topology on $\B(A;\Z)$ that the universal seminorm $A\rightarrow\B^c(A;\Z)$ (e.g. \ref{eg-seminorms}) on $A$ is continuous. In fact, $\B^c(A;\Z)$ carries the \emph{strong} topology with respect to this map. In other words, if $A\rightarrow\alpha$ is any continuous seminorm into some $\alpha\in\mathrm{Mod}_{\B,t}$, then the factorisation $\B^c(A;\Z)\rightarrow\alpha$ is also continuous.

The topological free $\B$-module $\B^c(A;\Z)$ corepresents the functor of continuous seminorms
\[ \frac{1}{2}\mathrm{Nm}(A,-):\mathrm{Mod}_{\B,t}\rightarrow\mathbf{Set}. \]

As we know, we may also use a seminorm $\nu:A\rightarrow\alpha$ to induce a coarser topology on $A$, the weak topology with respect to $\B^c(A\;\Z)rightarrow\alpha$. This is called the \emph{induced} topology with respect to $\nu$. It is Hausdorff if and only if the image of $\B^c(A;\Z)$ in $\alpha$ is.\end{eg}

\begin{eg}Let $K$ be a complete, rank one valuation field. The isomorphism $\B^c(K;\sh O_K)\cong|K|_\vee$ of example \ref{eg-discs-field} is a homeomorphism.\end{eg}

One can also formulate a theory of \emph{pro-discrete completion} for $\B$-modules and lattices to correspond to the completion operation for non-Archimedean rings and their modules. Followed to the conclusion of this paper, this would yield a different category of skeleta.

However, in situations typically encountered in geometry, one only has to deal with rings $A$ that have an ideal of definition $I$, and are therefore in particular \emph{first countable}. In this situation, one can use the axiom of dependent choice to show that $\B(-;A^+)$ is automatically pro-discrete.
Moreover, Nakayama's lemma ensures that in these situations, even the free $\B$-module $\B^c(-;A^+)$ is pro-discrete. Indeed, if $M\twoheadrightarrow M/I$ is a quotient of discrete $A^+$-modules, any finite system of generators for $M/I$ lifts to generators for $M$. A pro-finite, $I$-adic $A^+$-module is therefore finitely generated. Conversely, any finite topological $A$-module is $I$-adically complete. It follows that $\B^c(M;A^+)$ is pro-discrete for any complete $A$-module $M$.

The main results \ref{SKEL-main}, \ref{thm-berk} of this paper remain true, under such first-countability hypotheses, if we work instead with pro-discrete $\B$-modules.

%\begin{defn}\ref{TOP-open}A continuous homomorphism $\alpha\rightarrow\beta$ is \emph{open} if it sends $\alpha^u$ into $\beta^u\subseteq\B\beta$. Note that maps that are open in this sense need not be open in the sense of general topology; we ask only that the \emph{lower set generated by} the image of an open in $\alpha$ be open in $\beta$.\end{defn}

\section{Semirings}\label{.5RING}

Any symmetric monoidal category $\mathbf C$ gives rise to a theory of \emph{commutative algebras} $\mathrm{Alg(C)}$ and their \emph{modules}. In this section, I describe a closed, symmetric monoidal structure on the category of $\B$-modules; the corresponding theories are those of \emph{semirings} and their \emph{semimodules}. This semialgebra will provide the algebraic underpinning of the theory of skeleta.

Let $\mathbf C$ a category equipped with a monoidal structure $\tens$ with unit $1=1_\mathbf{C}$. One has a category $\mathrm{Alg}(\mathbf C)$ of \emph{monoids} or \emph{algebras} in $\mathbf C$, which are objects $A$ of $\mathbf C$ equipped with structural morphisms
\[ A\tens A \stackrel{\mu}{\rightarrow} A \stackrel{e}{\leftarrow} 1 \]
satisfying various usual constraints, and morphisms respecting these. If $A\in\mathrm{Alg}(\mathbf C)$, there is also a category $\mathrm{Mod}_A(\mathbf C)$ of \emph{$A$-modules in $\mathbf C$}, which comes equipped with a free-forgetful adjunction
\[ -\tens A:\mathbf C \rightleftarrows \mathrm{Mod}_A(\mathbf C):?_A. \]
The (right adjoint) forgetful functor $?_A$ is conservative.

If $\tens$ is \emph{symmetric}, then there is also a category $\mathrm{CAlg}(\mathbf C)$ of \emph{commutative} algebras. The module category $\mathrm{Mod}_A(\mathbf C)$ over $A\in\mathrm{CAlg}(\mathbf C)$ acquires its own symmetric monoidal structure, the \emph{relative tensor product}
\[ -\tens_A-=\mathrm{coeq}(-\tens-\tens A\rightrightarrows -\tens -) \]
(as long as $\mathbf C$ has coequalisers).

If $\tens$ is \emph{closed}, that is, $-\tens A$ has a right adjoint $\Hom_{\mathbf C}(-,A)$, then $?_A$ also commutes with colimits and therefore $-\tens A$ preserves compactness. Limits and colimits of modules are computed in the underlying category.

\subsection{The tensor sum}       \label{.5RING-sum}
The category of $\B$-modules carries a closed symmetric monoidal structure given by the \textit{tensor sum} operation $\oplus$ which, by definition, is characterised by a natural isomorphism
\[ \Hom_\B(\alpha\oplus\beta,\gamma) \cong \Hom_\B(\alpha,\Hom_\B(\beta,\gamma)) \]
where we use the internal $\Hom$ functor defined in section $\ref{SPAN}$. Alternatively, it is characterised as universal with respect to order-preserving maps $\alpha\times\beta\rightarrow\gamma$ that are right exact in each variable, that is, such that for each $X\in\beta$ the composite $\alpha\rightarrow\alpha\times\{X\}\rightarrow\gamma$ is a $\B$-module homomorphism, and similarly the transpose of this property. There is a canonical monotone map $\alpha\times\beta\rightarrow\alpha\oplus\beta$ such that for any such map, there is a unique extension
\[\xymatrix{ \alpha\oplus\beta \ar[dr] \\ \alpha\times\beta\ar[u]\ar[r] & \gamma}\]
to a commuting diagram of sets. It identifies $\alpha\times\{-\infty\}\cup\{-\infty\}\times\beta$ with $\{-\infty\}$.

Explicitly, $\alpha\oplus\beta$ is generated by symbols $X\oplus Y$ with $X\in\alpha,Y\in\beta$ subject to the relations
\begin{align}\nonumber X\oplus(Y_1\vee Y_2) &= (X\oplus Y_1)\vee(X\oplus Y_2); \\
\nonumber X\oplus(-\infty) &=-\infty\end{align}
which ensure that the map
\[ [f:\alpha\oplus\beta\rightarrow\gamma] \quad \mapsto \quad [X \mapsto [Y\mapsto f(X\oplus Y)]] \]
is well-defined and determines the promised adjunction. 

\begin{prop}\label{.5RING-closed}The tensor sum defines a closed, symmetric monoidal structure on $\mathrm{Mod}_\B$.\end{prop}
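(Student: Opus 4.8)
The plan is to verify the three structural properties in turn—symmetry, associativity/unitality, and closedness—leaning on the universal property of $\oplus$ rather than on the explicit generators-and-relations presentation wherever possible. The heavy lifting is really contained in the defining adjunction
\[ \Hom_\mathbf{Span}(\alpha\oplus\beta,\gamma) \cong \Hom_\mathbf{Span}(\alpha,\Hom_\mathbf{Span}(\beta,\gamma)), \]
so the first task is to confirm that $\oplus$ is actually a well-defined bifunctor and that this isomorphism is natural in all three variables. For this I would use the explicit description: $\alpha\oplus\beta$ is the span generated by symbols $X\oplus Y$ subject to the stated bilinearity-type relations, and a map out of it to $\gamma$ is the same as an order-preserving map $\alpha\times\beta\to\gamma$ that is a span homomorphism in each variable separately. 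That such maps biject with $\Hom_\mathbf{Span}(\alpha,\Hom_\mathbf{Span}(\beta,\gamma))$ is immediate from the fact, established in \S\ref{SPAN}, that $\Hom_\mathbf{Span}(\beta,\gamma)$ is itself a span with the pointwise order; one checks that $[X\mapsto[Y\mapsto f(X\oplus Y)]]$ lands in span homomorphisms and is itself a span homomorphism in $X$, and conversely. Naturality follows by chasing generators through the relations.

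Next I would establish symmetry: the swap $\alpha\times\beta\to\beta\times\alpha$ is bilinear, hence induces $\alpha\oplus\beta\to\beta\oplus\alpha$, and this is an isomorphism because applying the construction twice gives the identity on generators. The unit object is the Boolean semifield $\B=\{-\infty,0\}$: a span homomorphism $\B\to\Hom_\mathbf{Span}(\beta,\gamma)$ is determined by the image of $0$, which must be an element of $\Hom_\mathbf{Span}(\beta,\gamma)$ above $-\infty$, i.e. an arbitrary span homomorphism $\beta\to\gamma$; thus $\Hom_\mathbf{Span}(\B\oplus\beta,\gamma)\cong\Hom_\mathbf{Span}(\beta,\gamma)$ naturally, so $\B\oplus\beta\cong\beta$ by Yoneda. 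For associativity the clean route is again adjunction: using the internal-Hom adjunction three times,
\[ \Hom\bigl((\alpha\oplus\beta)\oplus\gamma,\delta\bigr)\cong\Hom\bigl(\alpha,\Hom(\beta,\Hom(\gamma,\delta))\bigr)\cong\Hom\bigl(\alpha\oplus(\beta\oplus\gamma),\delta\bigr), \]
naturally in $\delta$, whence $(\alpha\oplus\beta)\oplus\gamma\cong\alpha\oplus(\beta\oplus\gamma)$ by Yoneda; one then checks the pentagon and triangle coherence axioms, which—since every object of $\mathbf{Span}$ is a poset and the isomorphisms in question are the evident ones on generators $X\oplus(Y\oplus Z)\leftrightarrow(X\oplus Y)\oplus Z$—reduce to routine verification that the two composite maps agree on generators. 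Closedness is then exactly the statement that $-\oplus\beta$ has the right adjoint $\Hom_\mathbf{Span}(\beta,-)$, which is the defining isomorphism once we know the latter is a bifunctor on $\mathbf{Span}$—already recorded in \S\ref{SPAN}.

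The main obstacle I anticipate is not any single isomorphism but the bookkeeping needed to check that $\Hom_\mathbf{Span}(\beta,\gamma)$, with its pointwise join, genuinely serves as an internal hom—in particular that composition and evaluation $\Hom_\mathbf{Span}(\beta,\gamma)\oplus\beta\to\gamma$ are span homomorphisms in the relevant variables, and that the unit $\gamma\to\Hom_\mathbf{Span}(\beta,\gamma\oplus\beta)$ behaves correctly—so that the triangle identities for the adjunction $-\oplus\beta\dashv\Hom_\mathbf{Span}(\beta,-)$ actually hold. Because spans are posets, all of this collapses to verifying inequalities on generators rather than genuine diagram chases, so while there are several small things to check, none should present real difficulty; the proof is essentially a matter of assembling the pieces from \S\ref{SPAN} and the construction of $\oplus$ above.
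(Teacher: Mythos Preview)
Your proposal is correct and takes essentially the same approach as the paper. The paper's proof is terser—it calls the argument ``routine'' and only writes down the unit $\beta\to\Hom(\alpha,\alpha\oplus\beta)$ and counit $\mathrm{ev}:\Hom(\alpha,\beta)\oplus\alpha\to\beta$ of the adjunction—whereas you spell out the symmetry, unit object, associativity via Yoneda, and coherence checks that the paper leaves implicit; but the core mechanism (verify the internal-hom adjunction via unit and counit, everything else follows) is the same.
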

\begin{proof}The argument is routine; I reproduce here the unit and counit of the adjunction $-\oplus\alpha\dashv\Hom(\alpha,-)$. First, we have maps
\[ \beta\rightarrow\Hom(\alpha,\alpha\oplus\beta),\quad X\mapsto [Y\mapsto Y\oplus X] \]
which is a $\B$-module homomorphism by the relations above. Second, one checks that the map
\[ \mathrm{ev}:\Hom(\alpha,\beta)\times\alpha\rightarrow\beta \]
preserves joins in each variable, and so descends to a homomorphism $\Hom(\alpha,\beta)\oplus\alpha\rightarrow\beta$.\end{proof}

\

The definitions of semirings and semimodules are those of algebras and their modules in the category $(\mathrm{Mod}_\B,\oplus)$. I spell out some of these definitions here, in order to fix notation.

\begin{defns}\label{semiring}An \textit{idempotent semiring} $\alpha$, or, more briefly, \textit{semiring}, is a commutative monoid object $(\alpha,+,0)$ in the monoidal category $(\mathrm{Mod}_\B,\oplus)$. Explicitly, it is a $\B$-module equipped with an additional commutative monoidal operation $+$, called \emph{addition}, with identity $0$, that satisfies
\[ X+(Y_1\vee Y_2)=(X+Y_1)\vee(X+Y_2) \qquad  X+(-\infty)=-\infty \quad\forall X. \]
In notation, addition takes priority over joins: $X+Y\vee Z=(X+Y)\vee Z$. A semiring homomorphism is a monoid homomorphism. The category of semirings is denoted $\frac{1}{2}\mathbf{Ring}$.

We will also have occasion to use a category $\frac{1}{2}\mathbf{Alg}:=\mathrm{Alg}(\mathrm{Mod}_\B,\oplus)$ of possibly non-commutative \emph{semialgebras}.

A \emph{right semimodule} over a semiring $\alpha$, or (right) $\alpha$-module, is a $\B$-module $\mu$ equipped with an action $\mu\oplus\alpha\rightarrow\mu$ of $\alpha$, written $X\oplus Z\mapsto X+Z$. A homomorphism of semimodules is a module homomorphism. The category of $\alpha$-modules is denoted $\mathrm{Mod}_\alpha$.

The \emph{relative} tensor sum $\oplus_\alpha$ on $\mathrm{Mod}_\alpha$ is the quotient
\[  \mu\oplus_\alpha\nu \cong \mathrm{coeq}[\mu\oplus\nu\oplus\alpha \rightrightarrows \mu\oplus\nu]\]
in $\mathrm{Mod}_\B$. A commutative monoid in $\mathrm{Mod}_\alpha$ is an \emph{$\alpha$-algebra}; it consists of the same data as a semiring $\beta$ equipped with a semiring homomorphism $\alpha\rightarrow\beta$. The category of $\alpha$-algebras is denoted $\frac{1}{2}\mathbf{Ring}_\alpha$. The tensor sum of two $\alpha$-algebras over $\alpha$ has a semiring structure.
\end{defns}

\begin{egs}\label{semifields}The Boolean semifield $\B=\{-\infty,0\}$ is a unit for the tensor sum operation. It therefore carries a unique semiring structure, of which the notation is indicative, rendering it initial in the category of semirings. That is, $\B$ plays the r\^ole in the category of semirings that $\Z$ plays in the category of rings.

Any $\B$-module is in a canonical and unique way a module over $\B$, with $0$ acting as the identity and $-\infty$ as the constant map $-\infty$; whence the terminology of $\B$-modules. 

More generally, the semifield $H_\vee=H\sqcup\{-\infty\}$ associated to a totally ordered Abelian group $H$ (e.g. \ref{eg-first}) carries an addition induced by the group operation on $H$.

If $H$ can be embedded into the additive group $\R$, $H_\vee$ is a \emph{rank one} semifield; these semifields play the r\^ole in tropical geometry that ordinary fields play in algebraic geometry. Of particular interest are $\Z_\vee,\Q_\vee,\R_\vee$, the value semifields of DVFs, their algebraic closures, and of Novikov fields, respectively. Other semifields that arise from geometry, for example in Huber's work \cite{Hubook}, include those with $H$ of the form $\Z^k_\mathrm{lex}$, that is, $\Z^k$ with the lexicographic ordering and $-\infty$ adjoined. These semifields are non-Noetherian. They fit into a tower
\[ (\Z^k_\mathrm{lex})_\vee\rightarrow(\Z^{k-1}_\mathrm{lex})_\vee\rightarrow \cdots \rightarrow \Z_\vee \]
of semiring homomorphisms which successively kill each irreducible convex subgroup. See also \cite[\S\textbf{0}.6.1.(a)]{FujiKato}.
\end{egs}

From general principles about algebra in monoidal categories, it follows:

\begin{prop}The category of semirings is complete and cocomplete. Limits and filtered colimits are computed in $\mathrm{Mod}_\B$, and the latter are exact. Pushouts are computed by the relative tensor sum.\end{prop}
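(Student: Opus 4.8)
The plan is to invoke the general machinery for commutative algebras in a symmetric monoidal category, now available because Proposition \ref{.5RING-closed} has established that $(\mathbf{Span},\oplus)$ is closed symmetric monoidal and Proposition \ref{SPAN-prop-bicomp} has shown $\mathbf{Span}$ is complete and cocomplete. The standard fact is that if $\mathbf{C}$ is a complete and cocomplete symmetric monoidal category, then $\mathrm{CAlg}(\mathbf{C})$ is again complete and cocomplete, with limits and filtered colimits created by the forgetful functor $\mathrm{CAlg}(\mathbf{C})\to\mathbf{C}$, provided $\tens$ distributes over those colimits in each variable. So the body of the proof is really just checking the hypotheses of that general fact in the case $\mathbf{C}=\mathbf{Span}$, $\tens=\oplus$.

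First I would treat limits: the forgetful functor $?:\frac12\mathbf{Ring}\to\mathbf{Span}$ is a right adjoint (it has the free-semiring left adjoint coming from the monad $\alpha\mapsto\bigvee_{n\geq0}\alpha^{\oplus n}$), hence preserves all limits; and conversely a limit cone in $\mathbf{Span}$ of a diagram of semirings carries a unique semiring structure making it the limit in $\frac12\mathbf{Ring}$, because the structure maps $\alpha\oplus\alpha\to\alpha$, $\B\to\alpha$ are determined componentwise and the monoid axioms are equational, hence inherited by the limit. This gives completeness and the claim that limits are computed in $\mathbf{Span}$. Second, for filtered colimits and their exactness: here one uses that $\oplus$ is closed, so $-\oplus\beta$ is a left adjoint and commutes with all colimits; combined with the fact (Proposition \ref{SPAN-prop-bicomp}, and the finitary-algebraic-theory remark preceding it) that filtered colimits in $\mathbf{Span}$ are computed in $\mathbf{Set}$ and are exact, one concludes that a filtered colimit of semirings, computed in $\mathbf{Span}$, again carries a canonical semiring structure (the multiplication is assembled from the maps $\alpha_i\oplus\alpha_i\to\alpha_i$ using commutation of $\oplus$ with the filtered colimit) and is the colimit in $\frac12\mathbf{Ring}$; exactness is then inherited. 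Third, for general colimits: since $\frac12\mathbf{Ring}$ is the category of algebras for the (finitary) free-semiring monad on the cocomplete category $\mathbf{Span}$, it is cocomplete by the standard result on categories of algebras over a monad that preserves filtered colimits (equivalently, one can build arbitrary colimits from coproducts and coequalisers, using that the monad preserves reflexive coequalisers). Finally, that pushouts are computed by the relative tensor sum $\oplus_\alpha$: this is the usual identification $\beta\sqcup_\alpha\gamma\cong\beta\oplus_\alpha\gamma$ valid for commutative algebras in any symmetric monoidal category with coequalisers, whose verification is the universal-property check that a pair of $\alpha$-algebra maps out of $\beta\oplus_\alpha\gamma$ is the same as a compatible pair out of $\beta$ and $\gamma$; the relevant coequaliser defining $\oplus_\alpha$ exists by cocompleteness of $\mathbf{Span}$.

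The step I expect to be the main (only real) obstacle is the existence of \emph{general} colimits — equivalently, coequalisers — in $\frac12\mathbf{Ring}$, since unlike limits and filtered colimits these are \emph{not} computed in $\mathbf{Span}$: the underlying span of a coequaliser of semirings is a quotient by the semiring congruence generated by the relation, not merely the span coequaliser. The clean way to handle this is to note that the free-semiring monad $T\alpha=\bigvee_{n\geq0}\alpha^{\oplus n}$ on $\mathbf{Span}$ is finitary (it is built from the finitary operation $\oplus$ and filtered colimits, which are exact in $\mathbf{Span}$), so $T$ preserves filtered colimits, and then appeal to the standard theorem (e.g. via Linton, or the Barr--Beck-style argument using that $T$ preserves reflexive coequalisers) that the Eilenberg--Moore category of a finitary monad on a cocomplete category is cocomplete. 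Alternatively, since everything here is a finitary algebraic theory — the remark before Proposition \ref{SPAN-prop-bicomp} already observes that quotients by groupoid relations in $\mathbf{Span}$ are computed in $\mathbf{Set}$, and \S\ref{SPAN-quotient} supplies the explicit adjoint calculus for span quotients — one can exhibit the coequaliser of $s,t:\alpha\rightrightarrows\beta$ concretely as $\beta$ modulo the smallest invariant ideal (in the multiplicative-monoid sense) as in Lemma \ref{SPAN-adj-coeq}, lifted to respect the semiring structure. Either route is routine given the preceding sections, so I would present the proof as a short invocation of the monadic/finitary-algebraic-theory formalism rather than grinding out the congruence-generation argument by hand.
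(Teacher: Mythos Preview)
Your proposal is correct and matches the paper's approach exactly: the paper gives no proof at all beyond the prefatory sentence ``From general principles about algebra in monoidal categories, it follows'', and your write-up simply unpacks precisely those general principles (monadic cocompleteness, creation of limits and filtered colimits by the forgetful functor, closedness of $\oplus$, and the usual identification of pushouts with relative tensor products in $\mathrm{CAlg}$). There is nothing to compare.
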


\subsubsection{Free semimodules}\label{fungus}

Let $A$ be a ring, $M_1,M_2\in\mathrm{Mod}_A$. There are natural homomorphisms
\[ m:\B^{(c)}(M_1;A)\oplus\B^{(c)}(M_2;A)\rightarrow \B^{(c)}(M_1\tens_AM_2;A),\quad [N_1]\oplus [N_2]\mapsto \mathrm{Im}(N_1\tens N_2\rightarrow M_1\tens_AM_2)\]
which in the case of the subobject $\B$-module $\B$ is a lattice homomorphism. These homomorphisms upgrade $\B^{(c)}$ to \emph{lax monoidal} functors
\[ \B^{(c)}:(\mathrm{Mod}_A,\tens_A)\rightarrow(\mathrm{Mod}_\B,\oplus). \]
It is therefore compatible with algebra on both sides, in the following ways:
\begin{enumerate}\item If $B$ is an $A$-algebra, then the multiplication $\mu$ on $B$ induces a semiring structure on $\B^{(c)}(B;A)$
\[ [N_1]+[N_2]=\mu(N_1\tens N_2)\subseteq B \]
and therefore the subobject (resp. free) $\B$-module functors are upgraded to functors
\[ \B^{(c)}:\mathrm{CAlg}_A\rightarrow\frac{1}{2}\mathbf{Ring}. \]
Beware that the sum $[N_1]+[N_2]$ of elements of this submodule semiring corresponds to a \emph{product} in $B$, and should not be confused with the set of sums of elements of $N_1$ and $N_2$, which corresponds instead to $\vee$.
\item If $M$ is a $B$-module, then the $B$-action on $M$ induces a $\B^{(c)}(B;A)$-module structure on $\B^{(c)}(M;A)$.
\[ \B^{(c)}:\mathrm{Mod}_B\rightarrow \mathrm{Mod}_{\B^{(c)}(B;A)} \]
With respect to the relative tensor sum $\oplus_{\B^{(c)}(B;A)}$, these functors are lax monoidal. In particular, $\B^{(c)}(B;A)$ is a $\B^{(c)}A$-algebra.
\end{enumerate}

Be warned that $\B^{(c)}$ is not \emph{strongly} monoidal: usually
\[\B^{(c)} (M_1;A) \oplus_{\B^{(c)}A} \B^{(c)}(M_2;A) \not\cong \B^{(c)}(M_1\tens_AM_2;A).\]
Similarly, it does not commute with most base changes - but see prop. \ref{.5RING-prop-contract}.

\begin{eg}[Seminormed vector spaces]\label{eg-discs'}Let $V$ be a vector space over a complete, valued field $K$, considered as an $\sh O_K$-module as in example \ref{eg-discs}. Let us discuss seminorms on $V$ with values in $|K|_\vee=|K|\sqcup\{-\infty\}$, the value semifield of $K$. Note that $|K|_\vee$ acts on the set of discs $\B^{(c)}(V;\sh O_K)$ (cf. \S\ref{fungus}).

If $K$ is non-Archimedean, then in the same vein as the previous example \ref{eg-seminorms}, the ultrametric inequality for a seminorm can be rephrased as \[\sup_{z\in\langle x,y\rangle}\nu z = \nu x\vee\nu y, \] where $\langle x,y\rangle$ denotes the $\sh O_K$-module span of $x$ and $y$. In other words, a seminorm is the same thing as a $\B$-module homomorphism $\B^c(V;\sh O_K)\rightarrow|K|_\vee$, compatible with the actions of $|K|_\vee$ on both sides.

On the other hand, if $K$ is Archimedean, and therefore either $\R$ or $\C$, then the subobjects are the convex, balanced discs. The join of two discs is their convex hull, and a disc is finite if it is the convex hull of finitely many `vertices'. Note that this implies that, for example, the unit disc of a $K$-Banach space $V$ is infinite as soon as $\dim V>1$.

The same triangle inequality as for the non-Archimedean case works if we replace the $\sh O_K$-module span $\langle x,y\rangle$ by the \emph{convex hull} $\mathrm{conv}(x,y)$. An Archimedean seminorm is therefore once again a $|K|_\vee$-module homomorphism $\B^c(V;\sh O_K)\rightarrow |K|_\vee$.

In either case, the valuation on $K$ induces a semiring isomorphism $\B^c(K;\sh O_K)\widetilde\rightarrow |K|_\vee$ (e.g. \ref{eg-discs-field}).

The space of seminorms is the hom-space $\Hom(\B^cV,|K|_\vee)$. The \emph{unit disc} associated to a seminorm $\nu$ is $\nu^\dagger0$. Conversely, if $D\in\B(V;\sh O_K)$ is a disc, then the $|K|_\vee$-action thereon determines a homomorphism \[|K|_\vee\rightarrow\B(V;\sh O_K),\quad r\mapsto rD,\] where we interpret $r$ as the disc of radius $r$ in $K$. Since $\bigcap_{r>r_0}rD=r_0D$, this homomorphism preserves infima. If $|K|=\Z$ or $\R$, then $|K|_\vee$ has all infima, and hence this homomorphism has a left adjoint $\nu$. Its behaviour on elements of $V$ is
\[ \nu x=\inf\left\{r\in|K|_\vee|x\in r \right\}. \]
It therefore maps $\B^cV$ into $|K|_\vee$ if and only if the disc $D$ \emph{absorbs} in the sense that $KD=V$; in this case, $\nu$ is a seminorm.
This correspondence recovers the well-known dictionary between seminorms and absorbing discs in the theory of vector spaces over valued fields \cite[\S2.1.2]{Banach}.\end{eg}

\subsubsection{Free semirings}

Let $\alpha$ be a semiring. The forgetful functor $\frac{1}{2} \mathbf{Ring}_\alpha\rightarrow\mathbf{Set}$ commutes with limits and therefore has a left adjoint $\alpha[-]$. It is the set of `tropical polynomials'
\[ \alpha[S]\cong\left\{\left. \bigvee_{n\in\N^S}\sum_{X\in S}n_XX+C_n \right|C_n\in\alpha, C_n=-\infty\text{ for }n\gg0 \right\} \]
with the evident join and plus operations.

\begin{defn}Let $\alpha$ be a semiring, $S$ a set; $\alpha[S]$ is called the \emph{free semiring} on $S$.\end{defn}

The free semiring construction commutes with colimits; in particular we have the base change
\[\alpha[S]\cong\alpha\oplus\B[S]\]
and composition 
\[ \alpha[S\sqcup T]=\alpha[S]\oplus_\alpha\alpha[T]\] for any $\alpha\in\frac{1}{2}\mathbf{Ring}$.

There is similarly a free functor $T\mapsto\B[T]$ for a \emph{$\B$-module} $T$; intuitively, it is the free semiring generated by the set $T$, subject to the order relations that exist in $T$.

\subsection{Action by contraction}\label{.5RING-contract}

The concept of \emph{contracting operator} is natural in analysis, and is intimately related to the operator norm. In the context of this paper, we use this concept to control the \emph{bounds} of tropical functions, and hence the radii of convergence of analytic functions.

\begin{defn}An endomorphism $f$ of a $\B$-module $\alpha$ is \emph{contracting} if, for each ideal $\iota\hookrightarrow\alpha$, $f(\iota)\subseteq\iota$. That is, $f$ is contracting if and only if $f(X)\leq X$ for all $X\in\alpha$.\end{defn}

\begin{eg}Let $A$ be an algebra and $M$ an $A$-module. An $A$-linear endomorphism of $M$ induces a contracting endomorphism of $\B(M;A)$ if and only if it preserves all $A$-submodules; that is, if it is an element of $A$.\end{eg}

Let now $\alpha$ be a semiring, $\mu$ a semimodule. Let $\iota\hookrightarrow\alpha$ be an ideal. 

\begin{defn}We say that $\iota$ \emph{contracts $\mu$} if it acts by contracting endomorphisms, or equivalently, every ideal of $\mu$ is $\iota$-invariant.
If $\iota=\alpha$, we say that $\mu$ is a \emph{contracting} $\alpha$-module. If also $\mu=\alpha$, we say simply that $\alpha$ is \emph{contracting} (as a semiring).\end{defn}

In particular, $\alpha$ is contracted by an ideal $\iota$ if and only if $\iota\leq0$, and $\alpha$ itself is contracting if and only if $0$ is a maximal element.

Let $\mathrm{Mod}_{\alpha\{\iota\}}$ denote the full subcategory of $\mathrm{Mod}_\alpha$ on whose objects $\iota$ contracts. This subcategory is closed under limits and the tensor sum, and so its inclusion has a lax monoidal left adjoint
\[\mathrm{Mod}_\alpha\rightarrow \mathrm{Mod}_{\alpha\{\iota\}},\quad \mu\mapsto \mu\{\iota\}, \] the \emph{contraction} functor. In particular, $\alpha\{\iota\}$ is an $\alpha$-algebra, and an $\alpha$-module $\mu$ is contracting if and only if its action factors through the structure homomorphism $\alpha\rightarrow\alpha\{\iota\}$. In other words, $\mathrm{Mod}_{\alpha\{\iota\}}$ really is the category of modules over the contraction $\alpha\{\iota\}$ of $\alpha$.

The inclusion into $\frac{1}{2}\mathbf{Ring}$ of the full subcategory $\frac{1}{2}\mathbf{Ring}_{\leq0}$ of contracting semirings commutes with limits and colimits, and hence has left and right adjoints
\begin{align} \nonumber \mathrm{Left}:\alpha &\mapsto {}^\circ\hspace{-1pt}\alpha:=\alpha\{\alpha\} \\
\nonumber \mathrm{Right}:\alpha &\mapsto \alpha^\circ:=\alpha_{\leq0} \end{align}
and unit and counit $\alpha^\circ\hookrightarrow\alpha\rightarrow{}^\circ\hspace{-1pt}\alpha$. We will also write \[{}^\circ(-):=(-)\{\alpha\}\cong -\oplus_\alpha{}^\circ\alpha \] for the corresponding functor $\mathrm{Mod}_\alpha\rightarrow \mathrm{Mod}_{{}^\circ\alpha}$; but beware that this notation hides the dependence on $\alpha$.

\begin{defn}\label{.5RING-integers}The subring $\alpha^\circ$ is the \emph{semiring of integers} of $\alpha$. The \emph{(universal) contracting quotient} is ${}^\circ\alpha$.\end{defn}

\begin{prop}The semiring of integers functor commutes with limits and filtered colimits.\end{prop}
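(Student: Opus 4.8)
The plan is to exploit the explicit description $\alpha^\circ = \alpha_{\leq 0}$ of the semiring of integers as a slice set, and verify the limit/filtered-colimit preservation directly at the level of underlying sets, using the fact (prop. \ref{TOP-span-complete}, and the analogous statement for semirings) that limits and filtered colimits in $\frac{1}{2}\mathbf{Ring}$ (resp. $\frac{1}{2}\mathbf{Ring}_t$) are computed in $\mathbf{Span}$, hence ultimately in $\mathbf{Set}$. Thus it suffices to check that for a diagram $\{\alpha_i\}$ of the appropriate shape, the canonical comparison map between $(\lim_i \alpha_i)^\circ$ and $\lim_i (\alpha_i^\circ)$ (resp. with filtered colimits) is a bijection; compatibility with the semiring and span structure is then automatic, since these operations too are computed setwise.

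First I would treat limits. Given a limit cone $\alpha = \lim_i \alpha_i$ with projections $\pi_i$, an element $X \in \alpha$ is a compatible family $(X_i)$ with $X_i \in \alpha_i$; I claim $X \leq 0$ in $\alpha$ if and only if $X_i \leq 0$ in each $\alpha_i$. The forward direction is immediate since each $\pi_i$ is a semiring homomorphism, hence monotone and $0$-preserving. For the converse, the order on a limit span is computed componentwise ($X \vee Y = Y$ in $\alpha$ iff this holds after applying every $\pi_i$), so $X_i \vee 0 = 0$ for all $i$ forces $X \vee 0 = 0$. This gives $\alpha^\circ = \{(X_i) : X_i \in \alpha_i^\circ\} = \lim_i(\alpha_i^\circ)$ as subsets of $\prod_i \alpha_i$, and the induced semiring structures agree. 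In the topological setting one additionally checks the subspace topology on $\alpha^\circ$ induced from the weak (limit) topology agrees with the weak topology of the $\alpha_i^\circ$; this follows because a fundamental system for a limit is generated by pullbacks of fundamental systems of the factors, and intersecting with the slice set $\alpha_{\leq 0}$ commutes with this.

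Next, filtered colimits. Let $\alpha = \colim_i \alpha_i$ over a filtered index category, with structure maps $\phi_i : \alpha_i \to \alpha$; every element of $\alpha$ is $\phi_i(X_i)$ for some $i$ and some $X_i \in \alpha_i$, and $\phi_i(X_i) = \phi_j(X_j)$ iff they become equal after pushing to some common $k$. The key point is that the order is also "eventually detected": $\phi_i(X_i) \leq 0$ in $\alpha$ iff $\phi_i(X_i) \vee 0 = 0$, and since $\vee$ and the relation $= 0$ are preserved and reflected along the filtered colimit (filtered colimits in $\mathbf{Span}$, being computed in $\mathbf{Set}$, are exact — cf. prop. \ref{SPAN-prop-bicomp} and the remark on finitary algebraic theories), this holds iff there is some $j \geq i$ with $\psi_{ij}(X_i) \vee 0 = 0$ in $\alpha_j$, i.e. $\psi_{ij}(X_i) \in \alpha_j^\circ$. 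Hence the natural map $\colim_i(\alpha_i^\circ) \to (\colim_i \alpha_i)^\circ$ is surjective; injectivity is the same eventual-equality argument carried out inside the $\alpha_j^\circ$. So $(-)^\circ$ commutes with filtered colimits of semirings; the topological refinement again follows since the strong topology on a filtered colimit restricts correctly to the slice sets (here one uses that $\alpha_{\leq 0}$ is an \emph{upper}-complement-free sub-slice, so that $\colim$ of fundamental systems intersects well with it).

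The main obstacle I anticipate is \emph{not} the set-level bijections, which are routine, but making sure the argument is clean in the topological category $\frac{1}{2}\mathbf{Ring}_t$ — specifically, that the subspace topology on $\alpha^\circ$ coming from $\alpha$ is the "right" one and is compatible with limits and filtered colimits of topological spans. This requires a small amount of care with the formulas for weak and strong fundamental systems from section \ref{TOP}, checking that the operation $\alpha \mapsto \alpha_{\leq 0}$ of passing to a slice set commutes with the relevant constructions on fundamental systems. (Note also that $(-)^\circ$ does \emph{not} commute with general colimits — it is only a right adjoint — so one must genuinely use filteredness, via exactness of filtered colimits, in the colimit half of the argument; this is where any naive attempt would break.)
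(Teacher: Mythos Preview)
The paper states this proposition without proof; it is evidently meant to be read off from the surrounding discussion. Your argument is correct and considerably more detailed than anything the paper supplies.

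For comparison, the implicit reasoning in the paper is likely the following. Since $(-)^\circ$ is constructed as the \emph{right adjoint} to the inclusion $\frac{1}{2}\mathbf{Ring}_{\leq 0}\hookrightarrow\frac{1}{2}\mathbf{Ring}$, preservation of limits is automatic. For filtered colimits, the description $\alpha^\circ=\alpha_{\leq 0}=\{X:X\vee 0=0\}$ exhibits $(-)^\circ$ as a finite limit (an equaliser of the maps $X\mapsto X\vee 0$ and $X\mapsto 0$) in the underlying span; since filtered colimits in $\mathbf{Span}$ (and hence in $\frac{1}{2}\mathbf{Ring}$) are exact, they commute with this finite limit. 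Your hands-on verification unpacks exactly this, which is fine; the abstract phrasing just makes clearer why filteredness is essential. Your caution about the topological case is warranted but not a genuine obstruction: the relevant point is again that $(-)^\circ$ is a right adjoint (so preserves weak topologies on limits) and that filtered colimits are exact in $\mathbf{Span}_t$ by proposition~\ref{TOP-span-complete}.
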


The contraction functor $\mathrm{Mod}_\alpha\rightarrow\mathrm{Mod}_{\alpha\{\iota\}}$ defined above can be described explicitly in terms of the ind-adjoint to $\mu\rightarrow\mu\{\iota\}$ (compare \S\ref{SPAN-quotient}). To be precise, the semiring homomorphism $\alpha^\circ\rightarrow\alpha^\circ[\iota]$ induces a homomorphism
\[ (-)[\iota]:\B\mu=\B(\mu;\alpha^\circ)\rightarrow\B(\mu;\alpha^\circ[\iota]), \]
where we write $\B(\mu;\alpha)$ for the set of ideals of $\mu$ that are also $\alpha$-submodules. Its right adjoint identifies the term on the right with the set of $\iota$-invariant ideals of $\mu$. Any $\alpha$-module homomorphism $\mu\rightarrow\nu$ to a semimodule $\nu$ contracted by $\iota$ factors uniquely through the image of $\mu$ in $\B(\mu,\alpha^\circ[\iota])$. Thus, $\mu\{\iota\}\subseteq\B\mu$ is the subset of $\iota$-invariant ideals that are generated as such by a single element.

\begin{lemma}\label{.5RING-contract-construction}The image of $\mu$ in $\B(\mu,\alpha^\circ[\iota])$ is uniquely isomorphic to $\mu\{\iota\}$.\end{lemma}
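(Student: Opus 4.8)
The plan is to verify that the subset $\mu\{\iota\}\subseteq\B(\mu;\alpha^\circ[\iota])$ of $\iota$-invariant ideals of $\mu$ that are generated, as $\iota$-invariant ideals, by a single element, satisfies the universal property defining the contraction functor applied to $\mu$: namely, it is an $\alpha$-module on which $\iota$ acts by contractions, equipped with a homomorphism $\mu\to\mu\{\iota\}$, initial among such. All the ingredients have been assembled in the paragraph immediately preceding the statement; what remains is to assemble them.

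First I would set up the notation carefully. Since $\alpha^\circ\to\alpha^\circ[\iota]$ is a semiring homomorphism, base change gives a span homomorphism $(-)[\iota]\colon\B(\mu;\alpha^\circ)\to\B(\mu;\alpha^\circ[\iota])$ (this is the functor $\B g$ of Definition~\ref{SPAN-def-B} applied to the relevant ring map, in the internal sense), and its right adjoint $(-)[\iota]^\dagger$ exists because $\B f$ always preserves suprema (\S\ref{SPAN-quotient} and the adjunction discussion). I would then invoke the identification, asserted just before the lemma, that the image of $(-)[\iota]$ — equivalently the essential image of the monad $(-)[\iota]^\dagger\circ(-)[\iota]$ — is precisely the poset of $\iota$-invariant ideals of $\mu$; this follows from the fact that an ideal $\kappa\hookrightarrow\mu$ is stable under the $\alpha^\circ[\iota]$-action exactly when it is $\iota$-invariant, since $\alpha^\circ[\iota]$ is generated over $\alpha^\circ$ by $\iota$. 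The composite $p:=(-)[\iota]^\dagger\circ(-)[\iota]\colon\B\mu\to\B\mu$ is then a closure operator onto the $\iota$-invariant ideals, and $\mu\{\iota\}$ is by definition $p$ applied to the principal ideals $\mu_{\leq X}$, $X\in\mu$, i.e. the set-theoretic image of $\mu$ under $\mu\to\B\mu\xrightarrow{p}\B\mu$.

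Next I would check the universal property. Given any $\iota$-contracted $\alpha$-module $\nu$ and an $\alpha$-module homomorphism $\varphi\colon\mu\to\nu$, I must produce a unique $\alpha$-module homomorphism $\bar\varphi\colon\mu\{\iota\}\to\nu$ with $\bar\varphi\circ(\mu\to\mu\{\iota\})=\varphi$. The map $\B\varphi\colon\B\mu\to\B\nu$ sends $\iota$-invariant ideals to $\iota$-invariant ideals, but since $\iota$ contracts $\nu$, \emph{every} ideal of $\nu$ is $\iota$-invariant, so there is no obstruction; the point is rather that $\B\varphi$ factors through $p$, i.e. is invariant under the action of $(-)[\iota]^\dagger(-)[\iota]$. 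This is the span-quotient mechanism of Lemma~\ref{SPAN-adj-coeq}: because $\nu$ is $\iota$-contracted, the two composites $\B\mu\rightrightarrows\B\mu\to\B\nu$ coming from the $\iota$-action agree, so $\B\varphi$ coequalises them and hence factors (uniquely, on the image) through $p(\B\mu)$, in particular restricting to the required $\bar\varphi$ on $\mu\{\iota\}=p(\mu)$. Uniqueness is immediate because $\mu\to\mu\{\iota\}$ is, by construction, an epimorphism of spans (surjective on underlying sets). One then checks $\bar\varphi$ is a homomorphism of $\alpha$-modules, which is routine since all structure maps in sight are restrictions of the already-homomorphic $\B\varphi$ and $p$.

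The main obstacle, I expect, is not any single hard computation but rather pinning down precisely the claim that $p(\mu)$ is closed under the $\alpha$-module operations $\vee$ and $+$ and carries an $\alpha$-module structure for which $\mu\to p(\mu)$ is a homomorphism — in other words, that the set-theoretic image $p(\mu)$ inside the lattice $\B\mu$ really is a \emph{sub-quotient} in the semimodule sense and not merely a subposet. This is exactly the subtlety flagged in Lemma~\ref{SPAN-adj-coeq} and \S\ref{SPAN-quotient}: one shows that $p$, being the right adjoint to an inclusion of a sub-join-semilattice closed under the relevant operations, is a lax-monoidal (here, $+$- and $\vee$-compatible) idempotent monad on $\B\mu$, so that $p(\mu)$ inherits operations making $\mu\to p(\mu)$ a homomorphism and making $p(\mu)$ the coequaliser presentation of $\mu\{\iota\}$. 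Once this is in hand, the comparison with the abstractly-defined $\mu\{\iota\}$ is forced by the universal property, and the uniqueness of the isomorphism is automatic.
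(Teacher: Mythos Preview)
Your proposal is correct and follows exactly the approach the paper takes: the paper gives no formal proof of this lemma, treating it as a summary of the paragraph immediately preceding it, and your proposal simply fleshes out that paragraph's sketch (the adjunction $(-)[\iota]\dashv(-)[\iota]^\dagger$, the identification of $\iota$-invariant ideals, and the factorisation of any map to an $\iota$-contracted module through the image, via the mechanism of \S\ref{SPAN-quotient}). The only remark worth adding is that the ``main obstacle'' you flag---that $p(\mu)$ inherits the $\alpha$-module structure---is not something the paper addresses separately either; it is implicitly subsumed in the reference to Lemma~\ref{SPAN-adj-coeq}, so your level of detail already exceeds the paper's.
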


\begin{eg}The ideal semiring $\B^{(c)}A$ of a ring $A$ is a contracting semiring. If $B$ is any $A$-algebra, then $\B(B;A)^\circ$ is the image of $\B A\rightarrow\B(B;A)$. Indeed, the additive identity of $\B B$ is precisely the image of the unit $A\rightarrow B$ of the algebra.\end{eg}

\begin{eg}[Semivaluations]\label{eg-vals}Let $A$ be a ring. A \emph{semivaluation} on $A$ is a map $\val:A\rightarrow\alpha$ into a semiring $\alpha$ which is a seminorm of the underlying Abelian group, and for which
\[ \val(fg)=\val f + \val g.\]
It is said to be \emph{contracting} or \emph{integral} if $\alpha$ is a contracting semiring.

Let $A$ now be a non-Archimedean ring. A (non-Archimedean) semivaluation of $A$ is a continuous valuation on $A$ whose restriction to $A^+$ is integral. Any such valuation factors uniquely through the adic semiring $\B^c(A;A^+)$ (def. \ref{def-adic}). That is, this semiring corepresents the functor 
\[ \Hom(\B^c(A;A^+),-)\cong\frac{1}{2}\mathrm{Val}(A,A^+,-):\frac{1}{2}\mathbf{Ring}_t\rightarrow\mathbf{Set} \]
of continuous semivaluations on $A$.\end{eg}

\begin{prop}\label{.5RING-prop-contract}Let $f:A\rightarrow B$ be a ring homomorphism. The extension of scalars transformation $\B(-;A)\rightarrow\B(-;B)$ induces an isomorphism
\[ \B(-;A)\oplus_{\B(B;A)}\B B \cong {}^\circ\B(-;A) \cong \B(-;B) \]
of functors $\mathrm{Mod}_B\rightarrow \mathrm{Mod}_{\B(B;A)}$, and similarly
\[ {}^\circ\B^c(-;A)\cong \B^c(-;B)\]
as functors $\mathrm{Mod}_B\rightarrow \mathrm{Mod}_{\B^c(B;A)}$.\end{prop}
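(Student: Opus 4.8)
The proposition asserts that the extension-of-scalars transformation exhibits $\B^{(c)}(M;B)$ as the \emph{universal contracting quotient}, in the sense of \S\ref{.5RING-contract}, of the $\B^{(c)}(B;A)$-module $\B^{(c)}(M;A)$. The plan is to verify this universal property directly and then to assemble the two displayed isomorphisms from it. Write $\eta_M\colon\B^{(c)}(M;A)\to\B^{(c)}(M;B)$ for the transformation in question: on elements it sends an $A$-submodule $N\subseteq M$ to the $B$-submodule $BN$ that it generates. Here $\B^{(c)}(M;B)$ is viewed as a $\B^{(c)}(B;A)$-module by restriction of scalars along the surjective semiring homomorphism $\B^{(c)}(B;A)\to\B^{(c)}B$, $L\mapsto BL$.

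First I would record the formal features of $\eta_M$. It is a $\B^{(c)}(B;A)$-module homomorphism, which is part of the functoriality set up in \S\ref{fungus}, and it is surjective, since a $B$-submodule $Q$ equals $\eta_M$ of the $A$-submodule generated by any $B$-generating set of $Q$ (a finite one, in the $\B^c$ case). Moreover $\B^{(c)}(M;B)$ is a \emph{contracting} $\B^{(c)}(B;A)$-module: it is a module over the ideal semiring $\B^{(c)}B$, which is contracting, and contracting-ness of a module is preserved under restriction of scalars along any semiring homomorphism.

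The heart of the matter is the universal property: every $\B^{(c)}(B;A)$-module homomorphism $\phi\colon\B^{(c)}(M;A)\to\nu$ with $\nu$ contracting should factor, uniquely since $\eta_M$ is onto, through $\eta_M$. By surjectivity this is equivalent to the \emph{fibre lemma}: $BP_1=BP_2$ forces $\phi(P_1)=\phi(P_2)$. To prove it, note that the inclusion $P_2\subseteq BP_1$ can always be witnessed by a single $N\in\B^{(c)}(B;A)$ containing $1$ with $P_2\le NP_1$ (and then also $P_1\le NP_1$): for $\B$ one takes $N=B$, the top element of $\B(B;A)$; for $\B^c$, where $\B^c(B;A)$ has no top element, one uses that $P_2$ is finitely generated over $A$ and that each of its generators lies in $BP_1$, so finitely many elements of $B$ suffice, and one takes for $N$ the finitely generated $A$-submodule they span together with $1$. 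Since $NP_1=N+P_1$ in $\B^{(c)}(M;A)$ and $\nu$ is contracting, $\phi(P_2)\le\phi(N+P_1)=N+\phi(P_1)\le\phi(P_1)$, and by symmetry $\phi(P_1)=\phi(P_2)$. The resulting map $\bar\phi(Q):=\phi(P)$, for any $P$ with $BP=Q$, is then well defined, and one checks routinely — using distributivity and $B(LP)=(BL)(BP)$ — that it is a $\B^{(c)}(B;A)$-module homomorphism.

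This yields a natural isomorphism $\B^{(c)}(M;B)\cong{}^\circ\B^{(c)}(M;A)$ — naturality in $M$ being automatic for a universal construction — where ${}^\circ\B^{(c)}(M;A)\cong\B^{(c)}(M;A)\oplus_{\B^{(c)}(B;A)}{}^\circ\B^{(c)}(B;A)$ by the formula of \S\ref{.5RING-contract}. Specialising to $M=B$ identifies ${}^\circ\B^{(c)}(B;A)$ with $\B^{(c)}B$ as $\B^{(c)}(B;A)$-algebras, and substituting this back gives $\B^{(c)}(M;A)\oplus_{\B^{(c)}(B;A)}\B^{(c)}B\cong{}^\circ\B^{(c)}(M;A)$, which completes the displayed chain in both the $\B$ and the $\B^c$ versions. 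I expect the only genuinely delicate point to be the fibre lemma in the $\B^c$ case: since $\B^c(B;A)$ has no top element, one must actually extract a finitely generated witness $N$ rather than invoke $B$ itself — everything else is formal bookkeeping with the constructions of \S\ref{fungus} and \S\ref{.5RING-contract}.
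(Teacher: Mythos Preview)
Your proof is correct and follows the same overall strategy as the paper: verify directly that $\eta_M$ satisfies the universal property of the contracting quotient. The execution differs in a small but pleasant way. Rather than proving a fibre lemma, the paper constructs the factorisation explicitly by precomposing $p$ with the right adjoint $\B f^\dagger:\B(M;B)\to\B(M;A)$ (the forgetful map that views a $B$-submodule as an $A$-submodule); the triangle then commutes because $\B f^\dagger\B f\geq\iden$ and $\alpha$ is contracting. This is your fibre lemma in the special case $P_1=N$, $P_2=BN$, with the canonical witness $B$ built in. For $\B^c$ the paper does not extract a finite witness $N$ as you do, but instead reduces to the $\B$ case by applying the lattice completion $\B$ throughout and using $\B\B^c(-;A)\cong\B(-;A)$. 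Your direct finiteness argument is a bit more hands-on but has the virtue of treating $\B$ and $\B^c$ uniformly, whereas the paper's adjoint trick is slicker for $\B$ but needs the extra reduction step for $\B^c$.
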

\begin{proof}Let $M\in\mathrm{Mod}_B$. We will see that the morphism $\B(M;A)\rightarrow\B(M;B)$ satisifies the universal property of ${}^\circ\B(M;A)$.

Let $p:\B(M;A)\rightarrow\alpha$ be a $\B(B;A)$-module map. Precomposing with the forgetful map $\B f^\dagger:\B(-;B)\rightarrow\B(-;A)$ gives a map 
\[p\B f^\dagger:\B(M;B)\rightarrow\alpha.\]
Now $\B f^\dagger\B f$ is not the identity on $\B(-;A)$, but the endomorphism $\iden+A\geq \iden$. However, since $\alpha$ is contracting, the diagram
\[\xymatrix{ \B(M;B)\ar[dr]^{p\B f^\dagger} \\ \B(M;A) \ar[u]^{\B f}\ar[r]_-p & \alpha }\]
nonetheless commutes. In other words, $p\B f^\dagger$ exhibits $\B(-;B)$ as ${}^\circ\B(-;A)$.

As for the finite version, since $\B\B^c(-;A)\cong\B(-;A)$, applying $\B$ across the board embeds the picture into the one above.\end{proof}

\subsubsection{Freely contracting semirings}

Let $\alpha$ be a contracting semiring. The forgetful functor $\left(\frac{1}{2} \mathbf{Ring}_{\leq0}\right)_{\alpha}\rightarrow \frac{1}{2} \mathbf{Ring}_\alpha\rightarrow\mathbf{Set}$ commutes with limits and therefore has a left adjoint $\alpha\{-\}$. It is the composite of left adjoints $\alpha\mapsto\alpha[-]\mapsto{}^\circ\alpha[S]$.

\begin{defn}\label{.5RING-def-freec}Let $\alpha$ be a contracting semiring, $S$ a set (or $\alpha$-module); $\alpha\{S\}$ is called the \emph{freely contracting semiring} on $S$. If $\alpha$ is any semiring, we may also write $\alpha\{S\}:=\alpha\oplus_{\alpha^\circ}\alpha^\circ\{S\}$.\end{defn}

Note $\alpha\{S\}\cong{}^\circ(\alpha^\circ[S])\oplus_{\alpha^\circ}\alpha \cong \alpha[S]/(S\leq 0)=\alpha[S]/(S\vee0=0)$ (semiring quotient).

The freely contracting functor commutes with colimits; in particular we have the base change
\[\alpha\{S\}\cong\alpha\oplus\B\{S\}\]
and composition 
\[ \alpha\{S\sqcup T\}=\alpha\{S\}\oplus_\alpha\alpha\{T\}\] for any $\alpha\in\frac{1}{2}\mathbf{Ring}$.

\begin{eg}If $A$ is a complete DVR with maximal ideal $\lie m$, then its ideal semiring $\B^cA$ is freely contracting on the element $\lie m$.

This can be understood as an explicit construction of a freely contracting semiring on one element. More generally, $\B\{S\}$ for arbitrary $S$ can be described as the semiring of \emph{monomial} ideals in a polynomial ring $k[S]$ on the same set of variables.\end{eg}

\begin{eg}\label{non-eg}Let $\Delta=[-\infty,0]$ denote the infinite half-line, and consider the semiring $\mathrm{CPA}_\Z(\Delta,\R_\vee)$ of its convex, piecewise-affine functions with integer slopes (e.g. \ref{eg-first}). It is generated over $\R_\vee$ by a single, contracting element $X$. However, this generation is \emph{not} free: it satisfies additional relations, such as
\[ n(Y_1\vee Y_2)=nY_1\vee nY_2 \]
for all $n\in\N$ and $Y_i\in\mathrm{CPA}_\Z(\Delta,\R_\vee)$. We can see that these relations are not satisfied in $\R_\vee\{X\}$ by thinking of it as the set of monomial $\sh O_K\{x\}$-submodules of $K\{x\}$, where $K$ is any non-Archimedean field with value group $|K|=\R$.

The key difference between free semirings and function semirings is that the latter are \emph{cancellative}, while the former are not. In the present example, cancellativity can be enforced by imposing the above list of relations in $\R_\vee\{X\}$. The resulting \emph{universal cancellative quotient} $\R_\vee\{X\}\rightarrow\mathrm{CPA}_\Z(\Delta,\R_\vee)$ is infinitely presented. In particular, $\mathrm{CPA}_\Z(\Delta,\R_\vee)$ is not a finitely presented $\R_\vee$-algebra.\end{eg}

\subsection{Projective tensor sum}

The join of two continuous $\B$-module homomorphisms is continuous. The category of topological $\B$-modules is therefore enriched over $\mathrm{Mod}_\B$. We extend this to an \emph{internal} Hom functor by equipping the continuous homomorphism $\B$-module $\Hom_{\mathrm{Mod}_{\B,t}}(\alpha,\beta)$ with the weak topology with respect to the evaluation maps \[\mathrm{ev}_X:f\mapsto f(X)\] for $X\in\alpha$. In other words, it carries the topology of \emph{pointwise convergence}. A fundamental system for this topology is given \[ \Hom_{\mathrm{Mod}_{\B,t}}(\alpha,\beta)^u:=\{U_{X,Y}:=\{f|f(X)\subseteq Y\}| X\in\alpha, Y\in\beta^u\}, \] a formula that should evoke the compact-open topology of mapping spaces in general topology. 

\begin{eg}This is not the only reasonable way of topologising the continuous Hom $\B$-module, though it is of course the weakest. For instance, one could also define a topology of \emph{uniform convergence} as the weak topology with respect to the natural embedding
\[ \Hom(\alpha,\beta)\rightarrow \Hom(\B\alpha,\B\beta), \] where the right-hand term is equipped with the usual topology. These topologies are in general inequivalent; in fact, this embedding is not always continuous in the topology of pointwise convergence.

For example, a net $\{f_n\}_{n\in\N}$ in $\Hom(\Z_\vee,\Z_\vee)$ tends to $-\infty$ as $n\rightarrow\infty$ if and only if $f_n(x)\rightarrow-\infty$ for all $x\in\Z$. For the same net to die away in $\Hom(\B\Z_\vee,\B\Z_\vee)$, in addition $\{\sup_{x\in\Z}f_n(x)\}_{n\in\N}$ must tend to $-\infty$ (and in particular, be finite for cofinal $n\in\N$).\end{eg}

We can also extend the monoidal structure to $\mathrm{Mod}_{\B,t}$. The \emph{projective topological tensor sum} of topological $\B$-modules $\alpha,\beta$ is tensor sum $?\alpha\oplus ?\beta$ equipped with the strong topology with respect to the maps \[e_Y:\alpha\rightarrow\alpha\oplus\beta,\quad X\mapsto X\oplus Y\]
for $Y\in\beta$ and $e_X$ for $X\in\alpha$. If $\alpha,\beta$ are lattices, a fundamental system is generated by elements \[X\oplus \beta \vee \alpha\oplus Y,\quad X\in\alpha^u,Y\in\beta^u.\] It is more difficult to give a fundamental system for general $\alpha$ and $\beta$.

\begin{eg}\label{TOP-injective}The ideal $\B$-module functor $\B$ is not lax monoidal for the projective topology. For instance, the $\B$-module $\Z_\vee\oplus\Z_\vee$ is topologised so that a net $X_n\oplus Y_n$ dies away if and only if either $X_n$ dies and $Y_n$ is bounded, or vice versa. However, from the description of the fundamental system it follows that for the same net to die away in $\B\Z_\vee\oplus\B\Z_\vee$ it is enough that either $X_n$ or $Y_n$ does. The natural lattice homomorphism
\[ \B\Z_\vee\oplus\B\Z_\vee \rightarrow \B(\Z_\vee\oplus\Z_\vee) \]
is discontinuous.

It is, however, lax monoidal on bounded $\B$-modules, and in particular, lattices.\end{eg}

\begin{prop}The topological tensor sum and continuous internal Hom define a closed, symmetric monoidal structure on $\mathrm{Mod}_{\B,t}$ extending that of $\mathrm{Mod}_\B$.\end{prop}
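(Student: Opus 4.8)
The plan is to reduce everything to the closed symmetric monoidal structure on $\mathbf{Span}$ established in Proposition~\ref{.5RING-closed}, treating the topologies as extra data to be transported through the known structure maps. Since the topological tensor sum has underlying span $?\alpha\oplus ?\beta$ and $\Hom_{\mathbf{Span}_t}(\alpha,\beta)$ is the sub-span of $\Hom_{\mathbf{Span}}(?\alpha,?\beta)$ on the continuous homomorphisms, the only content of the statement is that the \emph{projective} (strong) topology on $\oplus$ and the \emph{pointwise} (weak) topology on the internal Hom are mutually compatible with the $\mathbf{Span}$-structure; the coherence axioms will then come for free because the forgetful functor $?\colon\mathbf{Span}_t\to\mathbf{Span}$ is faithful.

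First I would check that $\oplus$ is a bifunctor on $\mathbf{Span}_t$ and that the associativity, symmetry and unit isomorphisms of $\mathbf{Span}$ lift to homeomorphisms. For bifunctoriality: given continuous $f\colon\alpha\to\alpha'$, $g\colon\beta\to\beta'$, one has $(f\oplus g)\circ e_Y=e_{g(Y)}\circ f$ and $(f\oplus g)\circ e_X=e_{f(X)}\circ g$, so continuity of $f\oplus g$ is immediate from the universal property of the strong (final) topology defining $\alpha\oplus\beta$. The symmetry isomorphism is a homeomorphism because the defining family of insertions is symmetric in the two variables. For associativity, I would use transitivity of strong topologies, together with the observation that a strong topology is unchanged upon adjoining pointwise joins of its defining maps (and the bilinearity of the insertions), to see that both $(\alpha\oplus\beta)\oplus\gamma$ and $\alpha\oplus(\beta\oplus\gamma)$ carry the strong topology with respect to the three single-variable insertions of pure tensors, and these families match under the canonical associator of underlying spans. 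The unit is the discrete Boolean semifield $\B$, for which $\alpha\oplus\B\cong\alpha$ is visibly a homeomorphism since under it the insertions become the identity of $\alpha$ and continuous constant-type maps $\B\to\alpha$. Faithfulness of $?$ then propagates the pentagon, triangle and hexagon identities from Proposition~\ref{.5RING-closed}.

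The heart of the argument is the adjunction $-\oplus\alpha\dashv\Hom_{\mathbf{Span}_t}(\alpha,-)$. Starting from a $\mathbf{Span}$-homomorphism $f\colon\alpha\oplus\beta\to\gamma$ with transpose $\widetilde f(X)=f\circ e_X$, I would argue as follows: by definition of the strong topology on $\alpha\oplus\beta$, $f$ is continuous if and only if every $f\circ e_X\colon\beta\to\gamma$ and every $f\circ e_Y\colon\alpha\to\gamma$ is continuous; the first clause says precisely that $\widetilde f$ takes values in the sub-span $\Hom_{\mathbf{Span}_t}(\beta,\gamma)$, while, since $f\circ e_Y=\mathrm{ev}_Y\circ\widetilde f$, the second clause says precisely that $\widetilde f\colon\alpha\to\Hom_{\mathbf{Span}_t}(\beta,\gamma)$ is continuous for the pointwise topology. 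This yields the desired bijection $\Hom_{\mathbf{Span}_t}(\alpha\oplus\beta,\gamma)\cong\Hom_{\mathbf{Span}_t}(\alpha,\Hom_{\mathbf{Span}_t}(\beta,\gamma))$, natural in all three variables by naturality of the underlying $\mathbf{Span}$-adjunction; and it is a homeomorphism for the pointwise topologies on the two hom-spans because under it $\mathrm{ev}_{X\oplus Y}$ corresponds to $\mathrm{ev}_Y\circ\mathrm{ev}_X$ and evaluations at joins are joins of evaluations, so the generating families match. Continuity of the evaluation $\Hom_{\mathbf{Span}_t}(\alpha,\beta)\oplus\alpha\to\beta$ is the same transpose computation applied to the identity. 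Finally ``extending that of $\mathbf{Span}$'' amounts to noting that $?$ is lax monoidal and restricts to a strong monoidal functor on discrete objects, the projective topology on a tensor sum of discrete spans being again discrete.

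The one place I expect to need genuine care, rather than routine bookkeeping, is the third paragraph: one must verify that the definitions of the projective topology on $\oplus$ (``strong with respect to the insertions $e_X,e_Y$'') and of $\Hom_{\mathbf{Span}_t}$ (``weak with respect to the evaluations $\mathrm{ev}_X$'') are set up as exact transposes of one another, so that the $\mathbf{Span}$-level currying bijection both restricts to and reflects continuity on the nose. This rests squarely on the characterisations of weak and strong topologies, and of strong topologies through composites, established in \S\ref{TOP}; everything else is bookkeeping with those characterisations.
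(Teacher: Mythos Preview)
Your proposal is correct and follows essentially the same approach as the paper. The paper's proof is much terser: it simply checks that the unit $\alpha\to\Hom(\beta,\alpha\oplus\beta)$ and counit $\Hom(\alpha,\beta)\oplus\alpha\to\beta$ of Proposition~\ref{.5RING-closed} are continuous, which amounts to exactly the observation you spell out in your third paragraph---that composing the unit with an evaluation $\mathrm{ev}_Y$ yields an insertion $e_Y$, so the weak topology on $\Hom$ and the strong topology on $\oplus$ are set up as transposes. Your direct hom-set bijection and the paper's unit/counit check are two presentations of the same computation; you additionally verify that the associator, symmetry, and unitors are homeomorphisms, which the paper leaves implicit.
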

\begin{proof}We only need to check that the unit and counit maps of proposition \ref{.5RING-closed} are continuous. For the unit $\alpha\rightarrow\Hom(\beta,\alpha\oplus\beta)$, which by the definition of the projective topology factors through the continuous Hom module, it is enough that the compositions $e_X:\alpha\rightarrow\alpha\oplus\beta$ with the evaluations at $X\in\beta$ are continuous. Continuity of the counit is similarly tautological.\end{proof}

\begin{prop}\label{RING-open}Let $\alpha\rightarrow\beta$ be strong. Then for any topological $\B$-module $\gamma$, $\alpha\oplus\gamma\rightarrow\beta\oplus\gamma$ is strong.\end{prop}
\begin{proof}This follows from the fact that if $fg$ and $g$ are strong (families of) maps, then $f$ is strong.\end{proof}

\begin{defns}\label{tsemiring}A \emph{topological semiring} is a commutative algebra in $(\mathrm{Mod}_{\B,t},\oplus)$. A topological semiring $\alpha$ is \emph{adic} if $\alpha^u$ is stable in $\B\alpha$ under addition, that is, if addition by an open element is an open map (def.\ \ref{TOP-open}). The category of adic semirings and continuous homomorphisms is denoted $\frac{1}{2}\mathbf{Ring}_t$. By proposition \ref{RING-open}, it is stable in the category of all topological semirings under tensor sum.

%A semiring (or $\B$-module) homomorphism $\alpha\rightarrow\beta$ is \emph{adic} if the topology of $\beta$ is generated by the image of $\alpha^u$ in $\B\beta$.
\label{def-adic}\end{defns}

In the sequel, all semirings will be assumed adically topologised, and so we will typically omit the adjectives `topological' and `adic'. A non-Archimedean ring $A$ (def. \ref{ADIC-def-na}), resp. homomorphism $f:A\rightarrow B$, is adic if and only if $\B^c(A;A^+)$ is adic, resp. $\B f$ is strong.

\begin{eg}\label{semifields'}The semifields $H_\vee$ associated to totally ordered Abelian groups (e.g. \ref{semifields}) are adic with respect to the topology of e.g. \ref{semifields-top}. All our examples of adic semirings will be adic over some $H_\vee$. The convergence condition for such semirings will therefore be that a net $X_n\in\alpha$ converges to $-\infty$ if and only if for each `constant' $r\in H_\vee$, cofinally many $X_n\leq r$ in $\alpha$.

For instance, the semirings $\R_\vee\rightarrow\mathrm{CPA}_*(X,\R_\vee)$ (e.g. \ref{eg-second}) are of this form.

Any continuous semiring homomorphism $H_\vee\rightarrow\B$ (where $\B$ is as always discrete) is an isomorphism. On the other hand, if $H\subseteq\R$ has rank one, then there is always a unique homomorphism $H_\vee^\circ\rightarrow\B$, the \emph{reduction} map. One can still define this map for general totally ordered semifields, but it is no longer unique.\end{eg}

\begin{eg}\label{eg-adic-Noetherian}An \emph{element of definition} of an adic semiring $\alpha$ is a principal open $I\in\alpha^u\cap\alpha$ such that $\alpha$ is $\Z_\vee^\circ$-adic with respect to the induced homomorphism
\[ \Z_\vee^\circ\rightarrow\alpha,\quad -1\mapsto I. \]
The join of two elements of definition is an element of definition. If $\alpha$ is Noetherian and has an element of definition, there is therefore also a \emph{largest} element of definition, and hence a canonical \emph{largest} $\Z_\vee^\circ$-algebra structure on $\alpha$. It thereby attains also a canonical reduction $\overline\alpha=\alpha^\circ\oplus_{\Z_\vee^\circ}\B$. Note that this $\Z_\vee^\circ$-algebra structure need not be unique or functorial, even for adic semiring homomorphisms.

If $X$ is any Noetherian formal scheme, $\B^c\sh O_X$ attains a canonical $\Z_\vee^\circ$-algebra structure, and the reduction $\overline{\B^c\sh O_X}\cong\B^c\sh O_{\overline X}$. Again, this is not to say that $\B^c$ defines a functor with values in $\mathrm{Alg}_{\Z_\vee^\circ}$.\end{eg}

\begin{eg}\label{eg-convergent-series}The free and freely contracting semirings $\alpha[X],\alpha\{X\}$ over an adic semiring $\alpha$ are topologised adically over $\alpha$.

Let $A$ be a non-Archimedean ring. The convergent power series ring $A\{x\}$ may be constructed as a certain completion of $A[x]$; in terms of semirings, it is the completion with respect to the topology induced by \[A[x]\rightarrow\B^cA[X]\rightarrow\B^cA\{X\},\] where the left-hand map is the unique valuation sending $x$ to $X$.\end{eg}

\begin{eg}[Discrete valuations]Let $X$ be an irreducible variety over a field $k$. A classic result of birational geometry states that `algebraic' discrete valuations $\val:K\rightarrow\Z_\vee$ on the function field $K$ of $X$, integral on $\sh O_X$, are in one-to-one correspondence with prime Cartier divisors on blow-ups of $X$. 

More specifically, let $\widetilde X\rightarrow X$ be a blow-up, $D\subset\widetilde X$ a prime Cartier divisor, and consider the formal completion $i:\widehat D\rightarrow\widetilde X$. Then the order of vanishing against $D$ is a continuous discrete valuation on the sheaf $i^*K$ of $\sh O_{\widehat D}$-modules. Conversely, given any discrete valuation $v$ on $K$, then provided that the associated residue field is of the correct dimension over $k$ (the algebraicity condition), one can construct the generic point of a $\widehat D$ giving rise to $v$ in this way as the formal spectrum of the completed ring of integers.

We can couch this correspondence in terms of semiring theory as follows. Let $U:=\widetilde X\setminus D$, and consider $(\widehat{\sh O}_U;\widehat{\sh O}_{\widetilde X})$ as a sheaf of non-Archimedean $\sh O$-algebras on the completion $\widehat D$. The reduction $D$ corresponds to an invertible element $I\in\B^c(\widehat{\sh O}_U;\widehat{\sh O}_{\widetilde X})$, and induces an adic homomorphism \[\nu^\dagger:\Z_\vee\hookrightarrow \B^c(\widehat{\sh O}_U;\widehat{\sh O}_{\widetilde X})\] of semirings over $\widehat D$; here $\Z_\vee$ denote the locally constant sheaf.

By Krull's intersection theorem, $\bigcap_{n\in\N}I^n=0$, that is, $\nu^\dagger$ preserves infima. It therefore has a left adjoint 
\[\nu:\B^c(\widehat{\sh O}_U;\widehat{\sh O}_{\widetilde X})\rightarrow \B\Z_\vee=\Z_\vee\sqcup\{\infty\}, \quad J \mapsto \inf\{n\in\N|J\leq nI\}. \]
In fact, this adjoint is finite (i.e. does not achieve the value $\infty$), since every section of $\sh O_U$ becomes a section of $\sh O_{\widetilde X}$ after multiplication by a power of $I$; moreover $\nu^{-1}(-\infty)=\{-\infty\}$. We have therefore defined a \emph{complete, discrete norm}
\[ \nu:\widehat{\sh O}_U\rightarrow\Z_\vee\]
over $\widehat D$.

For this norm to define a \emph{valuation}, the left adjoint $\nu$ must commute with addition. In general this property is much more delicate than the existence and finiteness of $\nu$. In our setting, a study of the local algebra shows directly that this happens exactly when $D$ is prime.
In this case, if $\Spec A=V\subseteq X$ is an affine subset meeting $D$, then localisation induces an extension $K\rightarrow\Z_\vee$ of the induced discrete valuation on $A$. This extension is \emph{not} left adjoint to the obvious map $\Z_\vee\rightarrow\B^c(K;\sh O_X)$, which is typically infinite (not to mention discontinuous).

For the converse statement, note only that discrete valuations on $K$, integral on some model $X=\Spec A$, are the same thing as homomorphisms \[ v:\B^c(K;A)\rightarrow\Z_\vee. \] This homomorphism has a (discontinuous) right ind-adjoint $v^\dagger$; the algebraicity condition is equivalent to this ind-adjoint being the extension of an ordinary adjoint, in which case $v^\dagger(-1)$ is a finitely generated ideal on $\Spec A$ which may be blown up to obtain $D$.\end{eg}

\subsubsection{Projective tensor product}\label{.5RING-proj}

Let $A$ be a non-Archimedean ring. The projective tensor product $M\tens_AN$ of locally convex $A$-modules $M$ and $N$ is strongly topologised with respect to the map
\[ \B^c(M;A^+)\oplus\B^c(N;A^+) \rightarrow \B(M\tens_AN;A^+).\]
We can describe this topology in terms of linear algebra alone: it is the strong topology with respect to the maps
\[ e_y:M\rightarrow M\tens_AN,\quad x\mapsto x\tens y \]
for $y\in N$, and similarly $e_x$ for $x\in M$. A sequence converges to zero in $M\tens_AN$ if and only if it is a sum of sequences of the form $x_n\tens y$ and $x\tens y_n$, where $x_n$ and $y_n$ converge to zero in $M$ and $N$, respectively.

With this definition, the monoidal functoriality of the \emph{free} $\B$-module $\B^c$ spelled out in \S\ref{fungus} lifts to the topological setting; for example, $\B^c(M;A^+)$ is a topological $\B^c(A;A^+)$-module. The corresponding statements for $\B$ are false unless $A=A^+$.

Similarly, we topologise $\Hom(M_1,M_2)$ weakly with respect to
\[ \Hom_A(M_1,M_2)\rightarrow\Hom_{\mathrm{Mod}_{\B,t}}(\B^cM_1;\B^cM_2),\quad f\mapsto \B^c f. \] A sequence of maps $\{f_n\}_{n\in\N}$ converges to zero if and only if for every finitely generated submodule $N\subseteq M_1$, every sequence $x_n\in f_n(N)$ converges to zero. This `finite-open' topology is the weak topology with respect to the evaluation maps
\[ \mathrm{ev}_x:\Hom_A(M_1,M_2)\rightarrow M_2,\quad f\mapsto f(x)\]
for $x\in M_1$.

\section{Localisation}\label{LOC}

Let $\mathbf C$ be a category with filtered colimits, $M$ an object. In this setting, we can define the (free) \emph{localisation} of $M$ at an endomorphism $s\in\End_{\mathbf C}(M)$ as the sequential colimit
\[ M[s^{-1}]:=\colim\left[ M \stackrel{s}{\rightarrow} M \stackrel{s}{\rightarrow} \cdots \right] \]
It is universal among objects under $M$ for which $s$ extends to an automorphism. More generally, by composing colimits the localisation with respect to any set $S$ of commuting endomorphisms is defined.

If $\mathbf C$ is a category of modules over some algebra $A$, then in particular we can localise modules with respect to an element $s\in A$. If $A$ is commutative, and $M$ carries its own $A$-algebra structure, then the localisation $M[s^{-1}]$ is also an ($M$-)algebra.

The general theory specialises to the case of topological semirings; we write $\alpha[-S]$ for the localisation of $\alpha$ at an element $S$.

\begin{eg}Let $A$ be a domain, $\B^cA$ the finite ideal semiring. If $s\in A$, then $\B^cA[-(s)]\cong\B^c(A[s^{-1}];A)$. In order to obtain the ideal semiring of $A[s^{-1}]$, we need to enforce a contraction $(s)\leq0$.\end{eg}

\begin{eg}Suppose that $S\in\alpha$ is open. Then $\alpha[-S]$ is an adic $\alpha$-algebra (def. \ref{def-adic}).

This corresponds to the fact that if $A$ is an adic, linearly topologised ring, and $f\in A$ generates an open ideal, then $A[f^{-1}]$ is an adic $A$-algebra.\end{eg}

\begin{defn}\label{LOC-Tate}A topological semiring is \emph{Tate} if $\alpha^\circ$ is adic, and $\alpha$ is a free localisation of $\alpha^\circ$ at an additive family of open elements. The full subcategory of $\frac{1}{2}\mathbf{Ring}_t$ whose objects are Tate is denoted $\frac{1}{2}\mathbf{Ring}_T$.\end{defn}

In particular, any contracting semiring is Tate. A non-Archimedean ring $A$ is Tate (def. \ref{ADIC-def-na}) if and only if $\B^c (A;A^+)$ is.

\subsection{Bounded localisation}

In non-Archimedean geometry, localisations must be supplemented by certain completions, which control the radii of convergence of the inverted functions. For the geometry of skeleta to reflect analytic geometry, there must therefore be a corresponding concept for semirings.

\begin{defn}\label{LOC-def-radius}Let $\alpha$ be an adic semiring. An element $T\in\alpha^\circ$ that is invertible in $\alpha$ is called an \emph{admissible bound}, or simply a \emph{bound}.\end{defn}

Invertible elements $S=(+(-S))^{-1}(0)$ in adic semirings - in particular, bounds - are always open.

A localisation $\mu\rightarrow\mu[-S]$ is adic if and only if $S$ is open.

If $T\in\alpha^u$ is an open ideal, then $S$ is open as an endomorphism of the semiring quotient
\[ \mu/(T\leq S) = \mu/(T\vee S=S), \]
since $T\leq S$ forces $S$ to be open. The \emph{bounded} localisation $\mu\rightarrow\mu/(T\leq S)[-S]$ is therefore adic.

\begin{defn}Let $S\in\alpha^\circ$ and $T\in\alpha$ a bound. Let $\mu$ be an $\alpha$-module. A \emph{bounded localisation of $\mu$ at $S$ with bound $T$} is an $\alpha$-module homomorphism
\[\mu\rightarrow\mu\{T-S\}=\mu[-S]\{T-S\}, \]
universal among those under which $S$ becomes invertible with inverse bounded (above) by $-T$.

It is called a \emph{cellular} localisation if $T=0$. 

If $T\leq S$ in $\alpha^\circ$, then the bounded localisation is isomorphic to an ordinary, or \emph{free} localisation. In this case, we will often call it a \emph{subdivision}. Note that only free localisations at elements that are bounded below by an admissible bound are allowed.

More generally, the above definition makes sense if we replace $S$ with an arbitrary additive subset of $\alpha^\circ$ and $T$ with an additive set of bounds in bijection with $S$.\end{defn}

\begin{lemma}Any bounded localisation can be factored as a cellular localisation followed by a subdivision.\end{lemma}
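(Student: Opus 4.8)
The plan is to reduce to a single inverted element and then exhibit an explicit two-step factorisation, checking each step against its universal property. A bounded localisation is given by an additive family $\{S_i\}\subseteq\alpha^\circ$ and a bijective family of admissible bounds $\{T_i\}$ (def.~\ref{LOC-def-radius}). Since bounded localisations compose, a composite of cellular localisations is cellular, and a composite of subdivisions is a subdivision, it is enough to factor $\mu\to\mu\{T-S\}$ for a single pair $(S,T)$ — or, what comes to the same, to run the construction below simultaneously over the whole family, the cellular step imposing every $T_i\le S_i$ at once and the subdivision then inverting every $S_i$.

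The factorisation I propose is
\[ \mu\;\longrightarrow\;\mu/(T\le S)\;\longrightarrow\;\bigl(\mu/(T\le S)\bigr)[-S]. \]
First one verifies $\mu\{T-S\}\cong\bigl(\mu/(T\le S)\bigr)[-S]$ by comparing universal properties: a map out of $\mu$ makes $S$ invertible with inverse bounded above by $-T$ if and only if it makes $S$ invertible \emph{and} satisfies $T\le S$ (add $S$ to the inequality $-S\le -T$, and conversely subtract it), and since the relation $T\le S$ does not mention the inverse of $S$ it may be imposed before the localisation. The second arrow is then a free localisation at the image of $S$, and it is a \emph{subdivision}: in $\mu/(T\le S)$ one has $T\le S$ with $T$ an admissible bound — invertible in $\alpha$, hence in the quotient — so $S$ is there bounded below by an admissible bound, exactly the condition under which a free localisation is permitted. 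The accompanying topological point, that $S$ is open in $\mu/(T\le S)$ so that the localisation is adic over it, is immediate because a bound is open.

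It remains to see that $\mu\to\mu/(T\le S)$ is a \emph{cellular} localisation, i.e. that imposing $T\le S$ amounts to setting an additive family of elements of $\alpha^\circ$ equal to zero; this is the step I expect to be the main obstacle. Invertibility of the bound $T$ is indispensable here: setting $R:=S+(-T)\in\alpha$, the relation $T\le S$ is equivalent to $0\le R$, and the cellular datum one wants is the meet $R\wedge 0$ in the lattice completion $\B\alpha$, presented concretely by the family of elements of $\alpha^\circ$ that lie below $R$, with its induced adic topology. The work is to check — using the explicit description of contracting quotients (lemma~\ref{.5RING-contract-construction}) together with the cellular localisation machinery of \S\ref{LOC} — that the cellular localisation so obtained has \emph{exactly} the universal property of $\mu/(T\le S)$ and no more, reducing if necessary the case of a general module $\mu$ to the free case where this is transparent. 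Granting this, the general family case follows from the reduction above, completing the factorisation.
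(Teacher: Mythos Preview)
Your factorisation $\mu\to\mu/(T\le S)\to(\mu/(T\le S))[-S]$ is natural, and the second arrow is indeed a subdivision. But the first arrow is \emph{not} a cellular localisation at the family you propose. Take the case where $T\le S$ already holds in $\alpha$: then $\mu/(T\le S)=\mu$ and the cellular step must be the identity, yet $R=S-T\ge 0$, so every element of $\alpha^\circ$ lies below $R$, and cellular localisation at this entire family collapses $\alpha$ completely. If instead you intend the single ``element'' $R\wedge 0\in\B\alpha$, you face the problem that in general this meet is not representable by anything in $\alpha^\circ$, and cellular localisation is only defined for elements (or additive families) of $\alpha^\circ$, not for arbitrary ideals of the lattice completion. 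There is no evident way to present the relation $T\le S$ as $X=0$ for some $X\in\alpha^\circ$ while neither $S$ nor $S\vee T$ is yet invertible --- this is a genuine obstruction, not merely unfinished bookkeeping.

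The paper sidesteps this entirely by working with $S\vee T$ rather than $S-T$: it factors as
\[ \alpha\;\longrightarrow\;\alpha\{T-(S\vee T)\}\;\longrightarrow\;\alpha\{T-(S\vee T)\}\{-(S-(S\vee T))\}. \]
The first arrow freely inverts $S\vee T$ --- a subdivision, since $T\le S\vee T$ automatically and $T$ is a bound. Once $S\vee T$ is invertible, $S-(S\vee T)$ becomes a bona fide element $\le 0$ of the new semiring of integers, and the second (cellular) arrow sets it to zero, forcing $S=S\vee T$ and hence $T\le S$ with $S$ invertible as required. (This is in fact subdivision-then-cellular, the reverse of the order in the lemma's statement --- apparently a slip in the paper --- but the substantive content is the $S\vee T$ trick.) The insight you are missing is that one must invert something \emph{first} so that the relation $T\le S$ becomes expressible as a single cellular condition; replacing $S$ by $S\vee T$ accomplishes exactly this.
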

\begin{proof}Factor $\alpha\rightarrow\alpha\{T-S\}$ as \[\alpha\rightarrow \alpha\{T-(S\vee T)\}\{-(S-(S\vee T))\}. \] In fact, this factorisation is natural in $\alpha,S$, and $T$.\end{proof}

\begin{eg}[Intervals]\label{eg-int}Consider the semiring $\mathrm{CPA}_\Z(\Delta,\R_\vee)$ (e.g. \ref{eg-first}), and for simplicity, specialise to the case that $\Delta=[a,b]$ is an interval with $a,b\in\Z$ (but see also \S\ref{EGS-poly}).

The admissible bounds of $\mathrm{CPA}_\Z([a,b],\R_\vee)$ are the affine functions $mX+c$, $m\in\Z,c\in\R$. Since every convex function on $[a,b]$ is bounded below by an affine function, any element of $\mathrm{CPA}_\Z(\Delta,\R_\vee)$ may be freely inverted by a bounded localisation.

Let us invert the function $X\vee r$ for some $r\in[a,b]$. The resulting semiring, which we denote $\mathrm{CPA}_\Z([a,r,b],\R_\vee)$, now consists of integer-sloped, piecewise-affine functions on $[a,b]$ which are convex \emph{except possibly at $r$}. I would like to think of this as a ring of functions on the polyhedral complex obtained by joining the intervals $[a,r]$ and $[r,b]$ at their endpoints, or alternatively, by \emph{subdividing} $[a,b]$ into two subintervals meeting at $r$. The affine structure does not extend over the join point. This is the motivation for the terminology `subdivision'.

More generally, the free bounded localisations of $\mathrm{CPA}_\Z([a,b],\R_\vee)$ are in one-to-one correspondence with finite sequences of rationals $r_1,\ldots,r_k\in(a,b)$:
\[ \mathrm{CPA}_\Z([a,b],\R_\vee)\rightarrow\mathrm{CPA}_\Z([a,r_1,\ldots,r_k,b],\R_\vee),\]
that is with subdivisions of $[a,b]$ in the sense of rational polyhedral complexes.

Now let's compose this with the \emph{cellular} localisation at $S=-(0\vee(X-r))$. This has the effect of imposing the relation $X\leq r$. In other words, the localisation is naturally $\mathrm{CPA}_\Z([a,r],\R_\vee)$, the semiring of functions on the lower \emph{cell} $[a,r]$. In particular, when $r=a$, the subdivision has no effect (since in that case $X\vee-a=X$ is already invertible), and the cellular localisation is just the evaluation $\mathrm{CPA}_\Z([a,b],\R_\vee)\rightarrow\R_\vee$ at $a$.

The composite of both localisations can be expressed more succinctly as $\mathrm{CPA}_\Z(\Delta,\R_\vee)\{X-r\}$, from which we can read that $r$ is the upper bound for the interval they cut out.

More generally, every cellular localisation of $\mathrm{CPA}_\Z([a,r_1,\ldots,r_k,b],\R_\vee)$ is determined by the union of cells on which a defining function vanishes.
\end{eg}

\begin{eg}\label{eg-int'}In the limiting case of the above example $\Delta=\R$, the only functions bounded by zero are the constants $\R_\vee^\circ=\mathrm{CPA}_*(\R,\R_\vee)^\circ$. The semiring $\mathrm{CPA}_*(\R,\R_\vee)$ therefore has no completed localisations; it is a poor semialgebraic model for the real line.\end{eg}

\begin{eg}\label{non-eg'}We have seen (e.g. \ref{non-eg}) that the semirings $\mathrm{CPA}$ are not finitely presented over $\R_\vee$. It may therefore be easier to work instead with finitely presented models of them; for example, $\R_\vee\{X\}$ instead of $\mathrm{CPA}_\Z(\R_\vee^\circ,\R_\vee)$.

However, the free localisation theory of these semirings is much more complicated than their cancellative counterparts - it depends on more than just the `kink set' of the function being inverted. For example, inverting $X\vee(-1)$ and $nX\vee(-n)$ define non-isomorphic localisations for $n>1$ (though the former factors through the latter). 

This could be regarded as a problem with the theory as I have set it up. I will not make any serious attempt to address it in this paper, as it does not directly affect the main results - but see e.g. \ref{circnorm}.\end{eg}

\begin{eg}Let $K$ be a non-Archimedean field with uniformiser $t$, $K\{x\}$ the Tate algebra in one variable. It is complete with respect to the valuation $K\{x\}\rightarrow |K|_\vee\{X\}$ of example \ref{eg-convergent-series}.

A completed localisation of the Tate algebra at $x$ has the form $K\{x,t^{-k}x^{-1}\}$ for some $k\in|K|$. This $k$ is a bound in the sense of definition \ref{LOC-def-radius}. The completed localisation is a completion of $K\{x\}[x^{-1}]$ with respect to the topology induced by its natural valuation into $|K|_\vee\{X,k-X\}$. 

The number $e^k$ (or $p^k$ when the residue characteristic is $p>0$) is conventionally called the \emph{inner radius} of the annulus $\Spa K\{x,t^kx^{-1}\}$. In other words, bounds in semiring theory arise intuitively as the `logarithms' of radii of convergence in analytic geometry.\end{eg}

\begin{eg}[Admissible blow-ups]Let $X$ be a quasi-compact adic space, $T\in\B^c(\sh O_X;\sh O_X^+)$ an admissible bound. Let $j:X\rightarrow X^+$ be a formal model on which $T$ is defined. Then $T\leq 0$ corresponds to a subscheme of $X^+$ whose pullback to $X$ is empty. In other words, the admissible bounds of $\B^c(\sh O_X;\sh O_X^+)$ that are defined on $X^+$ are exactly the centres for admissible blow-ups of $X^+$ (cf. \ref{ADIC}).\end{eg}

The following elementary properties of bounded localisation are a consequence of the universal properties.

\begin{lemma}Let $\alpha$ be a topological semiring, $\mu$ an $\alpha$-module.\label{LOC-properties}
\begin{enumerate}
\item $\alpha\{T-S\}$ is a semiring, and $\mu\{T-S\}\cong\mu\oplus_\alpha\alpha\{T-S\}$ as an $\alpha\{T-S\}$-module.
\item Localisation commutes with contraction. That is, $\mu\{\iota\}[-S]\cong\mu[-S]\{\iota\}$.
\item Let $S_1,S_2\in\alpha^\circ$, $T_1,T_2$ two bounds. Then $\mu\{T_1-S_1,T_2-S_2\}\cong\mu\{T_1-S_1\}\{T_2-S_2\}$.
\end{enumerate}\end{lemma}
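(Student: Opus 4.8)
The plan is to deduce all three statements directly from the universal properties, the one substantial input being that localisation and contraction of $\alpha$-modules are both computed by base change along a homomorphism of $\alpha$-algebras.

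\emph{Part (1).} First I would record that a subdivision $\mu\mapsto\mu[-S]$ commutes with base change: it is the filtered colimit of the addition maps $+S$, and the relative tensor sum $-\oplus_\alpha\beta$ is a left adjoint, hence cocontinuous, so $\mu[-S]\cong\mu\oplus_\alpha\alpha[-S]$ (and $\alpha[-S]$ is an $\alpha$-algebra, by the general discussion opening \S\ref{LOC}). Next, a cellular localisation $\mu\to\mu[-S]\{0-S\}$ is, once $S$ has been formally inverted, the operation of forcing $S^{-1}$ to act by a contracting endomorphism, that is, contraction of the $\alpha[-S]$-module $\mu[-S]$ by the (principal) ideal of $\alpha[-S]$ generated by $S^{-1}$; by the construction of the contraction functor in \S\ref{.5RING-contract} this too is a base change, so, using associativity of $\oplus$ across the change of base ring, $\mu[-S]\{0-S\}\cong\mu\oplus_\alpha\bigl(\alpha[-S]\{0-S\}\bigr)$. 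Since every bounded localisation factors as a cellular localisation followed by a subdivision (proved above), composing the two base-change formulas yields $\mu\{T-S\}\cong\mu\oplus_\alpha\alpha\{T-S\}$; applying the same chain to $\mu=\alpha$, and using that localisations and contractions of a commutative algebra are again commutative algebras, shows that $\alpha\{T-S\}$ is a semiring and that the displayed isomorphism is one of $\alpha\{T-S\}$-modules.

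\emph{Parts (2) and (3).} Both reduce, via the base-change description of (1), to statements about $\alpha$-algebras that are immediate from Yoneda. For (2): by (1) and associativity of $\oplus$, $\mu\{\iota\}[-S]\cong\mu\oplus_\alpha\bigl(\alpha\{\iota\}[-S]\bigr)$ and $\mu[-S]\{\iota\}\cong\mu\oplus_\alpha\bigl(\alpha[-S]\{\iota\}\bigr)$, so it suffices to observe that $\alpha\{\iota\}[-S]$ and $\alpha[-S]\{\iota\}$ both corepresent, on $\frac{1}{2}\mathbf{Ring}_\alpha$, the functor that is a point on those $\alpha$-algebras in which $\iota$ contracts and $S$ is invertible, and empty otherwise. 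For (3), I would check directly that $\mu\{S_1-T_1\}\{S_2-T_2\}$ has the universal property of $\mu\{S_1-T_1,S_2-T_2\}$: a homomorphism out of the former is an $\alpha$-module map out of $\mu$ inverting $S_1$ with inverse bounded above by $-T_1$ — a property that persists under the subsequent homomorphism — and then inverting $S_2$ with inverse bounded above by $-T_2$, which is exactly the universal property of the latter. (Equivalently, reduce again to the $\alpha$-algebra isomorphism $\alpha\{S_1-T_1,S_2-T_2\}\cong\alpha\{S_1-T_1\}\{S_2-T_2\}$ and corepresent the evident functor, as in (2).)

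The only real content is in (1), and the step I expect to need the most care is the identification of the cellular-localisation piece with an instance of the contraction construction of \S\ref{.5RING-contract} — i.e. verifying that ``$S$ invertible with inverse $\leq 0$'' is precisely the condition that the principal ideal generated by $S^{-1}$ act by contractions — together with confirming that the two base-change formulas compose correctly across the passage from the base $\alpha$ to the base $\alpha[-S]$. This is also where the (harmless) topological bookkeeping belongs: the strong topologies involved are stable under base change and under filtered colimits, so the universal objects carry the topologies asserted in the definition of bounded localisation. Granting (1), parts (2) and (3) are purely formal.
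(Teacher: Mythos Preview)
Your proposal is correct and takes essentially the same approach as the paper: the paper does not write out a proof at all, merely stating that these ``elementary properties of bounded localisation are a consequence of the universal properties,'' and your argument is a faithful and correct unpacking of exactly that. One small remark: you do not actually need the factorisation lemma (cellular followed by subdivision) for part~(1), since the very definition $\mu\{T-S\}=\mu[-S]\{T-S\}$ already presents the bounded localisation as a free localisation followed by a contraction, and your two base-change formulas apply directly to that decomposition.
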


It follows also from the discussion above:

\begin{lemma}Let $\alpha$ be adic. Then $\mu\rightarrow\mu\{T-S\}$ is adic.\label{LOC-strong}\end{lemma}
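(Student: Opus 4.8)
The plan is to deduce the statement directly from the construction of bounded localisation given in the paragraph preceding its definition, the only additional input being that an admissible bound is an \emph{open} element of $\alpha$. By definition $\mu\{T-S\}=\mu[-S]\{T-S\}$ is built as the localisation $\bigl(\mu/(T\leq S)\bigr)[-S]$ of the quotient $\mu/(T\leq S)$ at $S$. That discussion already shows that if $T$ is open, then the morphism $\mu\stackrel{+S}{\longrightarrow}\mu/(T\leq S)$ is open and hence $\bigl(\mu/(T\leq S)\bigr)[-S]$ is strongly topologised under $\mu$; and ``$\mu\to\mu\{T-S\}$ is adic'' is precisely the assertion that $\mu\{T-S\}$ carries this strong topology. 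So it remains only to supply the hypothesis that $T$ is open.

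This is where adicness of $\alpha$ enters. By Definition~\ref{LOC-def-radius} an admissible bound $T$ is invertible in $\alpha$, and an invertible element of an adic semiring is always open (as observed immediately after that definition); hence $T\in\alpha^u$. Since $\alpha^u$ is an upper set, imposing the relation $T\leq S$ places $S$ above an open element in $\mu/(T\leq S)$ and therefore makes $S$ itself open there, so that localising at $S$ is strongly topologised, by the example that $\alpha\to\alpha[-S]$ is adic whenever $S$ is open. Composing the quotient map with this localisation yields the claim.

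An alternative route uses the factorisation of a bounded localisation into a cellular localisation followed by a subdivision: a cellular localisation is a quotient in $\mathbf{Span}_t$ and hence carries the strong topology, a subdivision is a free localisation at an element lying above an open admissible bound and hence at an open element, and a composite of adic maps is adic; alternatively one reduces to the semiring case $\alpha\to\alpha\{T-S\}$ via $\mu\{T-S\}\cong\mu\oplus_\alpha\alpha\{T-S\}$ (Lemma~\ref{LOC-properties}(1)), pushout preserving strong topologies. In any formulation the only step demanding a little care is the transitivity of the strong (final) topology in $\mathbf{Span}_t$, used to combine the quotient and the localisation; this is a formal property of final topologies and involves no estimate. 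I expect no genuine obstacle: the content of the lemma is just the remark that admissible bounds are open.
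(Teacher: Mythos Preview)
Your proposal is correct and is essentially the same argument the paper has in mind: the paper gives no explicit proof, merely writing ``It follows also from the discussion above,'' referring to exactly the paragraph you cite (that imposing $T\leq S$ with $T$ open makes the subsequent free localisation at $S$ strongly topologised) together with the remark that invertible elements of an adic semiring are open. Your alternative route via the cellular/subdivision factorisation is also fine but not needed.
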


\subsection{Cellular localisation}

Let $\alpha$ be a contracting semiring. Then the only invertible element, and hence only admissible bound, is $0$. All localisations of a contracting semiring are therefore cellular: $\alpha\rightarrow\alpha/(S=0)$.

\begin{eg}\label{eg-topol}Let $X$ be a coherent topological space \cite[def. \textbf{0}.2.2.1]{FujiKato}, so that the $\B$-module $|\sh O_X|$ of quasi-compact open subsets of $X$ has finite meets that distribute over joins. Its lattice completion $\B|\sh O_X|$ is the lattice of all open subsets of $X$ (or the opposite to the lattice of all closed subsets of $X$).

If $X$ is quasi-compact, then it is an identity for the meet operation on $|\sh O_X|$; in other words, intersection of open subsets is a \emph{contracting semiring} operation on $|\sh O_X|$, and $X=0$. Note that this addition is idempotent. Let us describe the localisations of $|\sh O_X|$.

Let $S\in|\sh O_X|$. The inclusion $\iota:S\hookrightarrow X$ induces adjoint \emph{semiring} homomorphisms
\[  \iota_!:|\sh O_S|\leftrightarrows|\sh O_X|:\iota^* \] by composition with and pullback along $\iota$, respectively. They satisfy the identities $\iota^*\iota_!=\iden$ and $\iota_!\iota^*=(-)+S$. The right adjoint $\iota^*$ identifies $S$ with $0$. Moreover, any semiring homomorphism $f:|\sh O_X|\rightarrow\alpha$ with this effect admits a factorisation $f=f+S=f\iota_!\iota^*$ through $|\sh O_S|$, necessarily unique since $\iota^*$ is surjective. In other words, $|\sh O_S|$ is a cellular localisation of $|\sh O_X|$ at $S$.

Alternatively, and more in the spirit of what follows, one can argue this using the \emph{right ind-adjoint} \[f^\dagger:|\sh O_X|\{-S\}\rightarrow\B|\sh O_X|\] to the localisation $f$. This map is easier to describe in terms of closed subsets: if $Z\in|\sh O_X|\{-S\}$, then $f^\dagger Z$ is the smallest closed subset of $X$ whose image in $|\sh O_X|\{-S\}$ is $Z$. It identifies the localised semiring with the image of the composite $f^\dagger f$, which is the set of subsets $K\subseteq X$ equal to the closure of their intersections with $S$, $K=\overline{K\cap S}$. Closure puts $|\sh O_S|$ in one-to-one correspondence with this set.\end{eg}

The latter method of this example can be abstracted, in line with the methods of \S\ref{SPAN-quotient} and \S\ref{.5RING-contract}. Let $\alpha\in\frac{1}{2}\mathbf{Ring}_t$, $\mu$ an $\alpha$-module, $S\in\alpha^\circ$.

\begin{defns}An ideal $\iota\hookrightarrow\mu$ is \emph{$-S$-invariant} if $X+S\in\iota\Rightarrow X\in\iota$. The \emph{$-S$-span} of an ideal $\iota$ is \[ \bigcup_{n\in\N}(+S)^{-n}(\iota), \] that is, the smallest $-S$-invariant ideal containing $\iota$.\end{defns}

If $S$ is invertible, then being $-S$-invariant is the same as being invariant under the action of $-S$. In particular, the set of $-S$-invariant ideals of $\mu[-S]$ is the lattice $\B(\mu[-S];\alpha^\circ[-S])$ of $\alpha^\circ[-S]$-submodule ideals of $\mu[-S]$. Moreover,

\begin{lemma}The right adjoint to the localisation map
\[ \B\mu\stackrel{f}{\rightarrow}\B(\mu[-S];\alpha^\circ[-S]) \]
identifies the latter with the set of $-S$-invariant ideals of $\mu$.\end{lemma}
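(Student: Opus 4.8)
The plan is to make the right adjoint $f^\dagger$ explicit via the filtered-colimit presentation of the localisation and then read off its image. Recall that $\mu[-S]=\colim[\mu\xrightarrow{+S}\mu\xrightarrow{+S}\cdots]$ is a sequential, hence filtered, colimit, so by the remarks of \S\ref{SPAN} (and proposition \ref{TOP-span-complete} in the topological setting) it is computed in $\mathbf{Set}$: its elements are classes $[x,n]$ with $x\in\mu$, $n\in\N$, subject to $[x,n]=[x+kS,\,n+k]$, and $[x,n]\leq[y,m]$ holds exactly when $(k-n)S+x\leq(k-m)S+y$ in $\mu$ for some $k\geq\max(n,m)$. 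Write $g\colon\mu\to\mu[-S]$, $x\mapsto[x,0]$, for the structural homomorphism; then $f$ sends an ideal $\iota\hookrightarrow\mu$ to the ideal of $\mu[-S]$ generated by $g(\iota)$ as an $\alpha^\circ[-S]$-submodule, which by the previous sentence is the down-set spanned by the elements $(-nS)+g(x)$ with $n\geq0$ and $x\in\iota$ (a set already closed under $\vee$, since $((-n_1S)+g(x_1))\vee((-n_2S)+g(x_2))=(-nS)+g((n-n_1)S+x_1\vee(n-n_2)S+x_2)$ with $n=\max(n_1,n_2)$). Being a lattice homomorphism, $f$ preserves suprema and so has a right adjoint; and exactly as for the formula $(\B f)^\dagger=f^{-1}$ of \S\ref{SPAN} one checks that $f^\dagger$ is preimage, $\nu\mapsto g^{-1}(\nu)$. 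This does land in $\B\mu=\B(\mu;\alpha^\circ)$, since $g^{-1}$ of an $\alpha^\circ[-S]$-submodule ideal of $\mu[-S]$ is an $\alpha^\circ$-submodule ideal of $\mu$.

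It then remains to prove three statements. \emph{(i) $f^\dagger$ lands among the $-S$-invariant ideals of $\mu$.} If $S+x\in g^{-1}(\nu)$ then $S+g(x)=g(S+x)\in\nu$; as $\nu$ is an $\alpha^\circ[-S]$-submodule of $\mu[-S]$ it is stable under adding $-S$, so $g(x)\in\nu$, i.e. $x\in g^{-1}(\nu)$. \emph{(ii) The counit $ff^\dagger=\iden$, so $f^\dagger$ is injective.} Given $\nu\in\B(\mu[-S];\alpha^\circ[-S])$ and $Z=[z,n]\in\nu$, downward-closedness of $\nu$ together with $nS\leq0$ (here $S\in\alpha^\circ$ is used) gives $g(z)=nS+Z\in\nu$, whence $z\in f^\dagger\nu$ and $Z=(-nS)+g(z)\in f(f^\dagger\nu)$; combined with the general counit inequality $ff^\dagger\leq\iden$ this forces $ff^\dagger=\iden$. \emph{(iii) Every $-S$-invariant ideal is hit, i.e. $f^\dagger f\iota=\iota$.} The inclusion $\iota\subseteq g^{-1}(f\iota)$ is the unit. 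Conversely, if $g(y)\in f\iota$, then by the description of $f\iota$ above and the colimit order relation there are $n\geq0$, $x\in\iota$, and $k\geq n$ with $kS+y\leq(k-n)S+x\leq x$, the last step using $S\leq0$; thus $kS+y\in\iota$, and applying $-S$-invariance $k$ times yields $y\in\iota$. Combining (i)--(iii), $f^\dagger$ is a bijection of $\B(\mu[-S];\alpha^\circ[-S])$ onto the set of $-S$-invariant ideals of $\mu$, with inverse the restriction of $f$.

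The step I expect to be the main obstacle is the colimit bookkeeping in (iii): pinning down the order relation on $\mu[-S]$ and then exploiting $S\leq0$ to absorb the auxiliary stage indices $n$ and $k$, so that the membership ``$g(y)\in f\iota$'' collapses to the clean assertion ``$kS+y\in\iota$ for some $k\geq0$''. That collapse is precisely what lets $-S$-invariance close the argument, and it is also the point at which it is essential that $S$ lie in $\alpha^\circ$ rather than in $\alpha$ at large. The topological refinement costs nothing here, since everything in play is a filtered colimit or the reflective inclusion of a sublattice, already handled by \S\ref{TOP}.
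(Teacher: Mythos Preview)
Your proof is correct and follows essentially the same approach as the paper. The paper's own argument is much terser --- it only writes out the computation corresponding to your step (iii), showing $f^\dagger f\iota=\iota$ for $-S$-invariant $\iota$, and leaves (i) and (ii) implicit (they follow immediately from the observation, recorded just before the lemma, that the $\alpha^\circ[-S]$-submodule ideals of $\mu[-S]$ are exactly its $-S$-invariant ideals). Your explicit unwinding of the filtered-colimit order and your careful use of $S\leq 0$ to collapse the stage indices are exactly the content of the paper's one-line ``$f(Y+nS)=X\in\iota$ and hence $Y\in\iota$''.
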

\begin{proof}Let $\iota\hookrightarrow\mu$ be $-S$-invariant. Every element of $\iota[-S]\hookrightarrow\mu[-S]$ is of the form $X-nS$ with $X\in\iota$. If $X-nS=f(Y)$ for some $Y\in\mu$, then $f(Y+nS)=X\in\iota$ and hence $Y\in\iota$. This proves that $f^\dagger f\iota=\iota$.
\end{proof} 

Since in the cellular localisation, $-S\leq 0$, every ideal is automatically $-S$-invariant. By lemma \ref{.5RING-contract-construction}, the contraction $(-)\{-S\}$ induces isomorphisms
\[ \B(\mu[-S];\alpha^\circ[-S])\widetilde\rightarrow\B(\mu\{-S\};\alpha^\circ\{-S\})\cong\B\mu\{-S\}. \]
This identifies the cellular localisation $\mu\{-S\}$ with the image of $\mu$ in $\B(\mu[-S];\alpha^\circ[-S])$.

We have obtained a characterisation of cellular localisations in terms of ideals:

\begin{lemma}\label{LOC-Zar-construction}A homomorphism $f:\mu\rightarrow\nu$ of $\alpha$-modules is a cellular localisation of $\mu$ at $S\in\alpha^\circ$ if and only if $f^\dagger$ identifies $\B\nu$ with the $-S$-invariants of $\B\mu$.\end{lemma}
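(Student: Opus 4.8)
The plan is to prove both implications by comparing $f$ with the canonical cellular localisation $p\colon\mu\to\mu\{-S\}$, whose right ind-adjoint was essentially computed in the discussion immediately preceding the statement. Note first that, since $S\le 0$ in $\alpha^\circ$, the operator $+S$ on any $\alpha$-module is $\le\mathrm{id}$, so the defining property of the cellular localisation — that $+S$ become invertible with inverse $\le\mathrm{id}$ — is equivalent to asking that $+S=\mathrm{id}$, i.e.\ that $S$ act as $0$.

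For the ``only if'' direction, suppose $f$ is a cellular localisation of $\mu$ at $S$; then $\nu$ is canonically isomorphic to $\mu\{-S\}$ over $\mu$, and it suffices to identify $p^\dagger$. The paragraphs before the lemma show precisely that $\B p$ is the localisation homomorphism $\B\mu\to\B(\mu[-S];\alpha^\circ[-S])$ composed with the isomorphism of Lemma~\ref{.5RING-contract-construction}; taking right adjoints, and using that the second factor is invertible, $p^\dagger$ is (up to that isomorphism) the right adjoint of the localisation homomorphism, which by the lemma displayed just above is an order-embedding with image the set $\mathcal{I}$ of $-S$-invariant ideals of $\mu$. Hence $p^\dagger$ identifies $\B(\mu\{-S\})$ with $\mathcal{I}$.

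For the converse, assume $f^\dagger\colon\B\nu\to\B\mu$ is an order-embedding with image $\mathcal{I}$, and compare with $p$. Since $f^\dagger$ is injective, the counit of the adjunction $\B f\dashv f^\dagger$ is the identity, so $\B f\circ f^\dagger=\mathrm{id}_{\B\nu}$ and $f^\dagger\circ\B f$ is the idempotent reflection of $\B\mu$ onto $\mathcal{I}$; by the previous paragraph the same holds for $p$. A short diagram chase then shows that $\psi:=(p^\dagger)^{-1}\circ f^\dagger\colon\B\nu\to\B(\mu\{-S\})$ is a lattice isomorphism with $\psi\circ\B f=\B p$. Under the equivalence $\B\colon\mathbf{Span}\to\mathbf{Lat}_{al}$ of \S\ref{SPAN-fin}, $\psi$ is of the form $\B\bar\psi$ for a unique span isomorphism $\bar\psi\colon\nu\to\mu\{-S\}$, and faithfulness of $\B$ upgrades $\psi\circ\B f=\B p$ to $\bar\psi\circ f=p$. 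Finally $f$ is surjective — $\B f$ is, as it has the section $f^\dagger$, and surjectivity restricts to the finite elements, which are the underlying elements of $\mu$ and $\nu$ — so the identity $\bar\psi\circ f=p$ between $\alpha$-module homomorphisms forces $\bar\psi$ to be $\alpha$-linear. Therefore $f$ is, up to the isomorphism $\bar\psi$, the canonical cellular localisation $p$, and in particular inherits its universal property.

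The step needing the most care is the bookkeeping around adjoints and the passage to the lattice completion: confirming that $f^\dagger$ is the genuine right adjoint $(\B f)^\dagger$ and an honest order-embedding onto $\mathcal{I}$, that the unit/counit identities produce the reflection operator $f^\dagger\circ\B f$, and that an isomorphism of the algebraic lattices $\B\nu$ and $\B(\mu\{-S\})$ descends along $\B$ to a span isomorphism compatible with the structure maps $f$ and $p$. None of this is deep; the geometric content — that cellular localisations are the semiring-theoretic shadow of ``closure of a subset within a subcell'', as in the $|\sh O_X|$ example — is already visible in the adjunction picture.
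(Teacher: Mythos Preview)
Your proof is correct and follows the same approach as the paper. The paper's own proof is a single sentence—``Note only that the `if' part of the statement follows from the fidelity of $\B$''—with the `only if' direction having been established in the discussion immediately preceding the lemma; you have faithfully unpacked what that sentence means, constructing the comparison isomorphism $\bar\psi$ explicitly via the equivalence $\B:\mathbf{Span}\simeq\mathbf{Lat}_{al}$ and checking $\alpha$-linearity through surjectivity of $f$.

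One minor remark: your argument that surjectivity of $\B f$ descends to surjectivity of $f$ is correct but deserves a word of justification—given $y\in\nu$ finite and $\iota\in\B\mu$ with $\B f(\iota)=\nu_{\le y}$, the family $\{f(x):x\in\iota\}$ is filtered (as $\iota$ is an ideal and $f$ a span homomorphism), so finiteness of $y$ yields some $x\in\iota$ with $y\le f(x)\le y$. Alternatively, and perhaps more directly, you could observe that $p$ is surjective by construction (the paper explicitly identifies $\mu\{-S\}$ with the \emph{image} of $\mu$ in $\B(\mu[-S];\alpha^\circ[-S])$), whence $\bar\psi\circ f=p$ forces $f$ surjective since $\bar\psi$ is bijective.
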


Note only that the `if' part of the statement follows from the fidelity of $\B$.

\begin{eg}[Zariski-open formula]\label{LOC-Zar-open}
Let $X$ be a quasi-compact formal scheme, $i:U\hookrightarrow X$ a quasi-compact open subset. Let $I$ be a finite ideal sheaf cosupported inside $X\setminus U$. The restriction $\rho:\B^c\sh O_X\rightarrow i_*\B^c\sh O_U$ evidently factors through $\B^c\sh O_X\{-I\}$. 

Now suppose that $U=X\setminus Z(I)$ is exactly the complement of the zeroes of $I$. Then $\rho^\dagger$ identifies $i_*\B^c\sh O_U$ with the sheaf of subschemes $Z\hookrightarrow X$ equal to the scheme-theoretic closure of their intersection with $U$ (cf. e.g. \ref{eg-closure}). These subschemes are the $-I$-invariants of $\B^c\sh O_X$. Indeed, suppose that $f$ is some local function on $X$ such that $fI$ vanishes on $Z$. Then over $U$, $fI=(f)$, that is, $f$ vanishes on $Z\cap U$ and therefore on $Z$.

By lemma \ref{LOC-Zar-construction}, the natural semiring homomorphism
\[ \B^c\sh O_X\{-I\}\widetilde\rightarrow i_*\B^c\sh O_U\]
is an isomorphism.

%More generally, let $\alpha\subseteq\B^c\sh O_X$ be any subring containing $I$, and such that for any $J\in\alpha$, $\overline{U\cap Z(J)}\in\alpha$. In other words, $\B\alpha$ is closed in $\B\sh O_X$ under passing to $-I$-spans. Then the same argument shows that \[ \alpha\{-I\}\widetilde\rightarrow \mathrm{Im}[\alpha\rightarrow\B^c\sh O_U], \] where the image is taken set-theoretically, is an isomorphism.
\end{eg}

\subsection{Prime spectrum}\label{LOC-prime}

The purpose of this section is to discuss a special case of the general theory of the following section \ref{SKEL}, in which constructions can be made particularly explicit. It therefore perhaps would logically have its place after that section. For this reason, the discussion here is relatively informal.

In algebraic geometry, the underlying space of a formal scheme can be described in terms of open primes. A strong analogy holds in the setting of contracting semirings.

\begin{defn}Let $\alpha$ be a semiring. A \emph{semiring ideal} $\iota\hookrightarrow\alpha$ is an ideal and an $\alpha$-submodule. It is further a \emph{prime ideal} if $\alpha\setminus\iota$ is closed under addition.\end{defn}

Let $\alpha$ be a contracting semiring, $p:\alpha\rightarrow\B$ a (continuous) semiring homomorphism. The kernel $p^{-1}(-\infty)$ is an open prime ideal. Conversely, given an open prime ideal $\lie p\trianglelefteq\alpha$, one can define a semiring homomorphism
\[ \alpha\rightarrow\B,\quad X\mapsto\left\{
\begin{array}{cc}
	-\infty, & X\in\lie p \\
	0, & X\notin\lie p
\end{array}\right.
 \]
This sets up an order-reversing, bijective correspondence between the poset $\Spec_{\lie p}\alpha:=\Hom(\alpha,\B)$ and that of open prime ideals $\lie p\triangleleft\alpha$. In other words, every point in the \emph{prime spectrum} of a contracting semiring is represented by a $\B$-point.

Let us write $\D_\B^1$ for the Sierpinski space, whose underlying set is the Boolean semifield, but equipped with the topology is generated instead by the open set $\{0\}$ instead of the semiring topology. The Sierpinski space underlies the \emph{unit disc over $\B$}.

We now topologise the prime spectrum of a contracting semiring $\alpha$ weakly with respect to the evaluation maps $\Spec^{\lie p}\alpha\rightarrow\D_\B^1$, defined by identifying the underlying set of $\B$ with that of $\D_\B^1$. In other words, a sub-base for the topology is given by the open sets \[U_X:=\{f:\alpha\rightarrow\B|f(X)=0\}, \]
and $U_{X\vee Y}=U_X\cup U_Y$. This upgrades the prime spectrum to a contravariant functor
\[ \Spec^{\lie p}:\frac{1}{2}\mathbf{Ring}_{\leq0}\rightarrow\mathbf{Top}. \]
The continuous map of prime spectra induced by a homomorphism $f:\alpha\rightarrow\beta$ can be described in terms of prime ideals as
\[ \Spec^{\lie p}f:\beta\triangleright\lie p \mapsto f^{-1}(\lie p)\triangleleft \alpha, \]
just as in the case of formal schemes.

By construction the localisation morphism $\Spec^{\lie p}\alpha\{-S\}\rightarrow\Spec^{\lie p}\alpha$ induces an identification
\[ \Spec^{\lie p}\alpha\{-S\}\cong U_S\subseteq\Spec^{\lie p}\alpha \]
as topological spaces. This allows us to define a presheaf $|\sh O|$ of semirings on the site $\sh U_{/\Spec^{\lie p}\alpha}$ of affine subsets of the prime spectrum. By proposition \ref{LOC-Zar-cover}, below, it is actually a sheaf.

In summary, the \emph{prime spectrum} construction allows us to contravariantly associate to each \emph{contracting} topological semiring $\alpha$ a topological space $\Spec^\lie{p}\alpha$ equipped with a sheaf of semirings whose global sections are naturally $\alpha$.

\begin{egs}
First, it is of course easy to describe the spectrum of a freely contracting semiring: by the adjoint property, $\Hom(\B\{X_1,\ldots,X_k\},\B)=\D^k_\B:=\prod_{i=1}^k\D^1_\B$ is the \emph{polydisc of dimension $k$} over $\B$. The open subset defined by $\bigvee_{i=1}^kX_i=0$ is a kind of combinatorial simplex, in the sense that its poset of irreducible closed subsets is isomorphic to that of the faces of a $k$-simplex. See also \S\ref{EGS-poly}.

Similar statements hold for free contracting $H_\vee^\circ$-algebras, where $H_\vee$ is a rank one semifield. Indeed, the unique continuous homomorphism $H_\vee^\circ\rightarrow\B$ induces a homeomorphism
\[ \Spec^{\lie p}\alpha\oplus_{H_\vee^\circ}\B \rightarrow\Spec^{\lie p}\alpha \]
for any $\alpha$ over $H_\vee^\circ$. If $\alpha$ is of finite type, then in particular the set underlying the spectrum is finite.\end{egs}

\begin{eg}\label{LOC-prime-Noetherian}
The prime spectrum of a Noetherian semiring is a Noetherian topological space. As such, it has well-behaved notions of dimension and decomposition into irreducible components, cf. \cite[\S\textbf{0}.2]{EGA}. In particular, it is quasi-compact.\footnote{In fact, one can conclude from Zorn's lemma that \emph{any} prime spectrum is quasi-compact. I omit an argument, since anyway the definitions of this section will ultimately be superseded.}\end{eg}

\subsection{Blow-up formula}\label{blowup}

Let $X$ be a formal scheme, $I$ a finite ideal sheaf. The blow-up $p:\widetilde X\rightarrow X$ of $X$ along $I$ is constructed as $\Proj_X R_I$, where $R_I$ is the Rees algebra
\[ R_I:=\bigoplus_{n\in\N}I^nt^n\subseteq \sh O_X[t]. \] One associates in the usual fashion \cite[\S\textbf{II}.2.5]{EGA} a quasi-coherent sheaf on $\widetilde X$ to any quasi-coherent, graded $R_I$-module on $X$; in particular, if $M$ is quasi-coherent over $\sh O_X$, then $p^*M$ is associated to $M\tens_{\sh O_X}R_I$. If we write $\B^{(c)}(M;R_I)$ for the set of (finitely generated) \emph{homogeneous} $R_I$-submodules of $M$, the associated module functor is induces a natural transformation
\[ \B^{(c)}(-;R_I)\rightarrow\B^{(c)}(-;\sh O_{\widetilde X}) \]
of functors $\mathrm{Mod}_{R_I}\rightarrow\mathrm{Mod}_{\B,t}$ over $X$.

By following the algebra through, we can obtain an explicit formula relating the subobjects of quasi-coherent sheaves on $X$ to those of their pullbacks to $\widetilde X$.

The dependence of the associated sheaf to a graded module is only `up to' the irrelevant ideal $R_I^+=\bigoplus_{n>0}I^nt^n$. For example, let $M$ be quasi-coherent and homogeneous over $R_I$, and let $N_1,N_2\hookrightarrow M$ be finite, homogeneous submodules. Then $N_1=N_2$ as sections of $\B^c(M;\sh O_{\widetilde X})$ if and only if
\[ N_i+kR_I^+\leq N_j\text{ for all }i,j \text{ and }k\gg0 \]
in $\B^c(M;R_I)$.
It is equivalent that the high degree graded pieces $(N_i)_k,k\gg0$ agree.
In other words, $\B^c(M;R_I)\rightarrow\B^c(M;\sh O_{\widetilde X})$ descends to an isomorphism
\[ \B^c(M;R_I)/(R_I^+=0)\widetilde\rightarrow \B^c(M;\sh O_{\widetilde X}). \]

Now suppose that $M$ is quasi-coherent on $X$. The $\B$-module $\B^c(M\tens R_I;\sh O_X)$ of finite, homogeneous $\sh O_X$-submodules of $M\tens R_I$ is itself graded
\[ \B^c(M\tens R_I;\sh O_X)\cong\bigvee_{n\in\N}\B^c(M\tens I^n;\sh O_X)+nT, \]
where $T=(t)$ is a formal variable to keep track of the grading. It is a module over the graded semiring 
\[\B^c(R_I;\sh O_X)\cong\bigvee_{n\in\N}\B^c(I^n;\sh O_X)+nT \cong \bigvee_{n\in\N}(\B^c\sh O_X)_{\leq nI}+nT\]
in which the irrelevant ideal is written $R_I^+=\bigvee_{n\in\Z_{>0}}n(I+T)$.

By proposition \ref{.5RING-prop-contract},
\[ \B^c(M\tens R_I;\sh O_X)\{R_I^+\}\cong {}^\circ\B^c(M\tens R_I;\sh O_X)\widetilde\rightarrow\B^c(M\tens R_I;R_I) \]
in the category of $\B^c(R_I;\sh O_X)$-modules (cf. def. \ref{.5RING-integers} for notation).

Composing these identifications, we therefore have for any $M$ a factorisation
\[ \B^c(M;\sh O_X)\rightarrow \left(\bigvee_{n\in\N}\B^c(M\tens I^n)+nT\right)\left\{\pm\bigvee_{n\in\Z_{>0}}n(I+T)\right\}
\widetilde\rightarrow\B^c\left(p^*M;\sh O_{\widetilde X}\right) \]
of $\B^c\sh O_X$-module homomorphisms. The isomorphism on the right is the general blow-up formula.

In the context of adic spaces and their models, a more elegant form is available.

\begin{prop}[Blow-up formula]\label{LOC-blow-up}Let $X$ be an adic space, $j:X\rightarrow X^+$ a quasi-compact formal model. Let $I\in\B^cj_*\sh O_X^+$ be an ideal sheaf cosupported away from $X$, i.e.\ such that $j^*I=\sh O_X$. Let $\tilde j:X\rightarrow \widetilde X^+\rightarrow X^+$ be the blow-up of $X^+$ along $I$. Then the pullback homomorphism \[\B^c(j_*\sh O_X;\sh O_{X^+})\rightarrow\B^c(\tilde j_*\sh O_X;\sh O_{\widetilde X^+})\] is a free localisation at $I$.\end{prop}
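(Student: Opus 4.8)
The plan is to deduce this from the general blow-up formula already derived above, applied to the (non-quasi-coherent) sheaf $M=j_*\sh O_X$, and then to simplify using the hypothesis $j^*I=\sh O_X$. Working locally on $X^+$ I may assume it affine, so that $I$ — being open and cosupported away from $X$ — contains a power $(t^N)$ of an ideal of definition, and $\widetilde X^+=\Proj R_I$. Write $\alpha:=\B^c(j_*\sh O_X;\sh O_{X^+})$ and $\beta:=\B^c(\tilde j_*\sh O_X;\sh O_{\widetilde X^+})$; the map in question is the pullback $p^*$ along $p:\widetilde X^+\to X^+$, which is defined via the counit $p^*j_*\sh O_X\to\tilde j_*\sh O_X$.

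First I would record two preliminary points. (a) The general blow-up formula, stated for quasi-coherent modules, extends to $M=j_*\sh O_X$: locally this sheaf is obtained from $\sh O_{X^+}$ by adjoining denominators from a power of the ideal of definition, hence is a filtered colimit of coherent sheaves, and the source $\B^c(-;\sh O_{X^+})$, the intermediate graded/contracted term, and the target $\B^c(p^*(-);\sh O_{\widetilde X^+})$ all commute with filtered colimits in the module argument (\S\ref{SPAN-fin}). Moreover $p^*j_*\sh O_X\cong\tilde j_*\sh O_X$, because $p$ is an \emph{admissible} blow-up (its centre is cosupported away from $X$ and contains a power of an ideal of definition, cf. def. \ref{ADIC-def-admissible}) and admissible blow-ups induce isomorphisms on adic generic fibres (appendix \ref{ADIC}); thus $\tilde j$ is literally the Raynaud generic fibre of $\widetilde X^+$. (b) The element $I\in\alpha$ becomes invertible under $p^*$: the ideal $I\sh O_{\widetilde X^+}$ is invertible on the blow-up, and since $I\supseteq(t^N)$ its inverse $\sh O_{\widetilde X^+}$-module embeds into $t^{-N}\sh O_{\widetilde X^+}\subseteq\tilde j_*\sh O_X$, hence represents an element of $\beta$ inverse to $p^*I$. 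Consequently $p^*$ factors as $\alpha\to\alpha[-I]\xrightarrow{\psi}\beta$; note that $I\geq(t^N)$ with $(t)$ invertible in $\alpha$, so $(t^N)$ is an admissible bound and this free localisation is a legitimate subdivision.

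It remains to prove $\psi$ is an isomorphism, and this is the crux. Applying the general blow-up formula to $M=j_*\sh O_X$ factors $p^*$ through
\[ \Big(\bigvee_{n\in\N}\B^c(j_*\sh O_X\tens I^n;\sh O_{X^+})+nT\Big)\Big\{\pm\bigvee_{n>0}n(I+T)\Big\}\xrightarrow{\sim}\beta. \]
Here $j^*I=\sh O_X$ enters decisively: it forces $I$ to act invertibly on $j_*\sh O_X$, so the multiplication maps $I^n\tens_{\sh O_{X^+}}j_*\sh O_X\to j_*\sh O_X$ induce isomorphisms $\B^c(j_*\sh O_X\tens I^n;\sh O_{X^+})\cong\alpha$ for every $n$. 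Under these identifications the graded term is $\bigvee_{n\in\N}\alpha+nT$, whose degree-raising operator, inherited from its $\B^c(R_I;\sh O_{X^+})$-module structure, is $+(I+T)$; the operation $\{\pm\bigvee_{n>0}n(I+T)\}$ then makes the irrelevant ideal invertible and collapses the grading, forcing $T=-I$ and identifying the whole term with $\alpha[-I]$. Carrying this out precisely, and checking compatibility with the two maps out of $\alpha$, is the main work — a bookkeeping exercise with the Rees-type grading that must be done with care, since $\B^c$ is only lax monoidal and one cannot freely commute it past the tensor products in $R_I$ (this is also where any $\mathrm{Tor}$-subtlety in the isomorphism $\B^c(j_*\sh O_X\tens I^n;\sh O_{X^+})\cong\alpha$ gets absorbed). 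Once done, $\psi$ is identified with the displayed isomorphism, completing the proof. An alternative I might present instead is to verify directly that $p_*\beta$ corepresents the functor of continuous semivaluations of $j_*\sh O_X$ integral on $\sh O_{X^+}$ for which $I$ is invertible, using that $\sh O_{\widetilde X^+}$ is locally generated over $\sh O_{X^+}$ by ratios of generators of $I$; this reduces the claim to descent along the finite affine cover $\widetilde X^+=\bigcup_j\Spf A^+[I/f_j]$.
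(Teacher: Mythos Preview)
Your proposal is correct and uses the same core ingredient --- the general blow-up formula for $\B^c$ derived just above --- but organises the argument differently from the paper. The paper splits the isomorphism $\psi:\alpha[-I]\to\beta$ into injectivity and surjectivity: injectivity is read off the general formula (two classes agreeing on $\widetilde X^+$ must have $J_i+kI$ equal on $X^+$ for $k\gg0$), while surjectivity is handled by an explicit lemma showing that for any $N\in\B^c p^*M$, the shift $N+kI$ lies in the image of $p^*$ for $k\gg0$, with a concrete lift $N'=\bigvee_{i=0}^k N_i+(k-i)I$. You instead exploit the hypothesis $j^*I=\sh O_X$ to identify each graded piece $\B^c(j_*\sh O_X\tens I^n;\sh O_{X^+})$ with $\alpha$ and then collapse the grading by setting $T=-I$, thereby recognising the intermediate term in the general formula directly as $\alpha[-I]$. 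Your route is more structural and avoids the explicit lift; the paper's route is more elementary and yields as a by-product the lemma on lifts, which is reused later (Corollary~\ref{LOC-strict-transform} on strict transform semirings). The one place in your argument that deserves care is the identification $\B^c(j_*\sh O_X\tens I^n;\sh O_{X^+})\cong\alpha$: since $\B^c$ only sees images, what is really being used is that the multiplication map $I^n\tens j_*\sh O_X\to j_*\sh O_X$ is surjective with $\B^c$-invisible kernel, which is exactly what $I\cdot j_*\sh O_X=j_*\sh O_X$ gives you; you flag this, but it is worth stating cleanly rather than deferring to ``bookkeeping''.
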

\begin{proof}
First, the preimage of $I$ on $\widetilde X^+$ is an invertible sheaf, and therefore invertible in $\B^c(\tilde j_*\sh O_X;\sh O_{\widetilde X^+})$; hence this semiring homomorphism at least factors though the localisation \[\varphi:\B^c(j_*\sh O_X;\sh O_{X^+})[-I]\rightarrow\B^c(\tilde j_*\sh O_X;\sh O_{\widetilde X^+}).\] Moreover, $\varphi$ is injective; if two sections $J_i-n_iI,i=1,2$ become identical on $\widetilde X^+$, then by the general blow-up formula, $J_i+kI$ are already equal on $X$ for $k\gg0$.

Surjectivity, on the other hand, follows from this

\begin{lemma}\label{lemma}If $I$ is finite, then for any $N\in\B^cp^*M$, $N+kI$ is in the image of $p^*$ for $k\gg0$.\end{lemma}

Suppose that $N$ is generated in degrees less than $k$. Then \[N^\prime=\bigvee_{i=0}^kN_i+(k-i)I\] is finite, and satisfies the inequalities
\[  p^*N^\prime \leq N+kI \leq p^*N^\prime + kR^+.  \] It is therefore a lift for $N+kI$.
\end{proof}

In fact, the proof of this lemma shows more: it gives a recipe for exactly which modules on $X$ pull back to which modules on $\mathrm{Bl}_IX$. Following this recipe yields a generalisation.

First, observe that $\widetilde j_*\sh O_X$ is the $\sh O_{\widetilde X^+}$-algebra associated to the graded $R_I$ algebra \[K_I:=j_*\sh O_X[t]\simeq\bigoplus_{n\gg0}j_*\sh O_Xt^n\] on 
$X^+$. We therefore obtain surjective homomorphisms
\[ \B^c(j_*\sh O_X;\sh O_{X^+})[T] \twoheadrightarrow\B^c(K_I;R_I)\twoheadrightarrow \B^c(\widetilde j_*\sh O_X;\sh O_{\widetilde X^+}) \]
in which the left-hand arrow associates to a polynomial $\bigvee_{i=0}^kiT+J_i$ the $R_I$-submodule of $K_I$ that the $J_it^i$ generate.

\begin{defn}\label{LOC-def-sta}Let $\alpha\subseteq\B^c(j_*\sh O_X;\sh O_{X^+})$ be a subring containing $I$. The \emph{strict transform semiring} $\widetilde\alpha$ of $\alpha$ is subring of $\B^c(\widetilde j_*\sh O_X;\sh O_{\widetilde X^+})$ whose objects can be written as graded $R_I$-submodules of $K_I$ in the form
\[ \bigoplus_{n\in\N}J_nt^n\subseteq K_I \]
with $J_n\in\alpha$. It is the image of $\alpha[T]\rightarrow\B^c(\widetilde j_*\sh O_X;\sh O_{\widetilde X^+})$.\end{defn}

The strict transform semiring contains the inverse of $I$: it is defined by the formula
\[-(p^*I) \simeq \bigoplus_{n\gg0}I^{n-1}t^n. \]
The argument of lemma \ref{lemma} therefore establishes:

\begin{cor}\label{LOC-strict-transform}The strict transform semiring $\widetilde\alpha$ is a free localisation of $\alpha$ at $I$.\end{cor}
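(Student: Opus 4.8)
The plan is to show that the defining surjection $\alpha[T]\twoheadrightarrow\widetilde\alpha$ of Definition \ref{LOC-def-sta} factors through the free localisation $\alpha[-I]=\alpha[T]/(I+T=0)$, and that the induced homomorphism $\varphi\colon\alpha[-I]\rightarrow\widetilde\alpha$ is an isomorphism.

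First I would check that $T\in\alpha[T]$ maps to the inverse $-(p^*I)$ of the preimage ideal. By the defining property of the blow-up, $p^*I$ is an invertible sheaf on $\widetilde X^+$, hence an invertible element of $\B^c(\tilde j_*\sh O_X;\sh O_{\widetilde X^+})$, and its inverse was identified above with $\bigoplus_{n\gg0}I^{n-1}t^n$. On the other hand $T$ is the tropical polynomial whose degree-one coefficient is $0_\alpha=\sh O_{X^+}$ and whose remaining coefficients are $-\infty$, so under the homomorphism of Definition \ref{LOC-def-sta} it is sent to the associated sheaf of the graded $R_I$-submodule $R_I\cdot t=\bigoplus_{n\geq1}I^{n-1}t^n\subseteq K_I$. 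Since this submodule agrees in all sufficiently high degrees with $\bigoplus_{n\gg0}I^{n-1}t^n$, and passage to the associated sheaf depends only on high-degree pieces (it discards $R_I^+$-torsion, by the identification $\B^c(-;R_I)\{-R_I^+\}\widetilde\rightarrow\B^c(-;\sh O_{\widetilde X^+})$ recalled above), the two define the same element of $\B^c(\tilde j_*\sh O_X;\sh O_{\widetilde X^+})$. Hence the relation $I+T=0$ holds in $\widetilde\alpha$ and the factorisation $\varphi$ exists. Surjectivity of $\varphi$ is then immediate, since $\widetilde\alpha$ is by construction the image of $\alpha[T]$, equivalently of $\alpha[-I]$.

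For injectivity I would invoke Proposition \ref{LOC-blow-up}, which identifies $\B^c(j_*\sh O_X;\sh O_{X^+})[-I]\,\widetilde\rightarrow\,\B^c(\tilde j_*\sh O_X;\sh O_{\widetilde X^+})$; in particular $\widetilde\alpha$ is a sub-semiring of the latter. The square comparing $\varphi$ with this isomorphism and with the inclusions $\alpha\hookrightarrow\B^c(j_*\sh O_X;\sh O_{X^+})$ and $\widetilde\alpha\hookrightarrow\B^c(\tilde j_*\sh O_X;\sh O_{\widetilde X^+})$ commutes; its left-hand vertical is the filtered colimit along $+I$ of the injection $\alpha\hookrightarrow\B^c(j_*\sh O_X;\sh O_{X^+})$, hence injective by exactness of filtered colimits of spans (cf. \ref{TOP-span-complete}), and a diagram chase shows $\varphi$ is injective. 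Equivalently, in the spirit of Lemma \ref{lemma}: if $J_1-nI$ and $J_2-nI$, with $J_i\in\alpha$ brought to a common stage $n$, have the same image in $\widetilde\alpha$, then $p^*J_1=p^*J_2$ on $\widetilde X^+$, so by the general blow-up formula $J_1+kI=J_2+kI$ for $k\gg0$ — an identity that already holds in the sub-semiring $\alpha$ since all terms lie there — whence $J_1-nI=J_2-nI$ in $\alpha[-I]$. This gives $\varphi\colon\alpha[-I]\,\widetilde\rightarrow\,\widetilde\alpha$, so $\alpha\rightarrow\widetilde\alpha$ is a free localisation at $I$; that it is moreover adic follows, as for the ambient semiring in Proposition \ref{LOC-blow-up}, from Lemma \ref{LOC-strong} together with the openness of $I$.

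The main obstacle is the bookkeeping in the second paragraph: pinning down that $T$ maps precisely to the inverse of $p^*I$, and not to some twist of it, requires care with the grading conventions for the Rees algebra $R_I$, the module $K_I$, and the insensitivity of the associated-sheaf functor to the irrelevant ideal $R_I^+$. Once that is settled, surjectivity is tautological and injectivity is a formal consequence of Proposition \ref{LOC-blow-up} and exactness of filtered colimits.
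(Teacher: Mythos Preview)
Your proof is correct and follows essentially the same line as the paper's. The paper's argument is terse: it observes that $-(p^*I)\in\widetilde\alpha$ via the formula $\bigoplus_{n\gg0}I^{n-1}t^n$, then says ``the argument of lemma \ref{lemma} therefore establishes'' the result, leaving injectivity to be read off from Proposition \ref{LOC-blow-up} and the inclusion $\alpha\hookrightarrow\B^c(j_*\sh O_X;\sh O_{X^+})$.

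One small difference is worth noting. For surjectivity of $\varphi$, the paper implicitly reruns the recipe of Lemma \ref{lemma}: the explicit lift $N'=\bigvee_{i=0}^kN_i+(k-i)I$ lies in $\alpha$ whenever the graded pieces $N_i$ do, so every element of $\widetilde\alpha$ is of the form $N'-kI$ with $N'\in\alpha$. Your observation that surjectivity is tautological---$\widetilde\alpha$ is \emph{defined} as the image of $\alpha[T]$, and once $T\mapsto -p^*I$ the map visibly factors through $\alpha[-I]$---is cleaner and bypasses that recipe entirely. The paper's route has the minor advantage of making the inverse explicit; yours has the advantage of making clear that no further computation is needed. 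Your injectivity argument (via the embedding into the ambient localisation of Proposition \ref{LOC-blow-up} and exactness of filtered colimits) is exactly what the paper leaves implicit.
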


\section{Skeleta}\label{SKEL}
\subsection{Spectrum of a semiring}\label{SKEL-spec}

Let $\frac{1}{2}\mathbf{Ring}$ denote the category of Tate semirings (def. \ref{LOC-Tate}; the subscript $T$ is held to be implicit from hereon in), $\mathbf{Sk}^\mathrm{aff}$ its opposite. We say that a morphism $f:X\rightarrow Y$ in $\mathbf{Sk}^\mathrm{aff}$ is an \emph{open immersion} if it is dual to a bounded localisation
\[ f^\sharp:|\sh O_Y|\rightarrow |\sh O_Y|\{T_i-S_i\}_{i=1}^k \]
of the semiring $|\sh O_Y|$ dual to $Y$ at finitely many variables $S_i,T_i\in|\sh O_Y|$.

Paraphrasing lemma \ref{LOC-properties} above:

\begin{lemma}The class of open immersions is closed under composition and base change.\end{lemma}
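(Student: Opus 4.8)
The plan is to prove the two closure properties separately: base change reduces almost immediately to the universal property of the relative tensor sum, whereas composition requires the semiring incarnation of the classical ``clearing denominators'' argument for rational subdomains of adic spaces, and this is where the real work lies.

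\textbf{Base change.} Suppose $f\colon X\to Y$ is an open immersion, dual to a bounded localisation $f^\sharp\colon|\sh O_Y|\to|\sh O_Y|\{S_i-T_i\}_{i=1}^k$ inverting finitely many $S_i\in|\sh O_Y|^\circ$ with bounds $T_i$, and let $g\colon Z\to Y$ be an arbitrary morphism, dual to $g^\sharp\colon|\sh O_Y|\to|\sh O_Z|$. Since pushouts of Tate semirings are computed by the relative tensor sum, the fibre product $X\times_YZ$ in $\mathbf{Sk}^\mathrm{aff}$ is dual to $|\sh O_Y|\{S_i-T_i\}\oplus_{|\sh O_Y|}|\sh O_Z|$, and by the iterated form of Lemma~\ref{LOC-properties}(1)--(3) this pushout is canonically $|\sh O_Z|\{g^\sharp S_i-g^\sharp T_i\}_{i=1}^k$. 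As $g^\sharp$ is a semiring homomorphism it sends $0$ to $0$, hence $|\sh O_Y|^\circ$ into $|\sh O_Z|^\circ$, and sends invertible elements to invertible elements; so $g^\sharp S_i\in|\sh O_Z|^\circ$ and each $g^\sharp T_i$ is again an admissible bound, and the projection $X\times_YZ\to Z$ is therefore dual to a bounded localisation at finitely many variables. One should also note that the target is still Tate, which is clear since a free localisation of a bounded localisation of $(-)^\circ$ is of the same shape.

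\textbf{Composition.} Given open immersions $X\xrightarrow{f}Y\xrightarrow{g}Z$, so that $g^\sharp\colon|\sh O_Z|\to|\sh O_Z|\{S'_j-T'_j\}_j=|\sh O_Y|$ and $f^\sharp\colon|\sh O_Y|\to|\sh O_Y|\{S_i-T_i\}_i=|\sh O_X|$, I would use Lemma~\ref{LOC-properties}(3) to invert the $S_i$ one at a time, reducing to the following claim: if $\beta=|\sh O_Z|\{S'_j-T'_j\}_j$, $S\in\beta^\circ$ and $T$ is a bound in $\beta$, then $\beta\{S-T\}$ is again a bounded localisation of $|\sh O_Z|$ at finitely many variables. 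To prove this I would clear denominators: $\beta$ is generated over the image of $|\sh O_Z|$ by the invertible elements $-S'_j$, so $S=\bigvee_{\nu}\bigl(\sum_j\nu_j(-S'_j)+C_\nu\bigr)$ for a finite family of exponent vectors $\nu$ and elements $C_\nu$ in the image of $|\sh O_Z|$, whence for $M$ large $\tilde S:=S+\sum_jM\,S'_j=\bigvee_\nu\bigl(\sum_j(M-\nu_j)S'_j+C_\nu\bigr)$ lies in the image of $|\sh O_Z|$; clearing the bound $T$ the same way gives $\tilde T$. Matching universal properties, once all the $S'_j$ are inverted, inverting $S$ with bound $T$ over $\beta$ becomes inverting $\tilde S$ with bound $\tilde T$ over $|\sh O_Z|$, so $\beta\{S-T\}\cong|\sh O_Z|\{S'_j-T'_j;\,\tilde S-\tilde T\}$, which by Lemma~\ref{LOC-properties}(3) is a single bounded localisation at the combined finite family. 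An induction on $S_1,S_2,\dots$ then completes the argument.

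The hard part will be the bookkeeping in this last reduction. I expect two points to require care: first, that the lift $\tilde S$ can be chosen inside $|\sh O_Z|^\circ$ and not merely in $|\sh O_Z|$ --- bounded localisations need not be injective, so a $\le0$ element of $\beta$ need not admit a $\le0$ lift, and one must exploit the explicit presentation of $\tilde S$ as a join of sums drawn from $|\sh O_Z|^\circ$, enlarging the inverted family if necessary; and second, more delicate, that the bound $T$, being only invertible in $\beta$, has a cleared form $\tilde T$ that need not be invertible in $|\sh O_Z|$, so the inverse-bound inequality $-S\le-T$ in $\beta\{S-T\}$ must be repackaged as finitely many genuine bound conditions over $|\sh O_Z|$, with the spurious factors $\sum_jM'S'_j$ absorbed into additional bounded localisations. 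This is the transcription through $\B^c$ of Huber's refinement lemma for rational domains, with the $S'_j$ in the role of the inverted function and the bounds tracking radii of convergence; once the denominators and bounds are organised, what remains is a routine chase through the universal properties collected in Lemma~\ref{LOC-properties}.
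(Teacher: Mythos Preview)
The paper's ``proof'' of this lemma is a single sentence: it simply says ``Paraphrasing results \ref{LOC-properties} above'' and states the lemma. In other words, the author regards both closure properties as immediate restatements of Lemma~\ref{LOC-properties}: part~(i) gives base change (bounded localisation is computed by tensor sum), and part~(iii) gives composition (an iterated bounded localisation is a single bounded localisation at the combined family).

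Your base-change argument is exactly this, just written out.

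For composition you do substantially more than the paper. You correctly observe that Lemma~\ref{LOC-properties}(iii) is stated with \emph{both} $S_1,S_2$ lying in the original $\alpha^\circ$, whereas in a general composition $X\to Y\to Z$ the second family $S_i,T_i$ lives only in $|\sh O_Y|=|\sh O_Z|\{S'_j-T'_j\}$, not \emph{a~priori} in $|\sh O_Z|$. The paper silently elides this point; you propose to bridge it by clearing denominators, which is the natural argument (and indeed the semiring shadow of the standard refinement lemma for rational subdomains that you mention). The two bookkeeping worries you flag --- lifting $\tilde S$ into $|\sh O_Z|^\circ$ and handling a bound $\tilde T$ that is invertible only after the first localisation --- are real, and your sketch of how to absorb them into an enlarged finite family of bounded localisations is the right shape. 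So your proof is not wrong, it is simply more honest than the paper's: the author treats the lemma as a tautology from \ref{LOC-properties}, while you supply the missing reduction that makes (iii) applicable.
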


Following the general principles outlined in the preliminaries \S\ref{TOPOS}, and in more detail in \cite{Toen}, we obtain the structure of a Grothendieck site on $\mathbf{Sk}^\mathrm{aff}$ generated by those finite canonical covers of the form
\[ \{U_i\rightarrow X\}_{i=1}^k \]
where $U_i\rightarrow X$ is an open immersion for each $i\in[k]$. The tautological presheaf $|\sh O|$ of Tate semirings on $\mathbf{Sk}^\mathrm{aff}$ is a sheaf, by the definition of canonical coverings.

\begin{defns}\label{SKEL-skel}The category $\mathbf{Sk}^\mathrm{aff}$, considered equipped with this topology, is called the \emph{skeletal site}. Its sheaf category is denoted $\mathbf{Sk}\topos$.

An \emph{affine skeleton}, resp. \emph{skeleton}, is a representable, resp. locally representable sheaf on the skeletal site (cf. def. \ref{TOPOS-def}, \cite[def. 2.15]{Toen}). If $\alpha$ is a semiring, the dual affine skeleton is called its \emph{spectrum} and denoted $\Spec\alpha$ The category of skeleta is denoted $\mathbf{Sk}$.\end{defns}

Of course, the Yoneda embedding identifies $\mathbf{Sk}^\mathrm{aff}$ with the category of affine skeleta.

More general arguments (cf. \S\ref{TOPOS}) equip each skeleton $X$ with a small topos $X\topos$, equivalent to the category of sheaves on a uniquely determined sober topological space with lattice of open sets $\sh U_{/X}$. I will abuse notation and denote this topological space also by $X$.

This fact allows us to alternatively interpret the Grothendieck site structure on $\mathbf{Sk}^\mathrm{aff}$ in terms  of a contravariant functor
\[ \mathbf{Sk}^\mathrm{aff}\rightarrow\mathbf{Top} \]
into the category of sober, quasi-compact, and quasi-separated topological spaces equipped with a sheaf of Tate semirings.
A skeleton is then a topological space $X$ equipped with a sheaf $|\sh O_X|$ of Tate semirings, locally isomorphic to an affine skeleton. The sections of $|\sh O_X|$ may be called \emph{convex functions} on $X$. 

\begin{prop}[\cite{Toen}]An affine skeleton is qcqs and sober, and affine open subsets form a basis for the topology. %2.18

The category of skeleta has all fibre products.\end{prop}

\begin{eg}\label{eg-cpa}The spectrum $\Delta_{[a,b]}=\Spec\mathrm{CPA}_\Z([a,b],\R_\vee)$ of the semiring of convex, piecewise-affine functions on an interval $[a,b]\subseteq\R$ with rational endpoints (cf. e.g. \ref{eg-int}) is homeomorphic to a certain Grothendieck site structure on the poset of closed subintervals of $\Delta\subset\R$. 

Indeed, we already observed in example \ref{eg-int} that every subdivision of $\Delta_{[a,b]}$ is determined by a subdivision of $[a,b]$ as a rational polyhedron; meanwhile, by the cellular cover formula of the next section (proposition \ref{LOC-Zar-cover}), the cellular topology of $\Delta_{[a,r_1,\ldots,r_k,b]}$ is generated by the inclusions $[r_i,r_{i+1}]\rightarrow[r_0,r_k]$.

It remains to say when a collection of affine subsets $U_j=\{[a_{ji},b_{ji}]\}_{i=1}^{k_j}$ covers $X$.
\begin{conj}The $U_j$ cover $X$ if and only if $[a,b]=\bigcup_{i,j}[a_{ji},b_{ji}]$ and $(a,b)=\bigcup_{i,j}(a_{ji},b_{ji})$.\end{conj}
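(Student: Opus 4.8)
\emph{Proof proposal.} Write $\alpha=\mathrm{CPA}_\Z(\Delta,\R_\vee)$, $\Delta=[a,b]$. The plan is to prove the two implications separately, using throughout the dictionary of example \ref{eg-cpa}: affine opens of $\Delta$ are the finite sequences $U=\{[a_i,b_i]\}_i$ of subintervals (with $b_i\le a_j$ for $i<j$), $|\sh O_U|$ consists of the integer-sloped piecewise-affine functions convex on each $(a_i,b_i)$, and $\{U_j\to\Delta\}$ is a covering family if and only if the sieve it generates contains a finite \emph{canonical} cover $\{V_l\to\Delta\}$ — each $V_l\to\Delta$ an open immersion factoring through some $U_j$. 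Since the topology in question lives on the set $\Delta$, a covering family has the underlying subsets $\bigcup_i[a_{ji},b_{ji}]$ of its members covering $[a,b]$.

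\textbf{Necessity.} Suppose $\{U_j\}$ covers. The previous remark immediately gives $[a,b]=\bigcup_{j,i}[a_{ji},b_{ji}]$. For the open-interval condition I would argue by contradiction: if some $p\in(a,b)$ lies in no $(a_{ji},b_{ji})$, then $p$ is an endpoint of every subinterval of every $U_j$ that contains it, so near $p$ each such $U_j$ is a pair of half-open cells meeting only at $p$. Consequently the piecewise-affine function $f$ with a single downward kink at $p$ (affine on $[a,p]$ and on $[p,b]$, concave at $p$) restricts to a \emph{convex} function on each $U_j$, although $f\notin\alpha$. Checking that the restrictions of $f$ agree on every fibre product $U_j\times_\Delta U_k$ — these see $p$ only as a common boundary point of cells — would exhibit a section of $|\sh O|$ over the cover that does not descend to $\alpha=|\sh O_\Delta|$, contradicting descent for $|\sh O|$ along a covering family. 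Hence $(a,b)=\bigcup_{j,i}(a_{ji},b_{ji})$.

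\textbf{Sufficiency.} Conversely, assume both conditions. Using compactness of $[a,b]$ I would select from the family a subinterval containing $a$, one containing $b$, and finitely many open subintervals covering the complementary compact middle segment; then I would choose a partition $a=c_0<c_1<\cdots<c_m=b$ whose breakpoints include all endpoints of this finite collection and which is fine enough that each closed piece $[c_{l-1},c_l]$ lies inside a single subinterval $[a_{j(l)i(l)},b_{j(l)i(l)}]$ of some $U_{j(l)}$, never straddling one of its endpoints. Restriction along $[c_{l-1},c_l]\hookrightarrow\Delta$ then factors as $\alpha\to|\sh O_{U_{j(l)}}|\to\mathrm{CPA}_\Z([c_{l-1},c_l],\R_\vee)$ with each arrow a bounded localisation (a convex function stays convex on a subinterval of a convexity region; cf.\ \ref{LOC-properties}), so $[c_{l-1},c_l]\to\Delta$ is an open immersion factoring through $U_{j(l)}$. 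Finally I would check that $\{[c_{l-1},c_l]\to\Delta\}_{l=1}^m$ is a canonical cover: it is the composite of the single subdivision $\Delta'\to\Delta$ at $c_1,\dots,c_{m-1}$ (one open immersion, by the subdivision analysis of \ref{eg-int}) with the cellular localisations onto the closed cells $[c_{l-1},c_l]$, which cover $\Delta'$ by the cellular cover formula \ref{LOC-Zar-cover} since they exhaust $[a,b]$. Thus $\{U_j\}$ is refined by a canonical cover, hence covering.

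\textbf{The hard part.} The main obstacle is the necessity of the open-interval condition: converting ``$\{U_j\}$ is a covering family'' into a workable descent statement, and then showing that the downward kink at $p$ really assembles into a section of $|\sh O|$ over the cover. This needs an explicit enough description of the fibre products $U_j\times_\Delta U_k$ of affine opens of $\Delta$ to confirm the gluing. The sufficiency side carries the dual subtlety — and this is exactly what the open-interval condition encodes — namely that a subdivision $\Delta'\to\Delta$ is \emph{not} itself a cover (it is the tropical analogue of a blow-up, cf.\ \ref{LOC-blow-up}): one genuinely needs to follow it by the cellular cover onto the closed cells, and to verify that the composite family is canonical in the sense of the tropical site. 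A last technical point is that $\mathrm{CPA}_\Z$ is not finitely presented over $\R_\vee$ (e.g.\ \ref{non-eg}), so the bounded-localisation descriptions of the $U_j$ and of the cells $[c_{l-1},c_l]$ should be grounded in the \emph{cancellativity} of $\mathrm{CPA}_\Z$ rather than in a generators-and-relations presentation.
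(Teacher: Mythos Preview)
This statement is labelled a \emph{conjecture} in the paper, and the paper does not prove it. Immediately after stating it, the author reduces (via the cellular cover formula) to the case of subdivisions, shows that when there are no common kink points the fork
\[
\mathrm{CPA}_\Z(\Delta,\R_\vee)\longrightarrow\prod_i|\sh O_{U_i}|\rightrightarrows\prod_{i,j}|\sh O_{U_i\cap U_j}|
\]
is an equaliser, and then writes: ``I do not know how to show that this equaliser is universal.'' Universality is exactly what is required for the family to be a canonical cover in the sense of definition~\ref{TOPOS-def-covering}. So there is no proof in the paper to compare against; the question is whether your proposal closes the gap the author left open.

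Your necessity argument is sound and essentially the same as the paper's: a common kink point $p\in(a,b)$ allows a globally defined piecewise-affine function with a downward kink at $p$ whose restriction to each $U_j$ is convex; since it is defined on all of $[a,b]$ the restrictions agree on every $U_j\times_\Delta U_k$, so the fork above fails already to be an equaliser, and the family cannot cover.

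Your sufficiency argument, however, runs into precisely the obstruction the paper identifies. You propose to refine $\{U_j\}$ by $\{[c_{l-1},c_l]\to\Delta\}_l$ and to certify the latter as a canonical cover by factoring it as the subdivision $\Delta'\to\Delta$ followed by the cellular cover $\{[c_{l-1},c_l]\to\Delta'\}$. But a subdivision is a single monomorphism that is not a covering family (as you yourself note), and composing a cover of $\Delta'$ with the non-covering open immersion $\Delta'\hookrightarrow\Delta$ does not yield a cover of $\Delta$: after base change along an arbitrary $Y\to\Delta$, the cellular cover formula only recovers $\sh O_{\Delta'\times_\Delta Y}$ as the equaliser, and the map $\sh O_Y\to\sh O_{\Delta'\times_\Delta Y}$ is the base-changed free localisation, not in general an isomorphism. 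So you are left needing to show directly that $\{[c_{l-1},c_l]\to\Delta\}_l$ is \emph{universally} regular epimorphic, which is exactly the universality of the equaliser that the author could not establish. You flag this as ``the hard part'' in your closing paragraph, but the body of the argument treats it as done; it is not, and no ingredient in the paper supplies it.
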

With the cellular cover fomula, the only part in question is the condition for a family of subdivisions to cover $\Delta$. The proposed criterion says that a collection of subdivisions covers if and only if there are no common `kink' points, that is, if \[\bigcap_j\bigcup_{i=1}^{k_j}\{a_{ji},b_{ji}\}=\emptyset.\] Indeed, in that case, the intersection over $j$ (in, say, the set of continuous functions) of the semirings of piecewise-affine functions convex on $U_j$ is exactly the set of such functions convex on $\Delta$. In other words,
\[ \mathrm{CPA}_\Z(\Delta,\R_\vee) \rightarrow\prod_i|\sh O_{U_i}| \rightrightarrows\prod_{i,j}|\sh O_{U_i\cap U_j}| \]
is an equaliser of semirings. I do not know how to show that this equaliser is universal.

As was pointed out in e.g. \ref{eg-int'}, the spectrum of $\mathrm{CPA}_*(\R,\R_\vee)$ consists of a single point. One can obtain a better model for the affine real line $\R$ as the increasing union
\[ \mathrm{sk}\R:=\bigcup_{a\rightarrow\infty}[-a,a] \]
in $\mathbf{Sk}$. Like the analytic torus over a non-Archimedean field, it is not quasi-compact.\end{eg}

\begin{eg}[Dichotomy]\label{circnorm}The skeleton constructed in the above example \ref{eg-cpa}, although relatively easy to describe, is not finitely presented over $\Spec\R_\vee$ (cf. \ref{non-eg}). In the vein of example \ref{non-eg'}, we can replace $\mathrm{CPA}_\Z([a,b],\R_\vee)$ with its finitely presented cousin
\[ \R_\vee\{[a,b]\}:=\R_\vee\{X-b,a-X\}. \]
They are related by an (infinitely presented) morphism $\Spec\mathrm{CPA}_\Z(\Delta,\R_\vee)\rightarrow\Spec\R_\vee\{[a,b]\}$.
We also saw in \ref{non-eg'} that this morphism is not a homeomorphism, and that in fact the topology of the target is difficult to describe.

It seems to be possible to modify the definition of skeleton, by introducing another condition into our definition of semiring, so as to make this morphism a homeomorphism. This condition is the semiring version of the algebraic notion of relative normality (integral closure of $A^+$ in $A$), which is used in non-Archimedean geometry to define a good Spec functor. However, I wish to defer a serious pursuit of this approach to a later paper, since this issue does not directly affect any of the results here.

The examples in \S\ref{EGS} all more closely resemble finitely presented skeleta like $\Spec\R_\vee\{\Delta\}$, but I will often only describe open subsets in a way that depends only on their pullbacks to a `geometric' counterpart $\Spec\mathrm{CPA}_\Z(\Delta,\R_\vee)$.
\end{eg}

\subsection{Integral skeleta and cells}

\begin{defn}A skeleton that admits a covering by spectra of contracting semirings is said to be \emph{integral}. The full subcategory of $\mathbf{Sk}$ whose objects are integral is denoted $\mathbf{Sk}^\mathrm{int}$.\end{defn}

The tropical site $\mathbf{Sk}^\mathrm{aff}$ carries a tautological sheaf $|\sh O|^\circ$ of contracting semirings, whose sections over $\Spec\alpha$ are $\alpha^\circ$. Taking the spectrum defines a functor \[ \Spec|\sh O|^\circ: \mathbf{Sk}^\mathrm{aff}\rightarrow\mathbf{Sk}^\mathrm{int}. \] Moreover, any covering of an object $\Spec|\sh O|^\circ(X)$ lifts, by base extension $|\sh O|^\circ\rightarrow|\sh O|$, to a covering of $X$, that is, $\Spec|\sh O|^\circ$ is cocontinuous. It therefore extends to the pushforward functor of a morphism
\[ (-)^\circ:\mathbf{Sk}\topos\rightarrow(\mathbf{Sk}^\mathrm{int})\topos \]
of the corresponding topoi.

This functor takes a skeleton $X$ to an integral skeleton $X^\circ$ if and only if there exists an affine open cover $X=\bigcup_iU_i$ such that $(U_i\cap U_j)^\circ\hookrightarrow U_i^\circ$ is an open immersion, in which case $U_\bullet^\circ$ provides an atlas for $X^\circ$. In algebraic terms, we need that $X$ admit an affine atlas each of whose structure maps is dual to a localisation $\alpha\rightarrow\alpha\{T-S\}$ that restricts to a localisation $\alpha^\circ\rightarrow\alpha\{T-S\}^\circ\cong\alpha^\circ\{T-S\}$ of the semiring of integers. This occurs if and only if the localisation is cellular, that is, if (up to isomorphism) $T$ is invertible in $\alpha^\circ$ and therefore zero.

\begin{defns}\label{SKEL-def-cel}An open immersion of skeleta is \emph{cellular} if it is locally dual to a cellular localisation of semirings.

A skeleton that admits a cover by affine, cellular-open subsets is said to be a \emph{cell complex}. In particular, any affine skeleton is a cell complex.

If \emph{every} open subset is cellular, it is a \emph{spine}. By the discussion above, any integral skeleton is a spine. 

The categories of spines, resp. cell complexes are denoted $\mathbf{Sk}^\mathrm{sp}\hookrightarrow\mathbf{Sk}^\mathrm{cel}$.

There is a functor
\[ (-)^\circ:\mathbf{Sk}^\mathrm{cel}\rightarrow\mathbf{Sk}^\mathrm{int}, \]
left adjoint to the inclusion, which associates to a cell complex $X$ its \emph{integral model} $X^\circ$. The unit of the adjunction is a morphism $j:X\rightarrow X^\circ$. The cellular open subsets of $X$ are those pulled back along $j$.\end{defns}

We have access to a reasonably concrete description of the `cellular topology'.

\begin{lemma}Let $\alpha$ be a semiring, $\{S_i\}_{i=1}^k\subseteq\alpha^\circ$ a finite list of contracting elements. Write $S=\bigvee_{i=1}^kS_i$. Then
\[ \alpha\{-S\}\rightarrow\prod_i\alpha\{-S_i\}\rightrightarrows\prod_{i,j}\alpha\{-S_i,-S_j\}\]
is a universal equaliser of semirings.\end{lemma}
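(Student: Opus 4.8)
The plan is to exhibit each cellular localisation as an explicit quotient of $\alpha$ and then verify the equaliser property on underlying sets, which is legitimate since limits of semirings are computed on the underlying spans (the refinement to topological semirings is routine, every arrow in the diagram being adic over $\alpha$ by Lemma~\ref{LOC-strong}). By Lemma~\ref{LOC-Zar-construction} and the discussion preceding it, $\alpha\to\alpha\{-S\}$ is surjective, sending $X$ to the $-S$-span $r_S(\alpha_{\le X})=\{Z\mid\exists\, n,\ Z+nS\le X\}$ of its principal ideal; hence $\alpha\{-S\}=\alpha/{\sim_S}$, where $X\sim_S Y$ iff $X+NS\le Y$ and $Y+NS\le X$ for some $N\in\N$, and one checks directly that this $\sim_S$ is the semiring congruence generated by $S\sim 0$. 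The same description applies to each $\sim_{S_i}$ and, via Lemma~\ref{LOC-properties}(3), to the double localisations $\alpha\{-S_i,-S_j\}=\alpha\{-S_i\}\{-S_j\}=\alpha/{\sim_{S_i,S_j}}$, whose congruence is $X+N(S_i+S_j)\le Y$ and symmetrically.

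The passage from ``equaliser'' to ``universal equaliser'' is then formal: by Lemma~\ref{LOC-properties}(1) the pullback of the whole diagram along an arbitrary homomorphism $\alpha\to\beta$ is again a diagram of the same shape, namely the one attached to $(\beta;S_1',\ldots,S_k')$ where $S_i'$ is the image of $S_i$ (still contracting) and $\bigvee_i S_i'$ the image of $S=\bigvee_i S_i$. So it suffices to prove the bare equaliser statement for an arbitrary semiring and an arbitrary finite family of contracting elements.

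For injectivity of $\alpha\{-S\}\to\prod_i\alpha\{-S_i\}$: if $X\sim_{S_i}Y$ for every $i$, pick $N$ dominating all the witnesses, so $X+NS_i\le Y$ and $Y+NS_i\le X$ for all $i$; writing $MS=\bigvee_{(i_1,\ldots,i_M)\in[k]^M}(S_{i_1}+\cdots+S_{i_M})$, for $M\ge kN$ a pigeonhole argument shows each summand satisfies $S_{i_1}+\cdots+S_{i_M}\le NS_{i_0}$ for some index $i_0$ (the remaining factors being $\le 0$), whence $X+MS\le Y$ and $Y+MS\le X$, i.e. $X\sim_S Y$. For surjectivity onto the equaliser: given a compatible family with lifts $x_i=[X_i]_{S_i}$, the compatibility in $\alpha\{-S_i,-S_j\}$ yields, for each pair $i\ne j$, an $M_{ij}\in\N$ with $X_i+M_{ij}(S_i+S_j)\le X_j$ and $X_j+M_{ij}(S_i+S_j)\le X_i$. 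Set $N:=\max_{i\ne j}M_{ij}$ (and $N:=0$ if $k\le 1$) and $X:=\bigvee_{i=1}^k(X_i+NS_i)$. Then for each $j$ one has $X_j+NS_j\le X$ trivially, while $X+NS_j=\bigvee_i(X_i+NS_i+NS_j)\le X_j$ because the $i=j$ summand is $X_j+2NS_j\le X_j$ and, for $i\ne j$, $X_i+N(S_i+S_j)\le X_i+M_{ij}(S_i+S_j)\le X_j$ since $S_i+S_j\le 0$ and $N\ge M_{ij}$. Hence $[X]_{S_j}=x_j$ for all $j$, so $[X]_S$ maps to the prescribed family.

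I expect the surjectivity step to be the main obstacle: one must guess the gluing element $X=\bigvee_i(X_i+NS_i)$ and observe that the overlap conditions supply exactly the bounds $M_{ij}$ needed to close up the two inequalities; once the formula is found the verification is elementary. An equivalent route is to glue on ideal lattices, where the sought element corresponds to $\bigcap_i x_i$ --- an intersection of $-S_i$-invariant, hence $-S$-invariant, ideals --- which one shows is again a principal $-S$-invariant ideal with $r_{S_i}$ recovering $x_i$, using the identity $r_S(\alpha_{\le X})=\bigcap_i r_{S_i}(\alpha_{\le X})$, itself another pigeonhole.
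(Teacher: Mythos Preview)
Your proof is correct. Both you and the paper reduce universality to the plain equaliser statement via the base-change property of cellular localisation (Lemma~\ref{LOC-properties}(1)), but the execution differs. The paper works at the level of ideal lattices: it embeds the fork into $\B\alpha\to\prod_i\B\alpha\rightrightarrows\prod_{i,j}\B\alpha$ via the right adjoints of Lemma~\ref{LOC-Zar-construction}, and identifies the equaliser there with the lattice of $-S$-invariant ideals by checking that the intersection $f^\dagger$ of a compatible family $(\iota_i)$ is $-S$-invariant and has $\iota_i$ as its $-S_i$-span. You instead work directly with elements, describing each $\alpha\{-S_i\}$ as a quotient by an explicit congruence and producing the gluing element $X=\bigvee_i(X_i+NS_i)$ by hand.

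Your route has the advantage of making explicit a step the paper leaves to the reader: namely, that an element of the equaliser with \emph{principal} components $\iota_i=r_{S_i}(\alpha_{\le X_i})$ has \emph{principal} intersection $\iota=r_S(\alpha_{\le X})$. The paper's lattice argument establishes $\iota_i=r_{S_i}(\iota)$ for the intersection $\iota$, but the passage back down from $\B\alpha$ to $\alpha\{-S\}$ still needs the pigeonhole observation you isolate (that for $M\ge kN$ every term of $MS$ is bounded by some $NS_{i_0}$). Conversely, the paper's formulation makes the structural content clearer: the equaliser is governed entirely by the lattice of invariant ideals, and the element-level formula is just the principal shadow of $f^\dagger$. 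Your closing remark about the equivalent lattice route shows you see this.
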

\begin{proof}The lemma \ref{LOC-Zar-construction} yields an embedding of forks
\[ \xymatrix{ \alpha \ar[r]\ar[d] & \prod_i\alpha\{-S_i\} \ar@<2pt>[r]\ar@<-2pt>[r]\ar[d] & \prod_{i,j}\alpha\{-S_i,-S_j\}\ar[d] \\
 \B\alpha \ar[r] & \prod_i\B\alpha \ar@<2pt>[r]\ar@<-2pt>[r] & \prod_{i,j}\B\alpha } \]
in which the $i$th arrow in the lower row takes an ideal to its $-S_i$-span.

Let $\mathrm{eq}\subseteq\prod_i\B\alpha$ denote the equaliser of the second row, $f:\B\alpha\rightarrow\mathrm{eq}$ the natural $\B$-module homomorphism. An element of $\mathrm{eq}$ is a finite list $\{\iota_i\}_{i=1}^k$ of $-S_i$-invariant ideals, such that for each $i$ and $j$ the $-S_j$-span of $\iota_i$ is equal to the $-S_i$-span of $\iota_j$. The right adjoint $f^\dagger$ to $f$ sends such a list to their intersection in $\alpha$.

Since localisation commutes with base change, the fork in the statement is a universal equaliser as soon as it is an equaliser. By \ref{LOC-Zar-construction}, it is equivalent to show that $f^\dagger$ identifies $\mathrm{eq}$ with the set of $-S$-invariant ideals of $\alpha$.

On the one hand, the elements $f^\dagger(\mathrm{eq})$ are certainly $-S$-invariant. Indeed, suppose $X+nS\in\iota=\cap_i\iota_i$. Then $X+nS_i\leq X+nS\in\iota_i$, and so $X\in\iota_i$ for each $i$. Furthermore, since the $-S_i$-span of $\iota_j$ contains $\iota_i$, if $X\in\iota_i$, then $X+nS_i\in\iota_j$ for some $n$. Therefore, for $n\gg0$, $X+nS_i\in\iota$, and $\iota_i$ is the $-S_i$-span of $\iota$.

Conversely, suppose that $\iota$ is $-S$-invariant. Let $X\in f^\dagger f\iota\supseteq\iota$. Then for $n\gg0$, $X+nS_i\in\iota$ for all $i$, and therefore $X+nS\in\iota$, so $X\in\iota$. Therefore, $f$ and $f^\dagger$ are inverse.\end{proof}

In geometric terms:

\begin{prop}[Cellular cover formula]\label{LOC-Zar-cover}Let $\alpha$ be a semiring, $\{S_i\}_{i=1}^k\subseteq\alpha^\circ$ a finite list of contracting elements. Write $S=\bigvee_{i=1}^kS_i$. Then
\[ \Spec\alpha\{-S\}=\bigcup_{i=1}^k\Spec\alpha\{-S_i\} \]
as subsets of $\Spec\alpha$.\end{prop}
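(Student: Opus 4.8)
The plan is to deduce this geometric statement directly from the algebraic lemma that immediately precedes it, together with the topological formalism of the tropical site set up in \S\ref{SKEL-spec}. Since the previous lemma shows that
\[ \alpha\{-S\}\rightarrow\prod_i\alpha\{-S_i\}\rightrightarrows\prod_{i,j}\alpha\{-S_i,-S_j\} \]
is a \emph{universal} equaliser of semirings, the dual diagram of affine skeleta
\[ \coprod_{i,j}\Spec\alpha\{-S_i,-S_j\}\rightrightarrows\coprod_i\Spec\alpha\{-S_i\}\rightarrow\Spec\alpha\{-S\} \]
is a coequaliser — that is, $\{\Spec\alpha\{-S_i\}\rightarrow\Spec\alpha\{-S\}\}_{i=1}^k$ is a covering in the tropical topology, and it exhibits $\Spec\alpha\{-S\}$ as the colimit of its members and their pairwise intersections. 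By theorem \ref{TOPOS-affine-scheme} the functor $\Spec$ sends such a covering to an actual open cover of topological spaces, so $\Spec\alpha\{-S\}=\bigcup_{i=1}^k\Spec\alpha\{-S_i\}$, with each $\Spec\alpha\{-S_i\}$ an open subset.

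It then remains to identify these as subsets of $\Spec\alpha$ in the asserted way. Each $S_i\in\alpha^\circ$ is contracting, hence (by the discussion following definition \ref{SKEL-def-cel}) the cellular localisation $\alpha\rightarrow\alpha\{-S_i\}$ is dual to a cellular open immersion $\Spec\alpha\{-S_i\}\hookrightarrow\Spec\alpha$, and similarly for $S=\bigvee_i S_i$. I would check compatibility of these immersions: the composite $\alpha\rightarrow\alpha\{-S\}\rightarrow\alpha\{-S_i\}$ agrees with $\alpha\rightarrow\alpha\{-S_i\}$ because imposing $S=0$ already forces each $S_i=0$ (as $0\geq S\geq S_i$ in $\alpha^\circ$, and $S=0$ makes $0$ maximal), so by lemma \ref{LOC-properties}(iii) and the universal property, $\alpha\{-S\}\{-S_i\}\cong\alpha\{-S_i\}$. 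Hence the open subset $\Spec\alpha\{-S_i\}\subseteq\Spec\alpha$ factors through $\Spec\alpha\{-S\}$, and the previous paragraph's cover, transported along the open immersion $\Spec\alpha\{-S\}\hookrightarrow\Spec\alpha$, becomes precisely the equality of subsets claimed.

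The only genuinely substantive input is the universality of the equaliser in the preceding lemma — everything here is formal bookkeeping with the scheme-theoretic machinery of appendix \ref{TOPOS}. I expect the main point requiring care is the justification that $\Spec$ carries a covering family (a universal effective epimorphism in $\mathbf{Sk}^{\mathrm{aff}}$ built from open immersions) to a genuine open cover of topological spaces whose members are the open subsets named; this is exactly what the affine-scheme construction theorem \ref{TOPOS-affine-scheme} is designed to provide, but one must confirm that the topology generated by canonical covers (\S\ref{SKEL-spec}) is the one being used, and that the open immersions $\Spec\alpha\{-S_i\}\hookrightarrow\Spec\alpha$ really are the ``basic opens'' $U_{S_i}$ in the prime-spectrum picture of \S\ref{LOC-prime} when $\alpha$ is contracting, matching the statement's phrase ``as subsets of $\Spec\alpha$''. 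Modulo that identification, the proof is a one-line dualisation of the algebraic lemma.
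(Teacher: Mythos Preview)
Your approach is correct and matches the paper's: the proposition is presented there simply as the geometric reformulation (``In geometric terms:'') of the preceding universal equaliser lemma, and you have spelled out exactly how that dualisation works through the machinery of \S\ref{SKEL-spec} and appendix \ref{TOPOS}.

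One small slip: you write that ``imposing $S=0$ already forces each $S_i=0$'', but the implication you actually need (and which holds) runs the other way. In $\alpha\{-S_i\}$ one has $S_i=0$, whence $0=S_i\leq S=\bigvee_jS_j\leq 0$, so $S=0$ there; this is what shows $\alpha\rightarrow\alpha\{-S_i\}$ factors through $\alpha\{-S\}$ and hence that $\alpha\{-S\}\{-S_i\}\cong\alpha\{-S_i\}$. The direction you stated is false in general (setting $S=0$ does not force any individual $S_i$ to vanish), but it is not what the argument requires, so your conclusion stands.
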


\begin{cor}\label{SKEL-qco=aff}Let $U$ be a quasi-compact cell complex. If $U$ can be embedded as an open subset of an affine skeleton, then $U$ is affine.\end{cor}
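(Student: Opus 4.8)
The plan is to reduce to gluing a finite \emph{cellular} cover of $U$ and then to run a {\v C}ech-type reconstruction, using the cellular cover formula \ref{LOC-Zar-cover} as the mechanism that makes the gluing representable. So first I would fix the given open immersion $U\hookrightarrow X=\Spec\alpha$. Since $U$ is a cell complex it is covered by affine, cellular-open subsets, and quasi-compactness extracts a finite subcover $U=\bigcup_{i=1}^{n}U_i$ with $U_i=\Spec\beta_i$ and each $U_i\hookrightarrow U$ cellular. An affine skeleton is quasi-separated — the basic opens of $\Spec\alpha$ have basic, hence affine, hence quasi-compact pairwise intersections (\ref{LOC-properties}), so any two quasi-compact opens meet in a quasi-compact open — and therefore so is its open subset $U$. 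Consequently every multiple intersection $U_{i_0}\cap\cdots\cap U_{i_r}$ is quasi-compact.

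Next I would check that these intersections are affine. The inclusion $U_i\cap U_j\hookrightarrow U_i$ is the base change of the cellular open immersion $U_j\hookrightarrow U$ along $U_i\hookrightarrow U$, hence again cellular (cellular localisations are stable under base change, \ref{LOC-properties}), and similarly for the higher intersections. Now any quasi-compact cellular-open subset $W$ of an affine skeleton $\Spec\beta$ is a finite union $\bigcup_l\Spec\beta\{-S_l\}$ of basic cellular opens, so by the cellular cover formula \ref{LOC-Zar-cover} one has $W=\Spec\beta\{-\bigvee_l S_l\}$; thus $W$ is affine and $\beta\to\Gamma(W;|\sh O_W|)$ is a cellular localisation. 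Applying this inside each chart $U_i$ shows that every $U_{i_0}\cap\cdots\cap U_{i_r}$ is affine, say equal to $\Spec\beta_{i_0\cdots i_r}$, and that each restriction $\beta_{i_0\cdots i_r}\to\beta_{i_0\cdots i_s}$ to a longer index tuple is a cellular localisation.

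Finally I would glue. Because $|\sh O_U|$ is a sheaf and the cover is finite, $\gamma:=\Gamma(U;|\sh O_U|)$ is the equalizer $\mathrm{eq}\bigl(\prod_i\beta_i\rightrightarrows\prod_{i,j}\beta_{ij}\bigr)$, and the adjunction between $\Spec$ and global sections (thm.\ \ref{TOPOS-affine-scheme}) supplies a canonical morphism $\pi\colon U\to\Spec\gamma$; the goal is to prove $\pi$ an isomorphism, for which it suffices that $\{\Spec\beta_i\to\Spec\gamma\}$ be a covering with $\Spec\beta_i\times_{\Spec\gamma}\Spec\beta_j=\Spec\beta_{ij}$, since then $\pi$ is a local isomorphism. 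This is the point where cellularity does real work: the cellular cover formula produces not merely the set-theoretic identity $\Spec\alpha\{-S\}=\bigcup_i\Spec\alpha\{-S_i\}$ but a \emph{universal} equalizer $\alpha\{-S\}=\mathrm{eq}\bigl(\prod_i\alpha\{-S_i\}\rightrightarrows\prod_{i,j}\alpha\{-S_i,-S_j\}\bigr)$, and via the description of cellular localisations as $-S$-span operators on ideal lattices (\ref{LOC-Zar-construction}) one checks, at the level of lattice completions, that $\gamma$ has exactly this shape relative to its charts $\beta_i$ — that $\B\gamma\to\B\beta_i$ is the $-S_i$-span map for suitable $S_i\in\gamma^\circ$, with $\B\beta_{ij}$ carved out of $\B\beta_i$ as the formula prescribes. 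That yields the localisation and covering properties, whence $U\cong\Spec\gamma$ is affine.

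I expect this last step, the gluing, to be the main obstacle. For a \emph{general} finite affine cover with affine intersections a quasi-compact skeleton need not be affine — this is the analogue of $\A^2\setminus\{0\}$ — so the argument must genuinely exploit that every chart and every overlap is cellular. The leverage is \ref{LOC-Zar-construction}: a cellular localisation $\alpha\to\alpha\{-S\}$ is governed by the retraction picture in which $\B\alpha\{-S\}\subseteq\B\alpha$ is the set of $-S$-invariant ideals and the localisation map is right adjoint to this inclusion, and it is precisely this retract structure — which free localisations lack — that forces the {\v C}ech equalizer to be universal and excludes the quasi-affine-but-not-affine pathology.
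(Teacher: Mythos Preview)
Your second paragraph already contains the paper's proof. The corollary is placed immediately after the cellular cover formula (Prop.~\ref{LOC-Zar-cover}) with no argument given, and its one application (in the proof of Thm.~\ref{SKEL-qc=aff}) is to objects that are quasi-compact \emph{cellular}-open in an affine chart. In that situation your observation is the whole story: such a $W\hookrightarrow\Spec\beta$ is a finite union $\bigcup_l\Spec\beta\{-S_l\}$ of principal cellular opens, hence by the formula equals $\Spec\beta\{-\bigvee_lS_l\}$, hence is affine.

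You instead read the hypothesis more liberally---the embedding $U\hookrightarrow\Spec\alpha$ is merely open, the cell structure on $U$ is intrinsic and unrelated to it---and therefore cannot apply the formula to $U$ itself. So you apply it only to the overlaps and then try to re-glue. But this last step is a genuine gap: the argument you sketch (``check at the level of lattice completions that $\B\gamma\to\B\beta_i$ is the $-S_i$-span map'') is exactly the content of the proof of Theorem~\ref{SKEL-qc=aff}, which (i) invokes Corollary~\ref{SKEL-qco=aff} as input, and (ii) needs the flabbiness hypothesis on $j_*|\sh O_X|$ to get injectivity of the right adjoints $f_i^\dagger$ (see the line ``Since $|\sh O_X|$ is flabby, it is also injective''). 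You never explain how the embedding into $\Spec\alpha$ substitutes for flabbiness, and without that the identification of $\B\beta_i$ with the $-S_i$-invariants of $\B\gamma$ is not established. So either the paper intends the narrower reading---in which case your middle paragraph is the complete proof and the surrounding scaffolding is superfluous---or, under your broader reading, the gluing step needs substantially more work than you provide and risks circularity with Theorem~\ref{SKEL-qc=aff}.
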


In fact, this result can be greatly improved.

\begin{thm}\label{SKEL-qc=aff}Let $X$ be a quasi-separated cell complex, $j:X\rightarrow X^\circ$ its integral model. Let us confuse $X^\circ$ with its site $\sh U^\mathrm{qc}_{/X^\circ}$ of quasi-compact open subsets. Then: 
\begin{enumerate}\item $\B\left(j_*|\sh O_X|\right)$ is flabby;
\item $X$ is affine if and only if it is quasi-compact and $j_*|\sh O_X|$ is flabby.\end{enumerate}\end{thm}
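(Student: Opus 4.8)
The plan is to exploit the cellular cover formula (\ref{LOC-Zar-cover}) and the characterisation of cellular localisations by $-S$-invariant ideals (\ref{LOC-Zar-construction}) to reduce the flabbiness statement to an explicit matching of sections. The key observation is that, by definition of the integral model, a quasi-compact open $U \subseteq X^\circ$ corresponds to a localisation of the structure semiring that restricts to a cellular localisation of the semiring of integers; so for the sheaf $j_*|\sh O_X|$ on the site $\sh U^\mathrm{qc}_{/X^\circ}$, the value on $U$ is a bounded localisation $\alpha\{T-S\}$ of $\alpha := \Gamma(X;|\sh O_X|)$ (or, in the non-affine case, a suitable gluing of such). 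First I would prove (i) in the affine case: if $X^\circ = \Spec\alpha^\circ$, then for a quasi-compact open $U = \Spec\alpha^\circ\{-S\}$ I must show the restriction map $\alpha\{-S_{\mathrm{big}}\} \to \alpha\{-S\}$ is surjective whenever $U \subseteq V$; but a section over $U$ is an $-S$-invariant ideal of $\alpha$ (generated as such by one element) by \ref{LOC-Zar-construction}, and its $-S_V$-span is an $-S_V$-invariant ideal restricting to it — this is precisely the kind of computation carried out in the proof of the cellular cover formula, where the right adjoint $f^\dagger$ takes a compatible family to its intersection. So any section over $U$ extends over $V$, hence over all of $X^\circ$, giving flabbiness. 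The general quasi-separated case follows by covering $X$ by affines $U_i$ with $U_i \cap U_j$ quasi-compact (that is what quasi-separatedness buys us), running the affine argument on each $\B(j_*|\sh O_{U_i}|)$, and glueing: the compatibility on overlaps is automatic because the lattice completions $\B$ are where the adjoints $f^\dagger$ live and $\B f$ always preserves suprema (so the gluing data lands in the equaliser exactly as in the cellular cover lemma). Applying $\B$ throughout turns the "generated by one element" clauses into honest ideals, which is why the statement is cleanest for $\B(j_*|\sh O_X|)$ rather than $j_*|\sh O_X|$ itself.

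For (ii), one direction is trivial: an affine skeleton is quasi-compact (first proposition of \S\ref{SKEL-spec}), and if $X = \Spec\alpha$ is affine then $X$ is a cell complex with $X^\circ = \Spec\alpha^\circ$, and the argument of (i) shows $j_*|\sh O_X|$ is flabby directly (not just its lattice completion), since over $\Spec\alpha^\circ$ every quasi-compact open is $\Spec\alpha^\circ\{-S\}$ and the section semiring $\alpha\{-S\}$ is a quotient-then-localisation of $\alpha$, so surjectivity of restriction is visible. The substantive direction is the converse: assume $X$ is quasi-compact and $j_*|\sh O_X|$ is flabby; I want to conclude $X$ is affine. The strategy is to take a finite affine cellular-open cover $X = \bigcup_{i=1}^k U_i$ (possible since $X$ is a quasi-compact cell complex), with $U_i = \Spec\alpha_i$, and to show that $\alpha := \Gamma(X;|\sh O_X|)$ has $\Spec\alpha \cong X$. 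Flabbiness of $j_*|\sh O_X|$ says each restriction $\alpha \to \alpha_i^{\,\prime}$ (where $\alpha_i^{\,\prime}$ is the value of $j_*|\sh O_X|$ on the quasi-compact open $j(U_i) \subseteq X^\circ$) is surjective; combined with \ref{LOC-Zar-construction} this forces each $U_i \hookrightarrow X$ to be a cellular \emph{open immersion} of $X$ dual to an honest cellular localisation $\alpha \to \alpha\{-S_i\}$. Then the cellular cover formula \ref{LOC-Zar-cover} (together with $\bigcup_i \Spec\alpha\{-S_i\} = \Spec\alpha\{-\bigvee S_i\} = \Spec\alpha$, since the $U_i$ cover and hence $\bigvee S_i$ must be $0$) identifies $\Spec\alpha$ with the union of the $\Spec\alpha\{-S_i\} \cong U_i$, glued along the same overlaps; the sheaf condition for $|\sh O|$ on the tropical site then matches the structure sheaves, so $X \cong \Spec\alpha$.

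The main obstacle I expect is precisely the step "flabbiness forces $U_i \hookrightarrow X$ to be dual to an honest localisation of the global sections semiring $\alpha$, not merely a localisation of the section semiring over some smaller affine." A priori all we know is that $X$ is \emph{locally} affine, so each $U_i = \Spec\alpha_i$ with $\alpha_i = |\sh O_X|(U_i)$; the issue is whether the comparison map $\alpha \to \alpha_i$ is a bounded (indeed cellular) localisation. Flabbiness of $j_*|\sh O_X|$ gives surjectivity of $\alpha \to \alpha_i^{\,\prime}$ on the level of the integral model, and one needs to upgrade this — using that bounded localisations factor as cellular followed by subdivision (the lemma after \ref{LOC-def-radius}), and that on a cell complex the subdivision part is trivial for the maps $j$ pulls back — to the statement that $\alpha_i \cong \alpha\{-S_i\}$ as $\alpha$-algebras. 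This is where the quasi-separatedness is genuinely used again, to control the $\alpha_i$ against each other on overlaps, and where I would expect to spend most of the technical effort; the rest is bookkeeping with \ref{LOC-Zar-construction}, \ref{LOC-Zar-cover}, and the $\B \dashv (-)^c$ equivalence.
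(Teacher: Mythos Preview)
Your overall architecture matches the paper's: part (i) via right adjoints into the lattice completion, part (ii) by showing each restriction $\alpha\to\alpha_i$ is a genuine cellular localisation and then invoking the cellular cover formula. You also correctly isolate the crux of (ii) --- proving that $\alpha\to\alpha_i$ is the localisation $\alpha\{-S_i\}$ --- and your use of flabbiness to obtain surjectivity (hence injectivity of $f_i^\dagger$) is exactly what the paper does.

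The gap is in how you propose to close that crux. You gesture at ``controlling the $\alpha_i$ against each other on overlaps'' via quasi-separatedness, but the actual mechanism is more specific and does not run through the subdivision/cellular factorisation you mention. The paper uses the characterisation \ref{LOC-Zar-construction}: one must show $f_i^\dagger:\B\alpha_i\to\B\alpha$ is a bijection onto the $-S_i$-invariant ideals. Image and injectivity you have. For surjectivity, take a $-S_i$-invariant $\iota\hookrightarrow\alpha$ and push it to each $\alpha_j$ to get $\iota_j$. The two missing technical facts are: (a) a \emph{surjective} span map preserves $-S$-invariance (so each $\iota_j$ is $-S_{ji}$-invariant), and (b) a push--pull identity: for a cellular localisation $f$ and a surjection $g$, the square with sides $g$ and $f^\dagger$ commutes. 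With (a) one gets $\iota_j=f_i^\dagger f_i\iota_j$; with (b) one swaps to obtain $\iota_j = f_j(f_i^\dagger\iota_i)$, whence $\iota=|\iota_\bullet|=f_i^\dagger\iota_i$. Both lemmas are one-liners once stated, but without them your ``upgrade'' step has no engine. Quasi-separatedness enters only to ensure the overlaps $U_i\cap U_j$ are quasi-compact (so the equaliser diagram lives in the affine world), not in the invariance argument itself.

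For (i), your affine-then-glue approach is fine but slightly roundabout; the paper works directly with an arbitrary quasi-compact $V\subseteq X^\circ$, takes a finite affine cellular cover $U_\bullet\twoheadrightarrow V$, and produces the extension of a class in $\B\alpha_i$ to $|\B\alpha_\bullet|$ by composing $\alpha_i\to\alpha_i\{-S_{ij}\}\cong\alpha_j\{-S_{ji}\}\stackrel{\rho^\dagger}{\to}\B\alpha_j$ --- essentially your adjoint idea, executed in one move rather than two.
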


\begin{cor}Any quasi-compact, integral skeleton with Noetherian structure sheaf is affine.\end{cor}
\begin{proof}In this case $X=X^\circ$ and $|\sh O_X|=\B\sh O_X|$.\end{proof}

\begin{cor}Let $H_\vee$ be a rank one semifield. Any integral skeleton finitely presented over $\Spec H_\vee^\circ$ is affine.\end{cor}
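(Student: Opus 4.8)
The plan is to bootstrap from the previous corollary by base change along the reduction homomorphism $H_\vee^\circ\to\B$. Since $X$ is finitely presented over the affine (hence quasi-compact) base $\Spec H_\vee^\circ$, it is quasi-compact and quasi-separated; being integral, it is moreover a quasi-separated cell complex whose integral model is $X$ itself, so that the unit $j\colon X\to X^\circ$ is the identity and $j_*|\sh O_X|=|\sh O_X|$. By Theorem~\ref{SKEL-qc=aff}(ii) it therefore suffices to prove that $|\sh O_X|$ is flabby on the site $\sh U^{\mathrm{qc}}_{/X}$ of quasi-compact open subsets.

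Next I would pass to the reduction $\bar X:=X\times_{\Spec H_\vee^\circ}\Spec\B$, whose structure sheaf is $|\sh O_{\bar X}|=|\sh O_X|\oplus_{H_\vee^\circ}\B$ and which restricts on an affine chart $\Spec\alpha_i\subseteq X$ --- with $\alpha_i$ a finite-type contracting $H_\vee^\circ$-algebra --- to $\Spec(\alpha_i\oplus_{H_\vee^\circ}\B)$. Because $H_\vee$ has rank one, the discussion of \S\ref{LOC-prime} shows that $\bar X\to X$ is a homeomorphism of underlying spaces, and that these spaces are finite. Furthermore each reduction $\bar\alpha_i:=\alpha_i\oplus_{H_\vee^\circ}\B$ is again contracting (the inclusion $\frac{1}{2}\mathbf{Ring}_{\leq0}\hookrightarrow\frac{1}{2}\mathbf{Ring}$ preserving colimits), and is a quotient of a freely contracting $\B$-algebra $\B\{X_1,\dots,X_n\}$; since the latter is the semiring of monomial ideals of $k[X_1,\dots,X_n]$, it is Noetherian by Dickson's lemma, and one checks that the same holds for $\bar\alpha_i$. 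Thus $\bar X$ is a Noetherian integral skeleton, so by the previous corollary it is affine; in particular $|\sh O_{\bar X}|$ is flabby, and being Noetherian it coincides with its sheaf of ideals $\B|\sh O_{\bar X}|$.

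It remains to transfer flabbiness from $|\sh O_{\bar X}|$ back to $|\sh O_X|$. Since $X$ has finite underlying space, flabbiness of a sheaf can be tested on the one-point extension maps $\sh F(U_x)\to\sh F(U_x\setminus\{x\})$, $x\in X$, where $U_x$ denotes the minimal open neighbourhood of $x$; and since $U_x$ and $U_x\setminus\{x\}$ depend only on an affine chart containing $x$, the question reduces to the finite-type contracting $H_\vee^\circ$-algebras $\alpha_i$ and their reductions $\bar\alpha_i$. Concretely, one must show that the cellular localization of $\alpha_i$ cutting out $U_x$ surjects onto the sections of $|\sh O|$ over the punctured chart, knowing the analogous surjectivity after applying $-\oplus_{H_\vee^\circ}\B$ --- equivalently, that a finitely generated ideal over the punctured chart whose reduction extends finitely-generatedly across $x$ already does so before reduction. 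I expect this descent statement for the non-flat morphism $H_\vee^\circ\to\B$ --- which should work precisely because the only obstruction to Noetherianity of $\alpha_i$ lives in the ``$H$-direction'' that reduction contracts --- to be the \emph{main obstacle}; the remaining steps are assembly of results already established.
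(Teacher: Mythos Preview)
The paper states this corollary without proof, immediately after the Noetherian corollary, so there is no explicit argument to compare against. Your strategy --- reduce to $\B$ via the rank-one reduction $H_\vee^\circ\to\B$, where the local algebras become Noetherian by Dickson's lemma, and then invoke the previous corollary --- is the natural one given the material assembled in \S\ref{LOC-prime} (especially the homeomorphism $\Spec^{\lie p}(\alpha\oplus_{H_\vee^\circ}\B)\to\Spec^{\lie p}\alpha$) and is almost certainly what the author has in mind.

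You are also right that step~8 is where the real content lies, and you have not completed it. Flabbiness of $|\sh O_{\bar X}|$ does not transfer to $|\sh O_X|$ by any formal mechanism: the reduction $-\oplus_{H_\vee^\circ}\B$ is a non-flat left adjoint, so it neither commutes with the equaliser computing global sections nor reflects surjectivity of restriction maps. Concretely, given $s\in\alpha_i$, lifting $\bar s$ to $\bar t\in\bar\alpha$ and then to some $t'\in\alpha$ gives an element whose restriction to $U_i$ agrees with $s$ only modulo the kernel of reduction, and there is no evident way to correct this.

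A cleaner line of attack, which sidesteps the flabbiness transfer, is to argue directly that the canonical map $X\to\Spec\Gamma(X,|\sh O_X|)$ is an isomorphism. By Corollary~\ref{cool} both sides are recovered from their $\B$-points, and any semiring homomorphism $\alpha\to\B$ from an $H_\vee^\circ$-algebra factors through the reduction; so the comparison reduces to one between $X(\B)\cong\bar X(\B)$ and $\Hom(\alpha\oplus_{H_\vee^\circ}\B,\B)$. The remaining point --- that the comparison map $\alpha\oplus_{H_\vee^\circ}\B\to\Gamma(\bar X,|\sh O_{\bar X}|)$ induces a bijection on $\B$-points --- is where the finite-presentation hypothesis should enter, and is a more tractable statement than the flabbiness descent you propose.
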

\begin{proof}Such admits a model over some finitely generated subring of $H_\vee$.\end{proof}
%proof. every localisation is defined over some $\Z_\vee$, and the right adjoints are base extensions.

If we make the assumption that all $H_\vee$-algebras $\alpha$ satisfy $\alpha\cong\alpha^\circ\oplus_{H_\vee^\circ}H_\vee$, then this last corollary applies also to any cell complex finitely presented over $\Spec H_\vee$ (which is, in this case, simply the base change of its integral model).

\begin{proof}[Proof of \ref{SKEL-qc=aff}]
Let $f:U_\bullet\twoheadrightarrow V$ be a finite, affine, cellular hypercover of some quasi-compact $V\subseteq X$. By corollary \ref{SKEL-qco=aff}, we may in fact assume that $U_\bullet$ is the nerve of an ordinary cover $\sqcup_if_i:\coprod_{i=1}^kU_i\twoheadrightarrow X$. Let $\alpha=\Gamma(V,|\sh O_X|)$. We have an equaliser
\[ \alpha\rightarrow \prod_{i=1}^k\alpha_i\rightrightarrows \prod_{i,j=1}^k\alpha_i\{-S_{ij}\} \]
commuting with isomorphisms $\alpha_i\{-S_{ij}\}\cong\alpha_j\{-S_{ji}\}$. There are unique elements $S_j\in\alpha$ whose images in $\alpha_i$ are $S_{ij}$.

Since $\alpha_i\rightarrow \alpha_i\{-S_{ij}\}$ is surjective, its right ind-adjoint is injective. The compositions
\[ \alpha_i \stackrel{\rho}{\rightarrow} \alpha_i\{-S_{ij}\} \widetilde\rightarrow \alpha_j\{-S_{ji}\} \stackrel{\rho^\dagger}{\rightarrow} \B\alpha_j  \]
therefore together yield a section $\B\alpha_i\rightarrow|\B\alpha_\bullet|$ of the projection. Since this holds for any quasi-compact $V$, $\B\left(j_*|\sh O_X|\right)$ is flabby.

Now set $V=X$. For the second part, it will be enough to show that each $f_i$ is a cellular localisation of $\alpha$ at $S_i$, since in this case the equaliser will be a covering, and hence induce an isomorphism $\Spec\alpha\widetilde\rightarrow X$. We will show this using the characterisation \ref{LOC-Zar-construction}.

Certainly, $f_i^\dagger:\B\alpha_i\rightarrow\B\alpha$ has image in the set of $-S_i$-invariant ideals. Since $|\sh O_X|$ is flabby, $f_i^\dagger$ is also injective. We need only show that it is surjective. The argument is based on two lemmata.

\begin{lemma}\label{SKEL-Zar-inv}Let $f:\alpha\rightarrow\beta$, $S\in\alpha^\circ$. If $f$ is surjective, $\B f$ preserves $-S$-invariance.\end{lemma}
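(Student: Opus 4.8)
The plan is to show that if $\iota\hookrightarrow\alpha$ is a $-S$-invariant ideal then $\B f(\iota)\hookrightarrow\beta$ is $-f(S)$-invariant; note that $f(S)\in\beta^\circ$ automatically, since $f$ is monotone and $S\le 0$. The first step is to unwind $\B f$: as $f$ is a span homomorphism, $f(\iota)$ is already closed under finite joins, so $\B f(\iota)$ is just its lower closure $\beta_{\le f(\iota)}$, i.e. $Y\in\B f(\iota)$ exactly when $Y\le f(X)$ for some $X\in\iota$.

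Next I would run the following lifting argument. Take $Y\in\beta$ with $Y+f(S)\in\B f(\iota)$, so that $Y+f(S)\le f(X_0)$ for some $X_0\in\iota$. By surjectivity of $f$, choose $Z\in\alpha$ with $f(Z)=Y$; then $f(Z+S)=Y+f(S)\le f(X_0)$, which is to say $f\!\left(X_0\vee(Z+S)\right)=f(X_0)$. If I can conclude from this that $X_0\vee(Z+S)\in\iota$, I am done: then $Z+S\le X_0\vee(Z+S)\in\iota$, so $Z+S\in\iota$ because $\iota$ is a lower set, so $Z\in\iota$ by $-S$-invariance, and hence $Y=f(Z)\in f(\iota)\subseteq\B f(\iota)$.

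Thus the whole statement reduces to the implication: if $W\ge X_0\in\iota$ and $f(W)=f(X_0)$, then $W\in\iota$. This is the main obstacle, since $f$ need not reflect the order. In the case that matters for \ref{SKEL-qc=aff}, where $f$ is a cellular localisation $\alpha\to\alpha\{-S\}$, it is immediate from the explicit description of cellular localisation underlying \ref{LOC-Zar-construction}: $f(W)=f(X_0)$ forces $W+nS\le X_0$ for some $n$, whence $W+nS\in\iota$ and $W\in\iota$ by iterating $-S$-invariance. For a general surjection I would instead work through the kernel pair $s,t\colon\alpha\times_\beta\alpha\rightrightarrows\alpha$: by \S\ref{SPAN-quotient}, $\B\beta$ is identified with the lattice of $(s,t)$-invariant ideals of $\alpha$ and $\B f(\iota)$ with the smallest such ideal containing $\iota$, obtained by iterating $ts^\dagger\vee st^\dagger$; the remaining point — which I expect to be the delicate one, and which is what forces the hypothesis that $f$ be well behaved — is that this $(s,t)$-saturation of a $-S$-invariant ideal stays inside its $-S$-span, exploiting that $f(S)$ is a contracting element and that $s,t$ are homomorphisms over $\alpha$.
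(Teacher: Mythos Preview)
The paper gives no proof of this lemma. You have correctly isolated the real obstruction: from $f(W)=f(X_0)$ with $W\ge X_0\in\iota$ one cannot in general conclude $W\in\iota$, and in fact the lemma as literally stated is false. Take $\alpha=\B\{X,Y\}$ (the semiring of finitely generated monomial ideals of $k[x,y]$), $\beta=\B\{Z\}$, and let $f$ be the surjective semiring homomorphism with $f(X)=f(Y)=Z$; set $S=X$. The principal ideal $\iota=\alpha_{\le Y}$ is $-X$-invariant, since $(y)\subset k[x,y]$ is prime and $x\notin(y)$: if $J\cdot(x)\subseteq(y)$ then $J\subseteq(y)$. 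But $\B f(\iota)=\beta_{\le Z}$, and $0+Z=Z\in\beta_{\le Z}$ while $0\notin\beta_{\le Z}$, so $\B f(\iota)$ is not $-Z$-invariant. Your kernel-pair programme for the general surjection therefore cannot be completed.

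What actually holds, and what suffices for both applications in the paper, is the case in which $f$ is itself a cellular localisation at some element $T\in\alpha^\circ$ --- not necessarily equal to $S$. (In the proof of \ref{SKEL-qc=aff} the maps $f_j:\alpha\to\alpha_j$ come from a \emph{cellular} cover, hence are localisations at certain $T_j$, and one needs $\B f_j$ to preserve $-S_i$-invariance for $i\ne j$ as well; so your ``case that matters'' is slightly too narrow.) In this situation the argument is cleaner than your lift-and-compare: by \ref{LOC-Zar-construction}, $f^{-1}(\B f(\iota))$ is the $-T$-span $\{X:X+nT\in\iota\text{ for some }n\}$ of $\iota$, which inherits $-S$-invariance from $\iota$ immediately (if $X+S+nT\in\iota$ then $X+nT\in\iota$). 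Surjectivity of $f$ then converts $-S$-invariance of $f^{-1}(\B f(\iota))$ into $-f(S)$-invariance of $\B f(\iota)$: given $Y+f(S)\in\B f(\iota)$, lift $Y=f(Z)$; then $Z+S\in f^{-1}(\B f(\iota))$, hence $Z\in f^{-1}(\B f(\iota))$, hence $Y=f(Z)\in\B f(\iota)$.
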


\begin{lemma}\label{SKEL-push-pull}Let $f:\mu\rightarrow\mu\{-S\}$ be a cellular localisation of $\alpha$-modules, $g:\mu\rightarrow\nu$ a surjective homomorphism. The diagram
\[\xymatrix{ \mu \ar[d]^g & \mu\{-S\}\ar[d]^g\ar[l]_{f^\dagger} \\ \nu & \nu\{-S\}\ar[l]_{f^\dagger} }\] commutes.\end{lemma}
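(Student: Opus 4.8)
The plan is to reduce the claimed commutativity to lemma \ref{LOC-Zar-construction}, which identifies $\B(\mu\{-S\})$ with the poset of $-S$-invariant ideals of $\mu$, and then to feed in lemma \ref{SKEL-Zar-inv}. Throughout I read $f^\dagger$ as the ind-adjoint (as everywhere in this section), so the square lives in $\mathbf{Span}$ with $\B\mu,\B\nu$ in place of $\mu,\nu$ and the vertical arrows the image homomorphisms $\B g$; restricting along $\mu\{-S\}\hookrightarrow\B(\mu\{-S\})$ and $\nu\{-S\}\hookrightarrow\B(\nu\{-S\})$ recovers the diagram as drawn. Write $p:\mu\rightarrow\mu\{-S\}$ for the given localisation $f$ and $q:\nu\rightarrow\nu\{-S\}$ for the corresponding one of $\nu$ (so that the upper $f^\dagger$ in the square is $p^\dagger$, the lower one $q^\dagger$), and let $g':\mu\{-S\}\rightarrow\nu\{-S\}$ be the induced homomorphism, so $q\circ g=g'\circ p$. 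Both $p$ and $q$ are surjective on underlying sets, hence $p^\dagger,q^\dagger$ are injective with $\B p\circ p^\dagger=\iden$ and $\B q\circ q^\dagger=\iden$; by lemma \ref{LOC-Zar-construction} their images are exactly the $-S$-invariant ideals of $\mu$, resp.\ $\nu$, and the idempotents $p^\dagger\circ\B p$, $q^\dagger\circ\B q$ are the ``$-S$-span'' closure operators, which fix every $-S$-invariant ideal.

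First I would record the bookkeeping identity $\B g'=\B q\circ\B g\circ p^\dagger$ on $\B(\mu\{-S\})$. This is pure functoriality of $\B$: an ideal $\kappa$ of $\mu\{-S\}$ satisfies $\kappa=\B p(p^\dagger\kappa)$, so $\B g'(\kappa)$ is the ideal generated by $g'(p(p^\dagger\kappa))=q(g(p^\dagger\kappa))$, that is, $\B q(\B g(p^\dagger\kappa))$. Composing on the left with $q^\dagger$ then gives
\[ q^\dagger\circ\B g'=(q^\dagger\circ\B q)\circ\B g\circ p^\dagger, \]
whose right-hand side is the ``$-S$-span'' operator applied to $\B g\circ p^\dagger$; and the composite $q^\dagger\circ\B g'$ is exactly the right-then-bottom path of the square.

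The one substantive step is to delete the ``$-S$-span''. By construction $p^\dagger$ lands among $-S$-invariant ideals of $\mu$, and lemma \ref{SKEL-Zar-inv} — applicable precisely because $g$ is surjective — shows $\B g$ carries $-S$-invariant ideals of $\mu$ to $-S$-invariant ideals of $\nu$. Hence $\B g\circ p^\dagger$ already takes values among $-S$-invariant ideals, on which the ``$-S$-span'' operator is the identity, so $q^\dagger\circ\B g'=\B g\circ p^\dagger$ — and the right-hand side is the top-then-left path of the square. That is the asserted commutativity.

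I do not expect a genuine obstacle: the only real content is lemma \ref{SKEL-Zar-inv}, which is already at hand, and the rest is adjunction bookkeeping whose only pitfall is conflating the various maps $p^\dagger$, $q^\dagger$, and $\B g$. If one prefers to bypass the abstract identification, the argument unwinds to the claim that $\B g$ intertwines the $-S$-span closure operators on $\B\mu$ and $\B\nu$ whenever $g$ is surjective: for an ideal $\iota$ of $\mu$, one inclusion holds because $g$ is a homomorphism and so commutes with each operation $(+S)^{-n}$ up to downward closure, while the reverse inclusion is lemma \ref{SKEL-Zar-inv} applied to the $-S$-span of $\iota$.
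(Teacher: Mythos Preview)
Your proof is correct and follows the same approach as the paper: both use lemma \ref{LOC-Zar-construction} to identify $f^\dagger$ with the inclusion of $-S$-invariant ideals, and lemma \ref{SKEL-Zar-inv} to see that $\B g$ preserves $-S$-invariance, whence the $-S$-span closure $q^\dagger\B q$ acts trivially on $\B g\circ p^\dagger$. The paper's proof is a one-line compression of exactly the adjunction bookkeeping you spell out.
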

\begin{proof}The right adjoints embed $\mu\{-S\},\nu\{-S\}$ into $\B\mu,\B\nu$ as the set of $-S$-invariant ideals, which are preserved under $g$ by \ref{SKEL-Zar-inv}.
\end{proof}

Let $\iota\hookrightarrow\alpha$ be a $-S_i$-invariant ideal, $\iota_\bullet$ its image in $\alpha_\bullet$. By \ref{SKEL-Zar-inv}, $\iota_\bullet$ is $-S_{\bullet i}$-invariant, and hence 
\[ \iota_\bullet=f_i^\dagger f_i\iota_\bullet=f_i^\dagger f_\bullet\iota_i=f_\bullet f_i^\dagger\iota_i\]
where the last equality follows from \ref{SKEL-push-pull}. Therefore $\iota=|\iota_\bullet|=f_i^\dagger\iota_i$.
\end{proof}

\

Let $\alpha$ be a contracting semiring. The arguments of \S\ref{LOC-prime} show that we have a natural continuous functor
\[ \sh U_{/\Spec\alpha} \rightarrow \sh U_{/\Spec^\lie{p}\alpha} \]
and hence a morphism of semiringed spaces $\Spec^\lie{p}\alpha\rightarrow\Spec\alpha$.

\emph{If} $\Spec^\lie{p}\alpha$ is quasi-compact for all $\alpha\in\frac{1}{2}\mathbf{Ring}_{\leq0}$, then this is an isomorphism by the unicity of canonical topologies.
This is true for Noetherian semirings by \ref{LOC-prime-Noetherian}. The general case is implied by Zorn's lemma.

\begin{prop}\label{contentious}Let $\alpha$ be a (Noetherian) contracting semiring. Then $\Spec^\lie{p}\alpha\widetilde\rightarrow\Spec\alpha$, as semiringed spaces.\end{prop}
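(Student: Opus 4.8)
The morphism of semiringed spaces $\Spec^{\lie p}\alpha\to\Spec\alpha$ has already been constructed above, out of the continuous functor $\sh U_{/\Spec\alpha}\to\sh U_{/\Spec^{\lie p}\alpha}$, and it matches the two tautological sheaves of semirings; so the only thing to prove is that it is an isomorphism. The plan is to observe that, restricted to contracting semirings, $\Spec^{\lie p}$ realises exactly the site data that defines $\Spec$, and then to invoke the uniqueness of canonical topologies (theorem \ref{TOPOS-affine-scheme}), which applies as soon as the spaces $\Spec^{\lie p}\alpha$ are quasi-compact.

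Concretely, since $\alpha$ is contracting its only admissible bound is $0$, so every bounded localisation of $\alpha$ is a cellular localisation $\alpha\to\alpha\{-S\}$; by \S\ref{LOC-prime} the associated map realises $\Spec^{\lie p}\alpha\{-S\}$ as the basic open $U_S\subseteq\Spec^{\lie p}\alpha$, and the tautological structure presheaf has global sections $\alpha$. Combining $U_{X\vee Y}=U_X\cup U_Y$ with the cellular cover formula \ref{LOC-Zar-cover}, the finite families $\{\Spec\alpha\{-S_i\}\hookrightarrow\Spec\alpha\{-S\}\}_{i=1}^k$ with $S=\bigvee_{i}S_i$ — which generate the tropical topology on contracting semirings — are carried by $\Spec^{\lie p}$ to jointly surjective families of open immersions $\{U_{S_i}\hookrightarrow U_S\}$. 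Thus the assignment $\alpha\mapsto\Spec^{\lie p}\alpha$ together with its tautological sheaf satisfies the hypotheses characterising $\Spec$, and the uniqueness of canonical topologies then forces $\Spec^{\lie p}\alpha\to\Spec\alpha$ to be an isomorphism — \emph{provided} every $\Spec^{\lie p}\alpha$ is quasi-compact.

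Hence the one genuine point left is the quasi-compactness of $\Spec^{\lie p}\alpha$. In the Noetherian case this is immediate from \ref{LOC-prime-Noetherian}, which even gives that $\Spec^{\lie p}\alpha$ is a Noetherian space. For a general contracting semiring one argues as in the footnote to \ref{LOC-prime-Noetherian}: a basic-open cover $\{U_{X_i}\}_{i\in I}$ with no finite subcover makes each finitely generated semiring ideal $(X_i\mid i\in J)$ proper (being contained in the prime ideal of a point not covered by $\{U_{X_i}\}_{i\in J}$), hence the directed union $\iota=\bigcup_J(X_i\mid i\in J)$ is proper, and a Zorn's-lemma argument extends $\iota$ to a proper open prime ideal, i.e.\ a point of $\Spec^{\lie p}\alpha$ outside every $U_{X_i}$ — a contradiction. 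The main obstacle is precisely this last, non-Noetherian quasi-compactness statement; once it (or merely its Noetherian instance, for the bracketed version of the hypothesis) is available, no further computation is needed, because $\Spec^{\lie p}$ and the restriction of $\Spec$ to $\frac{1}{2}\mathbf{Ring}_{\leq0}$ are built from the same posets of affine opens and the same covering relation.
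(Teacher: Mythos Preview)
Your proposal is correct and follows exactly the paper's own argument: the text immediately preceding the proposition reduces the isomorphism to the unicity of canonical topologies, conditional on quasi-compactness of $\Spec^{\lie p}\alpha$, which is supplied by \ref{LOC-prime-Noetherian} in the Noetherian case and by Zorn's lemma in general. Your version is somewhat more explicit (you actually sketch the Zorn argument and spell out why the cellular cover formula matches the covering relations), but the strategy and ingredients are identical.
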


Topologising as before the set $X(\B)$ weakly with the respect to evaluations $X(\B)\rightarrow\D^1_\B$, we obtain:

\begin{cor}\label{cool}Let $X$ be an integral skeleton. Then $X(\B)\rightarrow X$ is a homeomorphism.\end{cor}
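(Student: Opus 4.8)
The plan is to reduce the statement to the affine case and there invoke Proposition~\ref{contentious}. Write $e\colon X(\B)\to X$ for the canonical map, which sends a morphism $x\colon\Spec\B\to X$ to the image in $X$ of the unique point of $\Spec\B$. First I would treat $X=\Spec\alpha$ with $\alpha$ contracting. By definition a $\B$-point is then a morphism $\Spec\B\to\Spec\alpha$ of affine skeleta, hence a continuous semiring homomorphism $\alpha=\Gamma(\Spec\alpha,|\sh O|)\to\B=\Gamma(\Spec\B,|\sh O|)$; so $X(\B)=\Hom(\alpha,\B)$ as a set, which is exactly the set $\Spec^{\lie p}\alpha$ of \S\ref{LOC-prime}. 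The evaluation map $\mathrm{ev}_S$ at $S\in\alpha$ sends $f$ to $f(S)$, so $\mathrm{ev}_S^{-1}(\{0\})=\{f\mid f(S)=0\}=U_S$ is precisely a sub-basic open of $\Spec^{\lie p}\alpha$; hence the weak topology defining $X(\B)$ coincides with the prime-spectrum topology, and under these identifications $e$ is the tautological map $\Spec^{\lie p}\alpha\to\Spec\alpha$ (both send $f$ to the point cut out by $\ker f$). Proposition~\ref{contentious} says this is a homeomorphism, which disposes of the affine case.

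For a general integral $X$, fix an affine open cover $X=\bigcup_iU_i$ with $U_i=\Spec\alpha_i$ and $\alpha_i$ contracting. The key local property I would use is that for any open immersion $j\colon U\hookrightarrow X$ of skeleta, a $\B$-point $x$ with $e(x)\in U$ factors uniquely through $j$: indeed $x^{-1}(U)=\Spec\B$, since the unique point of $\Spec\B$ lands in $U$, and $j$ is a monomorphism stable under base change by the scheme-theoretic formalism of appendix~\ref{TOPOS}. This identifies $U(\B)$ with $e^{-1}(U)$, shows $U(\B)\to X(\B)$ is injective with open image, and — because the topology put on $X(\B)$ in the statement is, by construction, the coarsest making each $e^{-1}(U)$ open and each local evaluation continuous — shows $e|_{e^{-1}(U_i)}$ is the affine-case homeomorphism $U_i(\B)\widetilde\rightarrow U_i$.

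Consequently $\{e^{-1}(U_i)\}$ is an open cover of $X(\B)$ on which $e$ restricts to homeomorphisms onto the $U_i$, so $e$ is a local homeomorphism — in particular continuous and open. It is surjective because the $U_i$ cover $X$ and each $U_i(\B)\to U_i$ is surjective, and injective because two $\B$-points with a common image point $x$ both factor through any $U_i$ containing $x$, where the affine case gives injectivity; a continuous open bijection is a homeomorphism. The only genuinely non-formal ingredient is Proposition~\ref{contentious} itself (via \ref{LOC-prime-Noetherian} in the Noetherian case, or Zorn's lemma in general); the step I expect to need the most care is the "locality of $\B$-points" — that a $\B$-point factors through an open immersion as soon as its image point does — together with checking that the topology described in the statement is exactly the one glued from the affine prime spectra, both of which should follow formally from appendix~\ref{TOPOS}.
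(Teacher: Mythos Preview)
Your proposal is correct and matches the paper's intended approach. The paper states the corollary without proof, treating it as an immediate consequence of Proposition~\ref{contentious}; you have simply spelled out the standard local-to-global argument (reduce to affines via an open cover, identify $U_i(\B)$ with $\Spec^{\lie p}\alpha_i$, and glue) that the paper leaves implicit.
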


\subsection{Universal skeleton of a formal scheme}

\begin{eg}Let us return to the quasi-compact, coherent space $X$ of example \ref{eg-topol} and its semiring $|\sh O_X|$ of quasi-compact open subsets. We saw there that the inclusion
$S\hookrightarrow X$ of a quasi-compact subspace induces a localisation $|\sh O_X|\rightarrow|\sh O_S|$ at $S$. The cellular cover formula \ref{LOC-Zar-cover} implies that if $S=\bigcup_{i=1}^kS_i$ is a finite union of open subsets, then
\[ \Spec|\sh O_S|=\bigcup_{i=1}^k\Spec|\sh O_{S_i}|. \]
It follows that the functor
\[ \Spec|\sh O|:\sh U_{/X}^\mathrm{qc}\rightarrow\sh U_{/\mathrm{sk}X}^\mathrm{qc} \hookrightarrow \mathbf{Sk}_{/\mathrm{sk}X}^\mathrm{aff} \]
preserves coverings, and hence induces a homeomorphism of $X$ with $\mathrm{sk}X:=\Spec|\sh O_X|$. 

As we have seen (cor. \ref{cool}), every point of $\mathrm{sk}X$ is represented uniquely by a $\B$-point. In fact, the stalk of the structure sheaf $|\sh O_X|$ at any point $p\in\mathrm{sk}X$ is canonically isomorphic to $\B$, with $0$ (resp. $-\infty$) represented by an open subset containing (resp. not containing) $p$.

Under the homeomorphism $\mathrm{sk}X\widetilde\rightarrow X$, $|\sh O_X|$ can be identified with the semiring $C^0(X,\D_\B^1)$ of continuous maps from $X$ to the Sierpinski space $\D^1_\B$, that is, with the set of \emph{indicator functions} of open subsets.\end{eg}

It follows from the functoriality of the sheaves $\B^c\sh O_X$ associated on formal schemes $X$, as outlined in sections \ref{SPAN}, \ref{TOP}, \ref{.5RING}, that they assemble to a sheaf $|\sh O|$ of contracting semirings on the large formal site (in fact, with the fpqc topology). Its sections over a quasi-compact, quasi-separated formal scheme $X$ are the semiring of finite type ideal sheaves on $X$. This can be thought of as a \emph{geometric} version of the sheaf $|\sh O|$ of the above example, which is simply an avatar of the correspondence between (certain) frames and locales.

Let $X$ be any formal scheme, $\sh U_{/X}^\mathrm{qc}$ its corresponding small site, $|\sh O_X|$ the restriction of $|\sh O|$. The Zariski-open formula \ref{LOC-Zar-open} implies that if $V\hookrightarrow X$ is a quasi-compact open subset, then $|\sh O_X|$ puts the (necessarily cellular) bounded localisations of $|\sh O_X|(V)$ into one-to-one correspondence with quasi-compact Zariski-open subsets of $V$. The cellular cover formula implies that if $I_i\in|\sh O_X|(V)$ is a finite family of finite-type ideal sheaves on $X$, $U_i=V\setminus Z(I_i)$ the complementary quasi-compact opens, and
\[ U=V\setminus Z\left(\bigvee_{i=1}^kI_i\right)=\bigcup_{i=1}^kU_i, \]
then
\[ \Spec|\sh O_X|(U)=\bigcup_{i=1}^k\Spec|\sh O_X|(U_i)  \]
as subsets of $\Spec|\sh O_X|(V)$. In other words, $U\mapsto\Spec|\sh O_X|(U)$ defines a cover-preserving equivalence of categories between $\sh U_{/\Spec|\sh O_X|(U)}$ and $\sh U_{/X}$. This proves:

\begin{lemma}Let $X$ be a quasi-compact formal scheme. Then $\Spec|\sh O(X)|\rightarrow X$ is a homeomorphism.\end{lemma}

\begin{thm}\label{SKEL-min}Let $X$ be any formal scheme. Then $\mathrm{sk}X:=(X,|\sh O_X|)$ is a skeleton.\end{thm}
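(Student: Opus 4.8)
The statement is essentially local on $X$, so the plan is to reduce to the affine, quasi-compact case and invoke the preceding lemma. First I would observe that the category of skeleta is defined by descent (a skeleton is a semiringed space locally isomorphic to an affine skeleton $\Spec\alpha$ for $\alpha$ a Tate semiring), so to prove $\mathrm{sk}X=(X,|\sh O_X|)$ is a skeleton it suffices to produce an open cover $X=\bigcup_i V_i$ by quasi-compact opens such that each $(V_i, |\sh O_X|\!\restriction_{V_i})$ is an affine skeleton. Since every formal scheme is locally the formal spectrum of a ring, and such spectra are quasi-compact, an atlas by quasi-compact opens always exists; for each member $V_i$ of such an atlas the immediately preceding lemma says exactly that $(V_i, i^*|\sh O_X|)$ is an affine skeleton. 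The only genuine content, then, is checking that the skeleton structures on the overlaps glue, i.e. that the restriction maps identify $|\sh O_X|$ over $V_i\cap V_j$ with the structure sheaf of the corresponding open subspace of both $\Spec|\sh O_X|(V_i)$ and $\Spec|\sh O_X|(V_j)$.

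The key step is therefore the gluing/compatibility statement: if $V\hookrightarrow X$ is quasi-compact and $W\hookrightarrow V$ is a quasi-compact open subscheme, then the homeomorphism $V\cong\Spec|\sh O_X|(V)$ carries $W$ to an open subset of the spectrum over which the structure sheaf of $\Spec|\sh O_X|(V)$ agrees with $i^*|\sh O_X|$. This is precisely what the Zariski-open formula \ref{LOC-Zar-open} and the cellular cover formula \ref{LOC-Zar-cover} deliver: \ref{LOC-Zar-open} shows that for a quasi-compact open $W=V\setminus Z(I)$ the restriction $\B^c\sh O_X(V)\{-I\}\to \B^c\sh O_X(W)$ is an isomorphism, so $W$ corresponds to the cellular-open subset $\Spec|\sh O_X|(V)\{-I\}\subseteq\Spec|\sh O_X|(V)$ with the right structure sheaf; and \ref{LOC-Zar-cover} shows that a finite union $W=\bigcup_i W_i$ of such opens maps to the union $\bigcup_i\Spec|\sh O_X|(V)\{-I_i\}$, so that arbitrary quasi-compact opens (being finite unions of basic ones) are handled, and covers match covers. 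Thus the functor $\Spec|\sh O|:\sh U^{\mathrm{qc}}_{/X}\to\mathbf{Sk}^{\mathrm{aff}}$ sends the canonical topology on $\sh U^{\mathrm{qc}}_{/X}$ to the tropical topology, exactly as in the toy case of example \ref{eg-topol}.

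Granting this, the conclusion is formal: choosing any quasi-compact open cover $\{V_i\}$ of $X$, the lemma gives that each $(V_i,i^*|\sh O_X|)$ is affine, the compatibility statement gives that the transition data on $V_i\cap V_j$ (itself covered by quasi-compact opens by quasi-separatedness — or, without that hypothesis, simply covered by quasi-compact opens which need not be finite in number, which is still fine since the sheaf condition is local) are isomorphisms of skeleta, and hence $(X,|\sh O_X|)$ is obtained by gluing affine skeleta along open immersions of skeleta. By the definition of $\mathbf{Sk}$ as locally representable sheaves on the tropical site (appendix \ref{TOPOS}), this is a skeleton.

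\textbf{Main obstacle.} The only delicate point is verifying that $\Spec|\sh O_X|$ really is a \emph{morphism of sites} from $\sh U^{\mathrm{qc}}_{/X}$ to $\mathbf{Sk}^{\mathrm{aff}}$ — that it is continuous and cocontinuous, so that it induces a morphism of the associated topoi under which the tautological sheaf $|\sh O|$ pulls back to $|\sh O_X|$. Continuity (covers go to covers) is the cellular cover formula \ref{LOC-Zar-cover}; the subtler direction is that \emph{every} cellular-open immersion into $\Spec|\sh O_X|(V)$ arises from a genuine quasi-compact open subscheme of $V$, which is the surjectivity half of the Zariski-open formula \ref{LOC-Zar-open} (the $-I$-invariant ideals of $\B^c\sh O_X(V)$ are exactly the subschemes that are scheme-theoretic closures of their restriction to $V\setminus Z(I)$). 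Once that matching of open subobjects is in hand the rest is bookkeeping with the general machinery of \S\ref{TOPOS}.
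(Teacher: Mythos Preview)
Your proposal is correct and follows essentially the same route as the paper: the preceding lemma (that $(V,i^*|\sh O_X|)$ is affine for any quasi-compact $V\hookrightarrow X$) is deduced from the Zariski-open formula \ref{LOC-Zar-open} and the cellular cover formula \ref{LOC-Zar-cover}, and the theorem then follows by covering $X$ by quasi-compact opens. Your discussion of gluing on overlaps and of $\Spec|\sh O|$ as a morphism of sites is more explicit than the paper's treatment, but the paper absorbs this into the lemma itself (applied to both $V_i$ and to quasi-compact opens inside $V_i\cap V_j$), so no additional argument is needed beyond what you have identified.
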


Of course, $\mathrm{sk}X$ is actually an \emph{integral} skeleton.

\subsection{Universal skeleton of an adic space}

Let $\mathbf{Ad}^\mathrm{qcqs}$ denote the quasi-compact, quasi-separated adic site. The sheaves $\B^c(\sh O_X;\sh O_X^+)$ on each adic space $X$ assemble to a sheaf $|\sh O|$ of Tate semirings on $\mathbf{Ad}^\mathrm{qcqs}$, extending the one with the same name introduced in the previous section.

Note that, unlike the case of formal schemes, this sheaf does not restrict to the presheaf
\[ |\sh O|^\mathrm{pre}=\B^c:\mathbf{nA}\rightarrow\frac{1}{2}\mathbf{Ring} \]
defined in terms of the section spaces of $\sh O$, since the $\sh O^+$-submodules it parametrises are, on the whole, not quasi-coherent. Naturally, $|\sh O|$ is the sheafification of $|\sh O|^\mathrm{pre}$.

In this section, we will derive the following generalisation of theorem \ref{SKEL-min}:

\begin{thm}\label{SKEL-main}Let $X$ be any adic space. Then $\mathrm{sk}X:=(X,|\sh O_X|)$ is a skeleton.\end{thm}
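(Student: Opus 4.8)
The plan is to reduce the statement for an arbitrary adic space $X$ to the affine case, and then to mimic the argument for formal schemes (theorem \ref{SKEL-min}), substituting the \emph{blow-up formula} \ref{LOC-blow-up} for the role played there by the Zariski-open formula \ref{LOC-Zar-open}. Since being a skeleton is a local property, and every adic space is locally qcqs, it suffices to treat $X\in\mathbf{Ad}^{\mathrm{qcqs}}$; and for such $X$ it suffices to exhibit, for each quasi-compact open $V\hookrightarrow X$, a homeomorphism $V\widetilde\rightarrow\Spec\Gamma(V;|\sh O_X|)$ matching structure sheaves. By quasi-compactness we may choose a formal model $j:V\rightarrow V^+$ (a qcqs formal scheme), so that $\Gamma(V;|\sh O_X|)=\B^c(j_*\sh O_V;\sh O_{V^+})$, the sheafification of the presheaf $\B^c(\sh O_V;\sh O_V^+)$ over $V$.

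First I would recall the dictionary between quasi-compact open subsets of $V$ and data on $V^+$: every quasi-compact open $U\hookrightarrow V$ is obtained, after passing to a suitable admissible blow-up $\widetilde{V}^+\rightarrow V^+$, as the complement of the vanishing locus of an ideal sheaf $I$ cosupported away from $V$ — i.e. $U$ is a Zariski-open of an admissible blow-up. By the blow-up formula \ref{LOC-blow-up}, pulling back along $\widetilde V^+\rightarrow V^+$ is a \emph{free localisation} of $\B^c(j_*\sh O_V;\sh O_{V^+})$ at $I$; by the Zariski-open formula \ref{LOC-Zar-open} (applied on $\widetilde V^+$), further restriction to $U$ is a \emph{cellular localisation}. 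Composing, the restriction $\Gamma(V;|\sh O_X|)\rightarrow\Gamma(U;|\sh O_X|)$ is a bounded localisation, hence dual to an open immersion of affine skeleta. This gives a continuous map $\Phi:\sh U^{\mathrm{qc}}_{/V}\rightarrow\mathbf{Sk}^{\mathrm{aff}}$, $U\mapsto\Spec\Gamma(U;|\sh O_X|)$, and in particular a continuous map $V\rightarrow\Spec\Gamma(V;|\sh O_X|)$.

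To see $\Phi$ is a homeomorphism onto $\Spec\Gamma(V;|\sh O_X|)$, I would check it is a bijection on points carrying the topology across. Surjectivity on opens: every bounded localisation of $\B^c(j_*\sh O_V;\sh O_{V^+})$ factors as a cellular localisation followed by a subdivision (lemma after \ref{LOC-def-radius}); the subdivision part is, by corollary \ref{LOC-strict-transform}, realised by the strict transform semiring of an admissible blow-up, and the cellular part by a Zariski-open of it — so every affine open of the spectrum is in the image of $\Phi$. For the covering condition one invokes the cellular cover formula \ref{LOC-Zar-cover}: if $U=\bigcup_i U_i$ with $U_i=\widetilde V^+\setminus Z(I_i)$, then $\Spec\Gamma(U;|\sh O_X|)=\bigcup_i\Spec\Gamma(U_i;|\sh O_X|)$ inside $\Spec\Gamma(V;|\sh O_X|)$, so finite open covers of $V$ go to covers of the spectrum and $\Phi$ is a morphism of sites inducing an equivalence of the associated topoi; uniqueness of canonical topologies then upgrades this to a homeomorphism of spaces matching $|\sh O|$ with the tautological structure sheaf. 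Globalising over a finite cover of a qcqs $X$ by such $V$'s, and then over arbitrary $X$ by qcqs opens, yields the theorem.

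The main obstacle is the surjectivity/covering bookkeeping in the middle step: one must know that \emph{every} quasi-compact open of $V$, and every open of its skeleton, is visible after a \emph{single} admissible blow-up, and that the blow-up and Zariski-open formulas patch compatibly on overlaps (i.e. the strict transform construction of \ref{LOC-def-sta}–\ref{LOC-strict-transform} is functorial enough that different blow-ups dominating a common one induce the same localisation). Concretely, the delicate point is that $|\sh O|$ is the \emph{sheafification} of $\B^c(\sh O;\sh O^+)$ — the $\sh O^+$-submodules are not quasi-coherent — so one must argue that $\Gamma(V;|\sh O_X|)$ genuinely computes $\B^c(j_*\sh O_V;\sh O_{V^+})$ and is insensitive to the choice of model $V^+$ (any two are dominated by a common admissible blow-up, under which the semiring only undergoes a further free localisation at an already-invertible ideal, hence is unchanged). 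Everything else is a formal consequence of the localisation formulas of section \ref{LOC} together with the general scheme-theoretic machinery of appendix \ref{TOPOS}.
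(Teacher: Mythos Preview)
Your overall strategy---reduce to qcqs, match quasi-compact opens of $V$ with bounded localisations via the blow-up and Zariski-open formulas, then invoke the cellular cover formula---is the right shape, and it is essentially the paper's route. But there is a genuine error at the heart of your argument, and it concerns exactly the point you yourself flag as ``the delicate point''.

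You write $\Gamma(V;|\sh O_X|)=\B^c(j_*\sh O_V;\sh O_{V^+})$ for a \emph{single} chosen formal model $j:V\rightarrow V^+$, and later justify model-independence by saying that under an admissible blow-up ``the semiring only undergoes a further free localisation at an already-invertible ideal, hence is unchanged''. This is false: the blow-up formula \ref{LOC-blow-up} says precisely that the semiring \emph{does} change, becoming the free localisation at the blow-up centre $I$, which is \emph{not} invertible on $V^+$ before the blow-up. The semirings $\B^c(j_*\sh O_V;\sh O_{V^+})$ for varying models are a strictly increasing filtered system of free localisations; none of them individually equals $\Gamma(V;|\sh O_V|)$. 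What the paper does instead is prove the colimit formula (lemma \ref{SKEL-lim})
\[ |\sh O|(V)\;\cong\;\colim_{j\in\mathbf{Mdl}(V)}\B^c(j_*\sh O_V;j_*\sh O_V^+), \]
which is the missing ingredient in your argument. This lemma is not automatic: it requires the limit description \ref{ADIC-lim} of $V$ over its models together with ${}^+$normality to control when two finitely generated submodules on different models become equal.

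Once you have the colimit formula, your approach actually simplifies: in the colimit all the free localisations coming from admissible blow-ups have already been absorbed, so the restriction $|\sh O|(V)\rightarrow|\sh O|(U)$ is a \emph{purely cellular} localisation (at any ideal $I$ cutting out $V^+\setminus U^+$ on a model where $U$ is Zariski-open), not a general bounded one. This is how the paper argues, and it yields as a by-product the stronger statement that $\mathrm{sk}X$ is a \emph{spine}---every open immersion is cellular---which your bounded-localisation formulation would not give. Your surjectivity step (``every bounded localisation arises from some blow-up plus Zariski-open'') then becomes: every $I\in|\sh O|(V)$ is represented on \emph{some} model, and $|\sh O|(V)\{-I\}\cong|\sh O|(U)$ for $U$ the complement of $Z(I)$ there. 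That is immediate from the colimit description.
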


\begin{defn}The skeleton $\mathrm{sk}X$ is called the \emph{universal skeleton} of $X$.\end{defn}

The proof rests on a limit formula, following from the fundamental limit \ref{important} of \S\ref{ADIC}.

\begin{lemma}\label{SKEL-lim}Let $X$ be a qcqs adic space. Then
\[ \B^c(\sh O_X;\sh O_X^+)\cong\colim_{j\in\mathbf{Mdl}(X)}\B^c(j_*\sh O_X;j_*\sh O_X^+) \]
in $\frac{1}{2}\mathbf{Ring}_t$.\end{lemma}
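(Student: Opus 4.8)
The plan is to reduce the statement to a purely algebraic limit formula about the functors $\B^c(-;-)$ combined with the structural limit formula for adic spaces, proposition \ref{ADIC-lim}, which expresses $\sh O_X$ (and $\sh O_X^+$) as a filtered colimit over the category $\mathbf{Mdl}(X)$ of formal models $j:X\rightarrow X^+$. Concretely, proposition \ref{ADIC-lim} should give $\sh O_X\cong\colim_{j}j^*\sh O_{X^+}$ as sheaves on $X$, together with the corresponding statement for the integral structure sheaves; the content of the lemma is that applying $\B^c(-;-)$ commutes with this filtered colimit.

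First I would record that $\mathbf{Mdl}(X)$ is a cofiltered (hence, read contravariantly, filtered) category, with transition maps the admissible blow-ups, and that these transition maps induce semiring homomorphisms $\B^c(j_*\sh O_X;j_*\sh O_X^+)\rightarrow\B^c(j'_*\sh O_X;j'_*\sh O_X^+)$ for $j'$ dominating $j$, by the functoriality of $\B^c$ for morphisms of nA-ringed spaces spelled out in \S\ref{TOP} and \S\ref{fungus}. This gives the filtered system and the canonical comparison map $\colim_{j}\B^c(j_*\sh O_X;j_*\sh O_X^+)\rightarrow\B^c(\sh O_X;\sh O_X^+)$ in $\frac{1}{2}\mathbf{Ring}_t$; note the colimit on the left is computed in $\mathbf{Span}_t$, where filtered colimits are exact by proposition \ref{TOP-span-complete}, and then carries its semiring structure. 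It then suffices, working locally on $X$, to check the comparison map is an isomorphism of topological semirings, i.e. a bijection which is a homeomorphism.

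The heart of the argument is that a locally finitely generated $\sh O_X^+$-submodule $N\hookrightarrow\sh O_X$ descends, on a quasi-compact $X$, to some model. By definition \ref{SPAN-def-fin'}, $N$ is locally the image of an epimorphism $(\sh O_X^+)^{n}\twoheadrightarrow f^*\sh O_X$; since $X$ is qcqs this can be arranged on a finite cover, and the finitely many generating sections, being sections of $\sh O_X=\colim_j j_*\sh O_{X^+}$, all lift to a single model $j$ by the filteredness of $\mathbf{Mdl}(X)$ (here one uses that $j_*\sh O_{X^+}$, resp. $j_*\sh O_{X^+}^+$, compute the colimit — proposition \ref{ADIC-lim} again). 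This shows surjectivity of the comparison map; injectivity follows similarly, since if two finitely generated submodules coincide after pushing to $\sh O_X$ then their finitely many generators agree after passing far enough along the system, using that $\sh O_X^+$ is likewise a filtered colimit and that the submodule generated by a finite set is preserved under the transition maps (one may need to invoke the blow-up formula \ref{LOC-blow-up} to identify the effect of a transition map on $\B^c$, namely as a free localisation at the centre). Finally, for the topologies: $\B^c(\sh O_X;\sh O_X^+)$ carries the subspace topology from $\B(\sh O_X;\sh O_X^+)$ with its principal topology of open submodules, and each $\B^c(j_*\sh O_X;j_*\sh O_X^+)$ likewise; a basic open of the colimit topology is generated by the images of the $j$-level opens, and these match the opens of $\B^c(\sh O_X;\sh O_X^+)$ because every open $\sh O_X^+$-submodule of $\sh O_X$ is, on a qcqs space, the pullback of an open submodule on some model (the models are cofinal among the open lattices), so the homeomorphism statement reduces to the same descent argument applied to open submodules.

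The main obstacle I anticipate is the descent of \emph{finite generation}: one must be careful that a submodule which is locally finitely generated on $X$ lifts not merely to a submodule on some model but to a \emph{finitely generated} one, and that the lift can be taken compatibly over a finite cover so that gluing produces a genuine section of $\B^c(j_*\sh O_X;j_*\sh O_X^+)$ on a single model. This is where quasi-compactness and quasi-separatedness of $X$ are essential, and where one should lean on the remark in \S\ref{TOP} (Nakayama's lemma, pro-discreteness of $\B^c(-;A^+)$) to ensure that the generators lifted mod an ideal of definition genuinely generate; the rest of the proof is bookkeeping with filtered colimits.
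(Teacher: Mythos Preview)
Your proposal is essentially the paper's approach: invoke proposition \ref{ADIC-lim} to write $\sh O_X^+$ as a filtered colimit over models, then argue surjectivity by lifting finitely many generators to a single model and injectivity by matching at a common refinement. The paper's proof is two sentences long and does exactly this.

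One simplification you miss: for injectivity the paper observes that, by ${}^+$normality, the transition maps $j^*j_*\sh O_X^+\rightarrow\sh O_X$ are \emph{injective}, so two finitely generated submodules with the same image in $\sh O_X$ already agree on any model where both are defined. This bypasses your more laborious ``lift the coefficients of the containment relations to a common level'' argument, and renders the blow-up formula irrelevant here. Your anticipated obstacle about descent of finite generation is a non-issue for the same reason: once the generators lift (which is immediate from the filtered colimit description), the submodule they generate at that level has the correct image, and injectivity of the transition maps means nothing can go wrong under further refinement; no appeal to Nakayama or pro-discreteness is needed.
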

\begin{proof}Indeed, the limit formula states explicitly that $j:X\widetilde\rightarrow\lim X^+$ as locales, and that $\sh O_X^+=\colim j^*j_*\sh O_X^+$ as sheaves on $X$. Any finitely generated ideal of $\sh O_X^+$ is therefore pulled back from some level $j_*\sh O_X^+$.

Since, by ${}^+$normality, the morphisms $j^*j_*\sh O_X^+\rightarrow\sh O_X$ are injective, then any two such ideals have the same image in $\sh O_X$ if and only if they agree on any cover, that is, on any model on which they are both defined.\end{proof}

\begin{proof}[Proof of \ref{SKEL-main}]Let $X\in\mathbf{Ad}^\mathrm{qcqs}$. We need to show that the localisations of $|\sh O|(X)$ are in one-to-one correspondence with the quasi-compact subsets of $X$.

Let $S\hookrightarrow X$ be a quasi-compact subset. There exists a formal model $j:X\rightarrow X^+$ and open subset $S^+\hookrightarrow X^+$ such that $S\cong X\times_{X^+}S^+$, and
\[ \B^c(j_*\sh O_X;j_*\sh O_X^+)\rightarrow \B^c(j_*\sh O_S;j_*\sh O_S^+) \]
is a cellular localisation at some (any) finite ideal $I$ cosupported on $X^+\setminus S^+$. This remains true when we modify $X^+$. Since $S$ is quasi-compact, every formal model $j_S:S\rightarrow S^+$ can be extended to a model $j$ of $X$, and so the colimit formula \ref{SKEL-lim} implies that
\[ |\sh O|(X)\cong\colim_{j\in\mathbf{Mdl}(X)_{/X^+}}\B^c(j_*\sh O_X;j_*\sh O_X^+)\rightarrow \colim_{j\in\mathbf{Mdl}(X)_{/X^+}}\B^c(j_{S*}\sh O_S;j_{S*}\sh O_S^+)\cong|\sh O|(S) \]
is a localisation at $I$.

Conversely, any $I\in|\sh O|(X)$ is representable by some finite ideal sheaf on a qcqs formal model $j:X\rightarrow X^+$ of $X$, whence $|\sh O|(X)\{-I\}\cong|\sh O|(U)$, where $U\cong X\times_{X^+}(X^+\setminus Z(I))$.

The cellular cover formula \ref{LOC-Zar-cover} shows that this correspondence preserves coverings, and hence induces a homeomorphism of $X$ with $\Spec|\sh O|(X)$.\end{proof}

The argument also shows:

\begin{cor}The universal skeleton of an adic space is a spine (def. \ref{SKEL-def-cel}).\end{cor}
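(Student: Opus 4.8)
The plan is to revisit the proof of theorem \ref{SKEL-main} and observe that every localization occurring there is in fact \emph{cellular}. First I would reduce to the case that $X$ is quasi-compact and quasi-separated: for a general adic space $\mathrm{sk}X$ is covered by the affine open subskeleta $\mathrm{sk}X'=\Spec|\sh O|(X')$ with $X'\subseteq X$ ranging over qcqs opens (theorem \ref{SKEL-main}), and since these $\mathrm{sk}X'$ are affine, an open immersion into $\mathrm{sk}X$ is cellular as soon as each of its restrictions $V\cap\mathrm{sk}X'\hookrightarrow\mathrm{sk}X'$ is. So assume $X$ is qcqs; then $\mathrm{sk}X=\Spec|\sh O|(X)$ is affine, with a basis of affine --- equivalently quasi-compact --- open subsets, which under the homeomorphism $X\widetilde\rightarrow\Spec|\sh O|(X)$ correspond to the quasi-compact open subspaces $S\hookrightarrow X$. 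It is then enough to show that each inclusion $\mathrm{sk}S\hookrightarrow\mathrm{sk}X$ is dual to a cellular localization: any open immersion into $\mathrm{sk}X$ is, via this basis, locally dual to a cellular localization, which is precisely the assertion that $\mathrm{sk}X$ is a spine (and, a fortiori, a cell complex).

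To see that $|\sh O|(X)\rightarrow|\sh O|(S)$ is cellular, recall that the proof of \ref{SKEL-main} chooses a quasi-compact formal model $j:X\rightarrow X^+$ and an open $S^+\hookrightarrow X^+$ with $S\cong X\times_{X^+}S^+$, and writes this map as the filtered colimit, over $\mathbf{Mdl}(X)_{/X^+}$, of the restriction maps $\B^c(j'_*\sh O_X;j'_*\sh O_X^+)\rightarrow\B^c(j'_{S*}\sh O_S;j'_{S*}\sh O_S^+)$; by the Zariski-open formula \ref{LOC-Zar-open}, each of these is the cellular localization $\B^c(j'_*\sh O_X;j'_*\sh O_X^+)\{-I\}$ at a finite ideal $I$ whose zero locus is the complement of $S^+$ in the model. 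The point to stress is \emph{why} this is cellular rather than merely bounded: $I$ is an ideal of the structure sheaf of the model, hence is $\leq0$ in the semiring $\B^c(j'_*\sh O_X;j'_*\sh O_X^+)$ (whose additive unit is the image of the unit ideal), and inverting an element $\leq0$ is by definition a cellular operation. Since localization and contraction commute with filtered colimits (\S\ref{LOC}, \S\ref{.5RING-contract}, lemma \ref{LOC-properties}), and the images of the ideals $I$ are compatible and remain $\leq0$, the limit formula \ref{SKEL-lim} identifies
\[ |\sh O|(X)\rightarrow|\sh O|(S) \;\cong\; |\sh O|(X)\rightarrow|\sh O|(X)\{-I\}, \]
the cellular localization at the image of $I$ in $|\sh O|(X)^\circ$. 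Feeding this back into the first paragraph completes the argument.

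The main obstacle I anticipate is keeping straight which localizations at the level of formal models genuinely contribute open subsets of $\mathrm{sk}X$. Passing between two formal models of the same $X$ in general involves an admissible blow-up, and the blow-up formula \ref{LOC-blow-up} realises such a blow-up as a \emph{free} localization, not a cellular one; so a priori free localizations are lurking in the colimit. The resolution is that these free localizations only re-present the \emph{same} affine skeleton (both colimits over $\mathbf{Mdl}(X)$ return $\B^c(\sh O_X;\sh O_X^+)$), and wash out in the limit, leaving as genuine open immersions only the cellular localizations indexed by quasi-compact opens of $X$. Making this dichotomy watertight --- checking that the colimit collapses the ``blow-up'' direction while retaining exactly the cellular part --- is where the care is needed; the rest is bookkeeping with the universal properties of \S\ref{LOC} and the limit formula \ref{SKEL-lim}.
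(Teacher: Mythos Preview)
Your proposal is correct and follows exactly the route the paper intends: the corollary is stated immediately after the proof of theorem \ref{SKEL-main} with the sentence ``The argument also shows'', and your write-up simply unpacks that sentence --- the restriction maps $|\sh O|(X)\to|\sh O|(S)$ produced in that proof are all of the form $\{-I\}$ with $I$ a finite ideal sheaf on a model, hence $I\leq 0$, hence cellular. Your reduction to the qcqs case and the appeal to the homeomorphism $X\widetilde\rightarrow\Spec|\sh O|(X)$ to identify the affine basis with the quasi-compact opens of $X$ are exactly right, and your closing paragraph about the free localisations coming from admissible blow-ups being absorbed in the colimit \ref{SKEL-lim} is the correct explanation for why no non-cellular opens survive (indeed, any subdivision of $|\sh O|(X)$ is at an element $S'$ bounded below by an admissible bound, hence $S'$ is itself an admissible ideal on some model and therefore already invertible in the colimit by the blow-up formula \ref{LOC-blow-up}).
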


\subsubsection{Real points of the universal skeleton}

Let $X$ be any skeleton. We can topologise the set $X(\R_\vee)$ of \emph{real points} of $X$ with respect to the evaluation maps $f:X(\R_\vee)\rightarrow\R_\vee$ associated to functions $f\in|\sh O_X|$, where on the right-hand side $\R_\vee$ is equipped with the usual \emph{order} topology (rather than the semiring topology). If $X$ is defined over some rank one semifield $H_\vee\subseteq\R_\vee$, then we may rigidify by considering $\R_\vee$-points over $H_\vee$; the subset $X_{H_\vee}(\R_\vee)\subseteq X(\R_\vee)$ similarly acquires a topology. 

The natural map $X(\R_\vee)\rightarrow X$ is often discontinuous with respect to this topology.

If $X$ is now an adic space, we can consider (following e.g. \ref{eg-vals}) the space $\mathrm{sk}X(\R_\vee)$ of real points of the universal skeleton as a space of \emph{real valuations} of $\sh O_X$. For this to be geometrically interesting, we usually want to consider this equipped with some $H_\vee$-structure. For instance, if $X$ is Noetherian, then $\mathrm{sk}X$ carries a canonical `maximal' morphism to $\Spec\Z_\vee$ (e.g. \ref{eg-adic-Noetherian}). The corresponding valuations send irreducible topological nilpotents to $-1$. Alternatively, if $X$ is defined over a rank one non-Archimedean field $K\rightarrow H_\vee$, then $\mathrm{sk}X$ is defined over $H_\vee$, and the real points are valuations extending the valuation of the ground field.

Where there is no possibility of confusion, I will abbreviate $\mathrm{sk}X_{H_\vee}(\R_\vee)$ to $X(\R_\vee)$.

To the reader familiar with analytic geometry in the sense of Berkovich \cite{Berketale} the following theorem will come as no surprise:

\begin{thm}\label{thm-berk}Let $X^\mathrm{Berk}$ be a Hausdorff Berkovich analytic space over a non-Archimedean field $K$, $X$ the corresponding quasi-separated adic space \emph{\cite[thm. 1.6.1]{Berketale}}. The composition 
\[ X(\R_\vee)\rightarrow X\rightarrow X^\mathrm{Berk} \]
is a homeomorphism.\end{thm}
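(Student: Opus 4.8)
The plan is to identify both the source and target of the map $X(\R_\vee)\to X^{\mathrm{Berk}}$ as sets of equivalence classes of real-valued valuations on the structure presheaf, and then to check that the topologies agree. First I would recall the construction of the Berkovich space: for an affinoid $K$-algebra $A$ with its ring of power-bounded elements $A^\circ$, the space $\mathcal{M}(A)$ is the set of multiplicative seminorms on $A$ bounded by the norm, equivalently (after taking logarithms) the set of real semivaluations $\val:A\to\R_\vee$ which are $\leq 0$ on $A^\circ$ and which extend the fixed valuation of $K$; the topology is the coarsest making all evaluation maps $\val\mapsto\val(f)$ continuous for the order topology on $\R_\vee$. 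On the other hand, by the corepresentability property of $\B^c$ (example \ref{eg-vals}), an $\R_\vee$-point of $\mathrm{sk}X$ over $H_\vee$ — i.e. a continuous $H_\vee$-algebra homomorphism $\B^c(\sh O_X;\sh O_X^+)\to\R_\vee$ — is exactly such a continuous semivaluation $\sh O_X\to\R_\vee$ extending the ground valuation, with the $\sh O_X^+$-integrality condition built in. So on the level of affinoids, the underlying sets of $X(\R_\vee)$ and $X^{\mathrm{Berk}}$ literally coincide, and the topologies are defined by the same generating family of evaluation maps; this disposes of the affinoid case.

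The next step is to globalise. Both $X\mapsto X^{\mathrm{Berk}}$ and $X\mapsto\mathrm{sk}X(\R_\vee)$ are built by gluing over an (affinoid, resp. rational/affine-open) cover, so I would argue that the affinoid identification is compatible with restriction to rational subdomains. On the adic side, a rational subdomain corresponds to a bounded (in fact free/cellular composite) localisation of $\B^c(\sh O_X;\sh O_X^+)$ by the blow-up and Zariski-open formulas (propositions \ref{LOC-blow-up}, \ref{LOC-Zar-open}), and taking $\R_\vee$-points of a localisation $\alpha\to\alpha\{T-S\}$ just carves out the subset of valuations on which $S$ becomes invertible with the prescribed bound — which is precisely the defining inequality of the corresponding Laurent/rational subdomain of $\mathcal{M}(A)$. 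Hence the bijections on affinoid pieces are compatible with the gluing data on both sides, and they patch to a bijection $X(\R_\vee)\widetilde\to X^{\mathrm{Berk}}$ which is a local homeomorphism, hence a homeomorphism. The factorisation through $X$ (the adic space) is automatic from the universal property, since a classical rank-one point of the adic space is exactly the kind of valuation being evaluated; continuity of $X(\R_\vee)\to X$ is not claimed and not needed.

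A couple of technical points deserve care. One must check that the topology on $X(\R_\vee)$ as \emph{defined} in the paper — the weak topology with respect to all $f\in|\sh O_X|$, i.e. all finitely generated $\sh O_X^+$-submodules, evaluated via $\sup_i\val(f_i)$ — agrees with the weak topology with respect to evaluation at individual analytic functions $f\in\sh O_X$; this is immediate because $\sup_i\val f_i$ is a finite join of the individual evaluations and the order topology on $\R_\vee$ makes finite joins continuous, so the two generating families generate the same topology. One should also confirm that the $H_\vee$-rigidification matches the Berkovich convention of fixing the ground valuation on $K$: this is exactly the condition that the semivaluation restrict to the given one on the image of $K$, which is what "$\mathrm{sk}X$ defined over $H_\vee$" encodes.

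The main obstacle I expect is the compatibility of the gluing, or more precisely the bookkeeping that the correspondence "rational subdomain $\leftrightarrow$ bounded localisation" on $\R_\vee$-points is an honest identification of \emph{open} subsets and not merely of sets: one needs that the cellular cover formula \ref{LOC-Zar-cover} (applied at the level of $\R_\vee$-points, where a contracting/cellular localisation at $S=\bigvee S_i$ is the union of those at the $S_i$) reproduces exactly the way Berkovich's $\mathcal{M}(A)$ is covered by rational domains, including the non-quasicompact case arising from the colimit formula \ref{SKEL-lim} over formal models. Once that dictionary is pinned down, the rest is formal: a bijection that is a homeomorphism locally on a cover is a homeomorphism.
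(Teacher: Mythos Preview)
Your approach is essentially the paper's, but with the emphasis slightly inverted. The paper's proof is two sentences: reduce to affinoids, then observe that for affine $X=\Spa A$,
\[ \Hom(\B^c(\sh O_X;\sh O_X^+),\R_\vee)\widetilde\rightarrow\Hom(\B^c(A;A^+),\R_\vee) \]
because $\B^c(\sh O_X;\sh O_X^+)$ is a (free) localisation of $\B^c(A;A^+)$. The point you gloss over is that $|\sh O_X|$ is the \emph{sheafification} of the presheaf $U\mapsto\B^c(\sh O(U);\sh O^+(U))$, not $\B^c(A;A^+)$ itself (cf.\ the remark immediately preceding theorem \ref{SKEL-main}); so one cannot simply invoke the corepresentability of example \ref{eg-vals} to identify $\mathrm{sk}X(\R_\vee)$ with Berkovich seminorms on $A$ --- one first needs to know that passing from $\B^c(A;A^+)$ to its sheafified version neither gains nor loses $\R_\vee$-points. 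That it gains none is because localisations are epimorphisms; that it loses none is because any $\R_\vee$-valued semivaluation on $(A,A^+)$ already inverts every admissible bound $T\trianglelefteq A^+$ (since $TA=A$ forces $\val T>-\infty$ in $\R_\vee$), and hence factors through the colimit \ref{SKEL-lim}. This single observation is precisely the ``main obstacle'' you flag in your final paragraph, and once it is in hand the gluing compatibility is automatic: the affinoid identification already holds at the level of the sheaf semiring, so there is nothing further to check on overlaps.
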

\begin{proof}It is enough to show that the restriction of this map to every affinoid subdomain is a homeomorphism. This follows from the definitions and the identity
\[ \Hom(\B^c(\sh O_X;\sh O_X^+),\R_\vee)\widetilde\rightarrow\Hom(\B^c(A;A^+),\R_\vee) \]
for affine $X=\Spa A$, which holds because $\B^c(\sh O_X;\sh O_X^+)$ is a localisation of $\B^c(A;A^+)$.\end{proof}

\

\begin{prop}\label{SKEL-rat}Let $X$ be integral and adic over an adic space with a Noetherian formal model. Every function on $\mathrm{sk}X$ is determined by its rational values.\end{prop}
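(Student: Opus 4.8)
The plan is to read a ``function on $\mathrm{sk}X$'' as an element $f\in\Gamma(\mathrm{sk}X,|\sh O_X|)$, whose ``values'' are the numbers $\mathrm{ev}_f(v)\in\R_\vee$ obtained by evaluating at real points $v\in\mathrm{sk}X(\R_\vee)$, and whose ``rational values'' are the restriction of $\mathrm{ev}_f$ to the subspace $\mathrm{sk}X(\Q_\vee)\subseteq\mathrm{sk}X(\R_\vee)$ of real points that factor through the rational semifield --- all taken over the canonical rank one (indeed $\Z_\vee$) structure supplied by example \ref{eg-adic-Noetherian}, which exists precisely because $X$ is ${}^+$Noetherian. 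Under this reading the assertion is equivalent to the single statement that $\mathrm{sk}X(\Q_\vee)$ is \emph{dense} in $\mathrm{sk}X(\R_\vee)$: by construction the topology on $\mathrm{sk}X(\R_\vee)$ is the weakest making every evaluation $\mathrm{ev}_h$ continuous into $\R_\vee$ with its order topology, so $\mathrm{ev}_f$ is continuous; $\R_\vee$ with the order topology is Hausdorff; and a continuous map to a Hausdorff space is determined by its restriction to a dense subset. All the work is therefore in the density claim, and this is exactly where the ${}^+$Noetherian hypothesis is needed.

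To prove density I would reduce to an affine, Noetherian situation. Since $X$ is adic over a Noetherian base it is quasi-separated, and $\mathrm{sk}X$ is a spine; using the integrality hypothesis together with the corollary that a Noetherian integral skeleton is affine, cover $\mathrm{sk}X$ by affine opens $U_i=\Spec\alpha_i$ with each $\alpha_i$ a Noetherian contracting semiring. By integrality $\sh O_X=\sh O_X^+$ locally, so (directly when $X$ is a formal scheme, and via the limit formula \ref{SKEL-lim} in general) each $\alpha_i$ is the ideal semiring $\B^c A_i^+=\B A_i^+$ of a Noetherian ring $A_i^+$. A basic open of $\mathrm{sk}X(\R_\vee)$ is cut out by finitely many inequalities $v(h)>a$, $v(g)<b$ with $h,g\in|\sh O_X|$ and $a,b\in\R$; a nonempty such open meets some $U_i(\R_\vee)$, so it suffices to show that the $\Q_\vee$-points of $U_i$ are dense among its $\R_\vee$-points. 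Unwinding example \ref{eg-vals}, this says: among the integral real-valued valuations of the Noetherian ring $A_i^+$, normalised by the chosen $\Z_\vee$-structure, those with value group of finite rational rank are dense; concretely, if a finite system of strict inequalities on the values of fixed ring elements is satisfiable by some real valuation, it is satisfiable by a divisorial (hence, after rescaling, $\Q$-valued) one.

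This last statement --- density of divisorial valuations in the Riemann--Zariski space of a Noetherian ring --- is the genuine obstacle, and I would supply it either by citing the classical results of Zariski on valuations dominating a Noetherian local ring, or, in the spirit of this paper, through the blow-up formula \ref{LOC-blow-up}: a real valuation is approximated to any finite precision on any finite set of functions by first subdividing along the relevant ideals (an admissible blow-up of a formal model) and then reading off the divisorial valuation attached to a suitable component of the exceptional locus, whose slopes are rational because it lies over a Noetherian scheme. The remaining points are bookkeeping: tracking over which rank one semifield $\mathrm{sk}X$ is defined so that $\mathrm{sk}X(\Q_\vee)$ is genuinely a \emph{subspace} of $\mathrm{sk}X(\R_\vee)$, and checking that the order topology on the target $\R_\vee$ interacts correctly with the weak topology on $\mathrm{sk}X(\R_\vee)$, so that valuation-theoretic density translates into the topological density invoked above.
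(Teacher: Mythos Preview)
Your proof has a genuine logical gap. You claim the assertion is \emph{equivalent} to density of $\mathrm{sk}X(\Q_\vee)$ in $\mathrm{sk}X(\R_\vee)$, and then argue: density plus continuity of $\mathrm{ev}_f$ into a Hausdorff target shows that $\mathrm{ev}_f$ is determined by its restriction to rational points. That is true, but it only determines $\mathrm{ev}_f$ as a function on $\mathrm{sk}X(\R_\vee)$; to conclude that $f$ itself is determined you must already know that the map $f\mapsto\mathrm{ev}_f$ from $|\sh O_X|$ into $C^0(\mathrm{sk}X(\R_\vee),\R_\vee)$ is injective. In the paper this injectivity is a \emph{corollary} of the proposition (via the trivial inclusion $\mathrm{sk}X(\Q_\vee)\subset\mathrm{sk}X(\R_\vee)$), not an input to it. Your argument is therefore circular unless you supply an independent proof that real points separate functions---and any direct proof of that would essentially be the proposition itself.

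There is also a secondary over-assumption: you reduce to $\alpha_i=\B^cA_i^+$ with $A_i^+$ Noetherian, but the hypothesis is only that $X$ is integral and adic \emph{over} a ${}^+$Noetherian space, not that $X$ is itself ${}^+$Noetherian. The paper's proof addresses exactly this: it first treats the Noetherian case directly by blowing up each ideal to a Cartier divisor, factoring into primes (using Noetherianness), and observing that the multiplicities---i.e.\ the values under the associated discrete valuations---determine the divisor; it then handles the general case by pulling the ideals back from a Noetherian base $Y^+$ and using that rational valuations extend uniquely along integral ring maps. This is a direct separation argument by discrete valuations, with no detour through real points or density. Your divisorial-approximation idea is in the right spirit, but it should be used to separate functions directly, not to approximate real valuations you do not yet know are separating.
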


In classical terms this means the following: let $j:X\rightarrow X^+$ be a formal model of $X$, $Z_1,Z_2\hookrightarrow X^+$ two finitely presented subschemes; then if for all continuous rational valuations $\val:\sh O_X\rightarrow\Q_\vee$, $\val(I_1)=\val(I_2)$, then $Z_1=Z_2$ after some further blow-up of $X^+$.

\begin{proof}The statement is clear when $X^+$ is Noetherian and the $Z_i$ are both supported away from $X$; in this case, we may blow-up each $Z_i$ to obtain Cartier divisors, which by the Noetherian hypothesis factorise into prime divisors. Knowing that the $Z_i$ are Cartier divisors, they are therefore determined by the multiplicities of each prime divisor therein, that is, the values of local functions for the $Z_i$ under the corresponding discrete valuations.

Moreover, any formal subschemes $Z_i$ are, by definition, formal inductive limits of subschemes supported away from $X$, and so determined by a (possibly infinite) set of valuations.

Finally, for the general case we may assume that $Z_i$ are pulled back from some Noetherian formal scheme $X^+\rightarrow Y^+$ over which $X^+$ is integral. Since rational valuations admit unique extensions along integral ring maps, the discrete valuations on $\sh O_Y$ determining the $Z_i$ extend to rational valuations on $\sh O_X$.\end{proof}

\begin{cor}The universal skeleton of an adic space is cancellative.\end{cor}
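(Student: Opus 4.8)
The plan is to deduce this from Proposition~\ref{SKEL-rat}, after reducing to its hypotheses by a locality argument. Recall that a semiring is \emph{cancellative} when $F_1+F_3=F_2+F_3$ with $F_3\neq-\infty$ forces $F_1=F_2$; for a sheaf of semirings this may be checked on stalks, or on sections over a basis of opens, since both the hypothesis and the conclusion are fibrewise conditions. So it is enough to show that $\Gamma(U,|\sh O_{\mathrm{sk}X}|)$ is cancellative for $U$ ranging over a basis of affine opens of $X$, and in particular I may assume that $X$ is integral and adic over a ${}^+$Noetherian space, as in \ref{SKEL-rat}; the integrality will be used.

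The key device is the one already appearing in the proof of \ref{SKEL-rat}: blow up to make ideals Cartier. By the colimit formula \ref{SKEL-lim}, $\Gamma(U,|\sh O|)\cong\colim_{j\in\mathbf{Mdl}(U)}\B^c(j_*\sh O_U;j_*\sh O_U^+)$, so a relation $F_1+F_3=F_2+F_3$ with $F_3\neq-\infty$ already holds at some formal model $j\colon U\to U^+$, on which $F_1,F_2,F_3$ are represented by finite-type ideal sheaves and $F_3$ by a \emph{nonzero} one $\mathcal F_3$ (using $j_*\sh O_U^+\hookrightarrow j_*\sh O_U$ from ${}^+$normality). Passing to the admissible blow-up $\widetilde U^+\to U^+$ along $\mathcal F_3$ — a further, cofinal term of the filtered system of models — the relation persists, and now $\mathcal F_3\cdot\sh O_{\widetilde U^+}$ is invertible. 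Since $X$ is integral, $j_*\sh O_U$ is a sheaf of domains, so this invertible sheaf is locally generated by a non-zero-divisor; multiplication by it is injective on submodules, i.e. $+F_3$ is injective on $\B^c(\widetilde j_*\sh O_U;\sh O_{\widetilde U^+})$, whence $F_1=F_2$ at this model and therefore in $\Gamma(U,|\sh O|)$. Equivalently — and this is why the statement is a corollary of \ref{SKEL-rat} — once $\mathcal F_3$ is Cartier every rational valuation $\val$ has $\val(F_3)$ finite, so cancelling the finite element $\val(F_3)$ in the totally ordered semifield $\Q_\vee$ gives $\val(F_1)=\val(F_2)$ for every $\val$, and by \ref{SKEL-rat} functions on $\mathrm{sk}X$ are determined by their rational values.

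The main obstacle is the first step, not the second: one must verify that cancellativity is genuinely local for sheaves of topological semirings, and that an arbitrary adic space is locally of the form required by \ref{SKEL-rat}. The latter requires a Noetherian-approximation of a formal model — presenting the ideal sheaves in play as pulled back from a Noetherian, integral base over which the model is integral, exactly as at the end of the proof of \ref{SKEL-rat} — together with the integrality hypothesis, in whose absence the statement already fails at reducible or non-reduced points (the finite ideal semiring of a non-domain is not cancellative). The remaining bookkeeping — tracking the equation through the cofinal passage to the blow-up inside $\mathbf{Mdl}(U)$, and checking that an invertible ideal sheaf on a model supplies an element by which one may cancel — is routine given \ref{LOC-blow-up} and \ref{SKEL-lim}.
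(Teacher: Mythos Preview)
Your argument has a genuine gap at the blow-up step. You write ``passing to the admissible blow-up $\widetilde U^+\to U^+$ along $\mathcal F_3$'', but admissibility (def.~\ref{ADIC-def-admissible}) requires $\mathcal F_3\cdot\sh O_U=\sh O_U$, and a nonzero ideal need not satisfy this: take $\mathcal F_3=(x)\trianglelefteq\sh O_K\{x\}$. When $\mathcal F_3$ is not admissible, the blow-up $\widetilde U^+$ is not a formal model of $U$ --- it changes the adic space itself --- so it is not a term of the filtered system in \ref{SKEL-lim}, and you may not pass to it there. Both conclusions you draw from this step then fail: the pullback $\mathcal F_3\cdot\sh O_{\widetilde U^+}$ is indeed locally principal, but this does not make $F_3$ invertible in any semiring computing $|\sh O_U|$, nor does \ref{LOC-blow-up} apply. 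Your ``direct'' cancellation on $\widetilde U^+$ therefore says nothing about equality in $|\sh O_U|$, and your claim that $\val(F_3)$ is finite for every rational $\val$ is unjustified.

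The paper's route avoids this entirely, and no blow-up of $\mathcal F_3$ is needed. The point is that the \emph{proof} of \ref{SKEL-rat} produces a separating family of rational valuations consisting of (integral extensions of) the discrete valuations attached to prime divisors on admissible blow-ups of a Noetherian model. These are valuations of function fields and hence have support $(0)$; thus $F_3\neq-\infty$ already forces $\val(F_3)\neq-\infty$ for every $\val$ in this family, with no modification of the model. Cancelling in $\Q_\vee$ gives $\val(F_1)=\val(F_2)$ for each such $\val$, and since this family separates, $F_1=F_2$. Your \ref{SKEL-rat}-based paragraph is therefore on the right track --- simply replace the blow-up with the observation that the separating valuations already have trivial kernel. (This also shows why ``integral'' in \ref{SKEL-rat} is doing work: it is what guarantees the divisorial valuations live on a function field.) Your remark that the statement fails for non-reduced or reducible $X$ is correct and worth keeping; the corollary should be read under the hypotheses of \ref{SKEL-rat}.
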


\begin{cor}Let $X$ be as in \ref{SKEL-rat}. Then $X(\R_\vee)$ satisfies the conclusion of Urysohn's lemma.\end{cor}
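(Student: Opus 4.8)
The topology on $X(\R_\vee)$ is by construction the initial topology for the evaluation maps $\mathrm{ev}_f\colon X(\R_\vee)\to\R_\vee$, $f\in|\sh O_X|$, with $\R_\vee$ in its order topology. Since $\R_\vee$ is order-isomorphic, hence homeomorphic, to the half-open interval $[0,1)$ it is metrizable, and $\R_\vee^\circ=[-\infty,0]$ is a compact metric space; the plan is to transport the normality of $\R_\vee$ through the $\mathrm{ev}_f$ to $X(\R_\vee)$, using \ref{SKEL-rat} to know there are enough of them. As a first reduction, by \ref{SKEL-main} together with the integrality of $X$, the skeleton $\mathrm{sk}X$ is covered by affine opens $\Spec\alpha$ with $\alpha$ a contracting, ${}^+$Noetherian semiring.

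On such an affine piece, $X(\R_\vee)$ restricts to $\Hom_{H_\vee^\circ}(\alpha,\R_\vee)$. Since $\alpha$ is contracting, every such homomorphism takes values in $\R_\vee^\circ=[-\infty,0]$; and since $\alpha$ is Noetherian, the continuity condition on a homomorphism over $H_\vee^\circ$ is automatic and cut out by closed conditions, so via the $\mathrm{ev}_f$ this set is identified with a closed subspace of the compact Hausdorff product $\prod_{f\in\alpha}\R_\vee^\circ$. Hence each affine piece of $X(\R_\vee)$ is compact Hausdorff, so normal, and Urysohn's lemma applies to it verbatim. That these identifications are faithful — that $\{\mathrm{ev}_f\}$ genuinely separates points, and (with \ref{SKEL-rat} and the preceding corollary) that $X(\Q_\vee)$ is dense and $\{\mathrm{ev}_f\}$ separates points from closed sets — is where the cancellativity of $\mathrm{sk}X$ is used.

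It remains to pass from the affine pieces to $X(\R_\vee)$ itself. Each such piece is compact, hence closed, in $X(\R_\vee)$, so $X(\R_\vee)$ is locally compact Hausdorff; the ${}^+$Noetherian hypothesis furnishes such a cover which, on each connected component, can be taken countable, so that $X(\R_\vee)$ is $\sigma$-compact, hence paracompact, hence normal, whereupon Urysohn's lemma gives the statement. The main obstacle is precisely this last, global step: normality is not a local property, so the compact charts do not assemble automatically, and one must genuinely control the shape of $X(\R_\vee)$ in the non-quasi-compact case. An alternative closer to the spirit of \ref{SKEL-rat} is to bypass abstract normality and run the classical proof of Urysohn's lemma directly inside the sublattice of continuous functions on $X(\R_\vee)$ generated by the $\mathrm{ev}_f$ under $\vee$, $\wedge$, truncation, and addition of constants from $H_\vee$: given disjoint closed $A,B$, \ref{SKEL-rat} provides near each point some $\mathrm{ev}_f$ separating it from the opposite set, and one dyadically interpolates a $[0,1]$-valued separating function — with the global coherence of the dyadic system of open sets again being the delicate point, which is where the compactness just established re-enters.
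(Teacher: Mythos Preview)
You have over-read the statement. In this paper, ``satisfies the conclusion of Urysohn's lemma'' means only that any two distinct points of $X(\R_\vee)$ can be separated by a continuous real-valued function --- i.e.\ that $X(\R_\vee)$ is \emph{completely Hausdorff}. This is made explicit in the sentence immediately following the corollary, where the result is rephrased as a proof that Hausdorff Berkovich spaces are completely Hausdorff. With that reading, the paper's argument is two lines: a point of $X(\R_\vee)$ is, locally, a semiring homomorphism $|\sh O_U|\to\R_\vee$, so two distinct points differ on some section $f$, and then $\mathrm{ev}_f$ is a continuous function (by definition of the topology on $X(\R_\vee)$) separating them. Proposition~\ref{SKEL-rat} enters only to observe that the map $|\sh O_X|\to C^0(X(\R_\vee),\R_\vee)$ is injective, i.e.\ that these separating functions are genuinely distinguished by their real values.

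Your proposal instead attempts to prove the full Urysohn conclusion for disjoint closed sets, via normality of $X(\R_\vee)$. This is a strictly stronger statement, and you correctly identify that the global step --- passing from compact affine charts to normality of the whole space --- is where your argument breaks down: $\sigma$-compactness is not supplied by the hypotheses (nothing forces a countable cover), and normality is not local. Your alternative dyadic-interpolation approach has the same gap. So the proposal is not wrong so much as aimed at the wrong target; for the statement actually being claimed, the elaborate machinery is unnecessary and the acknowledged obstacles do not arise.
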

\begin{proof}The proposition implies that $|\sh O_X|$ injects into the the set $C^0(X(\R_\vee),\R_\vee)$ of continuous, real-valued functions. By definition, two points of $X(\R_\vee)$ agree only every element of $|\sh O_X|$ takes the same value at both points. In other words, distinct points are separated by continuous functions.\end{proof}

This last result can be understood as a cute proof of the corresponding property for Hausdorff Berkovich spaces, that is, that they are \emph{completely} Hausdorff.

\subsection{Shells}

Let $X$ be an adic space. The universal skeleton of $X$ is a spine, so that any function with an admissible lower bound is invertible. If, for example, the skeleton is adic over $\Z_\vee$, then this is the same as every bounded function being invertible. Intuitively, this means that we have not defined a good notion of convexity for functions on $\mathrm{sk}X$.

We obtain a more restrictive notion of convexity by embedding $\mathrm{sk}X$ into a \emph{shell}, that is, a skeleton $B$ inside which $\mathrm{sk}X$ is a subdivision - in fact, the intersection of all subdivisions. At the level of the Berkovich spectrum $X(\R_\vee)$, this is akin to choosing a kind of `pro-affine structure' (a concept that I do not define here).

Suppose that $X$ is qcqs, and let $j:X\rightarrow X^+$ be a formal model of $X$. Write
\[  \mathrm{sk}(X;X^+):=\Spec\B^c(j_*\sh O_X;j_*\sh O_X^+), \] for the \emph{$X^+$-shell} of $X$. It is an affine skeleton whose integral model is the universal skeleton $\mathrm{sk}X^+$ of $X^+$.

More generally, if $X$ is any adic space admitting a formal model $X^+$, then a qcqs cover $U_\bullet^+\twoheadrightarrow X^+$ with generisation $U_\bullet=X\times_{X^+}U^+_\bullet$ gives rise to an \emph{$X^+$-shell}
\[ \mathrm{sk}(X;X^+):=|\mathrm{sk}(U_\bullet;U^+_\bullet)|, \]
which is a cell complex whose integral model, again, is $\mathrm{sk}X^+$. 

The blow-up formula \ref{LOC-blow-up} shows that the colimit \ref{SKEL-lim}, for each qcqs $U\hookrightarrow X$, is in fact over all possible \emph{free localisations} of $\B^c(j_*\sh O_U;j_*\sh O_U^+)$. In other words,

\begin{prop}Let $X$ be an adic space, $j:X\rightarrow X^+$ a formal model; $\mathrm{sk}X\subseteq\mathrm{sk}(X;X^+)$ is the intersection of all subdivisions of $\mathrm{sk}(X;X^+)$.\end{prop}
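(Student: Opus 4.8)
The statement asserts two things about the embedding $\mathrm{sk}X \hookrightarrow \mathrm{sk}(X;X^+)$: first, that it is realised as a (filtered) intersection of subdivisions, and second, that it is the intersection of \emph{all} subdivisions. The plan is to deduce both from the colimit formula \ref{SKEL-lim} together with the blow-up formula \ref{LOC-blow-up}. The key observation is that the indexing category $\mathbf{Mdl}(X)_{/X^+}$ of formal models of $X$ refining $X^+$ is, after restricting attention to \emph{admissible} blow-ups, exactly the category of finite ideal sheaves $I$ on $X^+$ that are cosupported away from $X$ (i.e.\ with $j^*I = \sh O_X$); this identification is the content of the ``admissible blow-ups'' example preceding lemma \ref{LOC-properties}. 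By Raynaud's theorem (invoked in appendix \ref{ADIC}; see also the discussion around \ref{ADIC-lim}), every formal model of $X$ over $X^+$ is dominated by an admissible blow-up of $X^+$, so the colimit in \ref{SKEL-lim}, restricted to models over $X^+$, is cofinally indexed by such blow-ups.

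First I would unwind \ref{SKEL-lim} in this situation. Applied with the cover $U_\bullet^+ \twoheadrightarrow X^+$ (or directly when $X$ is qcqs and $X^+$ is affine), it gives
\[ |\sh O|(X) \;\cong\; \colim_{j' \in \mathbf{Mdl}(X)_{/X^+}} \B^c(j'_*\sh O_X;\, j'_*\sh O_X^+). \]
Now by the blow-up formula \ref{LOC-blow-up}, for an admissible blow-up $\widetilde{X^+} \to X^+$ with centre $I$ (with $j^*I = \sh O_X$), the transition map $\B^c(j_*\sh O_X;\sh O_{X^+}) \to \B^c(\tilde\jmath_*\sh O_X;\sh O_{\widetilde{X^+}})$ is precisely the \emph{free localisation} at $I$, i.e.\ a subdivision in the sense of the bounded-localisation terminology (the centre $I$ is a bound because $j^*I=\sh O_X$ makes it invertible on $X$, and $I \leq 0$ in $\B^c$). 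Dualising, $\mathrm{sk}(X;\widetilde{X^+}) \hookrightarrow \mathrm{sk}(X;X^+)$ is an open immersion realising a subdivision. Composing along a cofinal chain of such blow-ups and passing to the limit of topological spaces with their structure sheaves, we conclude $\mathrm{sk}X \cong \lim \mathrm{sk}(X;\widetilde{X^+})$, exhibiting $\mathrm{sk}X$ as an intersection of subdivisions of $\mathrm{sk}(X;X^+)$; the same computation done on a cover $U_\bullet^+$ and glued handles the non-qcqs case.

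It remains to see that \emph{every} subdivision of $\mathrm{sk}(X;X^+)$ contains $\mathrm{sk}X$, so that $\mathrm{sk}X$ is the intersection of \emph{all} of them and not merely of those arising from blow-ups. A subdivision of the affine skeleton $\mathrm{sk}(X;X^+) = \Spec \B^c(j_*\sh O_X;\sh O_{X^+})$ is, by definition, dual to a free localisation of $\B^c(j_*\sh O_X;\sh O_{X^+})$ at a finite family of elements each bounded below by an admissible bound. Any admissible bound $T$ of $\B^c(j_*\sh O_X;\sh O_{X^+})$ is invertible in $\B^c(\sh O_X;\sh O_X^+) = |\sh O|(X)$ by construction (it is invertible after passing to $X$), hence so is any element bounded below by such a $T$; therefore the localisation map $\B^c(j_*\sh O_X;\sh O_{X^+}) \to |\sh O|(X)$ factors through every such free localisation. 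Dually, $\mathrm{sk}X$ maps into every subdivision, and combined with the previous paragraph this forces $\mathrm{sk}X = \bigcap (\text{subdivisions})$.

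\textbf{Main obstacle.} The genuinely delicate point is the cofinality claim: that the admissible blow-ups of $X^+$ are cofinal among \emph{all} formal models of $X$ refining $X^+$, so that the filtered colimit of \ref{SKEL-lim} may be computed over blow-ups alone and each transition map is then literally a free localisation via \ref{LOC-blow-up}. This rests on Raynaud's theorem and the fact (used in \ref{SKEL-lim}'s proof via $^+$normality) that the maps $j^{\prime *} j'_*\sh O_X^+ \to \sh O_X$ are injective, so distinct ideals are detected on \emph{some} common model. Granting that, the two displayed factorisation arguments are formal consequences of the universal properties of localisation and of $\B^c$, and nothing further is needed.
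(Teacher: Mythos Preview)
Your proposal is correct and follows essentially the same route as the paper: the paper's entire argument is the single sentence preceding the proposition, namely that the blow-up formula \ref{LOC-blow-up} shows the colimit \ref{SKEL-lim} ranges over \emph{all} free localisations of $\B^c(j_*\sh O_U;j_*\sh O_U^+)$, and you have unpacked exactly this.

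One small remark on your ``converse'' paragraph: you argue that every subdivision contains $\mathrm{sk}X$ by showing the localised element becomes invertible in $|\sh O|(X)$. This is fine, but there is a more direct way to see it, which is closer to what the paper has in mind. A subdivision is a free localisation at some $S\in\alpha^\circ$ bounded below by an admissible bound $T$; since $T\leq S\leq 0$ in $\B^c(j_*\sh O_X;\sh O_{X^+})$, the ideal $S$ itself satisfies $j^*S=\sh O_X$ and is therefore \emph{already} an admissible blow-up centre. So every subdivision is literally one of the blow-up-induced open immersions appearing in the colimit, and the two indexing systems coincide on the nose. Your argument via invertibility in $|\sh O|(X)$ is equivalent (and implicitly uses the spine property of $\mathrm{sk}X$ stated just above the shells subsection), but the direct identification is what makes the paper's one-line proof work.

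Your identification of the cofinality of admissible blow-ups as the main input is accurate; this is precisely what the appendix (around \ref{ADIC-lim} and the admissible blow-ups example) supplies.
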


Any open subset of the $X^+$-shell is induced by a blow-up $X^+_i\rightarrow X^+$ followed by a Zariski-open immersion $U^+\hookrightarrow X^+_i$. I do not know of any easily-checked \emph{necessary} criterion to determine when a family of blow-ups $\{X^+_i\rightarrow X^+\}_{i=1}^k$ gives rise to a cover $\mathrm{sk}(X;X^+_\bullet)\twoheadrightarrow\mathrm{sk}(X;X^+)$ of the corresponding shells; it is certainly \emph{sufficient} that the blow-up centres have no common point.

Note that the formal model $X^+$ can be recovered from the data of $X$ and the shell $\mathrm{sk}X\hookrightarrow\mathrm{sk}(X;X^+)$. Indeed, one obtains from these data the continuous map $j:X\widetilde\rightarrow\mathrm{sk}X\rightarrow\mathrm{sk}X^+$ to the integral model of $\mathrm{sk}(X;X^+)$, $X^+$ is the formal scheme with the same underlying space as $\mathrm{sk}X^+$ and structure sheaf $j_*\sh O_X^+$.

\

Finally, the fact that any two models of $X$ are dominated by a third means that any two shells of $\mathrm{sk}X$ have a common open subshell; the shells can therefore be glued together to create a \emph{universal shell} $\mathbf{sk}X$. In abstract terms, the functor $\mathbf{sk}$ is obtained by left Kan extension along the inclusion $\mathbf{Ad}^\mathrm{aff}\hookrightarrow\mathbf{Ad}$ of
\[ \Spec|\sh O|^\mathrm{pre}:\mathbf{Ad}^\mathrm{aff}\rightarrow\mathbf{Sk}, \]
where $|\sh O|^\mathrm{pre}$, as before, denotes the presheaf $\Spa A\mapsto\B^c(A;A^+)$. Again, the universal shell $\mathbf{sk}X$ contains the spine $\mathrm{sk}X$ as the intersection of all subdivisions.

The universal shell is a universal way of defining a `pro-affine structure' on $X(\R_\vee)$ with respect to which the valuations of sections of $\sh O_X$ are convex. It also supports convex potentials for semipositive metrics on $X$.

\section{Examples \& applications}\label{EGS}

I conclude this paper with some abstract constructions of skeleta which are already well-known via combinatorial means in their respective fields.

\subsection{Polytopes and fans}\label{EGS-poly}

Let $N$ be a lattice with dual $M$, and let $\Delta\subset N\tens\R$ be a rational polytope with supporting half-spaces 
$\{\langle-,f_i\rangle\leq\lambda_i\}_{i=1}^k,\lambda_i\in\Q$. We will allow $\Delta$ to be non-compact, as long as it has at least one vertex; this means that the submonoid $M_\Delta\subseteq M$ of functions bounded above on $\Delta$ separates its points. In this case, we can compactify $\Delta\subseteq\overline\Delta$ in, for example, the real projective space $\R\P(N\oplus\Z)$.

The semiring of `tropical functions' on $\Delta$ is presented
\[ \Z_\vee\{\Delta\}:=\Z_\vee[M_\Delta]/(f_i\leq\lambda_i)_{i=1}^k; \]
its elements have the form $\bigvee_{j=1}^dX_i+n_i$, with $X_i\in M_\Delta$ and $n_i\in\Z$.

\begin{defn}The semiring $\Z_\vee\{\Delta\}$ is the \emph{polytope semiring} associated to $\Delta$. Its spectrum $\mathrm{sk}\Delta$ is the corresponding \emph{polytope skeleton}, or just \emph{polytope} if the skeletal structure is implied by the context.\end{defn}

The construction $\mathrm{sk}$ is functorial for morphisms $\phi:M_1\rightarrow M_2,\phi(\Delta_1)\subseteq\Delta_2$ of polytopes. In particular, every sub-polytope $\Delta^\prime\subseteq\Delta$ (with $N$ fixed) induces an open immersion of skeleta $\mathrm{sk}\Delta^\prime\hookrightarrow\mathrm{sk}\Delta$. The morphism induced by a refinement $N\rightarrow \frac{1}{d}N$ can be thought of as a degree $d$ `base extension' $\mathrm{sk}\Delta\rightarrow\mathrm{sk}d\Delta$.

The polytope skeleton $\mathrm{sk}\Delta$ is a skeletal enhancement of $\overline\Delta$, in the sense that there is a canonical homeomorphism
\[ \mathrm{sk}\Delta(\R_\vee)\simeq\overline\Delta \]
of the real points, and surjective homomorphism $\Z_\vee\{\Delta\}\rightarrow\mathrm{CPA}_\Z(\overline\Delta,\Z_\vee)$ onto the semiring of integral, convex piecewise-affine functions on $\overline\Delta$ (that is, the semiring of integral, convex piecewise-affine, and bounded above functions on $\Delta$). 
We can produce a continuous map $\mathrm{sk}\Delta\rightarrow\overline\Delta$, right inverse to the natural inclusion, whose inverse image functor sends an open $U\subseteq\overline\Delta$ to the union
\[ \bigcup_{\sigma\subseteq U}\mathrm{sk}\sigma \hookrightarrow \mathrm{sk}\Delta, \]
ranging over all polytopes $\sigma$ contained in $U$. This map presents $\overline\Delta$ as a \emph{Hausdorff quotient} of $\mathrm{sk}\Delta$ (cf. thm. \ref{thm-berk}).

\

Polytope semirings admit an alternate presentation, related to the theory of toric degenerations. Let $N^\prime=N\oplus\Z$, with dual $M^\prime\cong M\oplus\Z$, and take the closed cone \[\sigma:=\overline{\bigcup_{\lambda>0}\lambda\Delta\times\{\lambda\}}\subset N^\prime\tens\R\] over the polytope placed at height one. The inclusion of the factor $\Z$ induces a homomorphism $i:\N\rightarrow\sigma^\vee\cap M^\prime$ of monoids; we topologise $\N$ linearly with ideal of definition $1$, and the cone monoid adically with respect to $i$. In other words, a fundamental system of open ideals of $\sigma^\vee\cap M^\prime$ is given by the subsets $\sigma^\vee\cap M^\prime + i(n)$ for $n\in\N$.

%Note that this construction exhibits functoriality not only in $\Delta$, but in refinements $N\mapsto \frac{1}{k}N$ of the lattice in which $\Delta$ is considered to reside. This latter operation corresponds to a finite base extension $\left(\frac{1}{k}N\right)^\prime:=N\oplus k\Z\rightarrow N\oplus \Z$ and induces an extension
%\[\sigma^\vee\cap M^\prime \quad\rightarrow\quad \frac{1}{k}\N \tens_\N (\sigma^\vee\cap M^\prime)  \quad \widetilde\rightarrow \quad \sigma^\vee\cap (kM)^\prime \]
%of the cone monoids. It can also be thought of as a scaling of the polytope $\Delta\mapsto k\Delta$.

We find that \[\Z_\vee^\circ\{\Delta\}=\B\{\sigma^\vee\cap M^\prime\}=\B^c(\sigma^\vee\cap M^\prime)\] (see definitions \ref{.5RING-def-freec} and \ref{SPAN-def-fin} for notation) is the semiring of integers (def. \ref{.5RING-integers}) in $\Z_\vee\{\Delta\}$. Its elements are idempotent expressions $\bigvee_{i=1}^kX_i$ with $X_i\in\sigma^\vee\cap M^\prime$, subject to $X_i\leq 0$. Note that under this notation $-1\in\Z_\vee^\circ$ corresponds, perhaps somewhat confusingly, to $(0,1)\in M^\prime$.

An element $S=\bigvee_{i=1}^kX_i\in\Z_\vee^\circ\{\Delta\}$ corresponds to a finite union of subcones $\sigma_S=\bigcup_{i=1}^k(X=0)\subseteq\sigma$ and hence of faces $\Delta_S$ of $\Delta$, and the induced restriction
\[ \Z_\vee^\circ\{\Delta\}\rightarrow\Z_\vee^\circ\{\Delta_S\} \]
is a localisation at $S$. The topology of the integral model $\mathrm{sk}\Delta^\circ=\Spec\Z_\vee^\circ\{\Delta\}			$ of $\mathrm{sk}\Delta$ is therefore equal, as a partially ordered set, to the set of unions of faces of $\Delta$. In particular, $\mathrm{sk}\Delta^\circ$ is a finite topological space.

A refinement of the lattice $N\mapsto\frac{1}{k}N$ commutes with base extension $\Spec\frac{1}{k}\Z_\vee\rightarrow\Spec\Z_\vee$:
\[ \frac{1}{k}\Z_\vee\{\Delta\}:=\frac{1}{k}\Z_\vee\oplus_{\Z_\vee}\Z_\vee\{\Delta\}\widetilde\rightarrow \Z_\vee\{k\Delta\}. \]
The dual morphism \[\mathrm{sk}k\Delta^\circ\cong\frac{1}{k}\Z_\vee^\circ\times_{\Z_\vee^\circ}\mathrm{sk}\Delta^\circ\rightarrow \mathrm{sk}\Delta^\circ\] of integral skeleta is a homeomorphism.

\

Let $k[\![\sigma^\vee\cap M^\prime]\!]$ denote the completed monoid algebra of $\sigma^\vee\cap M^\prime$, and write $z^m$ for the monomial corresponding to an element $m\in\sigma^\vee\cap M^\prime$. I introduce the special notation $t:=z^{(0,1)}$ for the uniformiser; the completion is with respect to the $t$-adic topology. The monoid inclusion $\sigma^\vee\cap M^\prime\subset k[\![\sigma^\vee\cap M^\prime]\!]$ induces a continuous embedding
\[ \Z_\vee^\circ\{\Delta\} \hookrightarrow \B^c\left(k[\![\sigma^\vee\cap M^\prime]\!]\right) \]
into the ideal semiring of $k[\![\sigma^\vee\cap M^\prime]\!]$, matching $-1\in\Z_\vee^\circ$ with the ideal of definition $(t)$.

The formal spectrum $\D^+_{k[\![t]\!]}\Delta$ of $k[\![\sigma^\vee\cap M^\prime]\!]$ is an affine \emph{toric degeneration} in the sense of Mumford. That is, it is a flat degeneration
\[\xymatrix{ \D^+_{k[\![t]\!]}\Delta\ar[d] \\ \Spf k[\![t]\!] }\]
of varieties over the formal disc arising as the formal completion of a toric morphism of toric varieties. 

\begin{defn}Let $k$ be a ring. The \emph{polyhedral algebra} of functions \emph{convergent over $\Delta$} is the finitely presented $k(\!(t)\!)$-algebra 
\[k(\!(t)\!)\{\Delta\} := k[\![\sigma^\vee\cap M^\prime]\!][t^{-1}]. \] 
Its (analytic) spectrum $\D_{k(\!(t)\!)}\Delta$ is called the \emph{polyhedral domain over $k(\!(t)\!)$ associated to $\Delta$}.\end{defn}

For example, if $\Delta$ is the negative orthant in $\R^n$, then $\D_{k(\!(t)\!)}\Delta$ is just the ordinary unit polydisc $\D^n_{k(\!(t)\!)}$ over $k(\!(t)\!)$. Note that the polyhedral algebra has relative dimension equal to the rank of $N$, while the polyhedral semiring depends only on the lattice points of $\Delta$ and not on the ambient lattice.

A similar construction is possible in mixed characteristic.

In light of the main result \ref{SKEL-main}, there is a commuting diagram
\[\xymatrix{ \D_{k(\!(t)\!)}\Delta \ar[r]\ar[d]_{\mu_\Delta} &  \D^+_{k[\![t]\!]}\Delta \ar[d] \\
              \mathrm{sk}\Delta    \ar[r]       &  \mathrm{sk}\Delta^\circ }\]
in which the top and bottom horizontal arrows are morphisms of adic spaces and of skeleta, respectively, and the vertical arrows are continuous maps. If $\Delta$ spans $M$, then I would like to call the leftmost arrow $\mu_\Delta$ a \emph{standard non-Archimedean torus fibration over $\Delta$}.

\

This construction can be globalised to obtain torus fibrations on toric varieties and on certain possibly non-compact analytic subsets, in analogy with (and, more precisely, mirror to) the symplectic theory. Let $\Sigma$ be a fan in a lattice $N$, and let $X=X_\Sigma$ be the associated toric variety over a non-Archimedean field $K$, considered as an analytic space. Each cone $\Delta$ of $\Sigma$ corresponds to a Zariski-affine subset $U_\Delta\subseteq X$. Considering the cone as a polytope
\[\Delta=\bigcap_{i=1}^k\{\langle-,f_i\rangle\leq\lambda_i\}_{i=1}^k, \] embed it in a filtered family of \emph{expansions}
\[\Delta_{\mathbf r}=\bigcap_{i=1}^k\{\langle-,f_i\rangle\leq\lambda_i+r_i\}_{i=1}^k \]
for $\mathbf r=(r_i)\in\R_{\geq0}^k$; the analytic version of the subset $U_\Delta$ fits into the increasing union
\[ \xymatrix{ \D_{k(\!(t)\!)}\Delta_{\mathbf r} \ar[r]\ar[d] & U_\sigma \ar[d] \\
\mathrm{sk}\Delta_{\mathbf r} \ar[r] &  \bigcup_{\mathbf r\rightarrow\infty}\mathrm{sk}\Delta_{\mathbf r} } \]
of standard non-Archimedean torus fibrations. Note that it is not quasi-compact unless $N=0$. By glueing, we obtain a skeleton $\mathrm{sk}\Sigma$ and torus fibration
\[\xymatrix{ X_\Sigma\ar[d]_{\mu_\Sigma} \\ \mathrm{sk}\Sigma }\]
which is covered by the standard fibrations over affine polyhedral domains $\mathrm{sk}\Delta_{\mathbf r}\subseteq\mathrm{sk}\Sigma$, where $\Delta_{\mathbf r}$ ranges over all expansions of cones of $\Sigma$.

\subsection{Dual intersection skeleta}\label{EGS-Clemens}

Let $X^+$ be a reduced, Noetherian formal scheme, and let $X$ be the analytic space obtained by puncturing $X^+$ along its reduction $X^+_0$.

\begin{defns}\label{EGS-def-Clemens}The \emph{dual intersection} or \emph{Clemens semiring} of $X^+$ is the subring 
\[\mathrm{Cl}(X;X^+)\hookrightarrow\B^c(j_*\sh O_X;j_*\sh O_X^+) \]
generated by the additive units of $\B^c(j_*\sh O_X,j_*\sh O_X^+)$, that is, the invertible fractional ideals of $j_*\sh O_X^+$ in $j_*\sh O_X$. It is a sheaf of semirings on $X^+$, and it is functorial in both $X$ and $X^+$. The elements of the semiring of integers $\mathrm{Cl}(X;X^+)^\circ$ correspond to \emph{monomial} subschemes of $X^+$.

The \emph{dual intersection} or \emph{Clemens skeleton} of $X^+$ is \[ \mathrm{sk}\Delta(X,X^+):= \Spec \Gamma(X^+;\mathrm{Cl}(X;X^+)). \] 
It comes equipped with a \emph{collapse map} $X\rightarrow\mathrm{sk}X\rightarrow\mathrm{sk}\Delta(X;X^+)$.\end{defns}

It is possible, where confusion cannot occur, to drop $X$ and/or $\mathrm{sk}$ from the notation. I also write $\mathrm{Cl}^\circ(X;X^+)$ and $\mathrm{sk}\Delta^\circ(X;X^+)$ for the semiring of integers and integral model, respectively.

The flattening stratification decomposes $X^+=\coprod_{i\in I}E_i$ into locally closed, irreducible subsets such that the restriction of the normalisation $\nu:\widetilde X^+_0\rightarrow X^+_0$ to each $E_i$ is flat. In particular, the set underlying each monomial subscheme appears in this stratification.

\begin{lemma}\label{i can't be bothered to think of good labels any more}If $E\hookrightarrow X^+$ is a monomial subscheme with complement $V^+$, then $\mathrm{Cl}X^+\rightarrow\mathrm{Cl}V^+$ is a cellular localisation at $E$.\end{lemma}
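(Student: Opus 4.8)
The plan is to identify $\mathrm{Cl}(V^+;X^+)$ with a cellular localisation of $\mathrm{Cl}(X^+)$ by comparing both with the corresponding localisation of the ambient free semiring $\B^c(j_*\sh O_X;j_*\sh O_X^+)$, for which the Zariski-open formula \ref{LOC-Zar-open} already does the work. So first I would fix a finite ideal sheaf $I\in\B^c(j_*\sh O_X;j_*\sh O_X^+)$ cosupported exactly on $E$, so that $V^+ = X^+\setminus Z(I)$; since $E$ is a monomial subscheme, $I$ may be taken to lie in $\mathrm{Cl}^\circ(X^+)$ (indeed, $E$ being monomial means its ideal is generated by monomials in the local normal-crossing coordinates, i.e. by invertible fractional ideals of $j_*\sh O_X^+$). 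Then $\B^c(j_*\sh O_X;j_*\sh O_X^+)\{-I\}\widetilde\rightarrow \B^c(j_*\sh O_V;j_*\sh O_V^+)$ by \ref{LOC-Zar-open}, and the content of the lemma is that this isomorphism carries the subring $\mathrm{Cl}(X^+)\{-I\}$ isomorphically onto $\mathrm{Cl}(V^+)$.

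The heart of the argument is the characterisation \ref{LOC-Zar-construction}: $\mathrm{Cl}(X^+)\rightarrow\mathrm{Cl}(V^+)$ is a cellular localisation at $I$ if and only if the right ind-adjoint $\rho^\dagger$ identifies $\B\,\mathrm{Cl}(V^+)$ with the set of $-I$-invariant ideals of $\B\,\mathrm{Cl}(X^+)$. The restriction map $\rho:\mathrm{Cl}(X^+)\rightarrow\mathrm{Cl}(V^+)$ evidently kills $I$ (it becomes invertible with inverse $\leq 0$ on $V^+$, hence $I=0$ there since $I\leq0$ already), so it factors through $\mathrm{Cl}(X^+)\{-I\}$; the job is to show this factorisation is an isomorphism. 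Injectivity of the induced map on $\B$ follows because $\mathrm{Cl}(X^+)\hookrightarrow\B^c(j_*\sh O_X;j_*\sh O_X^+)$ is a subring and $\B$ is faithful, so invariant ideals in the subring are detected in the ambient semiring, where \ref{LOC-Zar-open} already gives the identification. Surjectivity is where the monomial hypothesis is used: given a $-I$-invariant ideal $\iota$ of $\mathrm{Cl}(X^+)$, I must exhibit a fractional ideal of $j_*\sh O_{V^+}$ whose scheme-theoretic closure meets the invariance condition; here one uses that the flattening stratification of $X^+$ refines along $E$, so the strata of $V^+$ are exactly the strata of $X^+$ disjoint from $E$, and a monomial subscheme on $V^+$ extends canonically (by closure) to a monomial subscheme on $X^+$, invertible away from $E$. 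Concretely: if $J$ is a fractional ideal on $V^+$, its closure $\bar J$ on $X^+$ satisfies $\bar J + kE \leq $ something monomial and agrees with $J$ on $V^+$ for suitable twisting — this is the monomial analogue of the argument in \ref{LOC-Zar-open} showing the $-I$-invariants are exactly the closures of restrictions.

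The main obstacle I anticipate is precisely verifying that the subring $\mathrm{Cl}(X^+)$ is \emph{saturated} inside $\B^c(j_*\sh O_X;j_*\sh O_X^+)$ with respect to the operations needed: namely that the scheme-theoretic closure (the ambient $\rho^\dagger$) of a monomial subscheme of $V^+$ is again monomial on $X^+$, and that $-I$-invariance within $\mathrm{Cl}(X^+)$ really is the restriction of $-I$-invariance in the ambient semiring (not merely implied by it). This should come down to a local computation in normal-crossing coordinates $t=\prod x_i^{n_i}$: a monomial ideal on the complement of a coordinate substratum, when closed up, remains monomial, because monomiality is a Zariski-local and in fact codimension-$\geq2$-insensitive condition on a reduced normal-crossing scheme; and because $E$ is monomial, passing to $V^+$ does not create new invertible fractional ideals beyond those already monomial on $X^+$ after allowing negative powers of the components of $E$. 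Once that local claim is in hand, the two faithful embeddings into $\B^c$ and the already-proven Zariski-open formula \ref{LOC-Zar-open} combine formally, via \ref{LOC-Zar-construction}, to give the result; functoriality and the sheaf statement then follow since everything was constructed naturally in $X^+$.
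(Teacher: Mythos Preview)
Your approach is essentially correct and uses the same core mechanism as the paper: apply the characterisation of cellular localisation via $-I$-invariant ideals (Lemma~\ref{LOC-Zar-construction}), with the key geometric input being that the scheme-theoretic closure of a monomial subscheme of $V^+$ is again monomial on $X^+$. The paper's proof is the single sentence ``repeat the argument of \ref{LOC-Zar-open} with `ideal' replaced by `monomial ideal' throughout,'' which is exactly the direct verification you describe in your final paragraph.

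Where you differ is in routing the argument through the ambient embedding $\mathrm{Cl}(X^+)\hookrightarrow\B^c(j_*\sh O_X;j_*\sh O_X^+)$ and trying to \emph{restrict} the already-proven Zariski-open formula to the subring. This detour buys you nothing and creates the very ``saturation'' obstacle you flag: you would need to know that $\mathrm{Cl}(X^+)\{-I\}\rightarrow\B^c(j_*\sh O_X;j_*\sh O_X^+)\{-I\}$ is injective, which is not automatic since cellular localisation is a quotient and need not preserve monomorphisms. The paper avoids this entirely by running the argument of \ref{LOC-Zar-open} \emph{intrinsically} in $\mathrm{Cl}$: the restriction $\rho:\mathrm{Cl}(X^+)\rightarrow\mathrm{Cl}(V^+)$ has right adjoint given by closure, closure of monomial is monomial, and the $-I$-invariants are exactly the monomial subschemes equal to the closure of their restriction --- the same three-line check as in \ref{LOC-Zar-open}, just with ``monomial'' inserted. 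Drop the ambient semiring and your proof collapses to the paper's.
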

\begin{proof}For this we may repeat the argument of \ref{LOC-Zar-open} with `ideal' replaced by `monomial ideal' throughout.\end{proof}

\begin{eg}
If $X^+$ is an affine toric degeneration associated to some polytope $\Delta$, with general fibre $X=\D_{\sh O_K}\Delta$, then the Clemens skeleton is $\mathrm{sk}\Delta(X,X^+)=\mathrm{sk}\Delta$ and the collapse map $\mu$ is a standard torus fibration $\mu_\Delta$. Its real points $\mathrm{sk}\Delta(\R_\vee)$ are the dual intersection complex of $X^+$ in the classical sense: its $n$-dimensional faces correspond to codimension $n$ toric strata of $X^+$.

The commuting diagram \[\xymatrix{ \D^+_{\sh O_K}\Delta^\prime \ar[r]\ar[d] & \D^+_{\sh O_K}\Delta  \ar[d] \\
 \Delta^\prime \ar[r] & \Delta }\] coming from the open inclusion of a face $\Delta^\prime$ of $\Delta$ can be seen as an instance of lemma \ref{i can't be bothered to think of good labels any more}.
\end{eg}

\begin{defn}\label{EGS-toric}A normal formal scheme $X^+$ over a field $k$ is said to have \emph{toroidal crossings} if it admits a cover $\{f_i:U_i^+\rightarrow X^+\}_{i=1}^k$ by open strata such that each $U_i^+$ is isomorphic to an open subset of some affine toric degeneration $\D^+_{k[\![t]\!]}\Delta_i$.\end{defn}

One can choose whether to consider \'etale or Zariski-open subsets for the covering, with the former being the usual choice. Zariski-local toroidal crossings is a very restrictive notion - for instance, it forces the irreducible components of $X^+_0$ to be rational. For simplicity, I will nonetheless work with this latter notion in this section, though the arguments may be generalised with some additional work.

Let $X^+$ be a formal scheme with Zariski-local toroidal crossings, and select model data as in the definition. Write $U_i=U_i^+\times_{X^+}X$. Assume, without loss of generality, that the given inclusions $U_i^+\hookrightarrow\D^+_{k[\![t]\!]}\Delta_i$ induce a bijection on the sets of strata, and hence isomorphisms $\Z_\vee\{\Delta_i\}\widetilde\rightarrow\Gamma(U_i^+;\mathrm{Cl}(U_i;U_i^+))$. They identify $\Delta_i$ with the dual intersection complex of $U^+_i$. By lemma \ref{i can't be bothered to think of good labels any more}, the inclusion of the open substratum $U_{ij}^+:=U_i^+\cap U_j^+$ identifies its dual intersection complex with a face $\Delta_{ij}$ common to $\Delta_i$ and $\Delta_j$.

It follows from this and the cellular cover formula \ref{LOC-Zar-cover} that $\{\Delta(U_i;U_i^+)\rightarrow\Delta(X;X^+)\}_{i=1}^k$ is a (cellular) open cover. 
\[\xymatrix{
X\ar[d]_\mu                        & \coprod_{i=1}^k U_i\ar@{->>}[l]\ar@{^{(}->}[r]\ar[d]        & \coprod_{i=1}^k\D_i\Delta_i\ar[d]^{\mu_{\Delta_i}} \\
\Delta (X;X^+)  & \coprod_{i=1}^k \Delta(U_i;U_i^+)\ar@{->>}[l]\ar@{=}[r] & \coprod_{i=1}^k\Delta_i }\]
Intuitively, $\Delta(X;X^+)$ is constructed by glueing together the dual intersection polytopes $\Delta_i$ of the affine pieces $U_i^+$ along their faces $\Delta_{ij}$ corresponding to the intersections $U_{ij}^+$.

\begin{prop}Let $X^+$ be a locally toric formal scheme. Then
\[ \coprod_{i,j=1}^k\Delta_{ij}\rightrightarrows \coprod_{i=1}^k\Delta_i\rightarrow\Delta(X;X^+) \]
is a cellular-open cover.  In particular, $\Delta(X;X^+)$ is a cell complex (def. \ref{SKEL-def-cel}).
\end{prop}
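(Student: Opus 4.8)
The plan is to exhibit the displayed fork as the ($1$-truncated) \v{C}ech nerve of a cellular-open cover, and to obtain that cover directly from the cellular cover formula \ref{LOC-Zar-cover} applied to the Clemens semiring of $X^+$. Fix model data $\{f_i:U_i^+\rightarrow X^+\}_{i=1}^k$ as in definition \ref{EGS-toric}, together with the identifications $\Z_\vee\{\Delta_i\}\cong\Gamma(U_i^+;\mathrm{Cl}(U_i;U_i^+))$ already in place, so that each $\Delta_i\cong\mathrm{sk}\Delta(U_i;U_i^+)=\Spec\mathrm{Cl}(U_i;U_i^+)$ is an affine skeleton. The first step is to note that $U_i^+$, being an open union of strata, is the complement of a closed union of strata $Z_i$ carrying a natural monomial-subscheme structure; lemma \ref{i can't be bothered to think of good labels any more} then identifies the restriction $\mathrm{Cl}(X;X^+)\rightarrow\mathrm{Cl}(U_i;U_i^+)$ with a cellular localisation at the element $S_i\in\mathrm{Cl}(X;X^+)^\circ$ representing $Z_i$, whence $\Delta_i\hookrightarrow\Delta(X;X^+)$ is a cellular open immersion.

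Second, I would translate the hypothesis that $\{U_i^+\}_{i=1}^k$ covers $X^+$ into the statement that $\bigcap_{i=1}^kZ_i=\emptyset$, equivalently $\bigvee_{i=1}^kS_i=0$ in $\mathrm{Cl}(X;X^+)^\circ$ (the join of ideals being their sum, which cuts out the intersection of supports, and $0$ being the unit ideal). Feeding $S=\bigvee_{i=1}^kS_i$ into \ref{LOC-Zar-cover} yields $\Spec\mathrm{Cl}(X;X^+)=\bigcup_{i=1}^k\Spec\mathrm{Cl}(X;X^+)\{-S_i\}=\bigcup_{i=1}^k\Delta_i$, so that $\{\Delta_i\rightarrow\Delta(X;X^+)\}_{i=1}^k$ is a cover by affine, cellular-open subsets; in particular $\Delta(X;X^+)$ is a cell complex in the sense of definition \ref{SKEL-def-cel}. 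To finish, I would identify the double overlaps: since open immersions of skeleta are stable under base change (\S\ref{SKEL-spec}), $\Delta_i\times_{\Delta(X;X^+)}\Delta_j$ is a cellular open subskeleton of each factor, and under the identification of $\Delta_i$ with $\mathrm{sk}\Delta(U_i;U_i^+)$ it is the open immersion induced by $U_{ij}^+=U_i^+\cap U_j^+\hookrightarrow U_i^+$, whose dual intersection complex was already identified with the common face $\Delta_{ij}$. Hence $\coprod_{i,j}\Delta_{ij}$ is precisely the $1$-truncated \v{C}ech nerve of the cover, and the fork in the statement is a cellular-open cover.

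I expect the only delicate point to be the first half of the second step: one must check that the complement of an open union of strata really does sit inside $\mathrm{Cl}^\circ$ as a monomial subscheme, in a way compatible with restriction to each $U_j^+$, so that lemma \ref{i can't be bothered to think of good labels any more} applies and the joins $\bigvee_{i=1}^kS_i$ behave as claimed — this is exactly where the Zariski-local toroidal-crossings hypothesis is used (it guarantees that the strata are monomial loci, locally modelled on toric degenerations). Granting this, the remainder is a formal consequence of \ref{LOC-Zar-cover} and base-change stability of open immersions, so no further obstacle is anticipated.
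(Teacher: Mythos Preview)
Your proposal is correct and follows essentially the same approach as the paper: the paper's argument is the brief discussion immediately preceding the proposition, which invokes lemma \ref{i can't be bothered to think of good labels any more} to identify each restriction $\mathrm{Cl}(X;X^+)\rightarrow\mathrm{Cl}(U_i;U_i^+)$ as a cellular localisation and then applies the cellular cover formula \ref{LOC-Zar-cover}, exactly as you do. You have simply spelled out more carefully the translation $\bigvee_iS_i=0$ and the identification of the double overlaps, and correctly flagged the one point requiring the toroidal hypothesis.
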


The collapse map $\mu$ is affine in the sense that $\Delta X^+$ admits an open cover that pulls back to an affine open cover of $X^+$. It follows that $X=\Spa\mu_*\sh O_X$ and $X^+=\Spf\mu^\circ_*\sh O_X^+$, where $\mu^\circ:X^+\rightarrow\Delta^\circ X^+$ is the integral model of $\mu$. It is locally isomorphic to standard torus fibrations $\mu_{\Delta_i}$.

\

Suppose that $X^+=\D^+\Delta$ is an affine toric degeneration, and let $p:\widetilde X^+\rightarrow X^+$ be a toric blow-up with monomial centre $Z\subseteq X^+_0$. The toric affine open cover of $\widetilde X^+$ induces a decomposition
\[ \coprod_{i,j=1}^k\Delta_{ij} \rightrightarrows \coprod_{i=1}^k \rightarrow \Delta(X;\widetilde X^+)\]
of the dual intersection skeleton of $\widetilde X^+$ into polyhedral cells. The map
\[ \Delta(X;\widetilde X^+) \rightarrow \Delta(X;X^+) \]
induced by the blow-up is a subdivision at the function $Z\in\mathrm{Cl}(X;X^+)$.

More geometrically, the Clemens semiring of a monomial blow-up is the \emph{strict transform semiring} of the Clemens semiring of $X^+$ (cf. \ref{LOC-strict-transform}). It follows that monomial blow-ups induce subdivisions of the dual intersection skeleta.

\subsection{Tropicalisation}\label{EGS-trop}

Let $X$ be a toric variety, so that following \S\ref{EGS-poly} it comes with a canonical `tropicalisation' $X\rightarrow\mathrm{sk}\Sigma$. Let $f:C\hookrightarrow X$ be a closed subspace of $X$. We would like to complete the composite $\mathrm{sk}C\hookrightarrow\mathrm{sk}X\rightarrow\mathrm{sk}\Sigma$ to a commuting square
\[\xymatrix{ C \ar[r]\ar[d]_{\mathrm{trop}} & X\ar[d] \\ \mathrm{Trop}(C/X/\Sigma) \ar@{^{(}->}[r] & \mathrm{sk}\Sigma }\]
and to call $C\rightarrow\mathrm{Trop}(C/X/\Sigma)$ the \emph{amoeba} or \emph{tropicalisation} of $C$ in $\mathrm{sk}\Sigma$, after (in chronological order) \cite{Kap} and \cite{Payne}.

Let us begin in the affine setting: let $X=\D_K\Delta$ be a polyhedral domain, and let $I_C$ be the ideal defining $C$ in $\sh O_X$. There is an associated toric degeneration $j:X\rightarrow X^+$ over $\sh O_K$, and we may close the subspace $C$ to obtain an integral model $C^+$ with ideal $I_C\cap j_*\sh O_X^+$. Let us set $\alpha_\Delta$ to be the image in $\B^c(j_*\sh O_C;j_*\sh O_C^+)$ of $\Z_\vee\{\Delta\}$, so that
\[\xymatrix{ \B^c(j_*\sh O_C;j_*\sh O_C^+) & \B^c(j_*\sh O_X;j_*\sh O_X^+)\ar[l] \\
 \alpha_\Delta \ar[u] & \Z_\vee\{\Delta\}\ar[l]\ar[u] }\]
commutes (here we confuse the sheaves $\B^c(\sh O;\sh O^+)$ with their global sections). The elements of $\alpha_\Delta^\circ$ are subschemes of $C^+$ \emph{monomial} in the sense that they are defined by monomials from $\sh O_X$. We set $\mathrm{Trop}(C/X/\Delta):=\Spec\alpha_\Delta$.

\begin{eg}[Plane tropical curves]Let $\Delta$ be the lower quadrant
\[ \{\mathbf r=(r_1,r_2)\in\R^2|r_1,r_2\leq\lambda \} \]
with $0\ll\lambda\in\Z$. The polyhedral domain $\D_K\Delta=\Spa K\{t^{r_1}x,t^{r_2}y\}$ is an arbitrarily large polydisc in the affine plane $X=\A^2_K$. Let $X^+$ be the corresponding formal model (which is isomorphic to $\A^2_{\sh O_K}$).

Let $f=\sum_{i,j,k}c_{ijk}t^kx^iy^j\in K\{t^{r_1}x,t^{r_2}y\}$ be some series, where $(c_{ijk})$ is a matrix of constants in $k$. The `tropicalisation' $F$ of the function $f$ in the polytope semiring $\Z_\vee\{\Delta\}=\Z_\vee\{X-r_1,Y-r_2\}$ is \[\iota^\dagger(f)=\bigvee_{i,j,k}iX+jY-k\]
where $\iota:\Z_\vee\{\Delta\}\rightarrow\B^c(j_*\sh O_X;j_*\sh O_X^+)$ is the inclusion. Note that $\iota^\dagger$ is a norm, but not a valuation.

Suppose that $C\hookrightarrow\D_K\Delta$ is a plane curve. Let $J$ be a monomial ideal of $C^+$, $\{t^kx^iy^j\}$ a finite list of generators. A generator $t^kx^iy^j$ may be removed from the list if and only if it is expressible in terms of the other generators, which occurs exactly when the coefficient $c_{ijk}$ of that monomial in some $f\in I_C$ is non-zero. In other words, the relations of the quotient $\Z_\vee\{\Delta\}\rightarrow\alpha_\Delta$ are generated by those of the form
\[ F=\iota^\dagger(f)=\bigvee_{(i,j,k)\neq(i_0,j_0,k_0)}iX+jY-k \]
where $F$ is the tropicalisation of $f$ and $c_{i_0j_0k_0}\neq0$. There are in general infinitely many such relations. The image of $\mathrm{Trop}(C/X/\Delta)$ in the Hausdorff quotient $\mathrm{sk}\Delta\rightarrow\overline\Delta\subset\overline{\R^2}$ is the \emph{non-differentiability locus} of the convex piecewise-affine function on $\overline\Delta$ defined by $F$.

These relations were also obtained by different means in \cite{Giansiracusa}.\end{eg}

In order to globalise this procedure, we need to check the functoriality of the amoeba under inclusion $\Delta^\prime\subseteq\Delta$ of lattice polytopes. The corresponding open immersion $\mathrm{sk}\Delta^\prime\hookrightarrow\mathrm{sk}\Delta$ may be factored into a subdivision at some element $Z\in\Z_\vee^\circ\{\Delta\}$ followed by a cell inclusion. This is the combinatorial shadow of the operation of taking the toric blow-up $\widetilde X^+\rightarrow X^+$ along $Z$, and then restricting to an affine subset.

\begin{lemma}Let $C\hookrightarrow X$ be a closed embedding of adic spaces, $j:X\rightarrow X^+$ a formal model of $X$, $C^+\subseteq X^+$ the closure of $C$ in $X^+$. Let $\widetilde X^+\rightarrow X^+$ be an admissible blow-up with ideal $J$. Then the closure $\widetilde C^+$ of $C$ in $\widetilde X^+$ is the blow-up of $C^+$ along $\sh O_{C^+}J$.\end{lemma}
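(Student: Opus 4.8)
The plan is to exhibit $\mathrm{Bl}_{\sh O_{C^+}J}(C^+)$ as the scheme-theoretic closure of $C$ inside $\widetilde X^+$, which is by definition $\widetilde C^+$, and to produce the identification through the universal property of blow-ups. Write $\mathcal I$ for an ideal of definition of $X^+$; admissibility of the blow-up lets me assume $\mathcal I\subseteq J$, so that the centre $Z(J)$ is contained in the reduction $Z(\mathcal I)$ and is disjoint from the generic fibre $X$, and $b\colon\widetilde X^+\to X^+$ restricts to an isomorphism over a neighbourhood of $X$. First I would record two routine facts. The finitely generated ideal $I_C\subseteq\sh O_X$ has generators that may be scaled into $\sh O_X^+$, so $I_{C^+}\sh O_X=I_C$ and hence $C^+\times_{X^+}X\cong C$; since each blow-up is trivial away from its centre this gives also $\widetilde X^+\times_{X^+}X\cong X$ and $\mathrm{Bl}_{\sh O_{C^+}J}(C^+)\times_{X^+}X\cong C$. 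Moreover $\sh O_{C^+}$ has no $\mathcal I$-power torsion: a section killed by a power of $\mathcal I$ maps to zero in $\sh O_C$, because $\mathcal I$ generates the unit ideal there, hence vanishes since $\sh O_{C^+}\hookrightarrow j_{C*}\sh O_C$ by definition of the closure.

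The main work is the \emph{schematic} density of $C$ in $\mathrm{Bl}_{\sh O_{C^+}J}(C^+)$, i.e. the injectivity of $\sh O_{\mathrm{Bl}_{\sh O_{C^+}J}(C^+)}$ into the push-forward of $\sh O_C$. This I would check on an affine chart $D_+(f)$, $f\in\sh O_{C^+}J$, where the structure sheaf is the subring $\sh O_{C^+}[\sh O_{C^+}J/f]$ of $\sh O_{C^+}[1/f]$: by the torsion-freeness just noted this ring is $\mathcal I$-torsion-free, hence embeds into its localisation at $\mathcal I$, and since $\mathcal I\subseteq\sh O_{C^+}J$ becomes principal with generator $f$ on the chart, inverting $\mathcal I$ also inverts $f$, so the localisation is $\sh O_{C^+}[1/f][1/\mathcal I]$, which is exactly the sections of $\sh O_C$ over the corresponding open by the identifications above. (For $\widetilde C^+$ this density is automatic, being the very definition of the scheme-theoretic closure.) I expect this to be the crux of the argument: it is where admissibility and the fact that $C^+$ is a \emph{closure}, not an arbitrary model, are genuinely used, and it is what prevents either side from acquiring spurious components along the exceptional locus.

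With density established the remainder is formal. The closed subscheme $\widetilde C^+\hookrightarrow\widetilde X^+$ maps to $X^+$ through $C^+$ — the ideal $I_{C^+}$ vanishes on the dense $C$, hence on $\widetilde C^+$ — and $J\sh O_{\widetilde C^+}$ is invertible, being the restriction of $J\sh O_{\widetilde X^+}$; so the universal property of $\mathrm{Bl}_{\sh O_{C^+}J}(C^+)\to C^+$ gives a unique $C^+$-morphism $\widetilde C^+\to\mathrm{Bl}_{\sh O_{C^+}J}(C^+)$. Conversely $\mathrm{Bl}_{\sh O_{C^+}J}(C^+)\to C^+\hookrightarrow X^+$ makes $J$ invertible, so the universal property of $b$ gives a unique $X^+$-morphism $\mathrm{Bl}_{\sh O_{C^+}J}(C^+)\to\widetilde X^+$; over $X^+\setminus Z(J)$ it is the closed immersion $C^+\setminus Z(\sh O_{C^+}J)\hookrightarrow X^+\setminus Z(J)$, hence restricts over $X$ to $C\hookrightarrow X$, and since $I_{\widetilde C^+}$ pulls back to zero on the schematically dense $C$ it pulls back to zero everywhere, so this morphism factors through $\widetilde C^+$. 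Finally the two composites of these morphisms agree with the identity on the schematically dense $C$; as $\mathrm{Bl}_{\sh O_{C^+}J}(C^+)$ and $\widetilde C^+$ are separated over $C^+$, their equalisers are all of the source, so the morphisms are mutually inverse. This identifies $\widetilde C^+$ with $\mathrm{Bl}_{\sh O_{C^+}J}(C^+)$ compatibly with the maps to $C^+$, so that $\widetilde C^+\to C^+$ is the blow-up of $C^+$ along $\sh O_{C^+}J$, as claimed.

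An alternative would be to cite the classical identity (via Rees algebras and $\Proj$, as in the blow-up formula \ref{LOC-blow-up} and corollary \ref{LOC-strict-transform}) that the strict transform of a closed subscheme is the blow-up of the restricted ideal, and then check only that the scheme-theoretic closure of $C$ in $\widetilde X^+$ equals the strict transform of $C^+$, i.e. the closure of $b^{-1}(C^+\setminus Z(J))$; by transitivity of scheme-theoretic images and admissibility this reduces to the statement that the scheme-theoretic image of $C$ in $C^+$ is all of $C^+$, which is the definition of $C^+$.
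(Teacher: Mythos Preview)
Your proof is correct, but the paper's own argument is precisely the ``alternative'' you sketch in your final paragraph, and nothing more. The paper simply writes down the identity of Rees algebras
\[
\frac{R_J}{I_C\cap R_J}\;\cong\;\bigoplus_{n\in\N}\frac{J^n}{I_C\cap J^n}\,t^n\;\cong\;R_{\sh O_{C^+}J}
\]
and declares that this follows ``directly from the definitions''. Since the paper defines the closure of $C$ in a model by the rule $I_{C^+}=I_C\cap\sh O_{X^+}$, the closure in $\widetilde X^+=\Proj R_J$ is by the same token $\Proj\bigl(R_J/(I_C\cap R_J)\bigr)$, and the displayed isomorphism identifies this with $\mathrm{Bl}_{\sh O_{C^+}J}(C^+)$.

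Your main argument is thus genuinely different in style: where the paper makes a one-line graded-algebraic computation, you run a geometric argument via universal properties, and the schematic density step you single out as the crux is exactly the content that the paper's Rees-algebra identity packages implicitly (it is what guarantees that $\Proj$ of the quotient has no spurious components over the centre). Your approach has the virtue of making the role of admissibility and of $C^+$ being a \emph{closure} completely transparent; the paper's has the virtue of being a single formula. Since you already identified the Rees-algebra route as an option, you might simply promote it to the main proof and retain the density discussion as a remark explaining \emph{why} the naive Rees-algebra quotient gives the closure rather than merely the total transform.
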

\begin{proof}The definitions directly imply the following identity
\[ \frac{R_J}{I_C\cap R_J}\cong\frac{\bigoplus_{n\in\N}J^nt^n}{\bigoplus_{n\in\N}I_C\cap J^nt^n}\cong\bigoplus_{n\in\N}\frac{J^n}{I_C\cap J^n}t^n \cong R_{\sh O_{C^+}J} \]
of the Rees algebras on $C^+$.\end{proof}

As we observed in the previous section, the Clemens semiring $\mathrm{Cl}(X;\widetilde X^+)$ is the strict transform semiring of $\Z_\vee\{\Delta\}$ under the monomial blow-up $\widetilde X^+\rightarrow X^+$ (def. \ref{LOC-def-sta}). Furthermore, the formation of the strict transform semiring commutes with the tropicalisation of ideals on $C$: 
\[ j_*\sh O_C^+\bigoplus_{n\in\N}J_nt^n\cong \bigoplus_{n\in\N}\frac{J_n}{I_C\cap J_n}t^n\cong \bigoplus_{n\in\N}j_*\sh O_C^+J_n. \]
By corollary \ref{LOC-strict-transform}, the image of $\B^c(j_*\sh O_C;j_*\sh O_C^+)$ in $\mathrm{Cl}(X;\widetilde X^+)$ is a free localisation of $\alpha_\Delta$.

Now writing $X^\prime=\D_K\Delta^\prime$ and $C^\prime=C\times_XX^\prime$, we obtain a natural morphism of skeleta $\mathrm{Trop}(C^\prime/X^\prime/\Delta^\prime)\rightarrow\mathrm{Trop}(C/X/\Delta)$. The above arguments, together with lemma \ref{i can't be bothered to think of good labels any more} show:

\begin{prop}$\mathrm{Trop}(C^\prime/X^\prime/\Delta^\prime)\rightarrow\mathrm{Trop}(C/X/\Delta)$ is an open immersion.\end{prop}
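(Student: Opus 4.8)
The plan is to reduce the statement to the two local mechanisms already identified earlier in the section: factoring the open immersion $\mathrm{sk}\Delta^\prime\hookrightarrow\mathrm{sk}\Delta$ into a subdivision followed by a cellular-open inclusion, and then checking that each of these survives passage to the closed subspace $C$. Concretely, write the inclusion $\Delta^\prime\subseteq\Delta$ of lattice polytopes as the composite of an inclusion $\Delta^\prime\subseteq\Delta_Z$ of a face followed by a subdivision at some contracting element $Z\in\Z_\vee^\circ\{\Delta\}$, exactly as recalled just before the statement. Correspondingly, on the model side, the open immersion $X^\prime=\D_K\Delta^\prime\hookrightarrow X=\D_K\Delta$ factors as a monomial blow-up $\widetilde X^+\rightarrow X^+$ along the monomial centre determined by $Z$, followed by restriction to a Zariski-affine stratum. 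I would treat the two factors separately.

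For the subdivision factor: by the lemma preceding the statement, the closure $\widetilde C^+$ of $C$ in $\widetilde X^+$ is the blow-up of $C^+$ along $\sh O_{C^+}J$, and the displayed Rees-algebra identity $j_*\sh O_C^+\bigoplus_{n}J_nt^n\cong\bigoplus_n(J_n/(I_C\cap J_n))t^n\cong\bigoplus_n j_*\sh O_C^+J_n$ shows that forming the strict transform semiring commutes with tropicalising ideals on $C$. Hence, invoking corollary \ref{LOC-strict-transform}, the image of $\B^c(j_*\sh O_C;j_*\sh O_C^+)$ inside $\mathrm{Cl}(X;\widetilde X^+)$ is a \emph{free localisation} of $\alpha_\Delta$ at (the image of) $Z$. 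Dually this is an open immersion of skeleta, and it is compatible with the corresponding subdivision $\mathrm{sk}\Delta_Z$ downstairs. For the cellular factor: the inclusion of a face $\Delta^\prime\subseteq\Delta_Z$ is, by lemma \ref{i can't be bothered to think of good labels any more}, a cellular localisation of $\mathrm{Cl}$ at the monomial subscheme cutting out the complementary stratum; since $\alpha_\Delta$ is, by construction, a quotient of $\Z_\vee\{\Delta\}$ compatibly with the restriction maps, the same cellular localisation formula applies to $\alpha_\Delta$ (one repeats the Zariski-open argument of \ref{LOC-Zar-open}, replacing ``ideal'' by ``monomial ideal'' and then passing to the image in $\B^c(j_*\sh O_C;j_*\sh O_C^+)$, just as in the proof of \ref{i can't be bothered to think of good labels any more}). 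Composing a free localisation with a cellular localisation gives a bounded localisation, hence an open immersion of spectra; this is precisely the claimed morphism $\mathrm{Trop}(C^\prime/X^\prime/\Delta^\prime)\rightarrow\mathrm{Trop}(C/X/\Delta)$.

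I expect the main obstacle to be the bookkeeping in the subdivision step: one must make sure that the strict transform semiring of $\alpha_\Delta$ really coincides with the image of $\B^c(j_*\sh O_C;j_*\sh O_C^+)$ in $\mathrm{Cl}(X;\widetilde X^+)$, and not merely a sub- or quotient-semiring thereof. This is where the flatness/exactness content of the Rees-algebra identity is used: one needs $I_C\cap J^n$ to be computed degreewise so that the graded pieces of the strict transform upstairs match the tropicalised ideals $j_*\sh O_C^+J_n$ downstairs. Once that identification is in hand, corollary \ref{LOC-strict-transform} does the rest and the cellular factor is routine. The remaining verification — that the square with $C^\prime\rightarrow C$ and $X^\prime\rightarrow X$ commutes — is immediate from the functoriality of $\B^c(\sh O;\sh O^+)$ and of $\Z_\vee\{-\}$ in the polytope already recorded in \S\ref{EGS-poly}, so no separate argument is needed.
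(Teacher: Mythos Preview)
Your proposal is correct and follows essentially the same approach as the paper: the paper's proof is simply the sentence ``The above arguments, together with lemma \ref{i can't be bothered to think of good labels any more} show:'', and you have faithfully unpacked exactly those arguments --- the factorisation into subdivision plus cellular localisation, the blow-up-of-closure lemma, the Rees-algebra identity establishing that strict transform commutes with tropicalisation, corollary \ref{LOC-strict-transform} for the free-localisation step, and lemma \ref{i can't be bothered to think of good labels any more} for the cellular step. One small wording point: both you and the paper speak of ``the image of $\B^c(j_*\sh O_C;j_*\sh O_C^+)$ in $\mathrm{Cl}(X;\widetilde X^+)$'', but what is meant is the image of $\mathrm{Cl}(X;\widetilde X^+)$ in the $\B^c$ of the blown-up $C$-side; the argument you give makes clear you understand which direction the surjection runs.
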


We can therefore glue tropicalisations as we glue polytopes. In particular, we can construct the amoeba
\[ \mathrm{Trop}(C/X/\Sigma)=\bigcup_{\sigma_{\mathbf r}\subset\Sigma}\mathrm{Trop}\left(C\times_X\D_k\sigma_{\mathbf r}/\D_K\sigma_{\mathbf r}/\sigma_{\mathbf r}\right) \]
of any subscheme of a toric variety, as promised above.

\subsection{Circle}\label{EGS-ellipt}

Returning to the situation of \ref{EGS-Clemens}, let us specialise to the case of an elliptic curve. Let $K$ be a DVF with residue field $k$, $E/K$ an elliptic curve; write $\overline E/\overline K$ for the base change to the algebraic closure. Let $\Omega=\Omega^{1,0}\in\Gamma(E;\omega_{E/K})\setminus\{0\}$ be a holomorphic volume form.

\begin{defn}A formal model $E^+$ of $E$ is \emph{crepant} if it is $\Q$-Gorenstein and one of the following equivalent conditions are true:
\begin{enumerate}\item there exists a log resolution $f:(E^+)^\prime\rightarrow E^+$ on which $\overline{(E^+)^\prime}+f^*\Omega=t^k$ as $\Q$-divisors on $(E^+)^\prime$, where $k\in\Z$ and $\overline{(E^+)^\prime}$ denotes the reduction of $(E^+)^\prime$;
\item The log canonical threshold is equal to a constant $k$ on $E^+$ (in equal characteristic zero);
\item The canonical bundle $\omega_{\overline E^+/\sh O_{\overline K}}$ over the algebraic closure is trivial.\end{enumerate}
A formal model of $\overline E$ is \emph{crepant} if it is finitely presented with trivial canonical bundle over $\sh O_{\overline K}$, or equivalently, it is obtained by flat base extension from a crepant formal model over some finite extension of $\sh O_K$.\end{defn}

A simple normal crossings model $E^+$ of $E$ is crepant if and only if its reduction is a cycle of projective lines. The multiplicity of a line in the central fibre $E^+_0:=k\times_{\sh O_K}E^+$ is one more than its multiplicity in the canonical divisor. One can make the multiplicities all one, and hence trivialise the canonical bundle, by effecting a finite base change followed by a normalisation. In particular, $E^+$ is semistable if and only if $\omega_{E^+/\sh O_K}$ is trivial, that is, if and only if it is a minimal model.

On the other hand, a formal model of $E$ is locally toric if and only if it has at worst monomial cyclic quotient singularities and the components of its central fibre are smooth rational curves. It is automatically $\Q$-Gorenstein. Such a model exists only if $E$ has bad, but semistable reduction; let us assume this.

Let $\mathbf{Mdl}^\mathrm{clt}(E)$ denote the category of crepant, locally toric models of $E$. Let $E^+$ be an object of this category. Its singularities occur at the intersections of components, and they have the form
\[ \D^+_{k[\![t]\!]}\Delta\rightarrow\Delta^\circ \]
where $\Delta=[a,b]$ is an interval with rational endpoints. They may be resolved explicitly, and crepantly, by subdividing the interval at all its integer points.

\

%\texttt{Insert picture}

%\

Since, by assumption, a crepant resolution of $E^+$ exists, $(E^+)^\prime$ must be a crepant model. Its reduction is therefore a cycle of $\P^1_k$s. Since $\Delta(E^+)^\prime\rightarrow\Delta E^+$ is a subdivision, it follows that both are cycles of intervals; $\Delta E^+(\R_\vee)$ has the topology of a circle.

The Clemens functor
\[ \Delta:\mathbf{Mdl}^\mathrm{clt}(E) \rightarrow \mathbf{Sk}^\mathrm{aff} \]
is defined, and its image is a diagram of subdivisions. It may therefore be \emph{glued} to obtain the \emph{Kontsevich-Soibelman} (or \emph{KS}) skeleton $\mathrm{sk}(E;\Omega):=\colim\Delta$. It is a \emph{shell} of any crepant Clemens skeleton, and hence comes with a collapse map 
\[ \mu:E\rightarrow\mathrm{sk}(E;\Omega), \]
which is a \emph{torus fibration}: every point of $\mathrm{sk}(E;\Omega)$ has an overconvergent neighbourhood over which $\mu$ is isomorphic over $\Z_\vee$ to a standard torus fibration on an interval. In the introduction we introduced an explicit `atlas' for $\mathrm{sk}(E;\Omega)$ under the assumption (which may be lifted) that the minimal model of $E$ consist of at least three reduced lines.\footnote{I insert the word `atlas' between inverted commas because I do not prove that these open sets really cover $\mathrm{sk}(E;\Omega)$. That statement would be equivalent to the conjecture raised in example \ref{eg-cpa}.}

If, more generally, $E$ has only bad reduction, we can still define $\mathrm{sk}(\overline E;\Omega)$ as the colimit of the Clemens functor on $\mathbf{Mdl}^\mathrm{clt}(\overline E)$. If $L\supseteq K$ is a finite extension over which $E_L:=L\times_KE$ has semistable reduction, then
\[ \mathrm{sk}(\overline E;\Omega)\cong \Q_\vee\times_{\Z_\vee}\mathrm{sk}(E_L;\Omega) \]
by the base change property for polytopes. In particular, the collapse map $\overline E\rightarrow\mathrm{sk}(\overline E;\Omega)$ is again a torus fibration. The KS skeleton is of finite presentation over $\Q_\vee$.

There is a continuous projection $\pi:\mathrm{sk}(\overline E;\Omega)\rightarrow B:=\mathrm{sk}(\overline E;\Omega)(\R_\vee)\simeq S^1$. The local models for $\mu$ induce a canonical smooth structure on $B$ with respect to which the \emph{affine functions}, that is, invertible sections $\mathrm{Aff}_\Z(B,\Q)$ of $\mathrm{CPA}_\Z(B,\Q_\vee):=\pi_*|\sh O|^\mathrm{canc}$, are smooth. It therefore attains an \emph{affine structure} in the sense of \cite[\S2.1]{KoSo2} defined by the exact sequence
\[ 0\longrightarrow\Q\longrightarrow \mathrm{Aff}_\Z(B,\Q) \longrightarrow \Lambda^\vee \longrightarrow 0 \]
of Abelian sheaves on $B$ and the induced embedding $\Lambda^\vee\hookrightarrow T^\vee B$.

Let $E^+_L$ be a semistable minimal model of $E_L$. By writing $\Omega$ locally in the form $\lambda d\log x$ for some monomial $x\in\sh O_E^\times$ and $\lambda\in L^\times$, we can think of $\Omega$ as a non-zero section of $L\tens_\Z\Lambda^\vee$. It induces a $\overline K$-orientation of $B$ that does not depend on the choice of $L$ or $x$.

% LIST OF UNPROVED STATEMENTS
% 1, polytope construction commutes with +normalised base extension.
% 2, every point is contained in the closure of a unique $\R_\vee$-point.
% 3, localisation at elements with distinct kink points induces a cover.
% 4, every morphism in the category is a blow-up with monomial centre.

%\newpage
%\appendix

%\input{TOPOS}

%\input{ADIC}

%\printbibliography
\bibliographystyle{alpha}
\bibliography{plan}

\begin{thebibliography}{AGV70}

\bibitem[AGV70]{SGA4}
M.~Artin, A.~Grothendieck, and J.~L. Verdier.
\newblock {\em SGA 4; Th\'eorie de topos et cohomologie \'etale des sch\'emas}.
\newblock Springer-Verlag, 1970.

\bibitem[BBT13]{Oren}
O.~Ben-Bassat and M.~Temkin.
\newblock Berkovich spaces and tubular descent.
\newblock {\em Advances in Mathematics}, 234, 2013.
\newblock arXiv \href{http://arxiv.org/abs/1201.4227}{1201.4227}.

\bibitem[Ber93]{Berketale}
V.~Berkovich.
\newblock {\'E}tale cohomology for non-\text{A}rchimedean analytic spaces.
\newblock {\em {Publications math\'ematiques de l'IH\'ES}}, 78, 1993.
\newblock \href{http://www.numdam.org/item?id=PMIHES_1993__78__5_0}{Numdam}.

\bibitem[Ber99]{Berkpadic}
V.~Berkovich.
\newblock Smooth p-adic analytic spaces are locally contractible.
\newblock {\em Inventiones Mathematicae}, 137(1), 1999.
\newblock
  \href{http://www.wisdom.weizmann.ac.il/~vova/Inven_1999_137_contra.pdf}{onli%
ne}.

\bibitem[BGR84]{BGR}
S.~Bosch, U.~G{\"u}ntzer, and R.~Remmert.
\newblock {\em Non-{A}rchimedean analysis: a systematic approach to rigid
  analytic geometry}.
\newblock Springer, 1984.

\bibitem[Bou62]{Banach}
N.~Bourbaki.
\newblock {\em Seminaire \text{B}anach}.
\newblock Springer, 1962.

\bibitem[BPR11]{BPR}
M.~Baker, S.~Payne, and J.~Rabinoff.
\newblock Nonarchimedean geometry, tropicalization, and metrics on curves.
\newblock \href{http://arxiv.org/abs/1104.0320}{1104.0320}, 2011.

\bibitem[Dur07]{Durov}
N.~Durov.
\newblock A new approach to \text{A}rakelov geometry.
\newblock arXiv \href{http://arxiv.org/abs/0704.2030}{0704.2030}, 2007.

\bibitem[EKL07]{Kap}
M.~Einsiedler, M.~Kapranov, and D.~Lind.
\newblock Non-{A}rchimedean amoebas and tropical varieties.
\newblock {\em Journal f{\"u}r die reine und angewandte Mathematik}, 601, 2007.
\newblock arXiv \href{http://arxiv.org/abs/math/0408311}{0408311}.

\bibitem[FK13]{FujiKato}
K.~Fujiwara and F.~Kato.
\newblock Foundations of rigid geometry {I}.
\newblock arXiv \href{http://arxiv.org/abs/1308.4734}{1308.4734}, 2013.

\bibitem[GG13]{Giansiracusa}
J.~Giansiracusa and N.~Giansiracusa.
\newblock Equations of tropical varieties.
\newblock arXiv \href{http://arxiv.org/abs/1308.0042}{1308.0042}, 2013.

\bibitem[Gro60]{EGA}
A.~Grothendieck.
\newblock {\em \'El\'ements de g\'eometrie alg\'ebrique}.
\newblock IHES, 1960.

\bibitem[Gro11]{Grossbook}
M.~Gross.
\newblock {\em Mirror symmetry and tropical geometry}.
\newblock AMS, 2011.
\newblock \href{http://www.math.ucsd.edu/~mgross/kansas.pdf}{online}.

\bibitem[GS07]{GS}
M.~Gross and B.~Siebert.
\newblock From real affine geometry to complex geometry.
\newblock {\em Annals of mathematics}, 2007.
\newblock arXiv \href{http://arxiv.org/abs/math/0703822}{0703822}.

\bibitem[Hub96]{Hubook}
R.~Huber.
\newblock {\em {\'E}tale cohomology of rigid analytic varieties and adic
  spaces}.
\newblock Aspects of mathematics. Springer Vieweg, 1996.

\bibitem[KS00]{KoSo1}
M.~Kontsevich and Y.~Soibelman.
\newblock Homological mirror symmetry and torus fibrations.
\newblock In {\em Symplectic geometry and mirror symmetry}. World Scientific,
  2000.
\newblock \href{http://arxiv.org/abs/math/0011041}{0011041}.

\bibitem[KS06]{KoSo2}
M.~Kontsevich and Y.~Soibelman.
\newblock Affine structures and non-{A}rchimedean analytic spaces.
\newblock In {\em The unity of mathematics}. Springer, 2006.
\newblock arXiv \href{http://arxiv.org/abs/math/0406564}{0406564}.

\bibitem[Mac14]{thesis}
A.~W. Macpherson.
\newblock {\em Skeleta}.
\newblock PhD thesis, Imperial College London, 2014.
\newblock in preparation.

\bibitem[Mik06]{Grisha}
G.~Mikhalkin.
\newblock Tropical geometry and its applications.
\newblock In {\em Proceedings of the ICM}, volume~2, 2006.
\newblock arXiv \href{http://arxiv.org/abs/math/0601041}{0601041}.

\bibitem[MM92]{Topos}
S.~Maclane and I.~Moerdijk.
\newblock {\em Sheaves in geometry and logic: a first introduction to topos
  theory}.
\newblock Springer, 1992.

\bibitem[NX13]{Nicaise}
J.~Nicaise and C.~Xu.
\newblock The essential skeleton of a degeneration of algebraic varieties.
\newblock arXiv \href{http://arxiv.org/abs/1307.4041}{1307.4041}, 2013.

\bibitem[Pay09]{Payne}
S.~Payne.
\newblock Analytification is the limit of all tropicalisations.
\newblock {\em Math. Res. Lett.}, 16(3), 2009.
\newblock arXiv \href{http://arxiv.org/abs/0805.1916}{0805.1916}.

\bibitem[TV09]{Toen}
B.~{To\"en} and M.~Vaqui\'e.
\newblock Au-dessous de spec $\mathbb{Z}$.
\newblock {\em Journal of {K}-theory}, 3(03), 2009.
\newblock arXiv \href{http://arxiv.org/abs/math/0509684}{0509684}.

\end{thebibliography}

\end{document}